\definecolor{calccolor}{rgb}{0.8,0.3,0}
\newenvironment{calc}    {\color{calccolor}     }    {     }
\def\willeminsert#1{{\color{orange}#1}} 
\def\willem#1{{\color{purple}WZ: #1}} 
\long\def\willemText#1{{\color{purple}Willem:\ #1}} 
\def\willem#1{}
\long\def\willemText#1{} 
\def\willeminsert#1{} 
\colorlet{darkred}{red!90!black}
\definecolor{darkgreen}{rgb}{0.1,0.7,0.1}
\newtheorem{theorem}{Theorem}[section]
\newtheorem{proposition}[theorem]{Proposition}
\newtheorem{lemma}[theorem]{Lemma}
\theoremstyle{definition}
\newtheorem{remark}[theorem]{Remark}
\newtheorem{definition}[theorem]{Definition}
\newtheorem{assumption}[theorem]{Assumption}
\numberwithin{equation}{section}
\numberwithin{figure}{section}
\renewcommand{\E}{\mathds{E}}
\renewcommand{\R}{\mathbb{R}}
\renewcommand{\N}{\mathbb{N}}
\renewcommand{\P}{\mathbb{P}}
\newcommand{\eqlaw}{\stackrel{\mbox{\tiny law}}{=}}
\DeclareMathOperator*{\argmax}{arg\,max}
\DeclareMathOperator{\var}{Var}
\DeclareMathOperator{\cov}{Cov}
\DeclareMathOperator{\Hess}{Hess}
\renewcommand{\R}{\mathbb{R}}
\renewcommand{\N}{\mathbb{N}}
\renewcommand{\Z}{\mathbb{Z}}
\renewcommand{\E}{\mathbb{E}}
\renewcommand{\P}{\mathbb{P}}
\newcommand{\YYL}{Y_L}
\newcommand{\plusinfty}{\infty}
\newcommand{\cA}{\ensuremath{\mathcal A}} 
\newcommand{\cB}{\ensuremath{\mathcal B}} 
\newcommand{\cC}{\ensuremath{\mathcal C}} 
\newcommand{\cD}{\ensuremath{\mathcal D}} 
\newcommand{\cF}{\ensuremath{\mathcal F}} 
\newcommand{\cH}{\ensuremath{\mathcal H}}
\newcommand{\cM}{\ensuremath{\mathcal M}} 
\newcommand{\cN}{\ensuremath{\mathcal N}} 
\newcommand{\cO}{\ensuremath{\mathcal O}} 
\newcommand{\cP}{\ensuremath{\mathcal P}}
\newcommand{\cS}{\ensuremath{\mathcal S}} 
\newcommand{\cT}{\ensuremath{\mathcal T}} 
\newcommand{\cV}{\ensuremath{\mathcal V}}
\newcommand{\cZ}{\ensuremath{\mathcal Z}}
\newcommand{\fc}{\mathfrak{c}}
\newcommand{\bbZ}{{\ensuremath{\mathbb Z}} } 
\newcommand{\1}{\mathds{1}}
\newcommand{\dd}{\mathrm{d}} 
\newcommand{\gap}{\mathrm{gap}}
\def\one{\mathrm{(A)}}
\def\two{\mathrm{(B)}}
\begin{document}

\title[Anderson Hamiltonians with correlated Gaussian potentials]{Top of the spectrum of discrete Anderson Hamiltonians with correlated Gaussian potentials}


\author{Giuseppe Cannizzaro}
\address{University of Warwick, Mathematical Sciences Building, Coventry CV4 7AL, UK}
\email{giuseppe.cannizzaro@warwick.ac.uk}
\author{Cyril Labb\'e}
\address{Universit\'e Paris Cit\'e, Laboratoire de Probabilit\'es, Statistique et Mod\'elisation, UMR 8001, F-75205 Paris, France and Institut Universitaire de France (IUF).}
\email{clabbe@lpsm.paris}
\author{Willem van Zuijlen}
\address{WIAS Berlin, Mohrenstra{\ss}e 39, 10117 Berlin, Germany}
\email{vanzuijlen@wias-berlin.de}

\begin{abstract}
We investigate the top of the spectrum of discrete Anderson Hamiltonians with correlated Gaussian noise in the large volume limit. The class of Gaussian noises under consideration allows for long-range correlations. We show that the largest eigenvalues converge to a Poisson point process and we obtain a very precise description of the associated eigenfunctions near their localisation centres. 
We also relate these localisation centres with the locations of the
maxima of the noise. Actually, our analysis reveals that this relationship depends in a subtle way on the behaviour near $0$ of the covariance function of the noise: in some situations, the largest eigenfunctions are \textit{not} associated with the largest values of the noise.

\noindent
{\it AMS 2010 subject classifications}: Primary 60H25; Secondary 82B44, 60G70. \\
\noindent
{\it Keywords}: {Schr\"odinger operator; Anderson Hamiltonian; Gaussian field; order statistics; eigenvalue; eigenfunction; correlation}
\end{abstract}

\maketitle

\small
\setcounter{tocdepth}{1} 
\tableofcontents
\normalsize

\section{Introduction and main results}

The present article is concerned with the behaviour of the top eigenvalues / eigenfunctions of random operators of the form $\Delta+\xi$ on $Q_L\eqdef[-L/2,L/2]^d \cap \Z^d$ in the limit $L\to\infty$. Here $\xi$ is a random potential on $\Z^d$ and $\Delta$ is the discrete Laplacian:
$$ \Delta f(x) = \sum_{y\in \Z^d: y\sim x} (f(y) - f(x))\;,\quad x\in\Z^d\;.$$
Such operators are often called random Schr\"odinger operators, or Anderson Hamiltonians. 
They are considered in physics to model the Hamiltonian of a quantum particle evolving in a crystal subject to defects or impurities. They are named after P.W. Anderson due to his seminal paper \cite{An58} which discusses the localisation of the quantum particle for large enough disorder of the potential and had a profound and lasting impact on the field. 
We refer to 
\cite{CaLa90, Ki08, AiWa15} 
for some references which address a part of this literature, in particular that regarding Anderson localisation, i.e., the property of having a pure point spectrum and exponentially decaying eigenfunctions.

These operators also naturally arise in the mathematical study of the so-called parabolic Anderson model:
$$ \partial_t u = \Delta u +  \xi u\;,\quad u(0,\cdot) = \delta_0(\cdot)\;.$$
Indeed, the behaviour of the solution $u$ at a large time $t$ can be well-approximated by the solution of the same stochastic partial differential equation but restricted to a finite ball of growing size $L=L(t)$, so that the top of the spectrum of the operator typically provides an accurate description of the growth and spreading of this solution, see for instance~\cite{GaKoMo07,KoLaMoSi09, SiTw14,FiMu14} and the book of K\"onig~\cite{Konig}. We also refer to~\cite{GaKoMo00,KoPevZ,GhYi} for articles on this topic in the continuous setting.

Most of the literature on these questions concern potentials $\xi$ made of i.i.d.~random variables with common distribution $\mu$. It is now well-understood that the right tail of $\mu$ plays a prominent role in the behaviour of the top of the spectrum, in particular: the heavier the right tail of $\mu$ is, the more localised the top eigenfunctions are. To illustrate this, let us present informally two important classes of distributions:\begin{itemize}
	\item (Single peak): $\mu([x,\plusinfty))$ decays ``slowly'' as $x\to\plusinfty$ (for instance Gaussian, exponential or Pareto distributions). In the limit $L\to\infty$, the top eigenfunctions are asymptotically given by Dirac masses localised at i.i.d.~uniform r.v.'s~drawn from $[-L/2,L/2]^d \cap \Z^d$.
	\item (Doubly-exponential): $\mu([x,\plusinfty))$ behaves like $\exp(-Ce^{x/\rho})$ for some $C,\rho > 0$ as $x\to \plusinfty$. In the limit $L\to\infty$, the top eigenfunctions vary at scale $1$ and are ``supported'' on balls of unbounded radius centred at i.i.d.~uniform r.v., see~\cite{BiKo16}.
\end{itemize}
As it will be useful for later comparisons, let us mention a special class of laws, the Weibull distributions, 
which are such that $\mu([x,\plusinfty)) = \exp(-Cx^q)$, $x\ge 0$, for some $q>1$ and $C>0$. 
They fall into the Single peak case, and precise results on the top of the spectrum of the Anderson Hamiltonian 
were established in~\cite{As08,As16}.


The relationship between the localisation centres of the top eigenfunctions and the successive maxima of the potential $\xi$ was investigated by Astrauskas~\cite{As13}. For Weibull tails (and more generally, in the Single peak class), a natural guess would be that, in the limit $L\to\infty$, the localisation center $x_{k,L} \in Q_L$ of the $k$-th eigenfunction is such that $\xi(x_{k,L})$ is the $k$-th largest value reached by $\xi$ on $Q_L$.\\
The situation is actually subtler: if we denote by $\ell_L(k)$ the integer such that $\xi(x_{k,L})$ is the $\ell_L(k)$-th largest value of $\xi$ over $Q_L$, then with large probability as $L\to\infty$: \begin{itemize}
	\item if $q < 3$, $\ell_L(k) = k$,
	\item if $q = 3$, $\ell_L(k)$ is a non-trivial r.v.~of order $1$,
	\item if $q > 3$, the r.v.~$\ell_L(k)$ goes to $\plusinfty$.
\end{itemize}
Heuristically, when the right tail of $\mu$ is not so heavy ($q \ge 3$), one has to take into account the behaviour of $\xi$ at the nearest neighbours of the successive maxima: the negligible mass that the eigenfunction puts on these neighbouring sites may produce a shift in the eigenvalue that compensates for the difference between successive maxima and, thereby, makes the correspondence between successive maxima and successive eigenvalues / eigenfunctions non trivial. Let us mention that in the article~\cite{As13}, there are no precise statements that explain how this shift is produced.\\

Very little is known on the top of the spectrum of the Anderson Hamiltonian when $\xi$ is a correlated field: in~\cite{As03}, a few results were collected on the asymptotic behaviour of the potential and of the Anderson Hamiltonian for a Gaussian correlated field, while in~\cite{GaMo00} the asymptotic of the moments of the parabolic Anderson model with a correlated field were investigated. Let us also cite~\cite{GaKoMo00} for the almost sure asymptotic of the parabolic Anderson model with a correlated Gaussian field in the continuum.\\

In the present article, we initiate a comprehensive study of the Anderson Hamiltonian with a correlated Gaussian field and we aim at answering the following questions:
\begin{enumerate}[label=\normalfont{(\arabic*)}]
	\item What features of the covariance function of the field are relevant to determine the statistics of the top of the spectrum?
	\item How do the top eigenvalues / eigenfunctions behave?
	\item What is the relationship between the top eigenvalues / eigenfunctions, and the successive maxima of the field?
\end{enumerate}

Actually, we consider a more general framework where the potential is allowed to depend on the size $L$ at which we consider the Anderson Hamiltonian: more precisely, we give ourselves a sequence $(\xi_L)_{L\ge 1}$ of Gaussian potentials, and we investigate the above questions on the operator $\cH_L \eqdef \Delta + \xi_L$ on $Q_L$. We work under two main assumptions on our field. The first condition concerns the long-range decay: roughly speaking, the covariance function is required to decay at infinity faster than $1/\log |x|$. This condition ensures that the statistics of our field behave in a way similar to that of the i.i.d.~case. However, to encompass such long-range correlations in the study of the Anderson Hamiltonian requires substantial technical work. The second condition concerns the short-range decay: the covariance function is assumed to decay fast enough near $0$. Actually, our study reveals that the behaviour near the origin of the covariance function of $\xi_L$ has a subtle impact on Question (3), and we identify non-trivial relationships between the top of the spectrum and the maxima of the field. 

\subsection{The potential}\label{sec:pot}

Let us begin by rigorously introducing the (family of) Gaussian field(s) the present paper is concerned with. 

\begin{definition}\label{d:GF}
For any integer $L\ge 1$, let $(\xi_L(x))_{x\in\Z^d}$ be a centred Gaussian field, stationary in law 
under spatial shifts, with unit variance at every point, and non-negative covariance 
function $v_L$ on $\Z^d$. 
Further, we assume that $v_L$ is such that 
\begin{enumerate}[label=\normalfont{(\Roman*)}]
\item\label{i:LR} \textit{(Long-Range decay)} its tails $\cT_{v_L}$ satisfy
\begin{equ}[e:LongRange]
	\cT_{v_L}\eqdef\sup_{|x|\ge \exp(\sqrt{\ln L})} v_L(x) \ln |x| \longrightarrow 0\;,\quad \mbox{ as }L\to\infty\;.
\end{equ}
\item\label{i:De} \textit{(Short-Range decay)} there exist $\fc,\fc'>0$ such that for all $L\ge 1$
	\begin{equ}[e:AlmostDecay]
		1 - \frac{e^{\fc'|x|}}{d_L} \le v_L(x) \le 1 - \frac{\fc}{d_L}\;,\quad \forall x\in \bbZ^d\backslash\{0\}\;,
	\end{equ}
	where $d_L>0$ is defined through
\begin{equ}[e:defdL]
	\sup_{|x|=1}v_L(x) = 1 - \frac1{d_L}\;.
\end{equ}
\end{enumerate} 
\end{definition} 

While the structural assumptions on the field $\xi_L$ are somewhat standard, conditions~\ref{i:LR} and~\ref{i:De} 
require some justification. The former controls the long-range behaviour of the potential 
and determines the minimal speed of decay of its correlations. As written,~\eqref{e:LongRange} is very mild. 
Indeed, for $\xi_L$ independent of $L$, it is equivalent to 
$v_L(x) = v(x) = o(1/\ln |x|)$ as $|x|\to\infty$, which is a well-known condition 
in the study of extreme values of correlated Gaussian fields (see~\cite[Chapter 4]{Leadbetter}). 

For $L$-independent potentials, the assumption~\ref{i:De} only imposes that $v$ is strictly below $1$ outside the origin. On the other hand, in the $L$-dependent case, the assumption~\ref{i:De} is non-trivial as~\eqref{e:AlmostDecay} ensures that the parameter $d_L$ in~\eqref{e:defdL} is 
a faithful control of the decay of $v_L$ near the origin.

\medskip

Let us present a few examples of potentials $\xi_L$ that satisfy Definition~\ref{d:GF}. We start with $L$-independent potentials:
\begin{enumerate}
		\item\label{ex:indep} {\bf$L$-independent correlated Gaussian field.} Take $\xi_L = \xi$ to be a centred, stationary Gaussian field with a covariance function $v_L = v$ independent of $L$, satisfying $v(0)=1$, $v(x) \ln|x| \to 0$ as $|x|\to\plusinfty$ and $\sup_{x\ne 0} v(x) < 1$. Then $d_L$ is independent of $L$ and finite. This covers the i.i.d.~case (where $v(x) = \1_{x=0}$) and, for instance, the discrete Gaussian Free Field in dimension $d\ge 3$ (where $v$ is the Green function associated to the discrete Laplacian). 
\end{enumerate}
We now present examples of potentials whose laws depend on $L$, and that arise by discretising a continuum Gaussian potential on a grid. 
More precisely, we start from a Gaussian potential $\zeta \eqdef u*\eta $ obtained by convolving a white noise 
$\eta$ on $\R^d$ with some function $u\colon \R^d \to \R$. 
We give ourselves a sequence of $m_L\ge 1$ that converges to $\plusinfty$ and we set 
$\xi_L(x) \eqdef \zeta(x/m_L)$ for all $x\in \Z^d$. As we will see, the regularity of $u$ has a subtle impact on the top of the spectrum of the Anderson Hamiltonian, and therefore we distinguish two cases:
\begin{enumerate}\setcounter{enumi}{1}
	\item\label{ex:smooth} {\bf Smooth.} Let $u:\R^d\to\R$ be a smooth, compactly supported, radial function with unit $L^2$-norm. Then, $\xi_L$ satisfies the requirements listed above and $d_L$ is of order $m_L^2$.
	\item\label{ex:indic} {\bf Indicator.} Let $u$ be the indicator of the centred ball of $\R^d$ with radius $1/2$, normalised so that it has unit $L^2$-norm. Then, $\xi_L$ satisfies the requirements listed above and $d_L$ is of order $m_L$.
\end{enumerate}

\medskip

For our purposes, we need to collect some properties on $\xi_L$ restricted to the domain $Q_L \eqdef [-L/2,L/2]^d\cap \Z^d$. 
First of all, let us introduce the order of magnitude of the maximum of $\xi_L$ over $Q_L$, 
which  is given by the parameter $a_L$ implicitly defined through\footnote{$L^d$ should be interpreted as the cardinality of $Q_L$, although the exact value of the cardinality is slightly different but asymptotically equivalent.}
\begin{equ}[e:aL]
	\P(\xi_L(0)>a_L)=\frac{1}{L^d}\,.
\end{equ}
It is elementary to check that $a_L = \sqrt{2 d \ln L}\, (1+o(1))$ as $L\to\infty$.
\medskip

Let us now introduce an approximation of $\xi_L$ near one of its large peaks as it will be instrumental in this article. Let $x_0$ be a site such that $\xi_L(x_0) \ge a_L - \theta$, where $\theta >0$ is some fixed, arbitrary value\footnote{We will take $\theta=2d+1$ later on for definiteness.}. We will show in Section \ref{Sec:Field} that $\xi_L$ can be well-approximated, in a neighborhood of $x_0$, as follows
\begin{equ}[e:ApproxPot]
	\xi_L(x) \approx \xi_L(x_0) - \cS_L(x-x_0) + \zeta_{L,x_0}(x)\;,
\end{equ}
where $\cS_L$ is the so-called \textit{shape} defined by
\begin{equ}[e:Shape]
	\cS_L(x) \eqdef a_L (1-v_L(x)) \ge 0\;,\quad x\in \Z^d\;,
\end{equ}
and $\zeta_{L,x_0}$ is a Gaussian field independent of $\xi_L(x_0)$, that we will dub \textit{fluctuation field}. The shape should be seen as the first order description of the variation of the potential near a large peak, while the fluctuation field provides a random correction to this deterministic shape.\\
At this point, we observe that~\eqref{e:AlmostDecay} implies that $\cS_L$ is of order $a_L/d_L$ in the vicinity of the origin. We are thus naturally led to introduce the following assumption.

\begin{assumption}\label{ass:dLaL}
	Let $d_L$ and $a_L$ be respectively defined by~\eqref{e:defdL} and~\eqref{e:aL}. We assume that  
	\begin{equ}[e:StandingAssumptions]
		d_L \ll a_L\;.
	\end{equ}
\end{assumption}

Under this assumption, the shape is very steep so that we should be close to the Single peak case of the i.i.d.~setting. On the other hand, when $d_L$ is of order $a_L$, the shape is of order $1$ and this should correspond to the Doubly Exponential case of the i.i.d.~setting.\\

Our first result determines the statistics of the largest peaks of the potential and of the locations where these are achieved. 
To state it, for any $1 \le k \le \# Q_L$, let $y_{k,L}$ be the site in $Q_L$ where $\xi_L$ reaches its $k$-th largest value.

\begin{theorem}\label{Th:Potential}
Under Assumption~\ref{ass:dLaL}, 
	$$\Big(\frac{y_{k,L}}{L},a_L(\xi_L(y_{k,L}) - a_L)\Big)_{1 \le k\le \# Q_L}\;,$$
	converges in law as $L\rightarrow \infty$ to a Poisson point process on $[-1,1]^d\times\R$ of intensity $\dd x \otimes e^{-u} \dd u$.
\end{theorem}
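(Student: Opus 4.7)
The plan is to apply a Kallenberg-type criterion for Poisson point process convergence. Since the limit intensity $dx\otimes e^{-u}du$ has no atom, it suffices, for every bounded box $B\subset[-1,1]^d$ and every $u\in\R$, to verify the two limits $\E[N_L(B,u)]\to|B|\,e^{-u}$ and $\P(N_L(B,u)=0)\to\exp(-|B|\,e^{-u})$, where
\[
N_L(B,u)\eqdef \#\bigl\{x\in LB\cap Q_L:\xi_L(x)>u_L\bigr\}\,,\qquad u_L\eqdef a_L+\frac{u}{a_L}\,.
\]
The uniform spatial marginal of the limit then comes for free from stationarity of $\xi_L$.

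For the first moment, I would use Mill's ratio for a centred unit Gaussian together with $a_L^2=2d\ln L+o(1)$ (an elementary consequence of~\eqref{e:aL}) to obtain $\P(\xi_L(0)>u_L)=L^{-d}\,e^{-u}(1+o(1))$. Combined with $|LB\cap Q_L|=|B|L^d(1+o(1))$, this yields the claimed convergence of means.

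The core of the argument is the void probability. I would invoke Berman's comparison inequality between $\xi_L$ and an i.i.d.~standard Gaussian field on $LB\cap Q_L$:
\[
\Bigl|\P\bigl(\forall x\in LB\cap Q_L:\xi_L(x)\le u_L\bigr)-\P(\xi_L(0)\le u_L)^{|LB\cap Q_L|}\Bigr|\le C\!\!\sum_{\substack{x\ne y\\x,y\in LB\cap Q_L}}\!\!\frac{|v_L(x-y)|}{\sqrt{1-v_L(x-y)^2}}\exp\!\Bigl(-\frac{u_L^2}{1+|v_L(x-y)|}\Bigr)\,.
\]
The i.i.d.~void probability on the left converges to $\exp(-|B|\,e^{-u})$ by the first moment estimate, so it remains to show that the right-hand side vanishes as $L\to\infty$. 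I would split the double sum at $|x-y|=\exp(\sqrt{\ln L})$. For long-range pairs, Assumption~\ref{i:LR} gives $|v_L(x-y)|\le \cT_{v_L}/\ln|x-y|\to 0$ uniformly; the exponential factor is then bounded by $L^{-2d+o(1)}$, and a trivial count of at most $L^{2d}$ pairs combined with the vanishing $\cT_{v_L}$ makes the long-range contribution $o(1)$. For intermediate-range pairs ($1\le|x-y|\le\exp(\sqrt{\ln L})$), Assumption~\ref{i:De} ensures $|v_L(x-y)|\le 1-\fc/d_L$, so the exponential factor is bounded by $L^{-d(1+\fc/(2d_L))(1+o(1))}$, the prefactor $(1-v_L^2)^{-1/2}$ is $O(\sqrt{d_L})$, and the number of such pairs is at most $L^d\cdot e^{d\sqrt{\ln L}}$, altogether giving a contribution bounded by $\sqrt{d_L}\,e^{d\sqrt{\ln L}}\,L^{-d\fc/(2d_L)}$.

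The main obstacle is precisely this intermediate-range estimate: the decorrelation gain produced by Berman's inequality is only $\exp(-u_L^2\fc/(2d_L))\sim L^{-d\fc/(2d_L)}$, which must overcome the subexponential factor $e^{d\sqrt{\ln L}}$. This is where Assumption~\ref{ass:dLaL} plays its essential role: $d_L\ll a_L\sim\sqrt{2d\ln L}$ forces $\ln L/d_L\gg\sqrt{\ln L}$, so the Berman gain strictly dominates and the intermediate-range contribution vanishes. With both ranges controlled, the Poisson limit follows.
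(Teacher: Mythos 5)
Your route is genuinely different from the paper's. The paper proves this theorem through its splitting scheme $Q_L\to U_L\to Q_{R_L}$: it first establishes a \emph{local} description of the field near a high peak (Proposition~\ref{l:Event}), deduces tail asymptotics for $\xi_L(w_{j,L})$ on each mesoscopic box (Theorem~\ref{Th:TailJointLaw}), and only uses a normal--comparison/interpolation argument (Proposition~\ref{L:Leadbetter}) to decorrelate \emph{distinct} mesoscopic boxes, which by construction are already at distance $\geq\sqrt{R_L}$. You instead apply the Normal Comparison Lemma in one blow to all of $LB\cap Q_L$, comparing directly to an i.i.d.\ field, so you must also absorb the short--range pairs. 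Your observation that Assumption~\ref{i:De} together with $d_L\ll a_L$ makes the Berman bound vanish even for nearest--neighbour pairs is correct (the joint exceedance probability of a pair with $v_L\le 1-\fc/d_L$ decays like $L^{-d}\cdot L^{-\fc d/(2d_L)(1+o(1))}$, beating the $e^{d\sqrt{\ln L}}$ pair count precisely because $d_L\ll\sqrt{\ln L}\asymp a_L$); the paper handles that short--range regime in a quite different way (the deterministic shape $\cS_L$ and events $E_{L,x_0}$), which it needs anyway for the Hamiltonian, and so never runs Berman at those distances.

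There is, however, a genuine gap in your long--range estimate. The claim ``the exponential factor is bounded by $L^{-2d+o(1)}$, and a trivial count of at most $L^{2d}$ pairs combined with the vanishing $\cT_{v_L}$ makes the long--range contribution $o(1)$'' is not correct as stated. The $o(1)$ in the exponent is $\tfrac{u_L^2|v_L(x-y)|}{2d\ln L}$, which for a pair at distance $|x-y|\approx\exp(\sqrt{\ln L})$ can be as large as $\cT_{v_L}/\sqrt{\ln L}$. Bounding each term by this worst case and multiplying by $L^{2d}$ gives
\[
L^{2d}\cdot\frac{\cT_{v_L}}{\sqrt{\ln L}}\cdot\frac{a_L^2 e^{-2u}}{L^{2d}}\cdot e^{u_L^2\cT_{v_L}/\sqrt{\ln L}(1+o(1))}
\;\asymp\;\cT_{v_L}\sqrt{\ln L}\,e^{2d\,\cT_{v_L}\sqrt{\ln L}(1+o(1))}\,,
\]
which is \emph{not} $o(1)$ when $\cT_{v_L}\sqrt{\ln L}\to\infty$; Assumption~\ref{i:LR} only forces $\cT_{v_L}\to 0$ and permits, e.g., $\cT_{v_L}\asymp 1/\ln\ln L$. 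In other words, you may not combine the maximal pair count with the maximal exponential factor, because the two maxima are attained at opposite ends of the distance range. To close the gap you should either sum over dyadic shells in $|x-y|$ (the factor $|x-y|^d$ in the pair count forces the sum to be dominated by $|x-y|\asymp L$, where $u_L^2 v_L(x-y)=\cO(\cT_{v_L})\to 0$, producing the sharp bound $\cO(\cT_{v_L})$), or, as the paper does in Step~4 of Proposition~\ref{L:Leadbetter}, split the long range again at $L^{1/4}$: for $\exp(\sqrt{\ln L})\le|x-y|\le L^{1/4}$ use only $v_L\le 1/2$ together with the reduced pair count $\lesssim L^{5d/4}$ to get $L^{-d/12+o(1)}$, and for $|x-y|\ge L^{1/4}$ use $v_L\lesssim\cT_{v_L}/\ln L$ so that $a_L^2 v_L\to 0$ and the naive pair count is harmless. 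One further point worth a word: to invoke Kallenberg's criterion you also need void--probability convergence for finite unions $\bigcup_i B_i\times(u_i,\infty]$, not just single boxes; this is not a real obstruction (the Normal Comparison Lemma in \cite[Thm.~4.2.1]{Leadbetter} allows site--dependent thresholds), but it should be said.
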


\begin{remark}
Observe that the value $a_L$ \textit{is the same} for any potential $\xi_L$ as in Definition~\ref{d:GF}, because $\xi_L(0) \sim \cN(0,1)$.
	Theorem~\ref{Th:Potential} shows that the statistics of the largest peaks \textit{are asymptotically the same} both in the i.i.d.~case and in any of the correlated cases considered here (which {\it a posteriori} justifies the comparison with the 
	Single Peak class).
\end{remark}

Actually, in Sections \ref{Sec:Field} and \ref{sec:LoctoGlobPot} we will gather much more information on the Gaussian field as we will not only study its largest values but also other functionals (including the fluctuation field) which are instrumental in the study of the top of the spectrum of the Anderson Hamiltonian.\\
Let us finally mention that the study of extrema of Gaussian fields has been the topic of a large literature. Let us in particular cite the recent works~\cite{BiLo16,BiLo18} of Biskup and Louidor on the discrete Gaussian Free Field in dimension 2. It should be observed that for a discrete Gaussian Free Field, the variance at a given point blows up in dimension $2$ while it remains bounded in dimension $d\ge 3$. As mentioned above, the discrete Gaussian Free Field in $d\ge 3$ falls into our framework.


\subsection{Main result}\label{sec:MR}

We consider the random operator $\cH_L \eqdef \Delta + \xi_L$ on $Q_L = [-L/2,L/2]^d\cap \Z^d$ endowed with Dirichlet boundary conditions\footnote{The domain of $\cH_L$ is the set of all functions $f:Q_L\to \R$, extended outside $Q_L$ by setting them to $0$, and the value of $\cH_L f(x)$ is simply given by $\Delta f(x) + \xi_L(x) f(x)$ for all $x\in Q_L$.}. This operator is finite dimensional and self-adjoint: we let $(\lambda_{k,L})_{1\le k \le \# Q_L}$ be its successive eigenvalues in non-increasing order, and $(\varphi_{k,L})_{1\le k \le \# Q_L}$ be the associated eigenfunctions normalised in $\ell^2$. We also denote by $x_{k,L}$ the\footnote{If this point is not unique, take the smallest one for some arbitrary total order on $\Z^d$.} point in $Q_L$ that maximizes $|\varphi_{k,L}|$ for any $1\le k \le \# Q_L$, and w.l.o.g. we can take $\varphi_{k,L}$ positive at $x_{k,L}$.\\

Our analysis of the top of the spectrum of $\cH_L$ relies on a splitting scheme where we analyse the operator restricted to \textit{mesoscopic boxes} of side-length $1 \ll R_L \ll L$ (see below for further details). A key step consists in obtaining a fine description of the top eigenvalue, $\lambda_1(Q_{R_L},\xi_L)$, of the operator $\Delta + \xi_L$ restricted to $Q_{R_L}$, when there is a point $x_0 \in Q_{R_L}$ at which $\xi_L(x_0) \ge a_L - \theta$. Recall that the approximation of the field near $x_0$ given in~\eqref{e:ApproxPot} displays three terms: the value of the field at $x_0$, the shape and the fluctuation field. While the impact on $\lambda_1(Q_{R_L},\xi_L)$ of the value of the field at $x_0$ is rather straightforward as it amounts to a (random) shift by $\xi_L(x_0)$, 
those of the shape and the fluctuation fields are more subtle. Let us consider the deterministic operator 
\begin{equ}[e:DetHam]
	\bar{\cH}_L \eqdef \Delta - \cS_L\,,\qquad \text{on $Q_{r_L}\eqdef[-r_L/2,r_L/2]^d \cap \Z^d$}
\end{equ}
endowed with Dirichlet b.c., where $1 \ll r_L \ll R_L$ will be introduced later on. We let $\bar{\lambda}_L$ be the largest eigenvalue of this operator, and $\bar{\varphi}_L$ be its associated normalised eigenfunction (taken non-negative w.l.o.g.). We will show that
$$ \bar{\lambda}_L = -2d +  \cO(\frac{d_L}{a_L}) \;,\quad L\to\infty\;,$$
while 
\begin{equ}[e:BasicphiL]
	\bar{\varphi}_L(0)=1 - \cO(\frac{d_L}{a_L}) \;,\quad \text{ and }\quad \bar{\varphi}_L(x) \asymp \frac{d_L}{a_L}\; \text{for $x\in \Z^d$ with $|x|=1$. }
\end{equ}
Our ansatz, which is detailed in Section \ref{Sec:Local}, is that the main eigenfunction of 
$\Delta + \xi_L$ on $Q_{R_L}$ should be well approximated by $\bar{\varphi}_L(\cdot-x_0)$, 
and consequently the variational characterisation of the principal eigenvalue together 
with~\eqref{e:ApproxPot} suggests that $\lambda_1(Q_{R_L},\xi_L)$ should satisfy 
\begin{equs}
\lambda_1(Q_{R_L},\xi_L)&\approx \langle \bar{\varphi}_L(\cdot-x_0), \cH_L \bar{\varphi}_L(\cdot-x_0)\rangle 
= \langle \bar{\varphi}_L(\cdot-x_0), (\Delta+\xi_L) \bar{\varphi}_L(\cdot-x_0)\rangle\\
&\approx \langle \bar{\varphi}_L(\cdot-x_0), (\Delta+\xi_L(x_0)- \cS_L(x-x_0) + \zeta_{L,x_0}(\cdot)) \bar{\varphi}_L(\cdot-x_0)\rangle\\
&=\xi_L(x_0)+\langle \bar{\varphi}_L, \bar\cH_L \bar{\varphi}_L\rangle +\langle \bar{\varphi}_L(\cdot-x_0),  \zeta_{L,x_0}(\cdot) \bar{\varphi}_L(\cdot-x_0)\rangle\\
&=\xi_L(x_0) + \bar{\lambda}_L + \sum_{x\in Q_{r_L}} \bar{\varphi}_L^2(x-x_0) \zeta_{L,x_0}(x)\,.\label{e:competing}
\end{equs}
In \eqref{e:competing}, we observe a competition between the randomness coming from the first and third terms. More precisely, the first term fluctuates at scale $1/a_L$ around a leading order $a_L$ (as shown by Theorem \ref{Th:Potential}), while the third term is of order $\tau_L$ where
\begin{equ}[e:tauL]
	\tau_L^2 \eqdef \var\Big(\sum_{x\in Q_{r_L}}  \bar{\varphi}_L^2(x-x_0) \zeta_{L,x_0}(x) \Big)\;.
\end{equ}
Therefore, we should expect that the relative values of $1/a_L$ and $\tau_L$ play an important role. 
%
We will work under the following assumption on the strength of $\tau_L$. 

\begin{assumption}\label{a:TechnicalAssum}
	The $L$-dependent constant $\tau_L$ in~\eqref{e:tauL} satisfies
	\begin{equ}[e:TechnicalAssum]
		\tau_L \ll \frac1{a_L} \Big(\frac{a_L}{d_L}\Big)^{\frac12}\;.
	\end{equ}
\end{assumption}

The restriction \eqref{e:TechnicalAssum} is intimately related to the replacement 
of $\lambda_1(Q_{R_L},\xi_L)$ by the quantity in~\eqref{e:competing}. 
More precisely, it guarantees that the error made in the approximation is negligible. 
If this assumption is not satisfied, one certainly needs to take into account many more terms 
induced by the fluctuation field than the sole projection 
$\sum_{x\in Q_{r_L}} \bar{\varphi}_L^2(x-x_0) \zeta_{L,x_0}(x)$. 
We leave this task for future investigations. 

That said, Assumption~\ref{a:TechnicalAssum} is satisfied in most cases: 
when the covariance function $v_L$ does not depend on $L$ (Example \eqref{ex:indep}) 
or when the covariance function depends on $L$ but is ``regular'' enough (Example \eqref{ex:smooth}), then it holds. 
On the other hand, it fails when the covariance function is not regular enough 
(Example \eqref{ex:indic}) \textit{and} $d_L$ is ``close enough'' to $a_L$.
\medskip

Our first main result concerns the eigenvalue order statistics and the localisation centers.

\begin{theorem}[Eigenvalue order statistics]\label{Th:MainEigenvalue}
	Under Assumptions~\ref{ass:dLaL} and \ref{a:TechnicalAssum}, the point process
		$$ \Big(\frac{x_{k,L}}{L}, a_L\big(\lambda_{k,L}-a_L\sqrt{1+\tau_L^2}-\bar{\lambda}_L\big)\Big)_{1\le k \le \# Q_L}\;,$$
		converges in law as $L\to\infty$ towards a Poisson point process on $[-1,1]^d\times\R$ of intensity $\dd x \otimes e^{-u} \dd u$.
\end{theorem}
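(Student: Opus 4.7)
The plan is to deduce the statement from a mesoscopic splitting of $\cH_L$ combined with the local spectral expansion~\eqref{e:competing}, and then read off the Poisson limit from an analogue of Theorem~\ref{Th:Potential} applied to the effective field $x \mapsto \xi_L(x) + G_x$, where
\begin{equation*}
G_{x_0} := \sum_{x \in Q_{r_L}} \bar{\varphi}_L^2(x-x_0)\,\zeta_{L,x_0}(x)\,.
\end{equation*}
The shift $a_L\sqrt{1+\tau_L^2}$ in the theorem is the effective normalising constant for this Gaussian functional: indeed~\eqref{e:ApproxPot} exhibits $\xi_L(x_0)$ and $G_{x_0}$ as independent centred Gaussians with respective variances $1$ and $\tau_L^2$, so their sum has variance $1+\tau_L^2$ and~\eqref{e:aL} is solved at the level $a_L\sqrt{1+\tau_L^2}$.

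First I would tile $Q_L$ by mesoscopic boxes $(B_i)_i$ of side $R_L$ with $1 \ll R_L \ll L$ and compare $\cH_L$ on $Q_L$ with the direct sum of its Dirichlet restrictions $\cH_L|_{B_i}$. Combining Dirichlet--Neumann bracketing with a priori Agmon-type decay estimates on the true eigenfunctions $\varphi_{k,L}$, I would show that, with probability tending to $1$, each top eigenvalue $\lambda_{k,L}$ is in bijection with a local principal eigenvalue $\lambda_1(B_{i_k}, \xi_L)$ on a box $B_{i_k}$ containing a peak $x_0$ satisfying $\xi_L(x_0) \ge a_L - \theta$, with matching localisation centres at scale $1/L$. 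For each such local problem I would then justify~\eqref{e:competing} rigorously: the upper bound comes from Rayleigh--Ritz applied to the test function $\bar{\varphi}_L(\cdot - x_0)$ together with~\eqref{e:ApproxPot}, while the matching lower bound relies on a first-order perturbative analysis around $\bar{\cH}_L$, using its spectral gap and Assumption~\ref{a:TechnicalAssum} to show that the true principal eigenfunction on $B_{i_k}$ is $L^2$-close to $\bar{\varphi}_L(\cdot - x_0)$, so that the associated eigenvalue differs from $\xi_L(x_0) + \bar{\lambda}_L + G_{x_0}$ by $o(1/a_L)$.

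The theorem is then reduced to the Poisson convergence of
\begin{equation*}
\Big(\tfrac{x_0}{L},\; a_L\big(\xi_L(x_0) + G_{x_0} - a_L\sqrt{1+\tau_L^2}\big)\Big)_{x_0 \in Q_L,\,\xi_L(x_0) \ge a_L - \theta}\,.
\end{equation*}
Since Assumption~\ref{a:TechnicalAssum} forces $\tau_L \to 0$, a Mills-ratio expansion on the Gaussian tail of $\xi_L(x_0)+G_{x_0}$ yields that the one-point intensity converges to $\dd x \otimes e^{-u}\,\dd u$. The covariance of the effective field $x \mapsto \xi_L(x) + G_x$ is an explicit linear functional of $v_L$ and inherits the long-range decay~\eqref{e:LongRange}; this is precisely the Berman-type condition at the heart of Theorem~\ref{Th:Potential}, and the full Poisson convergence then follows by adapting the arguments of Sections~\ref{Sec:Field} and~\ref{sec:LoctoGlobPot} to this modified field.

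The main obstacle will be the matching lower bound in the local spectral expansion. The fluctuation field $\zeta_{L,x_0}$ is \emph{not} small in supremum norm---only its projection onto $\bar{\varphi}_L^2$ is controlled by $\tau_L$---so any naive Weyl-type perturbation bound fails. One must combine a priori Agmon decay on the true eigenfunction around $x_0$ with a refined resolvent expansion for $\bar{\cH}_L$ that propagates the smallness encoded in~\eqref{e:TechnicalAssum}, while keeping track of all error terms at the very fine scale $1/a_L$ at which the Poisson process is observed.
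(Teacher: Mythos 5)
Your proposal follows essentially the same route as the paper: mesoscopic splitting, a local expansion of the principal eigenvalue around $\bar\cH_L$ in terms of $\xi_L(x_0)+\bar\lambda_L+G_{x_0}$, and Poisson convergence of the effective field $\Xi_L=\xi_L+\Phi_L$ (your $G$ is the paper's $\Phi_L$). You correctly identify the effective scaling constant $a_L\sqrt{1+\tau_L^2}$ (Lemma~\ref{l:XiL}), the Rayleigh--Ritz upper bound, and the real obstacle, namely that $\zeta_{L,x_0}$ is not small in norm and only its $\bar\varphi_L^2$-weighted projection is controlled. The two places where your plan diverges from the paper's technical choices are: (i) for the matching lower bound you propose a refined resolvent/Kato--Temple expansion using Agmon decay and the spectral gap, whereas the paper replaces this with the convex-analysis Lemma~\ref{l:Convex} applied to the Dirichlet form, which gives both eigenvalue and eigenvector bounds in one stroke from a single gradient estimate $\|\nabla\cD_{R_L}(\bar\varphi_L)\|$; your route should also close (the gap $\asymp a_L/d_L$ together with the weighted bound on $\|\bar\varphi_L\zeta_{L,x_0}\|$ does give an error $o(1/a_L)$ under Assumption~\ref{a:TechnicalAssum}), but the convexity argument sidesteps normalisation and ordering subtleties more cleanly; (ii) you tile $Q_L$ and invoke Dirichlet--Neumann bracketing, whereas the paper deliberately peels off strips of width $\sqrt{R_L}\geq\exp(\sqrt{\ln L})$ to form the set $U_L$ in~\eqref{e:UL}, which is what makes the decorrelation estimate of Proposition~\ref{L:Leadbetter} work -- if your boxes tile without gaps, adjacent boxes are not well-separated and the Berman-type argument you invoke ``by adapting Sections~\ref{Sec:Field} and~\ref{sec:LoctoGlobPot}'' would not apply directly, so you would still need the peeling step. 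You should also make explicit the uniqueness of the large peak in each box and that this peak coincides with the $\arg\max$ of both $\xi_L$ and $\Xi_L$ on that box (the paper's events $E_{L,x_0}$ and Lemma~\ref{l:UniqueMax}), as this is what underwrites the bijection $\lambda_{k,L}\leftrightarrow\lambda_1(B_{i_k},\xi_L)$ that you assert.
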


If $\tau_L \ll 1/a_L$, one can replace $\sqrt{1+\tau_L^2}$ by $1$ without altering the result. 
However, when $\tau_L$ is order $1/a_L$ or larger, then the correction is crucial and hints at the fact that 
the relationship of the top eigenvalues with the largest values of $\xi_L$ is no longer trivial, 
see Theorem \ref{Th:MainRelationship}.
\medskip

We now address the localisation properties of the main eigenfunctions. Recall that $\bar{\varphi}_L$ is a deterministic function which is almost a Dirac mass at $0$ (see~\eqref{e:BasicphiL}). 

\begin{theorem}[Localisation]\label{Th:MainLocalisation}
	 Under Assumptions~\ref{ass:dLaL} and \ref{a:TechnicalAssum}, for any $k\ge 1$, the r.v.
	$$ \frac{a_L}{d_L} \Big\| \varphi_{k,L}(\cdot)-\bar{\varphi}_L(\cdot-x_{k,L}) \Big\|_{\ell^2(Q_{L})}\;,$$
		converges to $0$ in probability.
\end{theorem}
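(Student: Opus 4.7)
The strategy is perturbative: I will show that $\psi_{k,L}(\cdot):=\bar{\varphi}_L(\cdot-x_{k,L})$, extended by zero outside $x_{k,L}+Q_{r_L}$, is an approximate $\lambda_{k,L}$-eigenfunction of $\cH_L$ with residual error small compared to the spectral gap, so that a Parseval-based spectral-gap argument forces $\ell^2$-closeness with $\varphi_{k,L}$.

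First, the mesoscopic decomposition underlying Theorem~\ref{Th:MainEigenvalue} ensures, on an event of probability tending to $1$: every centre $x_{j,L}$ with $j\le k+1$ sits at a site satisfying $\xi_L(x_{j,L})\ge a_L-\theta$ (with $\theta=2d+1$), distinct centres are pairwise $R_L$-separated and lie far from $\partial Q_L$, and each $\varphi_{j,L}$ is essentially supported on $x_{j,L}+Q_{r_L}$. On this event, applying $\cH_L$ to $\psi_{k,L}$, substituting~\eqref{e:ApproxPot} at $x_0=x_{k,L}$, and invoking the eigen-relation $(\Delta-\cS_L)\bar{\varphi}_L=\bar{\lambda}_L\bar{\varphi}_L$ yield
\begin{equation*}
(\cH_L-\mu_{k,L})\,\psi_{k,L} \;=\; \bigl(\zeta_{L,x_{k,L}}-\bar{\mu}_{k,L}\bigr)\,\psi_{k,L}+\rho_{k,L},
\end{equation*}
where $\mu_{k,L}:=\bar{\lambda}_L+\xi_L(x_{k,L})+\bar{\mu}_{k,L}$, $\bar{\mu}_{k,L}:=\sum_y \psi_{k,L}(y)^2\,\zeta_{L,x_{k,L}}(y)$, and the remainder $\rho_{k,L}$ collects the error in~\eqref{e:ApproxPot}, the truncation at $\partial Q_{r_L}$ and the effect of $\partial Q_L$, all negligible thanks to Assumption~\ref{ass:dLaL} and the rapid decay of $\bar{\varphi}_L$. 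Since $\zeta_{L,x_{k,L}}(x_{k,L})=0$ (by evaluating~\eqref{e:ApproxPot} at $x=x_0$), the fluctuation residual involves only off-centre values of $\psi_{k,L}$, which by~\eqref{e:BasicphiL} are of size $\lesssim d_L/a_L$; combined with Assumption~\ref{a:TechnicalAssum} and a second-moment estimate that uses the covariance structure of $\zeta_{L,\cdot}$, this gives $\|(\zeta_{L,x_{k,L}}-\bar{\mu}_{k,L})\psi_{k,L}\|_{\ell^2}=o_{\P}(d_L/a_L^2)$. Together with the identity $|\mu_{k,L}-\lambda_{k,L}|=o_{\P}(1/a_L)$ obtained in the course of proving Theorem~\ref{Th:MainEigenvalue}, one concludes
\begin{equation*}
\|(\cH_L-\lambda_{k,L})\psi_{k,L}\|_{\ell^2} \;=\; o_{\P}(d_L/a_L^2).
\end{equation*}

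To finish, Theorem~\ref{Th:MainEigenvalue} applied to the top $k+1$ eigenvalues yields, for any fixed $\gep>0$, the event $\min_{j\ne k}|\lambda_{j,L}-\lambda_{k,L}|\ge \gep/a_L$ with probability arbitrarily close to $1$ (as $\gep\to 0$ after $L\to\infty$). Expanding $\psi_{k,L}=\sum_j c_j\varphi_{j,L}$ in the orthonormal eigenbasis and using Parseval together with the eigen-relations yield
\begin{equation*}
\Bigl(\frac{\gep}{a_L}\Bigr)^{\!2}\sum_{j\ne k}c_j^2 \;\le\; \sum_j c_j^2(\lambda_{j,L}-\lambda_{k,L})^2 \;=\; \|(\cH_L-\lambda_{k,L})\psi_{k,L}\|_{\ell^2}^2 \;=\; o_{\P}\bigl((d_L/a_L^2)^2\bigr),
\end{equation*}
hence $\sum_{j\ne k}c_j^2=o_{\P}((d_L/a_L)^2)$. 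Since $\sum_j c_j^2=1+o(1)$ and the positivity conventions $\psi_{k,L}(x_{k,L})>0$ and $\varphi_{k,L}(x_{k,L})>0$ fix signs, one deduces $\|\psi_{k,L}-\varphi_{k,L}\|_{\ell^2}=o_{\P}(d_L/a_L)$, which is Theorem~\ref{Th:MainLocalisation}. The main obstacle is the residual estimate at order $o(d_L/a_L^2)$: one has to combine the sharp near-centre profile of $\bar{\varphi}_L$, the fine covariance structure of the fluctuation field, Assumption~\ref{a:TechnicalAssum}, and the vanishing $\zeta_{L,x_{k,L}}(x_{k,L})=0$---without the latter, the centre of $\psi_{k,L}$ alone would contribute at order $1$ and spoil the bound.
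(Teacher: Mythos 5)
Your overall architecture (approximate eigenfunction + residual/gap) is a natural one, but two quantitative steps are off, and the combination is fatal to the argument as written.

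\textbf{The residual is not $o_{\P}(d_L/a_L^2)$.} Write $x_0=x_{k,L}$ and $\psi_{k,L}=\bar\varphi_L(\cdot-x_0)$. After substituting $\xi_L(x)=\xi_L(x_0)v_L(x-x_0)+\zeta_{L,x_0}(x)$ and using $(\Delta-\cS_L)\bar\varphi_L=\bar\lambda_L\bar\varphi_L$, one gets for interior $x$
\begin{equ}
(\cH_L-\mu_{k,L})\psi_{k,L}(x)=\bigl(\zeta_{L,x_0}(x)-\bar\mu_{k,L}\bigr)\psi_{k,L}(x)-\frac{\xi_L(x_0)-a_L}{a_L}\cS_L(x-x_0)\psi_{k,L}(x)+\rho_{k,L}(x)\,.
\end{equ}
You correctly note that $\zeta_{L,x_0}(x_0)=0$, but the first term does \emph{not} vanish at the centre: there $(\zeta_{L,x_0}(x_0)-\bar\mu_{k,L})\psi_{k,L}(x_0)\approx -\bar\mu_{k,L}=-\Phi_L(x_0)$, whose typical size is $\tau_L\gtrsim d_L/a_L^2$ in all the examples. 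Worse still, at the $2d$ neighbours $|x-x_0|=1$ one has $\var[\zeta_{L,x_0}(x)]\asymp 1/d_L$ by~\eqref{e:VarZeta} and~\eqref{e:AlmostDecay}, while $\psi_{k,L}(x)\asymp d_L/a_L$ by~\eqref{e:BasicphiL}; the product is $\asymp\sqrt{d_L}/a_L$, which under Assumption~\ref{ass:dLaL} is $\gg d_L/a_L^2$. Adding back $|\mu_{k,L}-\lambda_{k,L}|\|\psi_{k,L}\|=o_{\P}(1/a_L)$, which dominates $d_L/a_L^2$ anyway, the honest bound on $\|(\cH_L-\lambda_{k,L})\psi_{k,L}\|_{\ell^2}$ is of order $\max\{\tau_L,\sqrt{d_L}/a_L,\,\eta_L/a_L\}$, not $o_{\P}(d_L/a_L^2)$.

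\textbf{The macroscopic gap $\gep/a_L$ is far too small.} Even if one optimistically carries $\sqrt{d_L}/a_L$ through your Parseval inequality, the conclusion would read $\sum_{j\ne k}c_j^2\lesssim (a_L/\gep)^2\,d_L/a_L^2=d_L/\gep^2$, which does not tend to $0$, let alone satisfy $o_{\P}((d_L/a_L)^2)$. The reason the result nonetheless holds is that the components $c_j$ that actually carry mass are \emph{not} the ones with gap $\gep/a_L$: $\psi_{k,L}$ is sharply localised at $x_0$, so its overlap with the other top eigenfunctions $\varphi_{j,L}$ (localised in other mesoscopic boxes) is exponentially small, whereas the eigenvalues of the remaining eigenfunctions (second and lower on each box) lie an entire factor $\asymp a_L/d_L$ below $\lambda_{k,L}$ by Proposition~\ref{p:SpectralGap}. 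A uniform gap estimate cannot see this dichotomy.

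\textbf{What the paper does instead.} The paper first solves the eigenproblem on the mesoscopic box, where the relevant ``gap'' is built into the convex-analysis Lemma~\ref{l:Convex}: the Hessian of $\cD_{R_L}$ is bounded by $-c_0\,a_L/d_L$ (Lemma~\ref{l:IL}), so $\|\phi_{R_L}-\bar\varphi_L\|\le (d_L/a_L)\cdot\cO(\|\nabla\cD_{R_L}(\bar\varphi_L)\|)$ with the gradient of order $\sqrt{1/a_L+\|\bar\varphi_L\zeta_L\|^2}\to 0$; this is precisely where the mesoscopic gap $a_L/d_L$ enters and buys the full factor $d_L/a_L$. The transfer from the mesoscopic boxes to $Q_L$ (Section~\ref{subsec:ProofMainTh}) \emph{is} a residual-over-gap argument of the kind you sketch, but with residual $\delta_L=(1+1/4d)^{-2(r_L-1)}$ \emph{exponentially} small (from the exponential decay of $\varphi_{k,L}$ outside $U_L$, Lemma~\ref{lemma:BKL2Bound}), which is why the small macroscopic gap $\gtrsim a_L^{-3/2}$ there suffices. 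Your one-shot Parseval step with $\psi_{k,L}=\bar\varphi_L(\cdot-x_{k,L})$ conflates these two scales and, because of the under-estimated residual, cannot reach $o_{\P}(d_L/a_L)$.
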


We now relate the top eigenvalues/eigenfunctions with the maxima of the field. 
For any $k\ge 1$, we define the random variable $\ell_L(k)$ through 
\begin{equ}[e:lLk]
x_{k,L} = y_{\ell_L(k),L}\,,
\end{equ} 
where, as a reminder, $y_{k,L}$ is the site in $Q_L$ where $\xi_L$ reaches its $k$-th largest value. 
The random variable $\ell_L(k)$ provides the rank of the maximum of $\xi_L$ at which the $k$-th eigenfunction is localised.

To state our result, we need to introduce, for any given parameter $b>0$, a random permutation 
$(\ell_{\infty,b}(k))_{k\ge 1}$ of $(1,2,\ldots)$. 
\begin{itemize}[noitemsep]
\item Let $u_1 > u_2 > \dots$ be distributed according to a Poisson point process of intensity $e^{-u} \dd u$. 
\item Draw an independent sequence $(v_i)_{i\ge 1}$ of i.i.d.~$\cN(0,b)$ r.v. 
\item Let $(p_i)_{i\ge 1}$ be the (non-increasing) order statistics of the decorated Poisson point process $(u_i+v_i)_{i\ge 1}$. 
\end{itemize}
Then, for any $k\geq 1$, define $\ell_{\infty,b}(k)$ according to 
\begin{equ}[e:Permutation]
p_k = u_{\ell_{\infty,b}(k)}+v_{\ell_{\infty,b}(k)}\,.
\end{equ}

\begin{theorem}[Relationship with the maxima of $\xi_L$]\label{Th:MainRelationship}
	Under Assumptions~\ref{ass:dLaL} and \ref{a:TechnicalAssum}, it holds:
	\begin{enumerate}[label=\normalfont{(\alph*)}]
			\item \label{item:tau_L_small}
			if $\tau_L \ll \frac1{a_L}$ then for any given $k\ge 1$, $\P(\ell_L(k) = k) \to 1$ as $L\to\infty$,
			\item \label{item:tau_L_intermediate}
			if $\tau_L \sim \sqrt{b} \frac1{a_L}$ for some constant $b>0$ then $(\ell_L(k))_{k\ge 1}$ converges in law to $(\ell_{\infty,b}(k))_{k\ge 1}$,
			\item \label{item:tau_L_large}
			if $\tau_L \gg \frac1{a_L}$ then for any given $k\ge 1$, $\ell_L(k)\to \plusinfty$ in probability.
		\end{enumerate}
\end{theorem}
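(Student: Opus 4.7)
My plan is to upgrade the Poisson convergence of Theorem~\ref{Th:Potential} by decorating the top peaks with a further coordinate recording the projection of the fluctuation field, and then to read off all three regimes of $\tau_L$ from this enriched point process. The starting point, which I would derive from the ansatz~\eqref{e:competing} and the localisation estimate of Theorem~\ref{Th:MainLocalisation}, is the identity
\begin{equation*}
\lambda_{k,L} \;=\; \xi_L(x_{k,L}) + \bar{\lambda}_L + \eta_{L,x_{k,L}} + o_p(1/a_L),
\qquad
\eta_{L,x_0}\;:=\;\sum_{x\in Q_{r_L}}\bar{\varphi}_L^2(x-x_0)\,\zeta_{L,x_0}(x),
\end{equation*}
holding uniformly over any fixed finite window of top indices. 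By the decomposition~\eqref{e:ApproxPot} from Section~\ref{Sec:Field}, $\eta_{L,x_0}$ is a centred Gaussian of variance $\tau_L^2$, independent of $\xi_L(x_0)$.

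The core technical step is to prove that the enriched process
\begin{equation*}
\Xi_L \;:=\; \sum_{j\ge 1} \delta_{\big(y_{j,L}/L,\; a_L(\xi_L(y_{j,L})-a_L),\; a_L\eta_{L,y_{j,L}}\big)}
\end{equation*}
converges in law on $[-1,1]^d\times\R\times\R$ to a Poisson point process with intensity $\dd x\otimes e^{-u}\dd u\otimes\gamma_\infty$, where $\gamma_\infty=\delta_0$ in case~\ref{item:tau_L_small}, $\gamma_\infty=\cN(0,b)$ in case~\ref{item:tau_L_intermediate}, and a different rescaling (replacing $a_L\eta$ by $\eta/\tau_L$) is used in case~\ref{item:tau_L_large}. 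The asymptotic independence between the projected fluctuations at distinct top sites comes from the fact that the $y_{j,L}$ are separated by distances of order $L$ while each $\eta_{L,y_{j,L}}$ only depends on $\xi_L$ over a mesoscopic box of radius $r_L=o(L)$; combined with the long-range decay~\ref{i:LR}, this forces the relevant fluctuation fields to decouple.

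With $\Xi_L$ in hand, the three cases follow almost directly. In case~\ref{item:tau_L_small}, $a_L\eta_{L,y_{j,L}}\to 0$ uniformly in any finite window of $j$ and $a_L(\sqrt{1+\tau_L^2}-1)\to 0$; the third coordinate of $\Xi_L$ is negligible, and since the limit Poisson atoms in the second coordinate are a.s.\ distinct, the ordering of $(\lambda_{j,L})_{j\le k}$ coincides with that of $(\xi_L(y_{j,L}))_{j\le k}$ with probability tending to~$1$, giving $\ell_L(k)=k$. In case~\ref{item:tau_L_intermediate}, $\Xi_L$ converges to a Poisson point process decorated with i.i.d.\ $\cN(0,b)$ variables in the third coordinate; the recentred eigenvalues $a_L(\lambda_{j,L}-\bar{\lambda}_L-a_L\sqrt{1+\tau_L^2})$ correspond (up to $o_p(1)$) to the sum of second and third coordinates, so the permutation $\ell_L$ coincides in the limit with the one defined by~\eqref{e:Permutation}, and a continuous-mapping argument on finite-dimensional marginals yields the claimed convergence in law. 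In case~\ref{item:tau_L_large}, the level of the top $k$ eigenvalues is $a_L\sqrt{1+\tau_L^2}+\bar{\lambda}_L\approx a_L+\tfrac12 a_L\tau_L^2+\bar{\lambda}_L$ with $\tfrac12 a_L\tau_L^2\gg\tau_L$, while the $(\eta_{L,y_{j,L}})_{j\le N}$ only reach magnitude of order $\tau_L$; for some $y_{j,L}$ with $j\le N$ to host a top-$k$ eigenfunction one would need $\eta_{L,y_{j,L}}\gtrsim a_L\tau_L^2$, an event of Gaussian probability $\exp(-\tfrac12(a_L\tau_L)^2(1+o(1)))$, and a union bound over $j\le N$ shows $\P(\ell_L(k)\le N)\to 0$; since $N$ is arbitrary, $\ell_L(k)\to\plusinfty$ in probability.

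The main obstacle I anticipate is the joint Poisson convergence of $\Xi_L$: one needs a quantitative decoupling of the fluctuation fields $\zeta_{L,y_{j,L}}$ at the (random) top sites $y_{j,L}$, which requires running either a Chen--Stein argument or the classical Leadbetter-type comparison jointly in the height/location and fluctuation coordinates, under the long-range decay~\ref{i:LR}. A secondary difficulty, most acute in case~\ref{item:tau_L_large}, is that the localisation centres $x_{k,L}$ need not be among the very top maxima $y_{j,L}$---they only satisfy $\xi_L(x_{k,L})\ge a_L-O(a_L\tau_L^2)$---so the error term in~\eqref{e:competing} must be controlled uniformly over the whole candidate set of peak sites rather than only on a finite top window.
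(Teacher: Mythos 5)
Your proposal is correct and follows essentially the same route as the paper: the enriched point process you call $\Xi_L$ is the paper's $\cP^{(\xi,\Phi)}_L$ from Proposition~\ref{P:PPPxiPrelim}, and the two obstacles you flag (joint decorrelation at random top sites, uniform control of the error in~\eqref{e:competing}) are exactly what the paper handles via Proposition~\ref{L:Leadbetter} and Theorem~\ref{thm:MainEstimate}, after which the three regimes are read off as you describe. (Your Gaussian-tail argument in case~\ref{item:tau_L_large} is a minor variant of the paper's, which instead shows $\xi_L$ at the localisation centre lies below $\rho_L^+$ with $a_L(\rho_L^+-a_L)\to-\infty$ while the number of exceedances of $\rho_L^+$ diverges; the two are equivalent given the eigenvalue approximation.)
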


Let us point out the analogy with the i.i.d.~Weibull case presented in the introduction: the regime $\tau_L \ll 1/a_L$ corresponds to $q< 3$, the regime $\tau_L \sim b/a_L$ corresponds to $q=3$ and $\tau_L \gg \frac1{a_L}$ to $q>3$. Let us mention that the law of the permutation was not identified in the literature in the Weibull case with $q=3$.
\medskip

To conclude the introduction, we note that each of the three scenarios detailed in the above statement 
do indeed realise. For the specific examples of Section \ref{sec:pot}, we have:
\begin{enumerate}[label=\normalfont{(\arabic*)}]
	\item $L$-independent correlated Gaussian field: $\tau_L$ is of order $1/a_L^2$ 
	so that Assumption \ref{a:TechnicalAssum} is satisfied and the relationship with the maxima is given by~\ref{item:tau_L_small}.
	\item Smooth: $\tau_L$ is of order $d_L/a_L^2$ so that Assumption \ref{a:TechnicalAssum} is satisfied 
	and the relationship with the maxima is given by~\ref{item:tau_L_small}.
	\item Indicator: $\tau_L$ is of order $d_L^{3/2}/a_L^2$ so that Assumption \ref{a:TechnicalAssum} 
	is satisfied provided $d_L \ll a_L^{\frac34}$. 
	According to whether $d_L \ll a_L^{2/3}$, $d_L \asymp a_L^{2/3}$ or $d_L \gg a_L^{2/3}$, 
	the relationship with the maxima is respectively given by \ref{item:tau_L_small}, \ref{item:tau_L_intermediate} or \ref{item:tau_L_large}. 
\end{enumerate}

From now on and throughout the article, \textbf{we will always work under Assumptions~\ref{ass:dLaL} and~\ref{a:TechnicalAssum}} unless otherwise stated. 

\subsection{Strategy of proof and structure of the article}\label{sec:Splitting}

The study of the order-statistics of (the sequence of) Gaussian field(s) $\xi_L$ in Definition~\ref{d:GF} 
and the spectral properties of the Hamiltonian $\cH_L$ follow distinct but interdependent routes 
that we now outline. 
Both rely on a suitable localisation procedure (or splitting scheme) 
whose aim is to reduce the analysis from the {\it macroscopic} box of side-length $L$ 
to a collection of {\it mesoscopic} boxes of side-length $R_L$, which grows with $L$ but is much smaller than $L$,  
and ultimately, for the Hamiltonian, to an even smaller box of side-length $r_L$. 
Let us fix the sequences $(R_L)_{L\geq 1}$ and $(r_L)_{L\geq 1}$ of positive constants in such a way that 
they satisfy\footnote{Concerning $R_L$, the lower bound is needed in the proof of Lemma~\ref{l:RedE} while the upper bound in that of Proposition~\ref{l:Event}. For $r_L$ instead, the lower bound is needed in the proof of Lemma~\ref{l:RedE}, while the upper bound 
in the of Lemma~\ref{l:GL}.  } 
\begin{equs}\label{e:RL}
	&a_L\ll \ln R_L \ll a_L\frac{a_L}{d_L}\;, \\
	&\ln a_L\leq \ln r_L\ll \sqrt{a_L}\;,\label{e:rL}
\end{equs} 
where $a_L$ and $d_L$ are defined in~\eqref{e:aL} and~\eqref{e:defdL}, respectively, 
and where, for two sequences $(u_L)_{L\ge 1}, (v_L)_{L\ge 1}$ in $(0,\infty)$, the notation $u_L \ll v_L$ means $u_L/v_L \to 0$ as $L\to\infty$. 
\medskip

Now, the aforementioned localisation procedure can be roughly visualised in the following diagram
\begin{equ}[e:LocPro]
Q_L \quad \stackrel{\mathrm{(a)}}{\longrightarrow} \quad U_L \quad \stackrel{\mathrm{(b)}}{\longrightarrow} \quad Q_{R_L}\quad \stackrel{\mathrm{(c)}}{\longrightarrow} \quad Q_{r_L}
\end{equ}
where, for $a>0$, we denoted by $Q_{a}\eqdef [-a/2,a/2]^d\cap \Z^d$, and   
$U_L$ is obtained by peeling off suitable strips from $Q_L$ and is thus given by 
the union of disjoint and well-separated boxes of side-length $R_L$. 
More precisely, we consider a covering of $Q_L$ into boxes of side-length $R_L + \sqrt{R_L}$ whose interiors are disjoint
\begin{equ}[e:QLnL]
	Q_L = \bigcup_{j=1}^{n_L} Q_{R_L+\sqrt{R_L}, z_{j,L}}\;,\qquad \text{for}\quad n_L\eqdef\frac{\# Q_L}{\# Q_{R_L+\sqrt{R_L}}}\,,
\end{equ}
where\footnote{For notational convenience, we assume that $n_L$ is an integer. To treat the general case, it suffices to adapt the splitting scheme.} $(z_{j,L})_{j=1,\ldots,n_L}$ forms implicitly a lattice of points at distance at least $R_L+\sqrt{R_L}$ from each other,
and then we peel off a boundary layer of size $\sqrt{R_L}$ by setting
\begin{equ}[e:UL]
	U_L \eqdef \bigcup_{j=1}^{n_L} Q_{R_L, z_{j,L}}\;. 
\end{equ}

\begin{remark}\label{rem:DetLocPro}
In classical references on the Anderson Hamiltonian with i.i.d. potential (see e.g.~\cite{BiKo16}), the localisation procedure 
(or splitting scheme) is ``random'' in that the set $U_L$ is chosen to be the union of mesoscopic boxes 
centred around the large peaks of the potential. While possibly we could have proceeded similarly, it 
would have added an additional layer of difficulty as the 
presence of correlations makes such procedure much more complicated and 
the way in which (the already challenging) step (b) in~\eqref{e:LocPro} should be approached much less transparent. 
\end{remark}

Getting back to the cartoon in~\eqref{e:LocPro}, step (a) is relatively simple: at the level of the Gaussian field, 
since $\sqrt{R_L} \ll R_L$, the cardinality of $Q_L \backslash U_L$ 
is negligible compared to that of $Q_L$ so that, with large probability, $\xi_L$ does not display large peaks in this set 
(see Section~\ref{sec:ThPot}) and thus its order statistics are unaffected by its value therein. 
As we will see in Section~\ref{subsec:ProofMainTh}, 
this also implies that the top eigenfunctions put an exponentially small amount of mass 
on $Q_L \backslash U_L$ and thus the top eigenpairs of $\cH_L$ on $Q_L$ and on $U_L$ (asymptotically) coincide. 
\medskip

Step (b) for $\xi_L$ 
is one of the main novelties of the present work. 
The advantage of $U_L$ over $Q_L$ is that any two distinct boxes 
$Q_{R_L, z_{j,L}}$ and $Q_{R_L, z'_{j,L}}$ lie at a distance at least 
$\sqrt{R_L} \geq  \exp(\sqrt{\ln L})$ so that the restrictions of $\xi_L$ to these boxes 
display negligible correlations thanks to \eqref{e:LongRange}. 
This suggests that it should be possible to regard these as independent, but making this 
rigorous is technically challenging as it amounts to determine 
non-trivial {\it decorrelation estimates} (whose nature, in case $U_L$ were chosen as in Remark~\ref{rem:DetLocPro}, 
is unclear) to which Section~\ref{sec:LongRange} is dedicated. 
In view of these decorrelation estimates, the analysis of $\xi_L$ and $\cH_L$ on $U_L$ is 
rather standard and presented in Sections~\ref{sec:OrderStatsXi} and~\ref{subsec:spectrumUL} respectively. 
\medskip

Thanks to the steps (a) and (b), all that remains to do is to study the behaviour of the Gaussian field and the Anderson Hamiltonian 
on the mesoscopic box $Q_{R_L}$, and this is the second main novelty of the present paper. 
For the former, it consists of, first, formalising the description of the noise close to a large peak  
in terms of the {\it shape} $\cS_L$ and the {\it fluctuation field} $\zeta_{L,\cdot}$  
as in~\eqref{e:ApproxPot}, and this is carried out in Section~\ref{Subsec:Shape}, and 
then deducing its implications as done in Section~\ref{sec:Tail}. 
Such a description is then employed in the study of the principal eigenpair of $\cH_L$ on $Q_{R_L}$. 
In Section~\ref{Sec:Local}, we show that, thanks to an apriori estimate on the exponential 
decay of the eigenfunctions, we can further localise the eigenproblem to $Q_{r_L}$ (step (c) in~\eqref{e:LocPro}) 
and then, more importantly, that the main eigenvalue and eigenfunction are respectively 
well-approximated by~\eqref{e:competing} and by the main eigenfunction of the deterministic operator 
$\bar\cH_L$ in~\eqref{e:DetHam}. It is thanks to step (c) that $\bar\cH_L$ can be taken to be independent  
of the point at which $\xi_L$ achieves its maximum. 
The approximation on $Q_{r_L}$ relies on a simple and effective convex analysis argument 
applied to the local quadratic form (see Lemma~\ref{l:Convex}) that crucially allows to identify the correction 
due to the fluctuation field and that we believe could be of independent interest.

\subsection{Notation and basic Gaussian estimates}

Here we introduce (and recall) some notation and conventions we will be using throughout the paper.
For $x\in\Z^d$, we denote by $|x|\eqdef(\sum_{i=1}^d |x_i|^2)^{1/2}$ 
the $\ell^2$-norm of $x\in\R^d$.
As above, we write $Q_{a}$ for the ($\ell^\infty$-)box of side-length $a$, i.e. $Q_a\eqdef [-a/2,a/2]^d\cap\Z^d$,  
and, for $x,y\in \Z^d$, $Q_{a,x}\eqdef x+ Q_a$ and $Q_{a,x}^{\neq y}\eqdef Q_{a,x}\setminus\{y\}$.
For any subset $C \subset \Z^d$ and any function $V\colon C\to\R$, we let $\cH_{C,V}$ be the operator $\Delta + V$ on $C$ endowed with Dirichlet boundary conditions.

For two sequences $(u_L)_{L\ge 1}, (v_L)_{L\ge 1}$ in $(0,\infty)$, we write $u_L \ll v_L$ to express that $u_L/v_L \to 0$ as $L\to\infty$, we write $u_L \gg v_L$ to express that $v_L \ll u_L$, we write $u_L \asymp v_L$ if $0 < \liminf_{L\to\infty} u_L/v_L \le \limsup_{L\to\infty} u_L/v_L <\infty$, and we write $u_L \sim v_L$ if $u_L/v_L$ converges to $1$ as $L\to\infty$.
\medskip

At last, recall that if $X$ is a standard Gaussian r.v., i.e. $X \eqlaw\cN(0,1)$, then for all $x>0$
\begin{equ}[e:TailGauss]
	\P(X \geq x) \leq \frac1{\sqrt{2\pi} x} e^{-\frac{x^2}{2}}\,,
\end{equ}
and
\begin{equ}[e:TailGaussAsymp]
	\P(X \geq x) = \frac1{\sqrt{2\pi} x} e^{-\frac{x^2}{2}} (1+\cO(1/x^2))\,,\quad x\to\infty\,.
\end{equ}
These immediately provide a more explicit characterisation of 
$a_L$ in~\eqref{e:aL}, i.e. 
\begin{equ}[e:Asympa_L]
	\frac1{\sqrt{2\pi} a_L} e^{-\frac{a_L^2}{2}} \sim \frac1{L^d}\;,\qquad \text{as $L\to\infty$,}
\end{equ}
and a perturbative result for the tail of $X$ around $a_L$, namely, 
for any sequence $(b_L)_{L\ge 1}$ satisfying $|b_L| \ll a_L$, it holds
\begin{equ}[e:preBound]
	\P(X\ge a_L + b_L) \sim \frac{1}{L^d} e^{-a_L b_L - \frac{b_L^2}{2}} \;,\quad L\to\infty\;.
\end{equ}
By Gaussian scaling, it is immediate to adapt the above to the case in which $X$ has variance $\sigma>0$.

\subsection*{Acknowledgements} The work of C.~L. was partially supported by the ANR project Smooth ANR-22-CE40-0017, and by the Institut Universitaire de France. G.~C. gratefully acknowledges financial support
via the UKRI FL fellowship ``Large-scale universal behaviour of Random
Interfaces and Stochastic Operators'' MR/W008246/1.

\section{Local behaviour of correlated Gaussian fields}\label{Sec:Field}

\noindent This section is devoted to the study of the potential $\xi_L$ 
introduced in Section~\ref{sec:pot} and the Gaussian field introduced in~\eqref{e:competing}. 
We will begin by spelling out a useful orthogonal decomposition of $\xi_L$ and 
stating a few useful properties thereof (Section~\ref{sec:Prelim}). 
Then, we will study the behaviour of the field on a mesoscopic box of diameter $R_L\ll L$: 
we will first establish an approximation of $\xi_L$ near a large peak (Section~\ref{Subsec:Shape}) 
and use it to identify the behaviour of the maxima of $\xi_L$ and of associated random fields (Section~\ref{sec:Tail}). 
%
%

\subsection{Orthogonal decomposition and basic properties}\label{sec:Prelim}

Let $(\xi_L(x))_{x\in\Z^d}$ be a Gaussian field that satisfies Definition~\ref{d:GF}. 
At first, we devise the orthogonal decomposition to $\xi_L$ and rigorously 
introduce the fluctuation field alluded to in~\eqref{e:ApproxPot}. 
For any $x_0 \in \Z^d$, the latter is the (Gaussian) field $\zeta_{L,x_0}$  defined through
\begin{equ}[e:FluctField]
	\xi_L(x)=\xi_L(x_0) v_L(x-x_0) +\zeta_{L,x_0}(x),\quad x\in\Z^d\;. 
\end{equ}
Its main properties are summarised in the next lemma. 

\begin{lemma}\label{l:FluctField}
	For any given $x_0\in\Z^d$, $\zeta_{L,x_0}$ 
	is a centred Gaussian field independent of $\xi_L(x_0)$, satisfying $\zeta_{L,x_0}(x_0)=0$. Its covariance is 
	\begin{equ}[e:covZeta] 
		\E[\zeta_{L,x_0}(x)\zeta_{L,x_0}(y)] = v_L(x-y) - v_L(x-x_0) v_L(y-x_0) \;,
	\end{equ}
	while its variance is bounded above by $1$ and 
	there exists $c''>0$ such that for all $x\in \Z^d$
	\begin{equ}[e:VarZeta]
		\var[\zeta_{L,x_0}(x)] = 1 - v_L(x-x_0)^2 
		\le  \frac{e^{c'' |x-x_0|}}{d_L}\,.
	\end{equ}
\end{lemma}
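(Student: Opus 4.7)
The statement is essentially a collection of direct Gaussian computations stemming from the defining identity~\eqref{e:FluctField}, which expresses $\zeta_{L,x_0}(x)$ as a linear combination of $\xi_L(x)$ and $\xi_L(x_0)$. The plan is therefore to exploit this linearity in three stages: establish the distributional/structural properties (Gaussianity, centring, vanishing at $x_0$, independence), derive the covariance formula by direct expansion, and finally bound the variance using Assumption~\ref{i:De}.

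For the first stage, I would note that $\zeta_{L,x_0}(x) = \xi_L(x) - v_L(x-x_0) \xi_L(x_0)$ is, by construction, a linear combination of components of the Gaussian field $\xi_L$, hence itself a centred Gaussian variable, and jointly Gaussian in $x$. Setting $x = x_0$ and recalling $v_L(0)=1$ immediately yields $\zeta_{L,x_0}(x_0)=0$. For independence from $\xi_L(x_0)$, since the pair $(\zeta_{L,x_0}(x),\xi_L(x_0))$ is jointly Gaussian, it suffices to compute
\[
\E[\zeta_{L,x_0}(x)\,\xi_L(x_0)] = \E[\xi_L(x)\xi_L(x_0)] - v_L(x-x_0)\E[\xi_L(x_0)^2] = v_L(x-x_0) - v_L(x-x_0) = 0\,,
\]
using $\var(\xi_L(x_0))=1$.

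For the covariance, I would expand
\[
\E[\zeta_{L,x_0}(x)\zeta_{L,x_0}(y)] = \E[\xi_L(x)\xi_L(y)] - v_L(y-x_0)\E[\xi_L(x)\xi_L(x_0)] - v_L(x-x_0)\E[\xi_L(y)\xi_L(x_0)] + v_L(x-x_0)v_L(y-x_0)\E[\xi_L(x_0)^2]\,,
\]
and the three last terms collapse to $-v_L(x-x_0)v_L(y-x_0)$, yielding~\eqref{e:covZeta}. Specialising to $x=y$ gives $\var[\zeta_{L,x_0}(x)] = 1 - v_L(x-x_0)^2$.

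The only non-tautological step is the pointwise variance bound in~\eqref{e:VarZeta}. I would factor $1 - v_L(x-x_0)^2 = (1-v_L(x-x_0))(1+v_L(x-x_0)) \le 2(1-v_L(x-x_0))$, using that $v_L\in[0,1]$ by non-negativity of the covariance and unit variance. For $x=x_0$ the bound is trivial; for $x\neq x_0$ the lower bound in~\eqref{e:AlmostDecay} gives $1 - v_L(x-x_0) \le e^{\fc'|x-x_0|}/d_L$, so choosing any $c'' \ge \fc' + \ln 2$ (using $|x-x_0|\ge 1$) absorbs the factor $2$ and yields the stated bound. No step is genuinely an obstacle: the argument is a textbook orthogonal projection of $\xi_L(\cdot)$ onto $\xi_L(x_0)$, and the variance estimate is the sole place where the field-specific Assumption~\ref{i:De} enters.
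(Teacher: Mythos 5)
Your proof is correct and follows essentially the same route as the paper's: the structural properties follow from linearity of the Gaussian decomposition and the orthogonality computation $\E[\zeta_{L,x_0}(x)\xi_L(x_0)]=0$, and the variance bound reduces to $1-v_L^2\le 2(1-v_L)$ combined with the lower bound on $v_L$ from Assumption~\ref{i:De}. The only cosmetic difference is that you obtain $1-v^2\le 2(1-v)$ by factoring $(1-v)(1+v)$ with $v\le 1$, whereas the paper writes $v^2=(1+v-1)^2\ge 1-2(1-v)$; both are the same elementary estimate.
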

\begin{proof}
	The fact that $\zeta_{L,x_0}$ is a centred Gaussian field, independent of $\xi_L(x_0)$ and such that $\zeta_{L,x_0}(x_0)=0$ is obvious 
	by~\eqref{e:FluctField} and the definition of $v_L$. 
	The expression of the covariance follows from a straightforward computation. Since $v_L$ is bounded by $1$ by~\eqref{e:AlmostDecay}, 
	so is $\var[\zeta_{L,x_0}(x)]$. For~\eqref{e:VarZeta}, we immediately have
	$$ v_L(x-x_0)^2 = (1 + v_L(x-x_0) - 1)^2 \ge 1-2 (1-v_L(x-x_0))\;,$$
	and thus, by \eqref{e:AlmostDecay}, we deduce that there exists $c''>0$ such that for all $x\ne x_0$
	$$ 1-v_L(x-x_0)^2 \le 2 (1-v_L(x-x_0)) \le 2 \frac{e^{\fc' |x-x_0|}}{d_L} \le \frac{e^{c'' |x-x_0|}}{d_L}\;.$$
\end{proof}

As discussed in Section~\ref{sec:MR}, our analysis of the Hamiltonian aims at showing that its main 
eigenvalues can be described in terms of the sum of $\xi_L(\cdot)$ and of an additional term  
whose definition only involves the fluctuation field $\zeta_{L,\cdot}$ in~\eqref{e:FluctField}. 
For $y\in\Z^d$, the latter is given by 
\begin{equ}[e:FL]
\Phi_L(y)\eqdef \sum_{x\in Q_{r_L}^{\neq 0}} \bar\phi_L(x)^2 \zeta_{L,y}(x+y)\,,
\end{equ}
where $r_L$ is as in~\eqref{e:rL} 
and $\bar\phi_L$ is the principal normalised eigenfunction 
of the operator $\bar\cH_L$ in~\eqref{e:DetHam}. 
%

\begin{lemma}\label{l:XiL}
Let $(\Phi_L(x))_{x\in\Z^d}$ be defined according to~\eqref{e:FL}. Then, the field 
\begin{equ}[e:XiL]
\Xi_L(x)\eqdef\xi_L(x)+\Phi_L(x)\,,\qquad x\in\Z^d
\end{equ}
is Gaussian with variance $1+\tau_L^2$ at every point, for $\tau_L$ as in~\eqref{e:tauL}, 
translation invariant and, under Assumptions~\ref{ass:dLaL} and~\ref{a:TechnicalAssum}, 
its covariance function $v_L^{\Xi}\colon\Z^d\to\R_+$ satisfies
\begin{equs}[e:LongRangeXi]
\cT_{v_L^{\Xi}}\eqdef\sup_{|x|\geq \exp(\sqrt{\ln L})}v_L^{\Xi}(x)\ln|x|\longrightarrow 0\,,\qquad\text{as $L\to\infty$. }
\end{equs}
Furthermore, for any $z \in \Z^d$ such that $|z|>\sqrt{d} \,r_L$, we have 
\begin{equ}[e:CovxiPhi]
\cov(\xi_L(z), \Phi_L(0))\vee\cov(\Phi_L(z), \Phi(0))\lesssim \sup_{|x|\geq |z|-\sqrt{d} \,r_L}v_L(x)\,.
\end{equ}
\end{lemma}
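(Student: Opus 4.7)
The statement splits into three largely independent tasks: (i) the algebraic/probabilistic facts (Gaussianity, stationarity, variance); (ii) the cross-covariance bound~\eqref{e:CovxiPhi}; and (iii) the long-range decay~\eqref{e:LongRangeXi}. My plan is to dispatch (i) directly using Lemma~\ref{l:FluctField}, then obtain (ii) by expanding $\zeta_{L,\cdot}$ via~\eqref{e:FluctField} into linear combinations of $v_L$ at various shifts, and finally deduce (iii) by matching the scales $r_L$ and $\exp(\sqrt{\ln L})$.

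For (i), the field $\Phi_L(y)$ is a linear functional of $\zeta_{L,y}$, which is jointly Gaussian with $\xi_L$ by~\eqref{e:FluctField}, so $\Xi_L$ is Gaussian. Lemma~\ref{l:FluctField} provides the independence of $\zeta_{L,y}$ and $\xi_L(y)$, hence $\cov(\xi_L(y),\Phi_L(y))=0$ and
$\var(\Xi_L(y)) = 1 + \var(\Phi_L(y)) = 1 + \tau_L^2$,
the last identity using the stationarity of $v_L$ to see that $\var(\Phi_L(y))$ is independent of $y$ and coincides with~\eqref{e:tauL} after translating by $y$ (the discarded $x=0$ term vanishes as $\zeta_{L,y}(y)=0$). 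Translation invariance of $\Xi_L$ follows because replacing $\xi_L$ by $\xi_L(\cdot+c)$ sends $\zeta_{L,y}(x)$ to $\zeta_{L,y+c}(x+c)$, and thus $\Phi_L(y)$ to $\Phi_L(y+c)$.

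For (ii), substituting $\zeta_{L,y}(x+y) = \xi_L(x+y) - v_L(x)\xi_L(y)$ reduces each covariance in~\eqref{e:CovxiPhi} to an explicit sum of $v_L$ at various shifts. Concretely,
$$
\cov(\xi_L(z),\Phi_L(0)) = \sum_{x\in Q_{r_L}^{\neq 0}} \bar\phi_L(x)^2 \bigl( v_L(z-x) - v_L(z)v_L(x) \bigr),
$$
and an analogous four-term identity holds for $\cov(\Phi_L(z),\Phi_L(0))$. For $|z|>\sqrt{d}\,r_L$ and $x,y\in Q_{r_L}$, every argument of $v_L$ appearing there has norm at least $|z|-\sqrt{d}\,r_L$; bounding each product by a single copy of $v_L$ at such a shift via $v_L\le 1$, and summing against the probability weights $\bar\phi_L(x)^2$ (which sum to $1$), yields~\eqref{e:CovxiPhi}. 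For (iii), combining~\eqref{e:CovxiPhi} with $v_L^{\Xi}(z) = v_L(z) + 2\cov(\xi_L(0),\Phi_L(z)) + \cov(\Phi_L(0),\Phi_L(z))$ gives $v_L^{\Xi}(z) \le v_L(z) + C\sup_{|w|\ge |z|-\sqrt{d}\,r_L} v_L(w)$. By~\eqref{e:rL} and $a_L\sim\sqrt{2d\ln L}$, we have $\ln r_L \ll \sqrt{a_L} \ll \sqrt{\ln L}$, so $\sqrt{d}\,r_L = o(\exp(\sqrt{\ln L}))$; hence for $|z|\ge \exp(\sqrt{\ln L})$ and $L$ large, $|z|-\sqrt{d}\,r_L$ is comparable to $|z|$, and~\eqref{e:LongRange} delivers $\sup_{|w|\ge |z|-\sqrt{d}\,r_L} v_L(w)\ln|z| = o(1)$, whence $\cT_{v_L^{\Xi}}\to 0$.

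The genuine obstacle is the matching-of-scales in (iii): one has to ensure that the inward shift by $\sqrt{d}\,r_L$ does not push the radius of the supremum below the threshold $\exp(\sqrt{\ln L})$ at which the tail assumption~\eqref{e:LongRange} takes effect, and that the logarithmic factor $\ln|z|$ is preserved up to a harmless multiplicative constant. This is precisely where the upper bound on $r_L$ in~\eqref{e:rL}, combined with the asymptotic $a_L\sim\sqrt{2d\ln L}$, must be used carefully; everything else reduces to bookkeeping expansions of $v_L$.
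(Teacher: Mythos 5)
Your proof matches the paper's essentially step for step: you reduce (i) to Lemma~\ref{l:FluctField} and stationarity, expand $\zeta_{L,\cdot}$ as $\xi_L - v_L\xi_L(\cdot)$ to write the covariances in~\eqref{e:CovxiPhi} as explicit $v_L$-sums bounded via $v_L\leq 1$ and $\sum_x\bar\phi_L(x)^2\leq 1$, and then plug into $v_L^\Xi = v_L + 2\cov(\xi_L,\Phi_L) + \cov(\Phi_L,\Phi_L)$ together with $r_L = o(\exp(\sqrt{\ln L}))$ to obtain~\eqref{e:LongRangeXi}, exactly as in the paper (whose own final line is $\cT_{v_L^\Xi}\lesssim\cT_{v_L}+\sup_{|z|}\ln|z|\sup_{|x|\ge|z|-\sqrt{d}r_L}v_L(x)\leq 2\cT_{v_L}$). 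The scale-matching issue you flag at the end is also handled the same way the paper handles it, so there is nothing further to add.
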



\begin{proof}
The fact that $\Xi_L$ is Gaussian, translation invariant and at every point has variance $1+\tau_L^2$ 
is an immediate consequence of the definition of $\xi_L$ and Lemma~\ref{l:FluctField}. 
A direct computation (using translation invariance) shows that the covariance function $v_L^{\Xi}$ equals 
\begin{equs}
v_L^{\Xi}(z)&=\cov(\xi_L(z), \xi_L(0))+2\cov(\xi_L(z), \Phi_L(0))+\cov(\Phi_L(z), \Phi_L(0))\\
&=\E[\xi_L(z)\xi_L(0)]+2\E[\xi_L(z)\Phi_L(0)]+\E[\Phi_L(z)\Phi_L(0)]\\
&=v_L(z)+2\sum_{x\in Q_{r_L}^{\neq 0}}\bar\phi_L(x)^2\Big( v_L(z-x)-v_L(z)v_L(x)\Big)\\
+\sum_{x,y\in Q_{r_L}^{\neq 0}}&\bar\phi_L(x)^2\bar\phi_L(y)^2\Big(v_L(z+x-y)-2v_L(x+z)v_L(y)+v_L(z)v_L(x)v_L(y)\Big)\,.
\end{equs}
To control its decay  
and establish both~\eqref{e:LongRangeXi} and~\eqref{e:CovxiPhi}, let us point out that, for $|z|\geq \sqrt{d} \,r_L$ and 
any $|w|\leq \sqrt{d} \,r_L$, we clearly have
\begin{equ}
v_L(z-w)\leq \sup_{|x|\geq |z|-\sqrt{d} \,r_L}v_L(x)\,.
\end{equ}
Therefore, we immediately deduce 
\begin{equ}
\cov(\xi_L(z), \Phi_L(0))\lesssim \sup_{|x|\geq |z|-\sqrt{d} \,r_L}v_L(x) \sum_{y\in Q_{r_L}^{\neq 0}}\bar\phi_L(y)^2\leq \sup_{|x|\geq |z|-\sqrt{d} \,r_L}v_L(x)
\end{equ}
where we used that $\overline \varphi_L$ is normalised. 
Arguing similarly (and recalling that $v_L \le 1$) for $\cov(\Phi_L(z), \Phi_L(0))$,~\eqref{e:CovxiPhi} follows at once. 

The very same procedure also implies that 
\begin{equ}
\cT_{v_L^{\Xi}}\lesssim \cT_{v_L} +\sup_{|z|\geq \exp(\sqrt{\ln L})} \ln|z|\sup_{|x|\geq |z|-\sqrt{d} \,r_L}v_L(x)\leq 2 \cT_{v_L}
\end{equ}
and, by~\eqref{e:LongRange}, the r.h.s. converges to $0$, which completes the proof.  
%
\end{proof}

In what follows, we will derive the order statistics of both fields $\xi_L$ and $\Xi_L$, and 
study how they relate to each other, which in particular requires to identify the 
order of magnitude of their maxima and the size of the fluctuations around these. 
To do so, it is crucial to understand the mechanism that produces high peaks 
and what behaviour $\xi_L$ must display for $\Xi_L$ to be large. 
Notice that, in view of Lemma~\ref{l:FluctField}, for every $x\in\Z^d$, 
$\Xi_L(x)$ is the sum of two independent Gaussian random variables, $\xi_L(x)$ and $\Phi_L(x)$, 
of variance $1$ and $\tau_L^2$, respectively. 

In the next lemma, we 
address the afore-mentioned questions for generic Gaussian random variables satisfying these features 
and after its statement we will translate its content in our context. 
The proof of the lemma is postponed to Appendix~\ref{a:Gauss}.

\begin{lemma}\label{l:GaussianNew}
	Let $X, \YYL$ be two independent centred Gaussian random variables of variance $1$ and $\tau_L^2$, respectively. Then, for any $s\in\R$, as $L\to\infty$
	\begin{equ}[e:GausSum0]
		\P\Big(X+\YYL \ge a_L\sqrt{1+\tau_L^2} + \frac{s}{a_L} \Big) \sim \frac1{L^d} e^{-s}\;,
	\end{equ}
and 
	then
	\begin{equs}[e:GausSum]
		\limsup_{C\to\infty} \limsup_{L\to\infty} L^d \P\Big(X+\YYL \ge a_L\sqrt{1+\tau_L^2} + \frac{s}{a_L} ; X \notin I_L(C)\Big)= 0\;,
	\end{equs}
	where,  for $L,C>0$, $I_L(C)$ is the interval 
	\begin{equ}[e:ILC]
	I_L(C) \eqdef \Big[\tfrac{a_L}{\sqrt{1+\tau_L^2}} - C\max\{\tfrac{1}{a_L},\tau_L\} , \tfrac{a_L}{\sqrt{1+\tau_L^2}} + C\max\{\tfrac{1}{a_L},\tau_L\}\Big]\,.
	\end{equ}
	As a consequence, for any sequence $(\theta_L)_{L\geq 1}$ such that $\max\{a_L\tau_L^2,a_L^{-1}\}\ll\theta_L$, 
	 we have
	\begin{equ}[e:GausSum2]
		\lim_{L\to\infty}L^d\,\P\Big(X+\YYL \ge a_L\sqrt{1+\tau_L^2} + \frac{s}{a_L} ; |X - a_L|> \theta_L\Big)
	=0\;.
	\end{equ}
\end{lemma}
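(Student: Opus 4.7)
The plan is to establish the three assertions in sequence. The key observation throughout is that under Assumptions~\ref{ass:dLaL} and~\ref{a:TechnicalAssum} one has $\tau_L \ll 1/\sqrt{a_L d_L} \to 0$, since $d_L \ge 1$ and $a_L \to \infty$; I will freely use this smallness of $\tau_L$.

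For~\eqref{e:GausSum0}, I will simply renormalise: $(X+Y_L)/\sqrt{1+\tau_L^2}$ is standard Gaussian, and the event rewrites as $\{Z \ge a_L + b_L\}$ with $b_L := s/(a_L\sqrt{1+\tau_L^2}) = O(1/a_L) \ll a_L$, so that the perturbative tail estimate~\eqref{e:preBound} applies directly. Since $a_L b_L \to s$ and $b_L^2 \to 0$, this yields the required asymptotic $L^{-d} e^{-s}$.

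The heart of the argument is~\eqref{e:GausSum}, which I will prove via the orthogonal decomposition $X = \alpha U + W$, where $U := X+Y_L$, $\alpha := 1/(1+\tau_L^2)$, and $W := X - \alpha U$ is a centred Gaussian of variance $\sigma_L^2 := \tau_L^2/(1+\tau_L^2)$ independent of $U$. For a parameter $M$ to be optimised, I split the event $\{U \ge a_L\sqrt{1+\tau_L^2}+s/a_L\}$ into a ``far tail'' $\{U \ge a_L\sqrt{1+\tau_L^2}+M/a_L\}$ and an ``intermediate'' piece. The far tail contributes at most $e^{-M}/L^d$ (up to $1+o(1)$) by~\eqref{e:GausSum0}. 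On the intermediate piece, $\alpha U$ lies within $O(M/a_L)$ of $a_L/\sqrt{1+\tau_L^2}$, so $X \notin I_L(C)$ forces $|W| \ge (C - O(M))\max\{1/a_L, \tau_L\}$; since $\max\{1/a_L^2,\tau_L^2\} \ge \tau_L^2 \ge \sigma_L^2$ uniformly in $L$, a Gaussian tail bound on $W$ yields a probability at most $2\,e^{-(C-O(M))^2/2}$ independent of $U$. Using the independence of $U$ and $W$, the contribution of the intermediate piece is $\lesssim e^{-s}\,e^{-(C-O(M))^2/2}/L^d$. Choosing $M = C/2$ and letting $C\to\infty$ proves~\eqref{e:GausSum}.

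For~\eqref{e:GausSum2}, it suffices to show that for every fixed $C$ and $L$ large enough one has $\{|X-a_L| > \theta_L\} \subseteq \{X \notin I_L(C)\}$, after which the result follows by applying~\eqref{e:GausSum} and sending $C \to \infty$. The interval $I_L(C)$ is centred at $a_L/\sqrt{1+\tau_L^2}$, at distance $\sim a_L \tau_L^2/2$ from $a_L$, and has half-width $C\max\{1/a_L, \tau_L\}$; both of these quantities are $o(\theta_L)$ under the hypothesis $\theta_L \gg \max\{a_L \tau_L^2, 1/a_L\}$ (which forces also $\theta_L \gg \tau_L$, since $\tau_L \le \max\{a_L\tau_L^2, 1/a_L\}$ always). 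The principal subtlety throughout is precisely the choice of half-width $\max\{1/a_L, \tau_L\}$ in $I_L(C)$: it is the minimal scale that simultaneously absorbs the conditional Gaussian fluctuations of $X$ given $U$ (of order $\tau_L$) and the spread of the conditional mean $\alpha U$ above $a_L/\sqrt{1+\tau_L^2}$ (of order $1/a_L$), and the orthogonal decomposition is exactly what allows the two to be controlled by independent Gaussian tails.
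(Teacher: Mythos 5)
Your proofs of \eqref{e:GausSum0} and \eqref{e:GausSum2} are essentially identical to the paper's: for the first, renormalise $X+\YYL$ to a standard Gaussian and invoke \eqref{e:preBound}; for the last, show $I_L(C)\subset[a_L-\theta_L,a_L+\theta_L]$ for $L$ large (so the event in \eqref{e:GausSum2} is contained in the event in \eqref{e:GausSum}) and then send $C\to\infty$.

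For \eqref{e:GausSum}, however, you take a genuinely different and, I think, cleaner route. The paper writes the probability as a convolution integral $\int_{I_L(C)^\complement}\frac{e^{-x^2/2}}{\sqrt{2\pi}}\P(\YYL>u_L-x)\,\dd x$, applies the Gaussian tail bound to the inner factor, completes the square in $x$ (the constant $u_L^2/(2(1+\tau_L^2))$ that emerges is precisely your $\P(U\ge u_L)\sim e^{-s}/L^d$ factor), and then needs a case distinction on whether $\tau_L\le\sqrt C/a_L$ or not to handle the right tail $x>\rho_L^+$. You instead decompose $X=\alpha U+W$ with $U=X+\YYL$, $\alpha=(1+\tau_L^2)^{-1}$ and $W\eqlaw\cN(0,\tau_L^2/(1+\tau_L^2))$ \emph{independent} of $U$; on the ``intermediate'' range $U\in[a_L\sqrt{1+\tau_L^2}+s/a_L,\,a_L\sqrt{1+\tau_L^2}+M/a_L)$ the term $\alpha U$ deviates from the centre $a_L/\sqrt{1+\tau_L^2}$ of $I_L(C)$ by at most $M\max\{1/a_L,\tau_L\}$, so $X\notin I_L(C)$ forces $|W|>(C-M)\max\{1/a_L,\tau_L\}$, and since $\sigma_L\le\tau_L\le\max\{1/a_L,\tau_L\}$ the standardized tail bound gives a factor $\le 2e^{-(C-M)^2/2}$ uniformly in $L$; independence then factorizes the estimate. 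The far tail $U\ge a_L\sqrt{1+\tau_L^2}+M/a_L$ contributes $\sim e^{-M}/L^d$ by \eqref{e:GausSum0}. Taking $M=C/2$ and $C\to\infty$ closes the argument with no case split. What the paper's explicit integral buys is that it avoids introducing the auxiliary Gaussian $W$ (everything stays in one $\dd x$-integral); what your decomposition buys is that it replaces the completing-the-square bookkeeping and the dichotomy on the size of $\tau_L$ with a single use of independence. Both are correct, and the numerics check out (in particular the key identity $\max\{1/a_L,\tau_L\}^2/\sigma_L^2\ge 1$ that makes the half-width of $I_L(C)$ the right scale).

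One small hygiene point: you should take $M>|s|$ so that the intermediate $U$-range is nonempty and so that the deviation bound on $\alpha U$ is indeed $\le M/a_L$; this is automatic once you fix $M=C/2$ and restrict to $C>2|s|$, but it is worth stating before invoking the bound.
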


In terms of $\Xi_L$ and $\xi_L$, we can infer from the above a number 
of useful insights. First, in view of~\eqref{e:GausSum0} and~\eqref{e:aL} we set
\begin{equ}[e:aLXiL]
a_L^{\Xi}=a_L\sqrt{1+\tau_L^2}
\end{equ} 
as it is the counterpart of $a_L$ for $\Xi_L$, and the fluctuations around it are of order $a_L^{-1}$ (i.e. 
the same as those of $\xi_L$, see~\eqref{e:preBound}). 
Second, the estimate~\eqref{e:GausSum} shows that, for any $x\in\Z^d$,  
for $\Xi_L(x)$ to be of order $a_{L}^{\Xi}$, $\xi_L(x)$ must 
be of order $a_L/\sqrt{1+\tau_L^2}$ up to an event of probability negligible compared to $1/L^d$.

\subsection{Deterministic shape around local maxima}\label{Subsec:Shape}

In this section, we study the behaviour of the potential $\xi_L$ around its maxima in a 
mesoscopic box of side-length $R_L$. 
To do so, we introduce an event around which our analysis revolves. 
Recall the definition of the \textit{shape} $\cS_L$ in~\eqref{e:Shape}, i.e. 
$\cS_L(\cdot) \eqdef a_L (1-v_L(\cdot))$, and of the fluctuation field $\zeta_{L,\cdot}$ in~\eqref{e:FluctField}. 

\begin{definition}\label{d:Event}
	Set $\theta\eqdef 2d+1$. For $x_0\in\Z^d$ and a constant $\kappa \in (0,1/3)$, we define $E_{L,x_0}=E_{L,x_0}(\kappa)$ as the intersection of the three events $E^i_{L,x_0}$, $i=1,2,3$, respectively given by 
	\begin{equs}
		E^1_{L,x_0}&\eqdef\{ |\xi_L(x_0)-a_L|  < \theta\}\label{e:E1} \,, \\
		E^2_{L,x_0}&\eqdef\{|\zeta_{L,x_0}(x)| \le \frac1{10} \cS_L(x-x_0)\quad\forall\; x\in Q_{2R_L, x_0}\} \,, \label{e:E2}
	\end{equs}
	and 
	\begin{equ}\label{e:E3}
		E^3_{L,x_0}\eqdef \bigcap_{x\in Q_{R_L,x_0}^{\neq x_0}}
		\bigg\{\frac{|\zeta_{L,x_0}(x)|}{\sqrt{\var[\zeta_{L,x_0}(x)]}} \leq \Big(\frac{a_L}{d_L}\Big)^{\kappa|x-x_0|} \sqrt{1\vee (|\xi_L(x_0)-a_L|a_L)}\bigg\}\;.
	\end{equ}
\end{definition}

Let us make some comments on this definition. 
The event $E^1_{L,x_0}$ forces $\xi_L$ to display a large peak at $x_0$. 
The requirement $\xi_L(x_0)< a_L + \theta$ was added for convenience only and, in any case, 
its complement is unlikely (\textit{i.e.}, its probability is negligible compared to $1/L^d$) and can thus be excluded. 
The event $E^2_{L,x_0}$ ensures that the fluctuation field remains ``small'' around $x_0$ (the value $1/10$ 
at the r.h.s. is arbitrary and anything sufficiently small would do) 
so that, as we will see in Proposition \ref{l:Event}, $\xi_L(x_0)$ is a local maximum and 
$\xi_L(x)$ remains below $\xi_L(x_0)$ around $x_0$. 
Finally, $E^3_{L,x_0}$ prescribes the order of $\zeta_{L,x_0}$ on the box $Q_{R_L,x_0}$. 
It morally requires that $\zeta_{L,x_0}\approx \sqrt{{\rm Var}(\zeta_{L,x_0})}$ up to an error 
which suitably depends on the size of the fluctuations of $\xi_L(x_0)$ around $a_L$ 
(which in turn are expected to be $\cO(a_L^{-1})$, see Theorem~\ref{Th:TailJointLaw} below). 


The specific choice of $\theta$ and the control over the $\zeta_{L,x_0}$'s will become 
clearer in the proof of the next two results as well as those of Lemmas~\ref{l:RedE},~\ref{l:Shrink}. 

\begin{proposition}\label{l:Event}
For $x_0\in \bbZ^d$, let $E_{L,x_0}$ be the event in Definition~\ref{d:Event}. 
Then, there exists an $L_0\geq 1$ such that for all $L\geq L_0$, 
\begin{enumerate}[label=\normalfont{(\arabic*)}]
\item\label{i:event1} on $E_{L,x_0}$, $\xi_L$ admits a unique maximum over $Q_{2R_L, x_0}$ which is attained at $x_0$ and the following bound holds
\begin{equ}[e:LowerShape]
	\xi_L(x) - \xi_L(x_0) \le - \frac{\fc}{2}\frac{a_L}{d_L}\;,\quad \forall x\in Q_{2R_L, x_0}^{\neq x_0}\;,
\end{equ}
where $\fc>0$ is as in~\eqref{e:AlmostDecay}. 
\item\label{i:event2} for any $y_0 \ne x_0$ such that $|x_0 - y_0| \le \sqrt{d} \,r_L$, $E_{L,x_0}\cap E_{L,y_0}=\emptyset$. 
\end{enumerate}
Furthermore, there is $C>0$ such that for all $L\ge L_0$ and $x_0\in\Z^d$, we have
\begin{equ}[e:nonE]
\P(\xi_L(x_0) \ge a_L - \theta ; E_{L,x_0}^\complement )
		\le  \frac{1}{L^d}e^{-C \big(\frac{a_L}{d_L}\big)^{2\kappa}}\,,
	\end{equ}
and 
\begin{equ}[e:MaxNegl]
\lim_{L\to\infty} \Big(\frac{L}{R_L}\Big)^d \P\Big(\max_{x\in Q_{R_L+ r_L}}\xi_L(x)\geq a_L-\theta\,;\,\Big(\bigcup_{x_0\in Q_{R_L}} E_{L,x_0}\Big)^\complement\Big)=0\,.
\end{equ}
\end{proposition}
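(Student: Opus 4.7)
The plan is to treat the four assertions in order, exploiting the orthogonal decomposition of Lemma~\ref{l:FluctField} together with Gaussian tail estimates. Rewrite~\eqref{e:FluctField} as $\xi_L(x)-\xi_L(x_0)=-\xi_L(x_0)\cS_L(x-x_0)/a_L+\zeta_{L,x_0}(x)$, recalling $\cS_L=a_L(1-v_L)$. On $E^1_{L,x_0}\cap E^2_{L,x_0}$ one has $\xi_L(x_0)/a_L\ge 9/10$ for $L$ large (since $\theta$ is fixed), while $|\zeta_{L,x_0}(x)|\le\cS_L(x-x_0)/10$ on $Q_{2R_L,x_0}$. Combining, $\xi_L(x)-\xi_L(x_0)\le-(4/5)\cS_L(x-x_0)$ on $Q_{2R_L,x_0}^{\neq x_0}$, and $\cS_L\ge\fc\,a_L/d_L$ from~\eqref{e:AlmostDecay} yields~\eqref{e:LowerShape} together with~\ref{i:event1}. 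Part~\ref{i:event2} is then immediate by contradiction: if $E_{L,x_0}\cap E_{L,y_0}\neq\emptyset$ with $|x_0-y_0|\le\sqrt{d}\,r_L\ll R_L$, then $y_0\in Q_{2R_L,x_0}$ and $x_0\in Q_{2R_L,y_0}$ for $L$ large, forcing $\xi_L(y_0)<\xi_L(x_0)<\xi_L(y_0)$.

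For~\eqref{e:nonE}, I would decompose $E_{L,x_0}^\complement$ as the union of the three $(E^i_{L,x_0})^\complement$ and bound each intersection with $\{\xi_L(x_0)\ge a_L-\theta\}$, crucially exploiting the independence of $\xi_L(x_0)$ and $\zeta_{L,x_0}$ granted by Lemma~\ref{l:FluctField}. For $i=1$, the intersection reduces to $\{\xi_L(x_0)\ge a_L+\theta\}$, whose probability is $\sim L^{-d}e^{-a_L\theta-\theta^2/2}$ by~\eqref{e:preBound}. For $i=2$, independence together with the identity $\cS_L(x-x_0)^2/\var[\zeta_{L,x_0}(x)]=a_L^2(1-v_L(x-x_0))/(1+v_L(x-x_0))\ge\fc\,a_L^2/(2d_L)$, combined with Gaussian concentration and a union bound over $Q_{2R_L,x_0}$, yields $\P(\xi_L(x_0)\ge a_L-\theta)\cdot\P((E^2_{L,x_0})^\complement)\lesssim R_L^d L^{-d}e^{a_L\theta-c\,a_L^2/d_L}$; using $\ln R_L\ll a_L^2/d_L$ from~\eqref{e:RL} and $d_L\ll a_L$ from Assumption~\ref{ass:dLaL}, the dominant exponent $-a_L^2/d_L$ comfortably beats $-(a_L/d_L)^{2\kappa}$ since $2\kappa<1$.

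The serious piece is $(E^3_{L,x_0})^\complement$. Conditioning on $\xi_L(x_0)=s$ leaves $\zeta_{L,x_0}$ unchanged in distribution, and for $|x-x_0|=r\ge 1$ the Gaussian tail gives
\begin{equation*}
\P(\text{bad at }x\mid s)\le 2\exp\bigl(-\tfrac12(a_L/d_L)^{2\kappa r}(1\vee|s-a_L|a_L)\bigr).
\end{equation*}
Summing over $r$ with multiplicity $\lesssim r^{d-1}$, the $r=1$ term dominates since $(a_L/d_L)^{2\kappa r}$ explodes super-exponentially in $r$, so $\P((E^3_{L,x_0})^\complement\mid s)\le C\exp(-\tfrac12(a_L/d_L)^{2\kappa}(1\vee|s-a_L|a_L))$. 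I would then integrate against the $\cN(0,1)$ density over $[a_L-\theta,\infty)$, splitting into $s\in[a_L-\theta,a_L-1/a_L]$, $|s-a_L|\le 1/a_L$, and $s\ge a_L+1/a_L$. The apparent blow-up $e^{a_L\theta}$ of the density near $s=a_L-\theta$ is exactly compensated by the extra factor $e^{-\frac12(a_L/d_L)^{2\kappa}(a_L-s)a_L}$: writing $u=a_L-s$, the integrand is proportional to $\exp(a_Lu(1-\tfrac12(a_L/d_L)^{2\kappa}))$, maximal at $u=1/a_L$, and yields a total of order $L^{-d}e^{-\frac12(a_L/d_L)^{2\kappa}}$. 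The other two ranges are handled similarly, giving~\eqref{e:nonE} for any $C<1/2$. The main obstacle is precisely this integration: without retaining the $|s-a_L|a_L$ factor baked into the definition of $E^3_{L,x_0}$, the $e^{a_L\theta}$ growth of the density cannot be absorbed by a pure $e^{-(a_L/d_L)^{2\kappa}/2}$ factor in general.

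Finally,~\eqref{e:MaxNegl} follows by a union bound: decompose $Q_{R_L+r_L}=Q_{R_L}\cup A_L$ with $|A_L|\lesssim R_L^{d-1}r_L$. If the maximum is attained at some $x_0\in Q_{R_L}$, then $(E_{L,x_0})^\complement$ occurs, so by~\eqref{e:nonE} the contribution is $\le R_L^d L^{-d}e^{-C(a_L/d_L)^{2\kappa}}$, and multiplication by $(L/R_L)^d$ yields $e^{-C(a_L/d_L)^{2\kappa}}\to 0$. For the boundary layer $A_L$, the crude estimate $\P(\xi_L(x_0)\ge a_L-\theta)\sim L^{-d}e^{a_L\theta}$ produces a contribution $\lesssim(r_L/R_L)e^{a_L\theta}$, which tends to $0$ because $\ln R_L\gg a_L$ by~\eqref{e:RL} dominates both $a_L\theta$ and $\ln r_L\ll\sqrt{a_L}$ from~\eqref{e:rL}.
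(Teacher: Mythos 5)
Your proposal is correct and follows essentially the same route as the paper's proof: the same orthogonal decomposition for \eqref{e:LowerShape} and part~\ref{i:event2}, the same $i=1,2,3$ split with independence of $\xi_L(x_0)$ and $\zeta_{L,x_0}$ for \eqref{e:nonE} (in particular you correctly identify that the $|s-a_L|a_L$ factor in $E^3_{L,x_0}$ is what saves the integral near $s=a_L-\theta$, which is exactly the paper's $I_1$-range computation), and the same annulus-plus-interior union bound for \eqref{e:MaxNegl}. The only cosmetic differences are that you sum over $|x-x_0|=r$ before integrating in $s$ (the paper does the reverse) and split the $s$-integral into three ranges rather than two; neither affects the argument.
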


\begin{remark}\label{rem:Likely}
The rule of thumb underlying our probabilistic analysis is that we can neglect any event ``based at $x_0$'' whose probability is negligible compared to $1/L^d$. Indeed, the union over $x_0\in Q_L$ of such events has a probability which is then negligible compared to $1$.
Also,~\eqref{e:nonE} combined with \eqref{e:preBound}, ensures that
\begin{equ}
	\P(E_{L,x_0}) = \P(\xi_L(x_0) \ge a_L - \theta ; E_{L,x_0} ) \sim \P(\xi_L(x_0) \ge a_L - \theta) \sim \frac{e^{\theta a_L - \frac{\theta^2}{2}}}{L^d}\;.
\end{equ}
%
\end{remark}

\begin{proof}
We start with~\eqref{e:LowerShape}. Equations~\eqref{e:FluctField},~\eqref{e:E2} and~\eqref{e:Shape} together with the fact that $\xi_L(x_0)> a_L -\theta$ imply that on the event $E_{L,x_0}$ for any $x\in Q_{2R_L,x_0}^{\ne x_0}$
\begin{equs}
		\xi_L(x) - \xi_L(x_0)&\leq \xi_L(x_0)\big(v_L(x-x_0)-1\big) + \frac1{10} \cS_L(x-x_0)\\
		&\leq - \frac{\xi_L(x_0)}{a_L}\cS_L(x-x_0) + \frac1{10} \cS_L(x-x_0)\\
		&\leq -\Big(1-\frac{\theta}{a_L}-\frac1{10}\Big)\cS_L(x-x_0) \le -\frac{\fc}{2} \frac{a_L}{d_L}\;.
\end{equs}
where we used~\eqref{e:AlmostDecay} and, in the last step, 
that, for $L$ large enough, the quantity in parenthesis is larger than $1/2$. This establishes~\eqref{e:LowerShape}, 
which in turn implies both properties~\eqref{i:event1} and~\eqref{i:event2}. 
\medskip

Now, assume~\eqref{e:nonE} and observe that
\begin{equs}
	\ &\P\Big(\max_{x\in Q_{R_L+ r_L}}\xi_L(x)\geq a_L-\theta\,;\,\Big(\bigcup_{x_0\in Q_{R_L}} E_{L,x_0}\Big)^\complement\Big)\\
	&\le \sum_{x\in Q_{R_L+ r_L} \backslash Q_{R_L}} \P(\xi_L(x) \ge a_L - \theta) + \sum_{x_0\in Q_{R_L}} \P(\xi_L(x_0) \ge a_L - \theta; E_{L,x_0}^\complement)\\
	&\lesssim r_L R_L^{d-1} \frac{e^{a_L \theta}}{L^d} + \frac{R_L^d}{L^d}e^{-C \big(\frac{a_L}{d_L}\big)^{2\kappa}}\;,
\end{equs}
where we have also used~\eqref{e:preBound}. Since $\ln r_L \ll a_L \ll \ln R_L$, 
we deduce that the r.h.s. is negligible compared to $(R_L/L)^d$. Hence, it remains to argue~\eqref{e:nonE}. 

For this,  we show that each of the summands at the r.h.s. of  
	\begin{equ}
		\P\big(\xi_L(x_0) \ge a_L - \theta ; (E_{L,x_0})^\complement \big)\leq \sum_{i=1}^3 \P\big(\xi_L(x_0) \ge a_L - \theta ; (E_{L,x_0}^i)^\complement \big)
	\end{equ}
	can be bounded above by a term of the desired order.
	Now, the first can be controlled via~\eqref{e:preBound}, which gives  
	\begin{equ}
		\P\big(\xi_L(x_0) \ge a_L - \theta ; (E_{L,x_0}^1)^\complement \big)=\P\big(\xi_L(x_0) \ge a_L + \theta\big) \lesssim \frac{e^{-\theta a_L}}{L^d} (1+o(1))
	\end{equ}
	and since $a_L\gg (a_L/d_L)^{2\kappa}$, the r.h.s. is bounded above by the r.h.s. of~\eqref{e:nonE}. 
	
	For the second, we recall that, by Lemma~\ref{l:FluctField}, $\xi_L(x_0)$ and $\zeta_{L,x_0}$ are independent, 
	thus so are the events $\{\xi_L(x_0) \ge a_L - \theta\}$ and $E^2_{L,x_0}$. Hence, 
	\begin{equ}
		\P\big(\xi_L(x_0) \ge a_L - \theta ; (E_{L,x_0}^2)^\complement \big)=\P\big(\xi_L(x_0) \ge a_L - \theta\big)\P\big( (E_{L,x_0}^2)^\complement \big)
	\end{equ} 
	and we only need to focus on the latter factor. A union bound yields
	\begin{equs}
		\P\big( (E_{L,x_0}^2)^c \big)&\leq \sum_{x\in Q_{2R_L,x_0}}\P\Big(|\zeta_{L,x_0}(x)|\geq \frac1{10} \cS_L(x-x_0)\Big)\,.
	\end{equs}
	Since $v_L\le 1$, we find
	\begin{equs}
		\frac{\cS_L(x-x_0)^2}{{\mathrm{Var}[\zeta_{L,x_0}(x)]}} &= \frac{a_L^2 \big[1-v_L(x-x_0)\big]^2}{(1-v_L(x-x_0)^2)}= \frac{a_L^2 \big[1-v_L(x-x_0)\big]}{(1+v_L(x-x_0))}\geq \frac{\fc}{2}\frac{a_L^2}{ d_L} \,,
	\end{equs}
	the last step being a consequence of~\eqref{e:AlmostDecay}. Note that the r.h.s. is larger than $1$ for all $L$ large enough, 
	so that~\eqref{e:TailGauss} and the definition of $R_L$ yield
	\begin{equs}
		\sum_{x\in Q_{2R_L,x_0}}\P\Big(|\zeta_{L,x_0}(x)|\geq \frac1{10} \cS_L(x-x_0)\Big)&\leq 
		\sum_{x\in Q_{2R_L,x_0}} \exp\Big(-\frac{\cS_L(x-x_0)^2}{200{\mathrm{Var}[\zeta_{L,x_0}(x)]}}\Big)\\
		&\leq \#Q_{2R_L} e^{-\fc\frac{a_L^2}{400 d_L}}\leq e^{-C'\frac{a_L}{d_L} a_L}
	\end{equs}
	for some constant $C'>0$, where we used~\eqref{e:RL}.
	
	We turn to the event $E^3_{L,x_0}$. Let $\sigma_{L,x_0}(x)\eqdef \sqrt{{\rm Var}[\zeta_{L,x_0}(x)]}$. Then 
	\begin{equs}
		\,&\P\big(\xi_L(x_0) \ge a_L - \theta ; (E^3_{L,x_0})^\complement \big)\label{e:E3Bound}\\
		&\leq\sum_{x\in Q_{R_L,x_0}^{\neq x_0}}  \P\Big(\xi_L(x_0) \ge a_L - \theta ; \frac{|\zeta_{L,x_0}(x)|}{\sigma_{L,x_0}(x)} > \Big(\frac{a_L}{d_L}\Big)^{\kappa|x-x_0|}\sqrt{1\vee (|\xi_L(x_0)-a_L|a_L)} \Big)\;.
	\end{equs}
	By translation invariance, the probability at the r.h.s.~does not depend on $x_0$. 
	The independence of $\zeta_{L,x_0}(x)$ and $\xi_L(x_0)$ then implies 
	\begin{equs}
		\P&\Big(\xi_L(x_0) \ge a_L - \theta ; \frac{|\zeta_{L,x_0}(x)|}{\sigma_{L,x_0}(x)} > \Big(\frac{a_L}{d_L}\Big)^{\kappa|x-x_0|}\sqrt{1\vee (|\xi_L(x_0)-a_L|a_L)} \Big)\\
		&=\int_{ a_L-\theta}^{\infty} \frac1{\sqrt{2\pi}}e^{-\frac{z^2}{2}} \P\Big(\frac{|\zeta_{L,x_0}(x)|}{\sigma_{L,x_0}(x)}> \Big(\frac{a_L}{d_L}\Big)^{\kappa|x-x_0|}\sqrt{1\vee (|z-a_L|a_L)} \Big) \dd z\\
		&\le \int_{a_L-\theta}^{\infty} \frac1{\sqrt{2\pi}}\exp\Big(-\frac{z^2}{2} - \frac12\Big(\frac{a_L}{d_L}\Big)^{2\kappa|x-x_0|}{1\vee (|z-a_L|a_L)}\Big) \dd z\,.	
	\end{equs}
	where we have used \eqref{e:TailGauss} to go from the second to the third line. 
	We now split the domain of integration into $I_1 \eqdef [a_L - \theta, a_L - \frac{1}{a_L} ] $ and $I_2 \eqdef (a_L - \frac1{a_L},\plusinfty)$. Using \eqref{e:preBound}, the integral over $I_2$ is bounded by
	\begin{equ}
		e^{- \frac12(\frac{a_L}{d_L})^{2\kappa|x-x_0|}}\P(\cN(0,1) \ge a_L - \frac1{a_L}) \lesssim \frac1{L^d} e^{1 - \frac12(\frac{a_L}{d_L})^{2\kappa|x-x_0|}}\,, 
	\end{equ}
	while the integral over $I_1$ can be rewritten as (take $y = (a_L-z)a_L$)
	\begin{equs}
		\int_1^{\theta a_L}&\frac{1}{\sqrt{2\pi} a_L} \exp\Big(-\frac{a_L^2}{2} - \frac{y^2}{2a_L^2} - y\Big(\frac12 \Big(\frac{a_L}{d_L}\Big)^{2\kappa|x-x_0|} -1\Big)\Big) \dd y\\
		&\lesssim \frac{1}{L^d} \int_1^{\infty} \exp\Big(- \frac{y}{4} \Big(\frac{a_L}{d_L}\Big)^{2\kappa|x-x_0|} \Big) \dd y\lesssim \frac1{L^d} \exp\Big(-\frac14 \Big(\frac{a_L}{d_L}\Big)^{2\kappa|x-x_0|}\Big)\,.
	\end{equs}
	thanks to $d_L\ll a_L$ and \eqref{e:Asympa_L}. 
	Plugging these estimates into~\eqref{e:E3Bound} we are left to control
	\begin{equs}
		\frac{1}{L^d}\sum_{x\in Q_{R_L,x_0}^{\neq x_0}}\exp\Big(-\frac14 \Big(\frac{a_L}{d_L}\Big)^{2\kappa|x-x_0|}\Big)=\frac{1}{L^d}\sum_{x\in Q^{\neq 0}_{R_L}}\exp\Big(-\frac14 \Big(\frac{a_L}{d_L}\Big)^{2\kappa|x|}\Big) \,,
	\end{equs}
	from which~\eqref{e:nonE} follows at once.  
\end{proof} 

\begin{calc}
For the last estimate: 
There exists a $C>0$ such that for all  $b\in (0,1)$ and $R>0$
\begin{align*}
\sum_{x\in Q^{\neq 0}_{R}} b^{|x|} \le C b.
\end{align*}
We can estimate the above sum by a multiple of following:
\begin{align*}
\int_1^\infty r^{d-1} e^{r \ln b} \dd r 
& = \frac{1}{\ln b} [r^{d-1} e^{r \ln b}] |_1^\infty - (d-1) \int_1^\infty r^{d-2} e^{r \ln b} \dd r \\
& = \frac{b}{\ln b} - \frac{d-1}{(\ln b)^2} \int_1^\infty r^{d-2} \frac{\mathrm{d}}{\mathrm{d} r} e^{r \ln b} \dd r, 
\end{align*}
so we get something like $b P((\ln b)^{-1})$, where $P$ is a polynomial. As $(\ln b)^{-1} <1$, $P((\ln b)^{-1}) \lesssim 1$. 
\end{calc}

We now derive two properties of the maxima of the fields 
$\xi_L$ and $\Xi_L$ (whose definition is in \eqref{e:XiL}) on $Q_{R_L}$, which are implied by the previous lemma. 
Before that, since thanks to~\eqref{e:preBound} and~\eqref{e:GausSum0}, we know 
that the maxima of $\xi_L$ and $\Xi_L$ are of respective sizes $a_L$ in~\eqref{e:aL} and $a_L^{\Xi}$ 
in~\eqref{e:aLXiL}, and that the order of the fluctuations around them is $a_L^{-1}$, 
let us introduce the rescaled fields
\begin{equ}[e:ScaledFields]
\Theta_L^{\xi}(x)\eqdef a_L\big(\xi_L(x)-a_L\big)\,,\quad \Theta_L^{\Xi}(x)\eqdef a_L\big(\Xi_L(x)-a_L^{\Xi}\big)\,,\qquad x\in\Z^d\;.
\end{equ}

At first, we want to show that on a box of size $R_L$ both $\xi_L$ 
and $\Xi_L$ can have at most one point at which $\Theta^{\xi}_L$ and $\Theta_L^{\Xi}$ are order $1$ 
(and this is the point at which they achieve their maxima). 
For this, for $z\in\Z^d$ and $s\in\R$, we will show that the events 
\begin{equ}[e:AjLXi]
\cA^{\chi}_{L,z}(s)\eqdef \bigcup_{x \ne y \in Q_{R_L,z}} \Big\{\Theta_L^{\chi}(x)\geq -s\;;\; \Theta_L^{\chi}(y)\geq -s\Big\}\,,
\end{equ}
for $\chi$ either $\xi$ or $\Xi$,  
are asymptotically negligible. 

To phrase the second property, for $z\in\Z^d$, let
\begin{equ}[e:wLwXiL]
w_{L,z}\eqdef\argmax_{Q_{R_L,z}}\xi_L\qquad\text{and}\qquad w_{L,z}^{\Xi}\eqdef\argmax_{Q_{R_L,z}}\Xi_L\,.
\end{equ}
Then, we want to verify that, provided the maximum of $\Xi_L$ is ``large'', $w_L=w_L^{\Xi}$ with high probability. 

For both features, an important step is to show that 
if the maximum of $\Xi_L$ is of order $a_L^{\Xi}$, then the maximum 
of $\xi_L$ must be of order $a_L$. This is the first point of the next lemma. 
The argument we will exploit 
uses the exponential decay of the principal normalised eigenfunction $\bar\phi_L$ 
of the operator $\bar\cH_L$ in~\eqref{e:DetHam}, which is a standard fact (independent of the specific setting of 
the present paper) and will anyway be detailed in 
Lemma~\ref{l:DecayPsi}. For the reader's convenience, let us anticipate that this amounts 
to say that there exists a constant $C>0$ such that
\begin{equ}[e:DecayPhiL]
\bar\phi_{L}(x)^2\leq C\Big(\frac{a_L}{d_L}\Big)^{-2|x|}\qquad \forall\, x\in Q^{\neq 0}_{r_L}\,.
\end{equ}

\begin{lemma}\label{l:UniqueMax}
The following statements hold: 
\begin{enumerate}[label=\normalfont{(\roman*)}]
\item\label{i:OrderMax} For $\theta=2d+1$ as in Definition~\ref{d:Event} and for any $s\in \R$, there exists $L_0\geq 1$ such that for every $L\geq L_0$ we have
\begin{equ}[e:OrderMax]
\Big\{\max_{x\in Q_{R_L,z}} \Xi_L(x)\geq a_L^{\Xi}-\frac{s}{a_L}\Big\} \cap \Big\{\max_{x\in Q_{R_L+r_L,z}}\xi_L(x)\leq a_L-\theta\Big\}=\emptyset\,,
\end{equ}
\item\label{i:UniqueMax} For every $z\in\Z^d$ and every $s\in\R$, it holds 
\begin{equ}[e:UniqueMax]
\lim_{L\to\infty}\Big(\frac{L}{R_L}\Big)^d\P\Big(\cA^{\chi}_{L,z}(s)\Big)=0\,,
\end{equ}
where the events $\cA^{\chi}_{L,z}(s)$ are defined according to~\eqref{e:AjLXi} and
$\chi$ is either $\xi$ or $\Xi$, 
\item\label{i:SameMax} 
For every $z\in \Z^d$, we have with $w_{L,z}$ and $w_{L,z}^{\Xi_L}$ as in~\eqref{e:wLwXiL}, we have
\begin{equ}[e:SameMax]
\lim_{L\to\infty}\Big(\frac{L}{R_L}\Big)^d \P\Big(\max_{x\in Q_{R_L,z}}\Xi_L(x)\geq a_L^{\Xi}-\frac{s}{a_L}\,;\, w_{L,z}\neq w_{L,z}^{\Xi}\Big)=0\,.
\end{equ}
\end{enumerate}
\end{lemma}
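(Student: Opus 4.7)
For~\eqref{i:OrderMax}, I argue deterministically. Using the defining relation~\eqref{e:FluctField}, one rewrites $\Xi_L(x)$ as a \emph{non-negative} linear combination of the values of $\xi_L$ on $x+Q_{r_L}\subset Q_{R_L+r_L,z}$, with total weight $1+\varepsilon_L$ where $\varepsilon_L\eqdef\sum_{y\neq 0}\bar\phi_L(y)^2\,\cS_L(y)/a_L$. Combining the exponential decay~\eqref{e:DecayPhiL} of $\bar\phi_L$ with the bound $\cS_L(y)/a_L\le e^{\fc'|y|}/d_L$ from~\eqref{e:AlmostDecay} shows that the sum is geometric with ratio $O(d_L^2/a_L^2)$, so that $\varepsilon_L=O(d_L/a_L^2)$. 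Thus on the event that $\xi_L\le a_L-\theta$ on $Q_{R_L+r_L,z}$,
\[\Xi_L(x)\le(a_L-\theta)(1+\varepsilon_L)=a_L-\theta+O(d_L/a_L),\]
which is strictly below $a_L^{\Xi}-s/a_L\ge a_L-|s|/a_L$ for all $L$ large enough, since $\theta=2d+1\ge 3$ and $d_L/a_L,\,s/a_L\to 0$.

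For~\eqref{i:UniqueMax} with $\chi=\xi$, I use a direct bivariate Gaussian tail bound based on $v_L(x-y)\le 1-\fc/d_L$, obtaining $\P(\xi_L(x),\xi_L(y)\ge a_L-s/a_L)\lesssim L^{-d}\,e^{-c\,a_L^2/d_L}$ uniformly in $x\ne y$; the union bound over the $\lesssim R_L^{2d}$ pairs, multiplied by $(L/R_L)^d$, reduces to $R_L^d e^{-c\,a_L^2/d_L}\to 0$ thanks to the upper bound $\ln R_L\ll a_L^2/d_L$ in~\eqref{e:RL}. For $\chi=\Xi$, I apply the deterministic argument of~\eqref{i:OrderMax} locally on $x+Q_{r_L}$ and $y+Q_{r_L}$ to produce sites $x^*,y^*$ at which $\xi_L\ge a_L-\theta$, and split according to whether $x^*=y^*$ or not. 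If $x^*\ne y^*$, part~\eqref{i:event1} of Proposition~\ref{l:Event} combined with~\eqref{e:LowerShape} forces $|x^*-y^*|_\infty>R_L$ on $E_{L,x^*}$, so $x^*,y^*$ lie in opposite strips of $Q_{R_L+r_L,z}$ and the previous bivariate bound, now summed over the $O(r_L^2 R_L^{2(d-1)})$ admissible pairs, is negligible after multiplication by $(L/R_L)^d$. If $x^*=y^*$ then $|x-y|_\infty\le r_L$ and at least one of $x,y$ differs from $x^*$; plugging~\eqref{e:LowerShape} into the non-negative linear combination used for~\eqref{i:OrderMax} yields $\Xi_L(w)\le a_L+\theta-\tfrac{\fc}{8}\tfrac{a_L}{d_L}$ for every $w\in Q_{r_L,x^*}^{\neq x^*}$, which contradicts $\Xi_L(w)\ge a_L^{\Xi}-s/a_L$ since $a_L/d_L\to\infty$. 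Hence the close subcase forces $E_{L,x^*}^c$, whose contribution is bounded via~\eqref{e:nonE} summed over $x^*\in Q_{R_L+r_L,z}$.

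For~\eqref{i:SameMax}, I combine Lemma~\ref{l:GaussianNew} with Proposition~\ref{l:Event}. Since Assumption~\ref{a:TechnicalAssum} entails $a_L\tau_L^2\ll 1/d_L\le 1$ and $1/a_L\to 0$,~\eqref{e:GausSum2} applies with the constant $\theta_L=\theta$; summing the pointwise bound over $w\in Q_{R_L,z}$ shows that, up to an event of probability $o((R_L/L)^d)$, the argmax $w_{L,z}^{\Xi}$ satisfies $\xi_L(w_{L,z}^{\Xi})\ge a_L-\theta$ whenever $\max_{Q_{R_L,z}}\Xi_L\ge a_L^{\Xi}-s/a_L$. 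A further application of~\eqref{e:nonE}, again summed over $Q_{R_L,z}$, ensures that $E_{L,w_{L,z}^{\Xi}}$ holds on this event up to an error of the same order. Property~\eqref{i:event1} of Proposition~\ref{l:Event} then identifies $w_{L,z}^{\Xi}$ as the unique maximiser of $\xi_L$ on $Q_{2R_L,w_{L,z}^{\Xi}}\supset Q_{R_L,z}$, so $w_{L,z}=w_{L,z}^{\Xi}$, establishing~\eqref{e:SameMax}.

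The main technical obstacle is the close subcase of~\eqref{i:UniqueMax} for $\chi=\Xi$: one must carefully exploit the non-negative linear combination structure of $\Xi_L$ together with the sharp shape bound~\eqref{e:LowerShape} available on $E_{L,x^*}$ to preclude $\Xi_L$ from reaching level $a_L^{\Xi}$ at two distinct points within a single $Q_{r_L}$-neighbourhood of a large peak of $\xi_L$.
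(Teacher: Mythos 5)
Your proof is correct, and for part~\eqref{i:OrderMax} it essentially coincides with the paper's argument (rewriting $\Xi_L(y)$ as a non-negative linear combination of the values of $\xi_L$ on $y+Q_{r_L}$ with total mass $1+O(d_L/a_L^2)$). For part~\eqref{i:UniqueMax}, however, you take a genuinely different route. The paper first reduces $\cA^{\chi}_{L,z}(s)$, via~\eqref{e:MaxNegl}, to the union $\cup_{x_0}E_{L,x_0}$, on which the peak structure from Proposition~\ref{l:Event}~\eqref{i:event1} forbids two large points; for $\chi=\Xi$ it further invokes~\eqref{e:GausSum2} through the auxiliary event $\cF_L$. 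You instead prove a two-point Gaussian exceedance estimate $\P\big(\xi_L(x),\xi_L(y)\ge a_L-O(1)\big)\lesssim L^{-d}e^{-c\,a_L^2/d_L}$, valid uniformly in $x\ne y$ because $v_L(x-y)\le 1-\fc/d_L$, and, for $\chi=\Xi$, reduce large values of $\Xi_L$ to large values of $\xi_L$ through the pointwise version of part~\eqref{i:OrderMax}. This is more self-contained---it dispenses with~\eqref{e:GausSum2} for this part and is closer in spirit to classical two-point extreme-value calculations---but at the cost of extra combinatorics: the $O(r_L^2 R_L^{2(d-1)})$ opposite-strip pairs when $|x^*-y^*|_\infty>R_L$, and the careful isolation of the single ``high'' site $x^*$ inside the non-negative combination when $x^*=y^*$. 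One organizational point you should make explicit when writing this up: the subcase $x^*\ne y^*$ with $E_{L,x^*}^\complement$ must also be absorbed into the~\eqref{e:nonE} sum over $x^*\in Q_{R_L+r_L,z}$ (your phrasing ties that sum only to the close subcase), so that sum should be stated once, covering any $x^*$ with $\xi_L(x^*)\ge a_L-\theta$ on which $E_{L,x^*}$ fails. For part~\eqref{i:SameMax} you use the same three ingredients as the paper (\eqref{e:GausSum2} with the constant threshold $\theta$, \eqref{e:nonE}, and the uniqueness of the peak from Proposition~\ref{l:Event}) but argue directly about the argmax $w^{\Xi}_{L,z}$ rather than through the events $\cA^\Xi_L$ and $\cF_L$, which is a modest streamlining of the paper's argument.
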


\begin{remark}
The reason why in~\eqref{e:OrderMax} the maximum of $\xi_L$ is taken on $Q_{R_L+r_L}$, while that of $\Xi_L$ on $Q_{R_L}$, 
is that, by definition, see \eqref{e:XiL} and \eqref{e:FL}, $\Xi_L$ depends on the values of $\xi_L$ on a box of side-length $R_L+r_L$. 
\end{remark}

\begin{proof}
W.l.o.g. we can (and will) take $z=0$ throughout the proof and omit the corresponding subscript, i.e.  $\cA^{\chi}_{L}(s)= \cA^{\chi}_{L,0}(s)$, $w_L=w_{0,L}$, etc. 
We begin by proving~\ref{i:OrderMax}. 
On $\{\max_{Q_{R_L+r_L}}\xi_L\leq a_L-\theta\}$, using 
that by definition, $\Xi_L =\xi_L+\Phi_L$~\eqref{e:XiL}, where $\Phi_L$ is as in~\eqref{e:FL}, and recalling $\zeta_{L,\cdot}$ from~\eqref{e:FluctField}, 
we can control $\Xi_L$ at any $y\in Q_{R_L}$ as
\begin{equs}
\Xi_L(y)&=\xi_L(y) \Big( 1 - \sum_{x\in Q_{r_L}^{\ne 0}} \bar\phi_L(x)^2 v_L(x)\Big) + \sum_{x\in Q_{r_L}^{\ne 0}} \bar\phi_L(x)^2 \xi_L(y+x)\\
		&\le (a_L-\theta) \Big( 1 + \sum_{x\in Q_{r_L}^{\ne 0}} \bar\phi_L(x)^2 (1-v_L(x))\Big)\,.
		\end{equs}
Now, the lower bound in~\eqref{e:AlmostDecay} and the bound in~\eqref{e:DecayPhiL} 
ensure that the remaining sum can be bounded by 
\begin{equs}
	\sum_{x\ne 0} \bar\phi_L(x)^2 (1-v_L(x))&\leq \frac{1}{d_L}\sum_{x\in Q_{r_L}^{\neq 0}} \Big(\frac{d_L}{a_L}\Big)^{2|x|} e^{\fc'|x|}\lesssim \frac{d_L}{a_L^2}\,.
\end{equs}
Since further $d_L\ll a_L$ by Assumption~\ref{ass:dLaL}, we deduce for large $L$
\begin{equs}
\Xi_L(y)&\leq (a_L-\theta)\Big(1+C\Big(\frac{d_L}{a_L^2}\Big)\Big)\leq a_L-\theta +C\Big(\frac{d_L}{a_L}\Big)\\
&<a_L-\frac{\theta}{2}<a_L\sqrt{1+\tau_L^2}-\frac{s}{a_L}=a_L^{\Xi}-\frac{s}{a_L}\;.
\end{equs}
Henceforth, if $\max\xi_L\leq a_L-\theta$, also $\max\Xi_L<a_L^{\Xi}-s/a_L$ and~\ref{i:OrderMax} is proved. 
\medskip

Let us begin by proving~\eqref{e:UniqueMax}. 

Notice that for $L$ large enough, $\cA^{\chi}_L$ is contained in $\{\max_{Q_{R_L+r_L}}\xi_L\geq a_L-\theta\}$: 
for $\chi=\xi$ this holds by definition, while for $\chi=\Xi$ it follows from 
$\cA_L^{\Xi}\subset \{\max_{Q_{R_L}}\Xi_L\geq a_L^{\Xi}-s/a_L\}$ 
and~\eqref{e:OrderMax}. As a consequence, 
\begin{equs}
\P\Big(\cA_L^{\chi}\cap \Big(\bigcup_{x_0\in Q_{R_L}} E_{L,x_0}\Big)^\complement\Big)\leq  \P\Big(\max_{x\in Q_{R_L+r_L}}\xi_L(x)\geq a_L-\theta\,;\,\Big(\bigcup_{x_0\in Q_{R_L}} E_{L,x_0}\Big)^\complement\Big) \,, 
\end{equs}
and, by~\eqref{e:MaxNegl} since $r_L\ll R_L$, the r.h.s.~is negligible compared to $(R_L/L)^d$. 
Thus, 
\begin{equ}[e:UniqueMaxInter]
\P(\cA^{\chi}_L)= \P\Big(\cA^{\chi}_L\cap \bigcup_{x_0\in Q_{R_L}} E_{L,x_0}\Big)+o\Big(\frac{R_L^d}{L^d}\Big)\,. 
\end{equ}
Now, for $\chi=\xi$, we exploit property~\ref{i:event2} from Proposition~\ref{l:Event} which  implies that 
\begin{equs}
\P\Big(\cA^{\xi}_L\cap \bigcup_{x_0\in Q_{R_L}} E_{L,x_0}\Big)=\sum_{x_0\in Q_{R_L}}\P(\cA^{\xi}_L\cap E_{L,x_0})\,.
\end{equs}
We have $\cA^{\xi}_L\cap E_{L,x_0}=\emptyset$. Indeed, on $E_{L,x_0}$, $\xi_L$ has a unique maximum at $x_0$ and $\xi_L$ is ``small'' nearby due to~\eqref{e:LowerShape}, 
and we conclude that~\eqref{e:UniqueMax} holds. 
\begin{calc}
On $E_{L,x_0}$ we have for $x$ with $1\le |x-x_0| \le 2R_L$, and for large $L$, 
\begin{align*}
\xi_L(x) \le \xi_L(x_0) - \frac{\fc}{2} \frac{a_L}{d_L} \le a_L + \theta - \frac{\fc}{2} \frac{a_L}{d_L} < a_L - \frac{s}{a_L} \,.
\end{align*}
Hence, no other $x$ in $Q_{R_L}$ than $x_0$ itself can be such that $\xi_L(x) \ge a_L - \frac{s}{a_L}$, i.e. $\Theta_L^\xi(x) \ge -s$. 
\end{calc}

For $\chi=\Xi$ instead, let us introduce the event $\cF_L$ which is defined by 
\begin{equ}
\cF_L\eqdef \bigcup_{y\in Q_{R_L}}\Big\{\Xi_L(y)\geq a_L^{\Xi}-\frac{s}{a_L}\,;\,\xi_L(y)<a_L-\theta\Big\}\,.
\end{equ}
Thanks to~\eqref{e:GausSum2}, its probability is negligible compared to $(R_L/L)^d$, so that    
\begin{equs}
\P\Big(\cA^{\Xi}_L\cap \bigcup_{x_0\in Q_{R_L}} E_{L,x_0}\Big)&\leq \P(\cF_L)+\P\Big(\cA^{\Xi}_L\cap \bigcup_{x_0\in Q_{R_L}} E_{L,x_0}\cap\cF_L^\complement\Big)\\
&\leq o\Big(\frac{R_L^d}{L^d}\Big)+\sum_{x_0\in Q_{R_L}}\P\Big(\cA^{\Xi}_L\cap E_{L,x_0}\cap\cF_L^\complement\Big)\,.
\end{equs}
By \eqref{e:LowerShape}, on $E_{L,x_0}$, 
$\xi_L(y) < a_L + \theta - \frac{\fc}{2}(a_L/d_L)<a_L-\theta$ for $L$ large enough and any $y\neq x_0$, 
so that on $E_{L,x_0}\cap\cF_L^\complement$, we must necessarily have that 
for every $y\neq x_0$, $\Xi_L(y)<a_L^{\Xi}-s/a_L$. But this means that 
$\cA^{\Xi}_L\cap E_{L,x_0}\cap\cF_L^\complement=\emptyset$ which, together with~\eqref{e:UniqueMaxInter}, 
implies that~\eqref{e:UniqueMax} holds also for $\chi=\Xi$. 
\medskip

At last, we show~\ref{i:SameMax}. 
We already know from~\eqref{e:OrderMax} that
$\{\max\Xi_L\geq a_L^{\Xi}-s/a_L\,;\, w_{L}\neq w_{L}^{\Xi_L}\}$ 
is contained in $\{\max\xi_L\geq a_L-\theta\}$, so that, by~\eqref{e:MaxNegl}, the probability 
of the former is the same as that of its intersection with the union of the $E_{L,\cdot}$ 
up to an error negligible compared to $R_L^d/L^d$. Moreover, by~\eqref{e:UniqueMax} 
for $\chi = \Xi$ and~\eqref{e:GausSum2}, we have that $\P(\cA^{\Xi}_L)$ and $\P(\cF_L)$ are negligible compared to $(R_L/L)^d$. 
Putting all these together, we deduce 
\begin{equs}
 \,&\P\Big(\max_{x\in Q_{R_L}}\Xi_L\geq a^{\Xi}_L-\frac{s}{a_L}\,;\, w_{L}\neq w_{L}^{\Xi_L}\Big)\\
 &= \P\Big(\max_{x\in Q_{R_L}}\Xi_L\geq a^{\Xi}_L-\frac{s}{a_L}\,;\, w_{L}\neq w_{L}^{\Xi_L}\,;\,\bigcup_{x_0\in Q_{R_L}}E_{L,x_0}\,;\,(\cA^{\Xi}_L)^\complement\,;\,\cF_L^\complement\Big)+o\Big(\frac{R_L^d}{L^d}\Big)\\
 &=\sum_{x_0\in Q_{R_L}}\P\Big(\max_{x\in Q_{R_L}}\Xi_L\geq a^{\Xi}_L-\frac{s}{a_L}\,;\, x_0\neq w_{L}^{\Xi_L}\,;\,E_{L,x_0}\,;\,(\cA^{\Xi}_L)^\complement\,;\,\cF_L^\complement\Big)+o\Big(\frac{R_L^d}{L^d}\Big)\,.
\end{equs}
But now, each of the summands above is $0$. Indeed, on $(\cA^{\Xi}_L)^\complement$ there is at most one point on $Q_{R_L}$ at which $\Xi_L$ is above $a^{\Xi}_L-s/a_L$, and, for every $x_0\in Q_{R_L}$, 
on $E_{L,x_0}\cap \cF_L^\complement$, $\max_{y\neq x_0}\Xi_L(y)<a_L^{\Xi}-s/a_L$, 
which implies that the only point at which $\Xi_L$ can be above $a_L^{\Xi}-s/a_L$ is $x_0$. 
Thus, $w_{L}^{\Xi_L}=x_0$ and the intersection of the events is empty. 
\end{proof}

\subsection{Tail distribution of the maximum}\label{sec:Tail}

A first major consequence of the analysis carried out in the previous section 
is that it provides a rather simple way to determine the tail distributions of $\xi_L(w_L)$, $\Xi_L(w_L)$ 
and of the couple $(\xi_L(w_L), \Phi_L(w_L))$ where $\xi_L$ is our potential, 
$\Xi_L$ and $\Phi_L$ are the fields respectively defined in~\eqref{e:XiL} and~\eqref{e:FL}, 
and $w_L$ is the point in $Q_{R_L}$ at which $\xi_L$ achieves its maximum, i.e. $w_L=w_{L,0}$ and the latter is 
given in~\eqref{e:wLwXiL}. 
The next theorem enucleates the rigorous statement we are after.

\begin{theorem}\label{Th:TailJointLaw}
As $L\to\infty$, the Radon measures
\begin{equ}[e:Meas]
\Big(\frac{L}{R_L}\Big)^d  \P\Big(a_L\big(\xi_L(w_L)-a_L\big) \in \dd u \Big)\;, \mbox{ and }\Big(\frac{L}{R_L}\Big)^d  \P\Big(a_L\big(\Xi_L(w_L)-a_L^{\Xi}\big) \in \dd u \Big)
\end{equ}
on $(-\infty,\plusinfty]$ converge vaguely to $e^{-u}\dd u$. 
Furthermore, if $a_L\tau_L\sim \sqrt{b}$ for $b\geq 0$, then 
the Radon measure on $(-\infty,\plusinfty] \times [-\infty,\plusinfty]$ given by 
\begin{equ}[e:MeasJoint]
\Big(\frac{L}{R_L}\Big)^d  \P\Big(\Big(a_L\big(\xi_L(w_L)-a_L\big)\,,\,a_L\Phi_L(w_L)\Big) \in \dd u\otimes\dd v \Big)\;,
\end{equ}
converges vaguely to $e^{-u} \dd u \otimes \frac{1}{\sqrt{2\pi b}}e^{-\frac{v^2}{2b}} \dd v$. 
\end{theorem}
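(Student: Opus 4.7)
The strategy is to reduce all three vague convergences to single-site computations, by decomposing the events governing $w_L$ over the disjoint events $E_{L,x_0}$ from Proposition~\ref{l:Event}. On each $E_{L,x_0}$, $\xi_L$ has a unique maximum at $x_0$ so $w_L = x_0$; the collection $\{E_{L,x_0}\}_{x_0 \in Q_{R_L}}$ is pairwise disjoint (this is really proved inside Proposition~\ref{l:Event}\ref{i:event2}, since on $E_{L,x_0}$ the bound~\eqref{e:LowerShape} forces $E^1_{L,y_0}$ to fail for every $y_0 \ne x_0$ in $Q_{R_L}$); and by~\eqref{e:MaxNegl} any event forcing $\xi_L$ to exceed $a_L - \theta$ somewhere in $Q_{R_L + r_L}$ may be intersected with $\bigcup_{x_0} E_{L,x_0}$ at a cost $o((R_L/L)^d)$.

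For the joint statement, I fix $u \in \R$ and $-\infty \le v_1 < v_2 \le +\infty$ and set $A(x_0) \eqdef \{\xi_L(x_0) \ge a_L + u/a_L,\; a_L \Phi_L(x_0) \in (v_1, v_2]\}$ together with $B(x_0) \eqdef A(x_0) \cap E_{L,x_0}$. The $B(x_0)$'s are then pairwise disjoint, and since $E_{L,x_0}$ forces $w_L = x_0$, the event $\{\xi_L(w_L) \ge a_L + u/a_L,\; a_L \Phi_L(w_L) \in (v_1, v_2]\}$ coincides with $\bigsqcup_{x_0} B(x_0)$ on $\bigcup_{x_0} E_{L,x_0}$. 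Combining~\eqref{e:MaxNegl} (to discard $(\bigcup_{x_0} E_{L,x_0})^\complement$) and the estimate~\eqref{e:nonE} (which yields $\sum_{x_0} \P(A(x_0) \cap E_{L,x_0}^\complement) \lesssim R_L^d L^{-d} e^{-C (a_L/d_L)^{2\kappa}} = o((R_L/L)^d)$) gives
\[
\P\bigl(\xi_L(w_L) \ge a_L + \tfrac{u}{a_L},\; a_L \Phi_L(w_L) \in (v_1, v_2]\bigr) = \sum_{x_0 \in Q_{R_L}} \P(A(x_0)) + o\bigl((R_L/L)^d\bigr).
\]

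By translation invariance $\sum_{x_0} \P(A(x_0)) = R_L^d \P(A(0))$, and the essential input is Lemma~\ref{l:FluctField}: since $\Phi_L(0)$ is a functional of $\zeta_{L,0}$, it is independent of $\xi_L(0)$, so $\P(A(0)) = \P(\xi_L(0) \ge a_L + u/a_L)\,\P(a_L \Phi_L(0) \in (v_1, v_2])$. The first factor is $L^{-d} e^{-u}(1+o(1))$ by~\eqref{e:preBound}, while $a_L \Phi_L(0) \sim \cN(0, a_L^2 \tau_L^2)$ converges in law to $\cN(0,b)$ under the hypothesis $a_L \tau_L \to \sqrt{b}$, so the second factor tends to $\P(\cN(0,b) \in (v_1, v_2])$. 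Multiplying by $(L/R_L)^d$ produces the claimed product $e^{-u}\,\dd u \otimes \frac{1}{\sqrt{2\pi b}} e^{-v^2/(2b)}\,\dd v$. The first marginal~\eqref{e:Meas} for $\xi_L(w_L)$ is obtained as a special case by taking $(v_1,v_2) = (-\infty, +\infty)$, integrating out $\Phi_L$ entirely, so that no hypothesis on $\tau_L$ is required.

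For the marginal of $\Xi_L(w_L)$, I proceed slightly differently: Lemma~\ref{l:UniqueMax}\ref{i:SameMax} together with~\eqref{e:UniqueMax} applied to $\chi = \Xi$ (which forces two distinct near-maxima of $\Xi_L$ to be unlikely) gives $\P(\Xi_L(w_L) \ge a_L^\Xi + u/a_L) = \P(\max_{Q_{R_L}} \Xi_L \ge a_L^\Xi + u/a_L) + o((R_L/L)^d)$. One then runs an analogous decomposition for $\max \Xi_L$: the inclusion from Lemma~\ref{l:UniqueMax}\ref{i:OrderMax} combined with~\eqref{e:MaxNegl} reduces to $\bigcup_{x_0} E_{L,x_0}$, uniqueness of the $\Xi_L$-maximum on $(\cA^\Xi_L(-u))^\complement$ again yields a disjoint decomposition, and~\eqref{e:GausSum0} replaces~\eqref{e:preBound} to produce $R_L^d L^{-d} e^{-u}(1+o(1))$ after rescaling. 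The main obstacle throughout is accountancy: every transition from $w_L$ to $x_0$, from $A(x_0)$ to $B(x_0)$, and from $\P(A(w_L))$ to $\sum_{x_0} \P(A(x_0))$ produces an error that must be controlled at the scale $(R_L/L)^d$, and this is precisely what the combination of Proposition~\ref{l:Event} (disjointness together with~\eqref{e:nonE}) and Lemma~\ref{l:UniqueMax} (uniqueness for both $\xi$ and $\Xi$, and equality of the argmaxima) is designed to furnish.
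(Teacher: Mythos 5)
Your decomposition over the disjoint events $E_{L,x_0}$, removal of the restriction via~\eqref{e:MaxNegl} and~\eqref{e:nonE}, use of the independence of $\xi_L(x_0)$ and $\Phi_L(x_0)$, and translation invariance reproduce, in essence, the paper's argument for the joint measure~\eqref{e:MeasJoint} and the $\xi$ marginal; your specialisation $(v_1,v_2)=(-\infty,+\infty)$ for the $\xi$ marginal is clean and correct.

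For the $\Xi_L(w_L)$ marginal, however, the sketch has a gap. After reducing to $\sum_{x_0} \P(\Xi_L(x_0)\ge a_L^\Xi + u/a_L\,;\,E_{L,x_0})$, you still need to pass to the unconditional probabilities $\P(\Xi_L(x_0)\ge a_L^\Xi + u/a_L)$ before~\eqref{e:GausSum0} can be applied. Unlike the $(\xi,\Phi)$ case, the event $\{\Xi_L(x_0)\ge a_L^\Xi + u/a_L\}$ does not of itself force $\xi_L(x_0)\ge a_L-\theta$: the field $\Phi_L(x_0)$ could carry the excess. So the analogue of your bound $\sum_{x_0}\P(A(x_0)\cap E_{L,x_0}^\complement) = o((R_L/L)^d)$ is not available by~\eqref{e:nonE} alone. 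The paper handles this by passing through the intermediate event $\{|\xi_L(x_0)-a_L|\le\theta\}$ (which is implied by $E_{L,x_0}$), using~\eqref{e:nonE} to discard $E_{L,x_0}^\complement$ given $|\xi_L(x_0)-a_L|\le\theta$, and then invoking~\eqref{e:GausSum2} from Lemma~\ref{l:GaussianNew} — which is exactly where Assumption~\ref{a:TechnicalAssum} is used — to discard $\{|\xi_L(x_0)-a_L|>\theta\}$. Neither~\eqref{e:GausSum2} nor this two-step removal appears in your proposal; the appeal to $(\cA^\Xi_L(-u))^\complement$ gives uniqueness of a near-maximum of $\Xi_L$ but does not supply the missing bound. (Incidentally, the detour through $\max_{Q_{R_L}}\Xi_L$ and~\eqref{e:SameMax} is unnecessary: since $E_{L,x_0}$ already forces $w_L = x_0$, one can decompose $\{\Xi_L(w_L)\ge\cdots\}$ directly over $E_{L,x_0}$ exactly as you did for $(\xi,\Phi)$, which is the paper's route.)
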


\begin{remark}\label{rem:b}
The restriction $a_L\tau_L\sim \sqrt{b}$ for $b\geq 0$ in the second part of the statement 
can be easily lifted (and the proof would be unaffected) to cover also the case $\tau_L\gg a_L^{-1}$. 
This would require to scale the second component in~\eqref{e:MeasJoint} to be 
$\Phi_L(w_L)/\tau_L$ instead of $a_L\Phi_L(w_L)$ 
and take $b=1$ in the limiting measure. 

The reason why we stated the result as such is that the joint convergence will 
only be needed when $\tau_L=\cO(a_L^{-1})$, in which case the scaling proposed is more meaningful 
(see the proof of point~\ref{i:main3} in Theorem~\ref{Th:MainRevisited}). 
\end{remark}

\begin{remark}\label{rem:Vague}
The convergence in the second part of the statement means that for any bounded continuous function 
$f\colon(-\infty,\plusinfty] \times [-\infty,\plusinfty]\to\R$ that vanishes outside some set $(-c,\plusinfty]\times[-\infty,\plusinfty]$ 
for some $c>0$, we have
\begin{equ}
\Big(\frac{L}{R_L}\Big)^d \E\Big[f\Big(a_L\big(\xi_L(w_L)-a_L\big)\,,\,a_L\Phi_L(w_L)\Big)\Big] \to \int f(u,v) e^{-u} \dd u \otimes \frac{1}{\sqrt{2\pi b}}e^{-\frac{v^2}{2b}} \dd v\;, 
\end{equ}
and similarly for the first part of the statement.
\end{remark}

\begin{proof}
We present a joint proof of the convergence of the measures in~\eqref{e:Meas} and 
in~\eqref{e:MeasJoint}. Let $\Theta_L^{\xi}$ and $\Theta_L^{\Xi}$ be as in~\eqref{e:ScaledFields} and 
similarly set 
\begin{equ}
\Theta_L^{(\xi,\Phi)}(x) \eqdef \Big(a_L\big(\xi_L(x)-a_L\big)\,,\,a_L\Phi_L(x)\Big)\;, \qquad x\in\Z^d\,.
\end{equ}
Below $\chi$ will denote either $\xi,\Xi$ or $(\xi,\Phi)$.


Now, in either case the limit measures have no atoms, thus it suffices to determine 
the behaviour of the measures when evaluated at sets of the form $I^{\chi}=(u,\infty]$, for $\chi=\xi,\Xi$, and 
$I^\chi = (u,\infty]\times(v,\infty]$ for $\chi=(\xi,\Phi)$, with $u,v\in\R$. 

We claim that  
\begin{equs}\label{Eq:JointSglePoint}
\P\Big(\Theta^{\chi}_L(w_L) \in I^{\chi} \Big)=\sum_{x_0\in Q_{R_L}}\P\Big(\Theta^{\chi}_L(x_0) \in I^{\chi} \Big)+o\Big(\frac{R_L^d}{L^d}\Big)\,.
\end{equs}
Before proving~\eqref{Eq:JointSglePoint}, let us see how it implies the result. 
For $\chi=\xi$ or $\Xi$ 
this is an immediate consequence of the fact that the law of $\xi_L(x_0)$ and $\Xi_L(x_0)$ 
is independent of $x_0$, and of~\eqref{e:preBound} and~\eqref{e:GausSum0} respectively. 
For the other, the independence of $\xi_L(x_0)$ and $\zeta_{L,x_0}$ stated in Lemma~\ref{l:FluctField} 
implies 
\begin{equs}
		\P\Big(\Theta^{\xi_L}(x_0) > u ; a_L\Phi_L(x_0) > v \Big)&= \P\big(a_L(\xi_L(x_0) - a_L) > u\big) \P\Big(a_L\Phi_L(x_0) > v \Big)\\
		&= \frac{1}{L^d}e^{-u}(1+o(1))\int_v^{\plusinfty} \frac{1}{\sqrt{2\pi b}}e^{-\frac{w^2}{2b}} \dd w\,,
\end{equs}
where in the last equality, we used~\eqref{e:preBound} on the first factor, 
and the fact that $a_L\Phi_L(x_0)$ is a centred Gaussian random variable with variance $(a_L\tau_L)^2 \sim b$ 
on the second.

Thus, it remains to show the claim. 
Note that  $\{ \Theta^{\chi}_L(w_L)\in I^{\chi}\}\subset\{\max_{Q_{R_L+r_L}} \xi_L\geq a_L-\theta\}$, 
for $\chi=\Xi$, by Lemma~\ref{l:UniqueMax} while for $\chi=\xi,(\xi,\Phi)$, by definition. 
Thus, thanks to~\eqref{e:MaxNegl} we have 
 \begin{equs}[e:QuasiQuasi]
\P(\Theta^{\chi}_L(w_L)\in I^{\chi})=&\P\Big(\{\Theta^{\chi}_L(w_L)\in I^{\chi}\}\cap \bigcup_{x_0\in Q_{R_L}} E_{L,x_0}\Big)\\
&+ \P\Big(\{\Theta^{\chi}_L(w_L)\in I^{\chi}\}\cap \Big(\bigcup_{x_0\in Q_{R_L}} E_{L,x_0}\Big)^\complement\Big)\\
=&\sum_{x_0\in Q_{R_L}} \P\Big(\Theta^{\chi}_L(x_0)\in I^{\chi}\,;\, E_{L,x_0}\Big)+o\Big(\frac{R_L^d}{L^d}\Big)\,.
 \end{equs}
Now we argue separately for the three measures. For $\chi = \xi$ or  $(\xi,\Phi)$, we write
\begin{equs}
	\P\Big(\Theta^{\chi}_L(x_0)\in I^{\chi}\,;\, E_{L,x_0}\Big) = \P\Big(\Theta^{\chi}_L(x_0)\in I^{\chi}\Big) - \P\Big(\Theta^{\chi}_L(x_0)\in I^{\chi}\,;\, E_{L,x_0}^\complement\Big)
\end{equs}
and the second term is bounded by $\P(\xi_L(x_0) \ge a_L - \theta \,;\, E_{L,x_0}^\complement)$ 
which is negligible compared to $1/L^d$ by \eqref{e:nonE}, so that the claim follows. 
For $\chi = \Xi$, we write
\begin{equs}
\P\Big(\Theta^{\Xi}_L(x_0)\in I^{\Xi}\,;\, E_{L,x_0}\Big)=& \P(\Theta^{\Xi}_L(x_0)\in I^{\Xi}\,;\, |\xi_L(x_0)-a_L|\leq \theta) \label{e:Quasi3}\\
& - \P(\Theta^{\Xi}_L(x_0)\in I^{\Xi}\,;\, |\xi_L(x_0)-a_L|\leq \theta\,;\, (E_{L,x_0})^\complement\Big)
\end{equs}
and the second term can be bounded by $\P(\xi_L(x_0) \ge a_L - \theta ; (E_{L,x_0})^\complement)$ which, once again, 
is negligible compared to $1/L^d$ by \eqref{e:nonE}. Regarding the first term
 \begin{equs}
  \P(\Theta^{\Xi}_L(x_0)\in I^{\Xi}\,;\, |\xi_L(x_0)-a_L|\leq \theta)=& \P(\Theta_L^{\Xi}(x_0)\in I^{\Xi})\\
  &- \P(\Theta^{\Xi}_L(x_0)\in I^{\Xi}\,;\, |\xi_L(x_0)-a_L|> \theta)
 \end{equs}
 where the second summand is again negligible compared to $1/L^{d}$ in view of~\eqref{e:GausSum2} since, 
 by Assumption \ref{a:TechnicalAssum},
 $\theta=2d+1\gg \max\{a_L\tau_L^2, a_L^{-1}\}$, 
 and thus the proof is complete. 
\end{proof}

\section{Statistics of the maxima of correlated Gaussian fields}\label{sec:LoctoGlobPot}

\noindent The primary goal of this section is to identify the statistics of the maxima of the potential 
$\xi_L$ on $Q_L$ as $L\to\infty$ and thus establish Theorem~\ref{Th:Potential}. Actually, we will not only consider the maxima of $\xi_L$, but also of the fields $\Xi_L$ and $(\xi_L,\Phi_L)$, as these quantities are instrumental in the determination of the top of the spectrum of the Anderson Hamiltonian on $Q_L$, and its relation to the (location of the) maxima of $\xi_L$.

Our analysis will rely on the splitting scheme introduced in Section \ref{sec:Splitting}. 
We will restrict ourselves to the study of the maxima of the fields on $U_L$, 
since, on $Q_L \backslash U_L$, they remain ``small''  with large probability. 
As already mentioned in Section \ref{sec:Splitting}, $U_L$ is a union of mesoscopic boxes 
which lie at a distance at least $\sqrt{R_L}$ from one another. 
A crucial step of our analysis will be to establish suitable {\it decorrelation estimates}, which, roughly speaking, 
allow to regard the restrictions of $\xi_L$ to the mesoscopic boxes $Q_{R_L, z_{j,L}}$, $j\in\{1,\ldots,n_L\}$, as independent. 
As these are technically challenging, we will postpone their statement and proof to 
the end of this section, in Section~\ref{sec:LongRange}. 

\subsection{Convergence of the maxima and fluctuations on $U_L$}\label{sec:OrderStatsXi}

We will deal with the rescaled fields $\Theta_L^{\xi}$, $\Theta_L^{\Xi}$ (see~\eqref{e:ScaledFields}) and $\Theta_L^{(\xi,\Phi)}$, where
\begin{equ}
\Theta_L^{\xi}(x)= a_L\big(\xi_L(x)-a_L\big)\,,\quad \Theta_L^{\Xi}(x)= a_L\big(\Xi_L(x)-a_L^{\Xi}\big)\,,\qquad x\in\Z^d\,,
\end{equ}
for $\Xi_L$ as in~\eqref{e:XiL} and $a_L^\Xi$ in~\eqref{e:aLXiL}, and
\begin{equ}[e:ScaledFieldsJoint]
\Theta_L^{(\xi,\Phi)}(x)\eqdef \big(a_L\big(\xi_L(x)-a_L\big),a_L\Phi_L(x)\big)\,,\qquad x\in\Z^d\,.
\end{equ}
for $\Phi_L$ as in~\eqref{e:FL} and $\tau_L$ in~\eqref{e:tauL}. For any $j\in \{1,\ldots,n_L\}$, denote by $w_{j,L}$ the point in $Q_{R_L, z_{j,L}}$ where $\xi_L$ achieves its maximum (recall the definition of $z_{j,L}$ and $n_L$ in~\eqref{e:QLnL}). 
%

For $\chi=\xi,\, \Xi$ or $(\xi,\Phi)$, the random (point) measures of interest are 
\begin{equ}[e:RPM]
	\cP^{\chi}_L \eqdef \sum_{j=1}^{n_L} \delta_{\big(\frac{z_{j,L}}{L} , \Theta^{\chi}_{L}(w_{j,L})\big)}\,,
\end{equ}
and our first goal is to establish their vague convergence.

\begin{proposition}\label{P:PPPxiPrelim}
	As $L\to\infty$, each random measure $\cP^{\chi}_L$ in~\eqref{e:RPM} converges in law to a Poisson random measure $\cP_\infty^{\chi}$ where
	\begin{itemize}
		\item for $\chi=\xi$ or $\Xi$, the convergence holds for the topology of vague convergence on $[-1,1]^d\times (-\infty,\plusinfty]$, and, in both cases, the limiting Poisson measure has intensity $\dd x \otimes e^{-u} \dd u$,
		\item for $\chi=(\xi, \Phi)$, we further assume that $a_L\tau_L\sim \sqrt{b}$ for some $b\geq 0$. Then the convergence holds for the topology of vague convergence on $[-1,1]^d\times (-\infty,\plusinfty]\times[-\infty,\infty]$, and the limiting Poisson measure has intensity $\dd x \otimes e^{-u} \dd u\otimes \frac1{\sqrt{2\pi b}} e^{-\frac{v^2}{2b}}\dd v$.
	\end{itemize}
\end{proposition}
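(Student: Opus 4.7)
The strategy is to verify Kallenberg's criterion for vague convergence to a Poisson random measure: for any relatively compact rectangle $A\times B$ (with $B$ bounded away from $-\infty$ in the real coordinate, and away from $\pm\infty$ in the third coordinate for the joint case) whose boundary has zero intended intensity, it suffices to show
\begin{equation*}
\E[\cP_L^\chi(A\times B)] \longrightarrow \mu_\chi(A\times B), \qquad \P\bigl(\cP_L^\chi(A\times B)=0\bigr) \longrightarrow e^{-\mu_\chi(A\times B)},
\end{equation*}
where $\mu_\chi$ is the prospective intensity. The first moment rewrites as $\E[\cP_L^\chi(A\times B)]=\sum_{j:\,z_{j,L}/L\in A} p_{j,L}$, with $p_{j,L}\eqdef \P(\Theta_L^\chi(w_{j,L})\in B)$, and by stationarity $p_{j,L}$ does not depend on $j$. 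Theorem~\ref{Th:TailJointLaw} provides exactly $p_{j,L}\sim (R_L/L)^d\,\nu_\chi(B)$, where $\nu_\chi$ is $e^{-u}\dd u$ for $\chi=\xi,\Xi$, and $e^{-u}\dd u\otimes (2\pi b)^{-1/2}e^{-v^2/(2b)}\dd v$ for $\chi=(\xi,\Phi)$. Since the centres $(z_{j,L}/L)_j$ form an asymptotically uniform discretisation of the rescaled cube with spacing $\sim R_L/L$, the Riemann sum converges to $|A|\cdot\nu_\chi(B)=\mu_\chi(A\times B)$, settling the first condition.

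\textbf{Void probability via decorrelation.} The main obstacle is the second Kallenberg condition, where one must replace the weakly correlated family of block maxima by an independent one. By construction, distinct mesoscopic boxes $Q_{R_L,z_{j,L}}$ lie at distance at least $\sqrt{R_L}\ge \exp(\sqrt{\ln L})$; combined with the long-range assumption $\cT_{v_L}\to 0$ of Definition~\ref{d:GF}, this guarantees that the Gaussian vectors $(\xi_L(x))_{x\in Q_{R_L,z_{j,L}}}$ indexed by $j$ are only mildly dependent. The plan is to invoke the decorrelation estimates of Section~3.3 (announced as a ``crucial step'' in the paper), which should produce a Slepian/Berman-type comparison between the true vector of block maxima and the corresponding vector built from an independent Gaussian family, with error vanishing faster than $(R_L/L)^d$ uniformly in $j$. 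Given such a decorrelation, the classical Poisson limit for small, weakly dependent Bernoulli variables yields
$$\P\bigl(\cP_L^\chi(A\times B)=0\bigr)\approx \prod_{j:\,z_{j,L}/L\in A}(1-p_{j,L})\longrightarrow \exp(-\mu_\chi(A\times B)),$$
using $\max_j p_{j,L}\to 0$ together with the first moment asymptotics established above.

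\textbf{Remaining technical points.} The main difficulty lies in calibrating the decorrelation estimate: since $p_{j,L}\asymp (R_L/L)^d$ and there are $n_L\asymp (L/R_L)^d$ blocks, the per-pair correlation error must be summable at a scale that beats $1/L^d$, which is precisely why qualitative decoupling does not suffice and Section~3.3 is needed. For the joint case $\chi=(\xi,\Phi)$, an additional observation is required: by the very definition of $\Phi_L$ in~\eqref{e:FL}, the random variable $\Phi_L(w_{j,L})$ is measurable with respect to $\xi_L$ restricted to an enlarged box of side $R_L+2r_L$ around $z_{j,L}$; since $r_L\ll \sqrt{R_L}$ by~\eqref{e:RL}--\eqref{e:rL}, this enlarged box remains strictly inside the buffer between the $j$-th block and its neighbours, so the same decorrelation scheme transfers verbatim to the joint setting, and the joint single-block tail from the second part of Theorem~\ref{Th:TailJointLaw} delivers the product intensity.
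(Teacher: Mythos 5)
Your approach is genuinely different from the paper's: you propose to verify Kallenberg's first-moment-plus-void-probability criterion, whereas the paper goes through the Laplace functional, showing $\E[\exp(-\lambda \cP_L^\chi(g))]\to\E[\exp(-\lambda\cP_\infty^\chi(g))]$ for $\cC^2$ compactly supported $g\ge 0$ and invoking \cite[Theorem 16.16]{Ka02}. Your first-moment step is sound and rests, as you say, on Theorem~\ref{Th:TailJointLaw} together with stationarity and the Riemann-sum count of blocks, and your observation about the measurability of $\Phi_L(w_{j,L})$ with respect to $\xi_L$ on a box of side $R_L+r_L$ (not $R_L+2r_L$, but the point stands since $r_L\ll\sqrt{R_L}$) is correct and mirrors the way the paper handles the joint case inside the decorrelation estimate.

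However, there is a genuine gap in the void-probability step. You write that ``the decorrelation estimates of Section 3.3 should produce a Slepian/Berman-type comparison,'' but what is actually proved there, Proposition~\ref{L:Leadbetter}, is a factorisation statement for products $\prod_j\prod_{x_0}\exp(-\lambda g(\dots))$ with $g$ \emph{of class $\cC^2$}; the proof moves two derivatives off the Gaussian density and onto the test function $A(s)$, which is impossible for the indicator $\prod_j\prod_{x_0}\1_{\{\Theta_L^\chi(x_0)\notin B\}}$ that your void-probability computation needs. The paper explicitly flags this: in the discussion preceding Proposition~\ref{L:Leadbetter} it notes that the indicator version (which is what Leadbetter's Theorem~4.2.1 handles) is harder, and that ``dealing with regular functions $g$ makes the proof somewhat easier,'' which is precisely why the paper routes the whole argument through Laplace functionals rather than avoidance functions. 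To make your plan rigorous you would have to either (i) reprove the long-range partial decorrelation for indicator events via a Plackett/normal-comparison computation that evaluates the mixed second derivative of the joint Gaussian density against the indicator directly (keeping, as the paper does, the within-block product inside the expectation), or (ii) sandwich $\1_{A\times B}$ by smooth functions and show that the resulting double limit ($\lambda\to\infty$ inside the Laplace functional, then $L\to\infty$) can be exchanged. Either route requires additional work beyond what you invoke; alternatively, once you have Proposition~\ref{L:Leadbetter} for smooth $g$, you can simply conclude via Laplace functionals as the paper does, making the detour through Kallenberg's void criterion unnecessary.
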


\begin{remark}\label{rem:b2}
The restriction $a_L\tau_L\sim \sqrt{b}$ in case $\chi=(\xi,\Phi)$ can be lifted following 
the same changes discussed in Remark~\ref{rem:b}. 
\end{remark}

To define the notion of convergence stated in the above theorem, set $I^\chi \eqdef [-1,1]^d\times (-\infty,\plusinfty]$ 
for $\chi=\xi$ or $\Xi$, and $I^\chi \eqdef  [-1,1]^d\times (-\infty,\plusinfty]\times[-\infty,\plusinfty]$ 
for $\chi = (\xi, \Phi)$. Then, $\cP^{\chi}_L$ is said to converge in law in the topology of vague convergence 
to $\cP_\infty^\chi$ provided that for any continuous function $g \colon I^\chi \to \R$ with compact support, 
the real-valued random variable $\cP^{\chi}_L(g)$ converges in law to $\cP_\infty^\chi(g)$. 
We refer to~\cite[Theorem 16.16 and Theorem A2.3]{Ka02} for further details on this topology. 
\begin{proof}
Let $\chi=\xi,\Xi$ or $(\xi,\Phi)$. Let $g\colon I^\chi \to \R$ with compact support, 
non-negative and of class $\cC^2$. 
Thanks to~\cite[Theorem 16.16]{Ka02}\footnote{Actually, in the above-mentioned reference $g$ is not assumed to be of class $\cC^2$ but merely continuous. However, a straightforward approximation procedure guarantees that one can restrict to $\cC^2$ functions.}, if we show that for all $\lambda \ge 0$
	\begin{equ}[e:PLPinfty]
		\E[\exp(-\lambda \cP_L^{\chi}(g))] \to  \E[\exp(-\lambda \cP_\infty^{\chi}(g))]\;,\quad \text{as $L\to\infty$,}
	\end{equ}
then we conclude that $\cP^{\chi}_L(g)$ converges in law to $\cP_\infty^{\chi}(g)$. Since $g$ was arbitrary, the very definition 
of the vague convergence of point measures ensures that this is enough to establish the statement. 

To prove~\eqref{e:PLPinfty}, let us fix $g$ and $\lambda$ as above. First of all, observe that
	\begin{equ}
		\E[\exp(-\lambda \cP^{\chi}_L(g))] = \E\Big[\prod_{j=1}^{n_L} \exp(-\lambda g(\tfrac{z_{j,L}}{L},\Theta^{\chi}_{L}(w_{j,L}))\Big]\;.
	\end{equ}
	We claim that 
	\begin{equ}[e:Eprod]
		\E\Big[\prod_{j=1}^{n_L} \exp(-\lambda g(\tfrac{z_{j,L}}{L},\Theta^{\chi}_{L}(w_{j,L}))\Big] = \prod_{j=1}^{n_L} \E\Big[\exp(-\lambda g(\tfrac{z_{j,L}}{L},\Theta^{\chi}_{L}(w_{j,L}))\Big] + o(1)\;,
	\end{equ}
	where $o(1)$ is a quantity that vanishes as $L\to\infty$. Given~\eqref{e:Eprod}, we have 
	\begin{equs}
		\E[\exp(-\lambda \cP^{\chi}_L(g))]=\exp\Big(\sum_{j=1}^{n_L} \ln \big(1 - \E[1-\exp(-\lambda g(\tfrac{z_{j,L}}{L},\Theta^{\chi}_{L}(w_{j,L}))]\big)\Big)+o(1)\;.
	\end{equs}
	By Theorem \ref{Th:TailJointLaw}, uniformly over all $j$ the expectation on the r.h.s.~is of order $(R_L/L)^d$. Since $n_L$ is of order $(L/R_L)^d$, we deduce that the last term equals
	\begin{equs}
		\ &\exp\Big(-\sum_{j=1}^{n_L} \E[1-\exp(-\lambda g(\tfrac{z_{j,L}}{L},\Theta^{\chi}_{L}(w_{j,L}))])\Big) (1+o(1))\\
		&=\exp\Big(-\sum_{j=1}^{n_L} \int \big(1-\exp(-\lambda g(\tfrac{z_{j,L}}{L},q))\big) \P(\Theta^{\chi}_{L}(w_{j,L}) \in \dd q)\Big)(1+o(1))\;.
	\end{equs}
	Invoking Theorem \ref{Th:TailJointLaw} once again, the latter converges to
	\begin{equ}
		\exp\Big(- \int \int \big(1-\exp(-\lambda g(x,u))\big) \dd x \otimes e^{-u} \dd u \Big)\;,
	\end{equ}
	if $\chi = \xi$ or $\Xi$, and to
	\begin{equ}
	\exp\Big(- \int \int \big(1-\exp(-\lambda g(x,u,v))\big) \dd x \otimes e^{-u} \dd u \otimes \frac1{\sqrt{2\pi b}} e^{-\frac{v^2}{2b}} \dd v \Big)\;,
	\end{equ}	
	if $\chi = (\xi, \Phi)$. In all cases, this equals $\E[\exp(-\lambda \cP_\infty^{\chi}(g))]$, 
	and thus~\eqref{e:PLPinfty} follows.\\
	
	We are left with proving~\eqref{e:Eprod}. What we will show is that~\eqref{e:Eprod} holds 
	provided the following decorrelation estimate does 
	\begin{equs}[e:Eprodbis]
		\E&\Big[\prod_{j=1}^{n_L}\prod_{x_0 \in Q_{R_L,z_{j,L}}} \exp(-\lambda g(\tfrac{z_{j,L}}{L},\Theta^{\chi}_{L}(x_0)))\Big]\\
		&= \prod_{j=1}^{n_L}\E\Big[\prod_{x_0 \in Q_{R_L,z_{j,L}}} \exp(-\lambda g(\tfrac{z_{j,L}}{L},\Theta^{\chi}_{L}(x_0)))\Big] + o(1)\;, 
	\end{equs}
	which in turn will be proved in Proposition \ref{L:Leadbetter}.
	
	To see the relation between~\eqref{e:Eprod} and~\eqref{e:Eprodbis}, 
	let us begin by considering $\chi =(\xi, \Phi)$. 
	Let $c>0$ be such that $g$ vanishes on $[-1,1]^d \times (-\infty,-c]\times[-\infty,\infty]$. 
	On the complement of the event $\cA^\xi_{z_{j,L}, L}(c)$ in~\eqref{e:AjLXi}, 
	there is at most one point $x_0 \in Q_{R_L,z_{j,L}}$ where $\xi_L(x_0) \ge a_L - \frac{c}{a_L}$ 
	(thus $\Theta^{(\xi,\Phi)}(x_0)\in[-c,\infty]\times[-\infty,\infty]$), 
	and necessarily, if such a $x_0$ exists then $x_0 = w_{j,L}$. 
	Since $g$ is a non-negative function supported on $[-1,1]^d \times [-c,\plusinfty] \times [-\infty,\plusinfty]$, we deduce that
	\begin{equ}
		\Big|\exp(-\lambda g(\tfrac{z_{j,L}}{L},\Theta^{(\xi, \Phi)}_{L}(w_{j,L})) - \prod_{x_0 \in Q_{R_L,z_{j,L}}} \exp(-\lambda g(\tfrac{z_{j,L}}{L},\Theta^{(\xi, \Phi)}_{L}(x_0))) \Big| \le \1_{\cA^\xi_{L,z_{j,L}}(c)}\,,
	\end{equ}
	and
	\begin{equs}
	\Big|\E\Big[&\exp(-\lambda g(\tfrac{z_{j,L}}{L},\Theta^{(\xi, \Phi)}_{L}(w_{j,L}))\Big]\\
	&- \E\Big[\prod_{x_0 \in Q_{R_L,z_{j,L}}} \exp(-\lambda g(\tfrac{z_{j,L}}{L},\Theta^{(\xi, \Phi)}_{L}(x_0)))\Big] \Big| \le \P\big(\cA^\xi_{L,z_{j,L}}(c)\big)\,,
	\end{equs}
	Using the identity $\prod_{j=1}^{n_L} a_j - \prod_{j=1}^{n_L} b_j =\sum_{k=1}^{n_L} a_1 \cdots a_{k-1} (a_k-b_k) b_{k+1}\cdots b_{n_L}$, and the fact that each factor is bounded by $1$, we get
	\begin{equs}
		\Big| \prod_{j=1}^{n_L}\E\Big[ &\exp(-\lambda g(\tfrac{z_{j,L}}{L},\Theta^{(\xi, \Phi)}_{L}(w_{j,L}))\Big]
		\\
		&- \prod_{j=1}^{n_L}\E\Big[\prod_{x_0 \in Q_{R_L,z_{j,L}}} \exp(-\lambda g(\tfrac{z_{j,L}}{L},\Theta^{(\xi, \Phi)}_{L}(x_0)))\Big] \Big| 
	\le \sum_{j=1}^{n_L} \P(\cA^\xi_{L,z_{j,L}}(c))\;,
	\end{equs}
	and
	\begin{equs}
		\Big| \E\Big[\prod_{j=1}^{n_L} &\exp(-\lambda g(\tfrac{z_{j,L}}{L},\Theta^{(\xi, \Phi)}_{L}(w_{j,L})))\Big]\\
		&- \E\Big[\prod_{j=1}^{n_L}\prod_{x_0 \in Q_{R_L,z_{j,L}}} \exp(-\lambda g(\tfrac{z_{j,L}}{L},\Theta^{(\xi, \Phi)}_{L}(x_0)))\Big] \Big| \le \sum_{j=1}^{n_L} \P(\cA^\xi_{L,z_{j,L}}(c))\;.
	\end{equs}
	By~\eqref{e:UniqueMax}, the sums at the r.h.s.~of the last two inequalities go to $0$ as $L\to\infty$, 
	and thus, the triangle inequality immediately implies that the proof of~\eqref{e:Eprod} can be reduced to 
	that of~\eqref{e:Eprodbis}. 
	
	For $\chi = \xi$ or $\Xi$, the argument is virtually identical, the only difference in the case $\chi=\Xi$ is that 
	$\cA^\xi_{L,z_{j,L}}(c)$ has to be replaced by
	$$ \cA^\Xi_{L,z_{j,L}}(c) \cup \{\max_{x\in Q_{R_L,z_{j,L}}}\Xi_L\geq a^{\Xi}_L-\frac{c}{a_L}\,;\, w_{L,z_{j,L}}\neq w_{L,z_{j,L}}^{\Xi}\}\;.$$
	To bound the sum over $j$ of the probability of their union, it suffices to use~\eqref{e:UniqueMax} and~\eqref{e:SameMax}, which once again implies that~\eqref{e:Eprod} holds provided~\eqref{e:Eprodbis} does. 
	\end{proof}

\subsection{Proof of Theorem \ref{Th:Potential}}\label{sec:ThPot}
Thanks to the results in the previous section and in particular Proposition~\ref{P:PPPxiPrelim} 
for $\chi=\xi$, we can complete the proof of the first theorem stated in the introduction. 
 
\begin{proof}[Proof of Theorem \ref{Th:Potential}]
Define the random measure
\begin{equ}
\cM_L \eqdef \sum_{k=1}^{\#Q_L} \delta_{\big(\frac{y_{k,L}}{L},\Theta_L^{\xi}(y_{k,L})\big)}\;,
\end{equ}
for $\Theta_L^{\xi}(\cdot)=a_L(\xi_L(\cdot)-a_L)$ as in~\eqref{e:ScaledFields} and $y_{k,L}$ the point on $Q_L$ at 
which $\xi_L$ reaches its $k$-th largest maximum. 
Theorem~\ref{Th:Potential} states that $\cM_L$ converges in law towards a Poisson random measure 
$\cM_\infty$ of intensity $\dd x \otimes e^{-u}\dd u$, 
for the topology of vague convergence of Radon measures on $[-1,1]^d \times (-\infty,\plusinfty]$. 
By~\cite[Theorem 16.16]{Ka02}, it suffices to prove that 
for any continuous function $g\colon [-1,1]^d \times (-\infty,\plusinfty] \to \R_+$ with compact support, 
$\cM_L(g) \to \cM_\infty(g)$ in law as $L\to\infty$. 
Fix such a function $g$ and let $c>0$ be such that $g$ vanishes outside $[-1,1]^d\times (-c,\plusinfty]$. 
Set
\begin{equ}
\cB_{L}(c) \eqdef \big\{ \exists x \in Q_L \backslash U_L\colon \Theta_L^{\xi}(x) \ge -c \big\}\;,
\end{equ}
that is, $\cB_L(c)$ is the event on which $\xi_L$ is ``large'' in $Q_L \backslash U_L$. 
By definition of $U_L$, $|Q_L \backslash U_L| \lesssim n_L \sqrt{R_L} R_L^{d-1}$ and thus by~\eqref{e:preBound} and~\eqref{e:QLnL}, we get 
\begin{equ}[e:ProbBLC]
		\P(\cB_{L}(c)) \le |Q_L \backslash U_L| \,\P(\Theta_L^{\xi}(0) \ge -c) \lesssim n_L \sqrt{R_L} R_L^{d-1} \frac{e^{c}}{L^d} \lesssim \frac{1}{\sqrt{R_L}}\;,
	\end{equ}
and the r.h.s. vanishes as $L\to\infty$.
	
Recall the definition of $z_{j,L}$ and $\cA^{\xi}_{L,\cdot}(c)$ in~\eqref{e:QLnL} and~\eqref{e:AjLXi}, respectively, 
and set $\cA^\xi_L(c) = \cup_{j=1}^{n_L}\cA^{\xi}_{L,z_{j,L}}(c)$.
On the event $\cA^\xi_L(c)^\complement$, 
for every $j\in\{1,\ldots,n_L\}$, 
the box $Q_{R_L, z_{j,L}}$ contains at most one point where $\xi_L$ lies above $a_L - c/a_L$, 
and if such a point exists it must be $w_{j,L}$.

This implies that on the event $\cA^\xi_L(c)^\complement \cap \cB_{L}(c)^\complement$
the set of points $\{y_{k,L}\colon 1\le k \le \# Q_L\,,\,\xi_L(y_{k,L}) \ge a_L - c / a_L \}$ 
coincides with the set of points $\{w_{j,L}\colon 1\le j \le n_L, \xi_L(w_{j,L}) \ge a_L - c / a_L\}$. 
Therefore, using the notation of~\eqref{e:RPM}, on $\cA^\xi_{L}(c)^\complement \cap \cB_{L}(c)^\complement$, we get 
\begin{equs}
		\cP_L^{\xi}(g) - \cM_L(g) = \sum_{j=1}^{n_L} \Big( g\big(\tfrac{z_{j,L}}{L} , \Theta^{\xi}_{L}(w_{j,L})\big) - g\big(\tfrac{w_{j,L}}{L} , \Theta^{\xi}_{L}(w_{j,L})\big)\Big)\;. 
\end{equs}
Now, $g$ is uniformly continuous in its first coordinate, so that, since $|z_{j,L} - w_{j,L}| \le \sqrt{d} \,R_L$, 
we deduce
\begin{equ}
 |\cP_L^{\xi}(g) - \cM_L(g)| \lesssim \omega(\tfrac{\sqrt{d} \,R_L}{L}) \cP_L^{\xi}([-1,1]^d \times (-c,\infty])\;.
 \end{equ}
where $\omega(\cdot)$ is the modulus of continuity of $g$ in its first coordinate. 
 The prefactor $\omega(\sqrt{d} \,R_L/L)$ vanishes as $L\to\infty$, while $\cP_L^{\xi}([-1,1]^d \times (-c,\infty])$ 
converges in law to a finite limit thanks to Proposition \ref{P:PPPxiPrelim}. 
As the probability of $\cA^\xi_{L}(c)^\complement \cap \cB_{L}(c)^\complement$ 
converges to $1$ by~\eqref{e:UniqueMax} and~\eqref{e:ProbBLC}, we deduce that 
$|\cP_L^{\xi}(g) - \cM_L(g)|$ goes to $0$ in (probability and thus in) law, and therefore, invoking once again 
Proposition \ref{P:PPPxiPrelim}, the statement follows at once.
\end{proof}

\subsection{Decorrelation estimates}\label{sec:LongRange}

In order to deal with the long-range correlations of our field, we now prove the decorrelation estimates 
which were exploited in the proof of Proposition \ref{P:PPPxiPrelim}.

\begin{proposition}\label{L:Leadbetter}
For $\chi = \xi, \,\Xi$, or $(\xi,\Phi)$, let $g\colon I^\chi \to \R$ (where $I^\chi$ is defined after 
Proposition~\ref{P:PPPxiPrelim}) be a compactly supported non-negative function of class $\cC^2$. 
If $\chi=(\xi,\Phi)$, further assume that $a_L\tau_L\sim \sqrt{b}$ for some $b\geq 0$. 
Then, as $L\to\infty$ 
	\begin{equs}[e:Eprodbis2]
		\E\Big[\prod_{j=1}^{n_L}&\prod_{x_0 \in Q_{R_L,z_{j,L}}} \exp\Big(-\lambda g(\tfrac{z_{j,L}}{L},\Theta^{\chi}_{L}(x_0))\Big)\Big]\\
		&- \prod_{j=1}^{n_L}\E\Big[\prod_{x_0 \in Q_{R_L,z_{j,L}}} \exp\Big(-\lambda g(\tfrac{z_{j,L}}{L},\Theta^{\chi}_{L}(x_0))\Big)\Big] = o(1)\;.
	\end{equs}
\end{proposition}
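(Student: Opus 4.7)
The plan is to apply a Slepian--Kahane Gaussian comparison (i.e.,\ the Normal Comparison Lemma) between the restriction of $\vec{\xi}_L$ to $U_L$ and a modified vector $\vec{\tilde{\xi}}_L$ whose blocks $(\vec{\tilde{\xi}}_L|_{Q_{R_L+r_L,z_{j,L}}})_j$ on the \emph{enlarged} mesoscopic boxes are mutually independent, while the within-block law is preserved. These enlarged blocks are pairwise disjoint because $r_L \ll \sqrt{R_L}$ by \eqref{e:RL} and \eqref{e:rL}, and for every $\chi \in \{\xi, \Xi, (\xi,\Phi)\}$ the function $F(\vec{x}) \eqdef \prod_j\prod_{x_0 \in Q_{R_L,z_{j,L}}} \exp(-\lambda g(z_{j,L}/L, \Theta_L^\chi(x_0)))$ is measurable with respect to the enlarged-block $\sigma$-algebras (for $\chi = \Xi$ or $(\xi,\Phi)$ because $\Phi_L(x_0)$ is a linear functional of $\xi_L$ on $Q_{r_L,x_0}$, by~\eqref{e:FL}--\eqref{e:FluctField}). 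Independence of the blocks under $\vec{\tilde{\xi}}_L$ then immediately produces the factorisation on the right-hand side of \eqref{e:Eprodbis2}, so it remains to prove $E[F(\vec{\xi}_L)] - E[F(\vec{\tilde{\xi}}_L)] \to 0$.

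Letting $\Sigma,\tilde{\Sigma}$ denote the respective covariance matrices and $\vec{X}_t$ a centred Gaussian vector with covariance $(1-t)\Sigma + t\tilde{\Sigma}$, the classical interpolation identity gives
$$E[F(\vec{\xi}_L)] - E[F(\vec{\tilde{\xi}}_L)] = -\frac{1}{2}\int_0^1 \sum_{i \ne j}(\Sigma - \tilde{\Sigma})_{ij}\,E\bigl[\partial_i \partial_j F(\vec{X}_t)\bigr]\,dt\,,$$
and by construction the only surviving pairs are those $(i,j)$ in distinct enlarged blocks, for which $|i-j| \ge \sqrt{R_L}$ and hence $v_L(i-j) \le \cT_{v_L}/\ln|i-j|$ by \eqref{e:LongRange}. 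To estimate $E[|\partial_i\partial_j F(\vec{X}_t)|]$, I use that $F = \prod_k F_k$ with each $F_k \le 1$ supported on the $k$-th enlarged block, so $|\partial_i\partial_j F| \le |\partial_i F_{[i]}|\cdot|\partial_j F_{[j]}|$; the compact support of $g$ in its second argument together with the chain rule through $\Theta_L^\chi$ (contributing a factor $a_L$ per derivative, plus at most $r_L^d$ terms for $\chi=\Xi,(\xi,\Phi)$) yields $|\partial_i F_{[i]}| \lesssim a_L\,\mathbf{1}_{H_i}$, where $H_i \eqdef \{\vec{x}(y)\ge a_L - c/a_L\text{ for some }y\in Q_{r_L,i}\}$. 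A standard bivariate Gaussian joint-tail bound applied to $(\vec{X}_t(y),\vec{X}_t(y'))$ for $y\in Q_{r_L,i}$, $y'\in Q_{r_L,j}$ (whose correlation is at most $v_L(i-j)$, up to negligible corrections since $r_L \ll \sqrt{R_L}$), combined with a union bound, then produces
$$E\bigl[|\partial_i\partial_j F(\vec{X}_t)|\bigr] \;\lesssim\; \frac{a_L^2\,r_L^{2d}}{L^{2d}}\,\exp\bigl(c'\,a_L^2\,v_L(i-j)\bigr)\,.$$

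Combining these ingredients reduces the proof to showing
$$\sum_{\substack{i\ne j\in U_L\\ \text{distinct blocks}}} v_L(i-j)\,\frac{a_L^2\,r_L^{2d}}{L^{2d}}\,\exp\bigl(c'\,a_L^2\,v_L(i-j)\bigr) \;\longrightarrow\; 0\,,$$
which I would handle by a two-level splitting of the sum according to $|i-j|$, in the spirit of the proof of the Berman--Leadbetter normal comparison theorem: pairs at ``far'' distances carry small $v_L$ with bounded exponential penalty, while pairs at distances $\asymp\sqrt{R_L}$ use $v_L \le 2\cT_{v_L}/\ln R_L$ uniformly. The main obstacle is precisely this last step, where the exponential penalty $\exp(c'a_L^2 v_L)$ threatens to blow up: to keep $a_L^2 v_L(i-j) \le 2a_L^2\cT_{v_L}/\ln R_L$ bounded one exploits the lower bound $\ln R_L \gg a_L$ of \eqref{e:RL} together with $\cT_{v_L}\to 0$, writing $a_L^2\cT_{v_L}/\ln R_L = (a_L/\ln R_L)\cdot a_L\cT_{v_L}$ and closing the estimate via the refined splitting; the geometric counting then gives $\sum v_L(i-j) \lesssim L^{2d}\cT_{v_L}/\ln R_L$, which combined with the prefactor $a_L^2 r_L^{2d}/L^{2d}$ (with $r_L^{2d}\ll L^d$ by \eqref{e:rL}) concludes the proof. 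The case $\chi=(\xi,\Phi)$ introduces no additional obstacle, as $\Phi_L$ is linear in $\xi_L$ on $Q_{r_L}$ and the hypothesis $a_L\tau_L = O(1)$ keeps the $\Phi_L$-contribution to the derivatives tight.
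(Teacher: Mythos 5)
Your overall plan — reduce to a Gaussian interpolation (normal comparison) identity, factorise by decoupling the enlarged blocks, bound the mixed second derivatives of $F$ by a two-site joint tail, and then split the remaining sum at $L^{1/4}$ — is essentially the paper's, which works with the vector $(\xi_L(x_0), a_L\Phi_L(x_0))_{x_0\in U_L}$ rather than $\xi_L$ itself. However, there is a genuine gap in your bound $\E\bigl[|\partial_i\partial_j F(\vec X_t)|\bigr]\lesssim a_L^2 r_L^{2d} L^{-2d}\exp(c'a_L^2 v_L(i-j))$. Your estimate $|\partial_i F_{[i]}|\lesssim a_L\1_{H_i}$ correctly absorbs the $r_L^d$ chain-rule terms because the weights $\bar\varphi_L(i-x_0)^2$ decay geometrically and sum to $O(1)$; but as soon as you compress to the indicator $\1_{H_i}$ and then union-bound $\P(H_i\cap H_j)$ over all $r_L^{2d}$ pairs $(y,y')$, you throw away exactly that geometric decay and re-introduce the $r_L^{2d}$ factor. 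The constraint $r_L^{2d}\ll L^d$ is then not the relevant one: the far regime ($|i-j|\ge L^{1/4}$, with $\approx L^{2d}$ pairs and $\exp(c'a_L^2 v_L)=O(1)$) produces a contribution of order $a_L^2\cT_{v_L}r_L^{2d}/\ln L \asymp \cT_{v_L}r_L^{2d}$, and under the hypotheses nothing forces $\cT_{v_L}r_L^{2d}\to 0$ (indeed $r_L\ge a_L$ by~\eqref{e:rL}, while \eqref{e:LongRange} only gives $\cT_{v_L}\to 0$ with no rate).

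The repair is to keep the weighted form $|\partial_i F_{[i]}|\lesssim a_L\sum_{x_0\in Q_{r_L,i}} w(|i-x_0|)\1_{\{\vec X_t(x_0)\ge a_L-c/a_L\}}$ with $\sum_{x} w(|x|)=O(1)$, and then estimate $\E[|\partial_i F_{[i]}||\partial_j F_{[j]}|]$ directly by summing the two-site joint tails against the product weights. Since $|x_0-y_0|\ge |i-j|-2\sqrt d\, r_L$, every pair in the sum has correlation at most $\sup_{|z|\ge |i-j|-2\sqrt d\, r_L} v_L(z)$, so the double sum over $(x_0,y_0)$ costs only a universal constant and the $r_L^{2d}$ disappears, leaving precisely the paper's sum \eqref{e:kappaL}. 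The paper sidesteps this entirely by applying the interpolation identity to the $2|U_L|$-dimensional Gaussian $(\xi_L(x_0), a_L\Phi_L(x_0))_{x_0\in U_L}$: each factor of $A$ in \eqref{e:Ast} depends on a single pair $(s_{(x_0,1)},s_{(x_0,2)})$, so $\partial^2 A/\partial s_{(x_0,i)}\partial s_{(y_0,i')}$ is supported on the clean two-site event $\{s_{(x_0,1)}\wedge s_{(y_0,1)}\ge a_L-c/a_L\}$, and the $r_L$-dependence is pushed instead into the covariance bound $|\Sigma^{1,\eps}_{(x_0,i),(y_0,i')}|\lesssim a_L^{i+i'-2}\sup_{|x|\ge|x_0-y_0|-\sqrt d\, r_L} v_L(x)$ of \eqref{e:CovxiPhi}, which causes no harm. (Also, what you actually invoke is the Gaussian interpolation/normal comparison identity, not Slepian--Kahane; and you skip the degeneracy regularisation of the paper's Step 1, which is harmless here since $F$ is $\cC^2_b$, but worth noting.)
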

The proof is inspired by~\cite[Theorem 4.2.1]{Leadbetter}. Let us hightlight a few differences. First, in that reference the estimate is established for a function $g$ which only depends on the second coordinate and which is the indicator of a semi-infinite interval. It turns out that dealing with regular functions $g$ makes the proof somewhat easier. Second, and more importantly, we establish here a ``long-range'' decorrelation estimate: indeed, in the second term on the l.h.s.~of \eqref{e:Eprodbis2} the product over all $x_0 \in Q_{R_L,z_{j,L}}$ remains inside the expectation (and so the decorrelation is proved for disjoint boxes) while in that reference, there is no such partial decorrelation. This is because in our setting, at small distances the r.v.'s~at stake may have a complicated correlation structure that we do not try to disentangle.

\begin{proof}
We will present the proof in detail for $\chi = (\xi,\Phi)$, and, since that for $\chi = \xi,\, \Xi$ is similar (and actually 
simpler), we will limit ourselves to outline the main (minor) differences at the very end. 

Let us introduce some notation.  
We rename the Gaussian vector of interest as 
\begin{equ}
(\eta^{1,0}_L(x_0))_{x_0\in U_L}\eqdef ((\eta^{1,0}_{1,L}(x_0), \eta^{1,0}_{2,L}(x_0))_{x_0\in U_L}\eqdef (\xi_L(x_0),a_L\Phi_L(x_0))_{x_0\in U_L}
\end{equ}
(the reason for the double superscript 
will be clarified soon). Let  $\Sigma^{1,0}$ be its covariance matrix 
and index its entries by $(x_0,i)$ with $x_0 \in U_L$ and $i\in\{1,2\}$. 
For instance, $\Sigma^{1,0}_{(x_0,2), (y_0,1)}$ is the covariance of 
$a_L\Phi_L(x_0)$ and $\xi_L(y_0)$. 
\medskip

As mentioned at the beginning of the section, the statement boils down 
to show that the error made by replacing  
$\eta^{1,0}_L$ with a Gaussian vector $\eta^{0,0}_L$ 
such that, for every $j=1,\dots, n_L$, $(\eta^{1,0}_L(x))_{x\in Q_{R_L,z_{j,L}}}\eqlaw(\eta^{0,0}_L(x))_{x\in Q_{R_L,z_{j,L}}}$, and $(\eta^{0,0}_L(x))_{x\in Q_{R_L,z_{j_1,L}}}$ is independent of 
$(\eta^{0,0}_L(x))_{x\in Q_{R_L,z_{j_2,L}}}$ for every $j_1\neq j_2$, is negligible as $L\to\infty$. 
Let $\Sigma^{0,0}$ be the covariance matrix of $\eta^{0,0}_L$ and notice that 
it is given by 
\begin{equ}[e:Sigma0] 
\Sigma^{0,0}_{(x_0,i),(y_0,i')} \eqdef \begin{cases} \Sigma^{1,0}_{(x_0,i),(y_0,i')} &\mbox{ if there exists $j$ s.t. $x_0, y_0\in Q_{R_L, z_{j,L}}$, }\\
		0 &\mbox{ else. }
\end{cases}
\end{equ}
For the reader's convenience, let us split the (quite involved) proof into four steps: 
{\it deceneracy}, {\it interpolation}, {\it density estimates} and {\it decay}, 
whose names will be justified along the way.
\medskip

\noindent{\it Step 1: Degeneracy.} The problem with the Gaussian vectors 
$\eta^{1,0}_L$ and $\eta^{0,0}_L$ is that they may be degenerate and thus might not 
admit a density with respect to the Lebesgue measure. To overcome the issue, 
we will slightly perturb them: let $\eps>0$ and $(\gamma_1(x_0), \gamma_2(x_0))_{x_0 \in U_L}$ be an independent 
centred Gaussian vector of i.i.d.~$\cN(0,\eps)$ r.v.'s and, for $\alpha=0,1$,  
define $\eta^{\alpha,\eps}_L$ according to 
\begin{equ}
(\eta^{\alpha,\eps}_{1,L}(x_0),\eta^{\alpha,\eps}_{2,L}(x_0))\eqdef  (\eta^{\alpha,0}_{1,L}(x_0)+\gamma_1(x_0),\eta^{\alpha,0}_{2,L}(x_0)+\gamma_2(x_0))\,,\qquad x_0\in U_L\,.
\end{equ}
Notice that $\eta^{\alpha,\eps}_L$ is again a Gaussian vector but, this time, has 
a full rank covariance matrix, which we denote by $\Sigma^{\alpha,\eps}$. 

We now claim that provided we show 
\begin{equ}[e:claimLongRange]
\limsup_{L\to\infty}\limsup_{\eps\to 0} |I^\eps_{L}|=0
\end{equ}
where $I^\eps_L$ is defined according to 
\begin{equs}[e:Eprodter]
	I_{L}^\eps \eqdef \E&\Big[\prod_{j=1}^{n_L}\prod_{x_0 \in Q_{R_L,z_{j,L}}} \exp\Big(-\lambda g(\tfrac{z_{j,L}}{L},a_L(\eta^{1,\eps}_{1,L}(x_0) - a_L), \eta^{1,\eps}_{2,L}(x_0))\Big)\Big]\\
	&- \prod_{j=1}^{n_L}\E\Big[\prod_{x_0 \in Q_{R_L,z_{j,L}}} \exp\Big(-\lambda g(\tfrac{z_{j,L}}{L},a_L(\eta^{0,\eps}_{1,L}(x_0) - a_L), \eta^{0,\eps}_{2,L}(x_0))\Big)\Big]\,,
\end{equs}
then~\eqref{e:Eprodbis2} follows. 
Indeed, since $I^0_L$ coincides with the l.h.s. of~\eqref{e:Eprodbis2} and 
the vectors $\eta^{1,\eps}_L$ and $\eta^{0,\eps}_L$ converge in law as 
$\eps\downarrow 0$ respectively to $\eta^{1,0}_L$ and $\eta^{0,0}_L$, 
for any fixed $L$ we have $\lim_{\eps \downarrow 0} I_L^\eps=I^0_L$, 
so that~\eqref{e:claimLongRange} implies the statement. 
We are left to prove~\eqref{e:claimLongRange} to which the next steps are devoted. 
\medskip

\noindent{\it Step 2: Interpolation.} We now introduce an interpolation 
between the covariance matrices $\Sigma^{1,\eps}$ and $\Sigma^{0,\eps}$, i.e. 
for $h\in[0,1]$ we define 
\begin{equ}[e:Interpol]
\Sigma^{h,\eps} \eqdef h \Sigma^{1,\eps} + (1-h) \Sigma^{0,\eps}\;.
\end{equ}
This is still a positive definite matrix, and therefore it is 
the covariance matrix of a non-degenerate Gaussian vector $\eta^{h,\eps}_L$. 
Let $f_h(s)$ for $s=(s_{(x_0,i)})_{(x_0,i)\in U_L\times\{1,2\}}\in \R^{2|U_L|}$ be the associated Gaussian density 
at $s$ and set 
\begin{equ}[e:Ast]
A(s) \eqdef \prod_{j=1}^{n_L} \prod_{x_0 \in Q_{R_L,z_{j,L}}} \exp(-\lambda g(\tfrac{z_{j,L}}{L},a_L(s_{(x_0,1)} - a_L), s_{(x_0,2)}))\;,
\end{equ}
and, for any $h\in[0,1]$, 
\begin{equs}
F(h)&\eqdef \E\Big[\prod_{j=1}^{n_L}\prod_{x_0 \in Q_{R_L,z_{j,L}}} \exp\Big(-\lambda g(\tfrac{z_{j,L}}{L},a_L(\eta^{h,\eps}_{1,L}(x_0) - a_L), \eta^{h,\eps}_{2,L}(x_0))\Big)\Big]\\
&=\int A(s) f_h(s)\dd s\,.
\end{equs}
Notice in particular that this means 
\begin{equ}[e:Newclaim]
I^\eps_L=F(1)-F(0)=\int_0^1 F'(h)\dd h =\int_0^1\int A(s) \partial_h f_h(s) \dd s \,\dd h\,.
\end{equ}
Hence, to obtain~\eqref{e:claimLongRange} we need to bound the r.h.s.  
by a quantity independent of $\eps$ and that vanishes 
as $L\to\infty$.
\medskip

\noindent{\it Step 3: Density Estimates.} Let $\le$ be an arbitrary total order on $\R^{|U_L|}$, and with a slight abuse of notation let us extend it into a total order on $\R^{|U_L|} \times \{1,2\}$ by setting
$$ (x_0,i) \le (y_0,i') \Leftrightarrow (x_0 < y_0) \mbox{ or } (x_0=y_0, i \le i')\;.$$
The dependence of $f_h$ on $h$ only goes through $\Sigma^{h, \eps}$. 
Since this matrix is symmetric, we will only consider its entries ``above the diagonal'', that is $(\Sigma_{(x_0,i),(y_0,i')}^{h,\eps}: (x_0,i) \le (y_0,i'))$.
By~\eqref{e:Interpol} and the definition of $\Sigma^{0,\eps}$ in~\eqref{e:Sigma0}, the derivative of 
$\Sigma^{h,\eps}$ in $h$ reads
\begin{equs}
\partial_h \Sigma_{(x_0,i),(y_0,i')}^{h,\eps} &= \Sigma_{(x_0,i),(y_0,i')}^{1,\eps} - \Sigma_{(x_0,i),(y_0,i')}^{0,\eps}\\
&=\begin{cases} 0 &\mbox{ if there exists $j$ s.t. $x_0, y_0\in Q_{R_L, z_{j,L}}$, }\\
		\Sigma^{1,\eps}_{(x_0,i),(y_0,i')} &\mbox{ else. }
\end{cases}
\end{equs}
On the other hand, using the identities
$$ \frac{\partial \det \Sigma}{\partial \Sigma_{(x_0,i),(y_0,i')}} = 2(\det \Sigma)\, \Sigma^{-1}_{(x_0,i),(y_0,i')}\;,\quad \frac{\partial \Sigma^{-1}}{\partial \Sigma_{(x_0,i),(y_0,i')}} = -\Sigma^{-1} \frac{\partial \Sigma}{\partial \Sigma_{(x_0,i),(y_0,i')}} \Sigma^{-1}\;,$$
where $\Sigma^{-1}$ denotes the inverse of $\Sigma$ and $\Sigma^{-1}_{(x_0,i),(y_0,i')}$ its $((x_0,i),(y_0,i'))$-entry, 
a straightforward computation yields
\begin{equ}
\frac{\partial f_h}{\partial \Sigma_{(x_0,i),(y_0,i')}^{h,\eps}} = \frac{\partial^2 f_h}{\partial s_{(x_0,i)} \partial s_{(y_0,i')}}\;. 
\end{equ}
Therefore,
\begin{equs}
F'(h) &= \int A(s) \partial_h f_h(s) \dd s=\sum_{(x_0,i)\leq (y_0, i')}\int A(s)\partial_h \Sigma_{(x_0,i),(y_0,i')}^{h,\eps} \frac{\partial f_h(s)}{\partial \Sigma_{(x_0,i),(y_0,i')}^{h,\eps}}\dd s\\
&=\sum_{(x_0,i) \le (y_0,i')} \Sigma_{(x_0,i) , (y_0,i')}^{1,\eps}  \int A(s) \frac{\partial^2 f_h(s)}{\partial s_{(x_0,i)} \partial s_{(y_0,i')}} \dd s \\
&=\sum_{(x_0,i) \le (y_0,i')} \Sigma_{(x_0,i) , (y_0,i')}^{1,\eps} \int \frac{\partial^2 A(s)}{\partial s_{(x_0,i)} \partial s_{(y_0,i')}} f_h(s) \dd s \;,
\end{equs}
where the sum is only over $x_0$ and $y_0$ that do not fall within the same box. 

We now need to estimate both $\Sigma^{1,\eps}$ and $A$. 
For the former, since $\gamma_1$, $\gamma_2$ and $(\xi_L, \Phi_L)$ 
are independent of each others, it is immediate to see that for any $(x_0,i), (y_0,i')\in U_L\times \{1,2\}$, 
$\Sigma_{(x_0,i),(y_0,i')}^{1,\eps} =\cov(\xi_L(x_0), \xi_L(y_0))$ if $i=i'=1$, $a_L \cov(\xi_L(x_0), \Phi_L(y_0))$ 
if $i=1, i'=2$ and $a_L^2 \cov(\Phi_L(x_0), \Phi_L(y_0))$ if $i=i'=2$. 
Therefore, provided $x_0,y_0$ do not belong to the same mesoscopic box (so that in particular $|x_0-y_0|> \sqrt{R_L}$),~\eqref{e:CovxiPhi} implies 
\begin{equ}
|\Sigma_{(x_0,i),(y_0,i')}^{1,\eps}|
\lesssim a_L^{i+i'-2}\sup_{|x|\geq |x_0-y_0|-\sqrt{d}\,r_L} v_L(x)\,.
\end{equ}
For $A$, by assumption $g\in\cC^2$ is compactly supported, so let $c>0$ be such that 
the support of $g$ is contained in $[-1,1]^d \times [-c,\plusinfty]\times[-\infty,\plusinfty]$.  
Then, evaluating the second derivative of $A$ in~\eqref{e:Ast} gives  
\begin{equ}
\Big| \frac{\partial^2 A(s)}{\partial s_{(x_0,i)} \partial s_{(y_0,i')}} \Big| \le C a_L^{4-i-i'} \1_{\{s_{(x_0,1)}\wedge s_{(y_0,1)}\geq a_L-\frac{c}{a_L}\}} \;,
\end{equ}
for some constant $C>0$ independent of $L$ (the variables in the indicator are both with $i=1$!). 
Putting everything together, we obtain 
\begin{equs}
|F'(h)|&\lesssim \sum_{(x_0,i) \le (y_0,i')}\sup_{|x|\geq |x_0-y_0|-\sqrt{d}\,r_L} v_L(x)\; a_L^2 \int  \1_{\{s_{(x_0,1)}\wedge s_{(y_0,1)}\geq a_L-\frac{c}{a_L}\}} f_h(s)\dd s\\
&\lesssim \sum_{x_0\leq y_0}\sup_{|x|\geq |x_0-y_0|-\sqrt{d}\,r_L} v_L(x)\; a_L^2 \int  \1_{\{s_{x_0}\wedge s_{y_0}\geq a_L-\frac{c}{a_L}\}} \tilde f_h(s_{x_0}, s_{y_0})\dd s_{x_0} \dd s_{y_0}
\end{equs}
where in the last step we used that the summand only depends on $x_0,y_0$ and not on $i,i'$ 
so that, with a slight abuse, we suppressed the latter from the notation, and we denoted 
by $\tilde f_h$ the marginal of $f_h$ restricted to the two coordinates $s_{x_0}=s_{x_0,1},s_{y_0}=s_{y_0,1}$. 
In other words, $\tilde f_h$ is the density of the Gaussian pair 
$(\xi_L(x_0)+ \gamma^1(x_0),\xi_L(y_0)+ \gamma^2(y_0))$ and thus 
is given by 
\begin{equs}
\tilde f_h(s_{x_0}, s_{y_0})&= \frac{\exp\Big(-\frac{(1+\eps) s_{x_0}^2 - 2 v_L(x_0-y_0) s_{x_0} s_{y_0} + (1+\eps) s_{y_0}^2}{2((1+\eps)^2-v_L(x_0-y_0)^2)} \Big)}{2\pi \big((1+\eps)^2- v_L(x_0-y_0)^2\big)^{1/2}}\\
&\leq \frac{\exp\Big(-\frac{s_{x_0}^2 +s_{y_0}^2}{2(1+\eps+v_L(x_0-y_0))} \Big)}{2\pi \big((1+\eps)^2- v_L(x_0-y_0)^2\big)^{1/2}}
\end{equs}
as follows by applying $a^2+b^2\geq 2ab$. Therefore, using the above and 
the basic Gaussian estimate~\eqref{e:TailGauss}, we deduce 
\begin{equs}
a_L^2& \int  \1_{\{s_{x_0}\wedge s_{y_0}\geq a_L-\frac{c}{a_L}\}} \tilde f_h(s_{x_0}, s_{y_0})\dd s_{x_0} \dd s_{y_0}\\
 &\leq  \frac{a_L^2}{\big((1+\eps)^2- v_L(x_0-y_0)^2\big)^{1/2}}
 \Big(\int_{a_L-\frac{c}{a_L}}^\infty \frac{\exp\Big(-\frac{t^2}{2(1+\eps+v_L(x_0-y_0))} \Big)}{\sqrt{2\pi}}\dd t\Big)^2\\
 &\lesssim \frac{1}{\big((1+\eps)^2- v_L(x_0-y_0)^2\big)^{1/2}}\exp\Big(-\frac{a_L^2}{1+\eps+v_L(x_0-y_0)}\Big)\,.
\end{equs}
Plugging all the previous estimates into~\eqref{e:Newclaim}, since the r.h.s.~of 
the bounds obtained so far are independent of $h$, 
we finally obtain
\begin{equs}[e:kappaL]
\limsup_{\eps\to 0} |I^\eps_L|&\lesssim \sum_{x_0\leq y_0}\frac{\sup_{|x|\geq |x_0-y_0|-\sqrt{d}\,r_L} v_L(x)}{\big(1- v_L(x_0-y_0)^2\big)^{1/2}}\exp\Big(-\frac{a_L^2}{1+v_L(x_0-y_0)}\Big)\\
&\lesssim L^d\sum_{z\in Q_L\setminus Q_{2\exp(\sqrt{\ln L})}}\frac{\sup_{|x|\geq |z|-\sqrt{d}\,r_L} v_L(x)}{\big(1- v_L(z)^2\big)^{1/2}}\exp\Big(-\frac{a_L^2}{1+v_L(z)}\Big)
\end{equs}
where the last step holds as $x_0$ and $y_0$ belong to distinct boxes, so that 
$|x_0 - y_0| \ge \sqrt{R_L} \ge 2\exp(\sqrt{\ln L})$. 
The last step consists of proving that the sum vanishes, which in turn is a consequence of the decay of $v_L$. 
\medskip

\noindent{\it Step 4: Decay of Correlations. } We split the sum at the r.h.s. of~\eqref{e:kappaL} into two parts. 
First, we consider the sum over $z\in Q_{L^{1/4}}\setminus Q_{2\exp(\sqrt{\ln L})}$, 
on which for all $L$ large enough, \eqref{e:LongRange}
ensures that $v_L(z) \le 1/2$ and $\sup_{|x| \ge |z| -\sqrt{d}\,r_L} v_L(x) \le 1/2$. Thus, 
\begin{align*}
L^d&\sum_{z\in Q_{L^{1/4}}\setminus Q_{2\exp(\sqrt{\ln L})}}\frac{\sup_{|x|\geq |z|-\sqrt{d}\,r_L} v_L(x)}{\big(1- v_L(z)^2\big)^{1/2}}\exp\Big(-\frac{a_L^2}{1+v_L(z)}\Big)\\
&\lesssim L^d \; |Q_{L^{1/4}}\setminus Q_{2\exp(\sqrt{\ln L})}| \;\exp\Big(-\frac{a_L^2}{1+1/2}\Big)\lesssim L^{d(1+\frac14)} \exp\Big(-\tfrac23a_L^2\Big)\\
&=L^{\frac54 d} \Big(e^{-\frac{a_L^2}{2}}\Big)^{\frac43}\lesssim L^{\frac54 d} \Big(\frac{a_L}{L^d}\Big)^{\frac43} = L^{-\frac{d}{12}} a_L^{\frac43}\;,
	\end{align*}
	where we further used~\eqref{e:Asympa_L}, and the r.h.s. vanishes as $L\to\infty$. 
	
On the other hand, to control the sum over $x\in Q_L\setminus Q_{L^{1/4}}$, 
set $\eps_L \eqdef \sup_{|x|\ge L^{1/4}-\sqrt{d}\,r_L} v_L(x)$ and write
\begin{align*}
L^d&\sum_{z\in Q_L\setminus Q_{L^{1/4}}}\frac{\sup_{|x|\geq |z|-\sqrt{d}\,r_L} v_L(x)}{\big(1- v_L(z)^2\big)^{1/2}}\exp\Big(-\frac{a_L^2}{1+v_L(z)}\Big)\\	
&\lesssim  L^{2d} \eps_L e^{-\frac{a_L^2}{1+\eps_L}}\lesssim  L^{2d} \eps_L e^{-a_L^2} e^{a_L^2\frac{\eps_L}{1+\eps_L}}\lesssim a_L^2 \eps_L  e^{a_L^2 \eps_L}\;,
\end{align*}
which goes to $0$ since $a_L^2 \eps_L$ goes to $0$ by \eqref{e:LongRange}.
\newline

The proof of the statement is thus complete for $\chi=(\xi,\Phi)$. For $\chi=\xi$ or $\Xi$, one can follow the 
exact same steps. Note that in the latter case, one has to replace the properties of $v_L$ with those 
of $v_L^\Xi$ in Lemma~\ref{l:XiL}, in particular, that $v_L^\Xi(0)=1+\tau_L^2$ and~\eqref{e:LongRangeXi}. 
%
%
\end{proof}

\section{The mesoscopic eigenproblem}\label{Sec:Local}

\noindent While in the previous section we completely characterised the asymptotics of the maxima 
of the potential, we now turn to the analysis of the Anderson Hamiltonian associated to it 
and, as in Section~\ref{Sec:Field}, we begin by studying the eigenproblem {\it locally} 
on a mesoscopic box of side-length $R_L$, with $R_L$ as in~\eqref{e:RL}. 
More specifically, we aim at understanding the behaviour of the principal eigenvalue $\lambda_1(Q_{R_L}, \xi_L)$ 
and eigenfunction $\phi_{R_L}$ of $\cH_{Q_{R_L}, \xi_L}$. 
As mentioned in the introduction, their behaviour is intimately related 
to that of the deterministic eigenproblem associated to the Hamiltonian
\begin{equ}
\bar\cH_{L} \eqdef \cH_{Q_{r_L}, -\cS_L} = \Delta - \cS_L\,,\qquad\text{ on $Q_{r_L}$,}
\end{equ}  
for $r_L$ as in~\eqref{e:rL} and $\cS_L$ the shape defined in~\eqref{e:Shape}, 
whose principal eigenfunction and eigenvalue 
are denoted by $\bar\phi_{L}$ and $\bar\lambda_L$. 

To state the main theorem of this section 
we need to introduce a few quantities. 
Recall that we denote by $w_L$ the point in $Q_{R_L}$ where $\xi_L$ attains its maximum, 
by $\Xi_L$ and $\Phi_L$ the fields in~\eqref{e:XiL} and~\eqref{e:FL} respectively,  
by $\tau_L^2$ the variance of $\Phi_L(y)$ for any given $y\in\Z^d$ and $a_L^\Xi = a_L\sqrt{1+\tau_L^2}$.
At last, the event(s) whose probability we want to determine is  
\begin{equ}[e:LambdaL]
\Lambda_L(s) \eqdef \Big\{\lambda_1(Q_{R_L},\xi_L) \ge a_L^\Xi + \bar{\lambda}_L + \frac{s}{a_L} \Big\}\;,\qquad s\in\R\,.
\end{equ}

%
%
\begin{theorem}\label{Th:Tail}
There exists a sequence of positive constants $(\eta_L)_{L\ge 1}$ 
which vanishes in the limit $L\to\infty$ such that the following statements hold for any given $s\in \R$
\begin{enumerate}[label=\normalfont{(\arabic*)}]
		\item\label{i:1} (Tail distribution of the main eigenvalue)
		\begin{equ}[e:Tail]
		\lim_{L\to\infty} \Big(\frac{L}{R_L}\Big)^d \P(\Lambda_L(s)) = e^{-s}\;.
		\end{equ}
		\item\label{i:2} (Approximation of the main eigenvalue)
		\begin{equ}[e:Approx]
		\lim_{L\to\infty} \Big(\frac{L}{R_L}\Big)^d \P\Big(\Lambda_L(s) ; \big| \lambda_1(Q_{R_L},\xi_L) - \big( \Xi_L(w_L) + \bar{\lambda}_L\big) \big| > \frac{\eta_L}{a_L} \Big) = 0\;,
		\end{equ}
		and
		\begin{equ}[e:Approxbis]
			\lim_{L\to\infty} \Big(\frac{L}{R_L}\Big)^d \P\Big(\Xi_L(w_L) \geq a_L^\Xi + \frac{s}{a_L} ; \big| \lambda_1(Q_{R_L},\xi_L) - \big( \Xi_L(w_L) + \bar{\lambda}_L\big) \big| > \frac{\eta_L}{a_L} \Big) = 0\;.
		\end{equ}
		\item\label{i:3} (Magnitude of the maximum)
		\begin{equ}[e:Max]
		\limsup_{C\to\infty}\limsup_{L\to\infty} \Big(\frac{L}{R_L}\Big)^d \P\Big(\Lambda_L(s) ; \xi_L(w_L) \notin I_{L}(C) \Big)=0\;,
		\end{equ}
		where, for $L,C>0$, $I_{L}(C)$ is the interval defined in~\eqref{e:ILC}. 
		\item\label{i:4} (Large spectral gap) there exists a $C'>0$ independent of $s$ such that 
		\begin{equ}[e:SpectralGap]
		\lim_{L\to\infty} \Big(\frac{L}{R_L}\Big)^d \P\Big(\Lambda_L(s) ; \lambda_2(Q_{R_L},\xi_L) > a_L^\Xi + \bar\lambda_L - C' \frac{a_L}{d_L}\Big) = 0\;.\end{equ}
		\item\label{i:5} (Behaviour of the eigenfunction)
		\begin{equ}[e:Eigen]
		\lim_{L\to\infty} \Big(\frac{L}{R_L}\Big)^d \P\Big(\Lambda_L(s) ; \|\phi_{R_L} - \bar{\varphi}_{L}(\cdot - w_L)\|_{\ell^2(Q_{R_L})} > \frac{d_L}{a_L}\eta_L \Big) = 0\;.
		\end{equ}
	\end{enumerate}
\end{theorem}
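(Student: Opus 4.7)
The plan is to reduce the random eigenproblem on $Q_{R_L}$ to the deterministic one for $\bar\cH_L$, perturbed by the fluctuation field $\zeta_{L,\cdot}$ of~\eqref{e:FluctField}. Since $\Delta \le 0$ implies $\lambda_1(Q_{R_L},\xi_L) \le \max_{Q_{R_L}}\xi_L$ and $\bar\lambda_L \sim -2d$, the event $\Lambda_L(s)$ forces $\max_{Q_{R_L+r_L}}\xi_L \ge a_L - \theta$ for $L$ large enough. Invoking~\eqref{e:MaxNegl} of Proposition~\ref{l:Event} together with Lemma~\ref{l:UniqueMax}, one may therefore restrict, up to error $o((R_L/L)^{-d})$ in probability, to the disjoint union $\bigcup_{x_0\in Q_{R_L}}E_{L,x_0}$, on each slice of which $w_L = x_0$ is the unique maximum of $\xi_L$ and~\eqref{e:LowerShape} provides a uniform well depth $\tfrac{\fc}{2}(a_L/d_L)$ around $x_0$.

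\textbf{Localisation and main approximation (claims~\ref{i:2} and~\ref{i:4}).} On $E_{L,w_L}$, a Combes--Thomas/Agmon argument exploiting this well depth yields the exponential bound $\phi_{R_L}(x)^2 \lesssim (d_L/a_L)^{2|x-w_L|}$, parallel to~\eqref{e:DecayPhiL}. This lets me replace $Q_{R_L}$ by $Q_{r_L,w_L}$ in the variational problem with an error in $\lambda_1$ that is $\exp(-c\, r_L \ln(a_L/d_L))$-small, negligible by~\eqref{e:rL}. Using~\eqref{e:FluctField} and the bound $|\xi_L(w_L)-a_L|<\theta$ from $E^1_{L,w_L}$, one rewrites on $Q_{r_L,w_L}$
\begin{equation*}
\cH_{Q_{r_L,w_L},\xi_L} - \xi_L(w_L)\,\Id \;=\; \bar\cH_L(\cdot - w_L) \;+\; \zeta_{L,w_L} \;+\; \rho_L,
\end{equation*}
where $\rho_L(x) = (1-\xi_L(w_L)/a_L)\cS_L(x-w_L)$ satisfies $\|\rho_L\|_\infty \lesssim 1/d_L$. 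Using $\bar\phi_L(\cdot-w_L)$ as trial function in the Rayleigh quotient and recognising $\Phi_L(w_L)$ from~\eqref{e:FL} as the first-order correction produces the upper bound
\begin{equation*}
\lambda_1(Q_{R_L},\xi_L) \;\le\; \xi_L(w_L) + \bar\lambda_L + \Phi_L(w_L) + O(\eta_L/a_L),
\end{equation*}
for a suitable $\eta_L \to 0$, where the error term gathers $\langle\bar\phi_L^2,\rho_L\rangle$ and the off-support quadratic contribution from $\zeta_{L,w_L}$, bounded via $E^2_{L,w_L}$, $E^3_{L,w_L}$ and~\eqref{e:DecayPhiL}. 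The matching lower bound is the analytic heart of the argument and I would derive it from the convex-analysis lemma announced in Section~\ref{sec:Splitting} (Lemma~\ref{l:Convex}): exploiting the spectral gap of $\bar\cH_L$ of order $a_L/d_L$ together with Assumption~\ref{ass:dLaL}, it turns the Rayleigh characterisation into a Legendre-dual identity matching the perturbative expansion to the precision $\eta_L/a_L$. Combining the two bounds yields~\eqref{e:Approx} and~\eqref{e:Approxbis}. For~\eqref{e:SpectralGap}, the same localisation shows that $\lambda_2(Q_{R_L},\xi_L) \le \xi_L(w_L) + \bar\lambda_L^{(2)} + o(a_L/d_L)$, where $\bar\lambda_L^{(2)}$ is the second eigenvalue of $\bar\cH_L$; since $\bar\lambda_L - \bar\lambda_L^{(2)} \asymp a_L/d_L$, $\xi_L(w_L)\le a_L+\theta$ and $a_L^\Xi = a_L(1+o(1))$, any $C'$ below the intrinsic gap constant works.

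\textbf{Consequences (claims~\ref{i:1} and~\ref{i:3}).} Claim~\ref{i:1} follows by combining~\eqref{e:Approxbis} with the convergence of the first measure in~\eqref{e:Meas} from Theorem~\ref{Th:TailJointLaw}, since on the event where~\eqref{e:Approxbis} is effective, $\Lambda_L(s)$ and $\{\Xi_L(w_L) \ge a_L^\Xi + s/a_L\}$ agree up to a probability negligible compared to $(R_L/L)^d$. For claim~\ref{i:3}, on $\Lambda_L(s)\cap E_{L,x_0}$ one has $w_L = x_0$ and $\Xi_L(x_0) \ge a_L^\Xi + s/a_L + O(\eta_L/a_L)$, so that a union bound over $x_0 \in Q_{R_L}$ combined with the independence of $\xi_L(x_0)$ and $\Phi_L(x_0)$ from Lemma~\ref{l:FluctField} reduces~\eqref{e:Max} to estimate~\eqref{e:GausSum} of Lemma~\ref{l:GaussianNew}.

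\textbf{Eigenfunction (claim~\ref{i:5}) and main obstacle.} Writing $\phi_{R_L} = \alpha\,\bar\phi_L(\cdot-w_L) + \phi^\perp$ with $\phi^\perp \perp \bar\phi_L(\cdot-w_L)$, testing the eigenvalue equation against $\phi^\perp$ and using the spectral gap of $\bar\cH_L$ yields
\begin{equation*}
\tfrac{a_L}{d_L}\,\|\phi^\perp\|_{\ell^2}^2 \;\lesssim\; \bigl|\lambda_1(Q_{R_L},\xi_L) - \xi_L(w_L) - \bar\lambda_L - \Phi_L(w_L)\bigr| + (\text{bulk errors}) \;=\; O(\eta_L/a_L),
\end{equation*}
so that $\|\phi^\perp\|_{\ell^2}$ and $|1-\alpha|$ are controlled by a $o(d_L/a_L)$ quantity, which, after redefining $\eta_L$ and renormalising, gives~\eqref{e:Eigen}. \emph{The main obstacle} throughout is the lower bound in~\eqref{e:Approxbis} at precision $\eta_L/a_L$: a naive second-order perturbative estimate produces an error of order $\tau_L$ or $1/\sqrt{a_L}$, far too coarse relative to the $1/a_L$-scale statistics of Theorem~\ref{Th:TailJointLaw}. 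The required refinement comes from the simultaneous use of the rapid decay~\eqref{e:DecayPhiL} of $\bar\phi_L$, the tight control~\eqref{e:E3} of $\zeta_{L,w_L}$ on $E^3_{L,w_L}$, Assumption~\ref{a:TechnicalAssum}, and the convex-analysis step of Lemma~\ref{l:Convex}, which together pin down the linear Rayleigh correction as exactly $\Phi_L(w_L)$ and not merely as a blurred $O(\tau_L)$ term.
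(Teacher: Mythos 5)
Your overall scheme (reduction to the single-site events $E_{L,x_0}$, exponential decay of $\phi_{R_L}$ and $\bar\phi_L$, decomposition via the fluctuation field, and the convex-analysis lemma as the quantitative engine) is the same as the paper's, which proves Theorem~\ref{Th:Tail} by first establishing Theorem~\ref{thm:MainEstimate} and then feeding it through the reduction Lemmas~\ref{l:RedE}--\ref{l:Shrink} and the Gaussian estimates of Lemma~\ref{l:GaussianNew}. However, you have reversed the roles of the two bounds: inserting $\bar\phi_L(\cdot-w_L)$ into the Rayleigh quotient gives the \emph{lower} bound $\lambda_1 \ge \cD_{R_L}(\bar\phi_L)$ for free, while the \emph{upper} bound $\lambda_1 \le \cD_{R_L}(\bar\phi_L) + H^{-1}\|\nabla\cD_{R_L}(\bar\phi_L)\|^2$ is the analytic heart, delivered by the strong-concavity estimate~\eqref{e:DiffGN} in Lemma~\ref{l:Convex}. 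Your phrasing ``produces the upper bound'' after the trial-function step, and ``the matching lower bound is the analytic heart'', swaps these; the conclusion is unaffected because the lemma controls $|\cD_{R_L}(\phi_{R_L})-\cD_{R_L}(\bar\phi_L)|$ in one stroke, but the narrative should be corrected. Second, the uniform claim $\|\rho_L\|_\infty\lesssim 1/d_L$ is false: $\cS_L(x)\to a_L$ at distance $\gtrsim 1$ from the well, so $\|\rho_L\|_{\ell^\infty(Q_{r_L})}=O(1)$; what saves you (and what the paper actually uses to bound the term $\two$) is that $\rho_L$ is tested against $\bar\phi_L^2$, whose exponential decay~\eqref{e:Decay} makes $\langle\bar\phi_L^2,\rho_L\rangle = O(d_L/a_L^2)$.

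Beyond these local slips, you take slightly different but viable routes on three sub-points. For claim~\ref{i:1} you route through Theorem~\ref{Th:TailJointLaw} (you mean the \emph{second} measure in~\eqref{e:Meas}, for $\Xi_L(w_L)$, not the first, for $\xi_L(w_L)$) whereas the paper applies~\eqref{e:GausSum0} directly after Proposition~\ref{p:Shrink}; both are fine, though your version needs an extra step controlling the window $\{|\,a_L(\Xi_L(w_L)-a_L^\Xi) - s\,| \le \eta_L\}$. For claim~\ref{i:4} you invoke $\bar\lambda_L^{(2)}$ and the intrinsic gap of $\bar\cH_L$; the paper's Proposition~\ref{p:SpectralGap} is more elementary, noting that any $\psi\perp\phi_{R_L}$ has $|\psi(x_0)|=O(d_L/a_L)$ because $\phi_{R_L}(x_0)\approx 1$, forcing the quadratic form of $\psi$ below $\xi_L(x_0)-\fc a_L/(2d_L)$ without spectral input on $\bar\cH_L$. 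For claim~\ref{i:5} you use an orthogonal decomposition against the eigenbasis of $\bar\cH_L$, which works, but the paper obtains it directly from the $\ell^2$-estimate~\eqref{e:Diffx} of Lemma~\ref{l:Convex}, i.e.\ the same tool that gave the eigenvalue bound, so no separate spectral-gap-of-$\bar\cH_L$ argument is needed.
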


The rest of the section is devoted to the proof of this theorem. 
The crucial step in our analysis is the identification of the expansion for the eigenvalue 
in point~\ref{i:2} above. As it is one of the major technical novelties of  
our work, we dedicate to it the next section.
%

\subsection{Approximating the eigenproblem}\label{sec:local}

In Section~\ref{Subsec:Shape}, we have seen that whenever the potential is larger than $a_L-\theta$ at some point $x_0$,  
it induces a local (deterministic) shape in a neighbourhood of $x_0$ and the fluctuations around 
such shape are encoded via $\zeta_{L,x_0}$. 
In this section, we want to understand how this 
influences
the behaviour of the main eigenvalue and eigenfunction of the Anderson Hamiltonian on $Q_{R_L}$. 
To this purpose, notice first that 
for any $x_0\in Q_{R_L}$, it holds that
\begin{equ}[e:lambdaV]
\lambda_1(Q_{R_L}, \xi_L) =\xi_L(x_0) +  \lambda_1( Q_{R_L}, V_{L,x_0})
\end{equ}
where we set $V_{L,x_0}\eqdef\xi_L-\xi_L(x_0)$. For $x\in  Q_{R_L}$, by~\eqref{e:FluctField}, $V_{L,x_0}$ satisfies
\begin{equ}[e:VLDecomp]
V_{L,x_0}(x)=\xi_L(x_0)(v_L(x-x_0)-1) +\zeta_{L,x_0}(x)\;.
\end{equ}
Our goal now is twofold. On the one hand we want to prove that, since on the event $E_{L,x_0}$, 
$\xi_L(x_0)$ is the unique maximum and is of order $a_L$, in $V_{L,x_0}$, we can replace 
the first summand by $-\cS_L$. On the other hand, we will show that the first non-trivial contribution of the fluctuation field 
$\zeta_{L,x_0}$ to the main eigenvalue on $Q_{R_L}$ is given by the r.v.~$\Phi_L(x_0)$. Let us state the theorem which rigorously details what we just explained.  

\begin{theorem}\label{thm:MainEstimate}
There exists a constant $C>0$ and an integer $L_0\geq 1$ such that for all $L\geq L_0$ and $x_0\in Q_{R_L-r_L}$, 
on the event $E_{L,x_0}=E_{L,x_0}(\kappa)$ (for $\kappa\in(0,1/3)$) from Definition~\ref{d:Event}, 
we have
\begin{equ}
\Big\vert \lambda_1(Q_{R_L},\xi_L) - \bar\lambda_L-\Xi_L(x_0) 
\Big\vert
\le C\frac{d_L}{a_L}\Big(\Big(\frac{d_L}{a_L}\Big)^{1-2\kappa}|\xi_L(x_0)-a_L| +\frac{1}{a_L}\Big),\label{e:EigenExpMain}
\end{equ}
where we recall that $\Xi_L(x_0)=\xi_L(x_0)+\Phi_L(x_0)$ and the latter is as in~\eqref{e:FL}, and
\begin{equ}[Eq:Bdpsi]
\| \phi_{R_L} - \bar\phi_{L}(\cdot-x_0) \|_{\ell^2(Q_{R_L})} \le  C\frac{d_L}{a_L}\sqrt{\Big(\frac{d_L}{a_L}\Big)^{1-2\kappa}|\xi_L(x_0)-a_L|+\frac{1}{a_L}}\;,
\end{equ}
where we extended $\bar\phi_{L}$ by setting it to 
be zero outside $Q_{r_L}$. 
\end{theorem}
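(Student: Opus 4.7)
The plan is to localize the eigenproblem from $Q_{R_L}$ down to the microscopic box $Q_{r_L,x_0}$ centred at the peak, and then to expand the restricted eigenvalue perturbatively around the deterministic operator $\bar\cH_L$. By~\eqref{e:lambdaV} it suffices to analyse $\lambda_1(Q_{R_L},V_{L,x_0})$ for the centred potential $V_{L,x_0}=\xi_L-\xi_L(x_0)$, which, by Proposition~\ref{l:Event}\ref{i:event1}, is bounded above by $-\tfrac{\fc}{2}(a_L/d_L)$ on $Q_{R_L,x_0}^{\neq x_0}$ on the event $E_{L,x_0}$. Using~\eqref{e:VLDecomp}, I would first rewrite
\begin{equ}
V_{L,x_0}(x)=-\cS_L(x-x_0)+\big(a_L-\xi_L(x_0)\big)\big(1-v_L(x-x_0)\big)+\zeta_{L,x_0}(x)\;,
\end{equ}
isolating the deterministic shape, a ``shift correction'' controlled pointwise via $E_{L,x_0}^1$, and a zero-mean fluctuation controlled by $E_{L,x_0}^2\cap E_{L,x_0}^3$.

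The first step is a Combes--Thomas type exponential decay for the principal eigenfunction $\phi_{R_L}$ away from $x_0$ at rate at least $c\ln(a_L/d_L)$ per step, which follows from the depth of the well. Combined with the lower bound $\ln r_L\ge\ln a_L$ in~\eqref{e:rL}, this decay will show that both $\lambda_1(Q_{R_L},\xi_L)$ and $\phi_{R_L}$ agree, up to super-polynomially small errors, with their analogues for the operator restricted to $Q_{r_L,x_0}$. After a translation, the restricted operator becomes $\bar\cH_L+W_L$ on $Q_{r_L}$, where $W_L(x)\eqdef(a_L-\xi_L(x_0))(1-v_L(x))+\zeta_{L,x_0}(x_0+x)$.

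The core of the proof is the forthcoming convex-analysis Lemma~\ref{l:Convex}, applied to the quadratic form $\phi\mapsto\langle\phi,(\bar\cH_L+W_L)\phi\rangle$. It should produce a two-sided expansion
\begin{equ}
\lambda_1(\bar\cH_L+W_L)=\bar\lambda_L+\langle\bar\phi_L,W_L\bar\phi_L\rangle+O\!\Big(\tfrac{d_L}{a_L}\langle\bar\phi_L^2,W_L^2\rangle\Big)\;,
\end{equ}
the prefactor $d_L/a_L$ being the inverse of the spectral gap of $\bar\cH_L$. Since $\zeta_{L,x_0}(x_0)=0$ and $\bar\phi_L$ is normalised in $\ell^2$, the linear term splits as $-\tfrac{a_L-\xi_L(x_0)}{a_L}\langle\bar\phi_L,\cS_L\bar\phi_L\rangle+\Phi_L(x_0)$, the first summand being of order $|\xi_L(x_0)-a_L|\cdot d_L/a_L^2$ since $\langle\bar\phi_L,\cS_L\bar\phi_L\rangle\asymp d_L/a_L$. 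Adding back $\xi_L(x_0)$ then reconstructs $\bar\lambda_L+\Xi_L(x_0)$ plus contributions already of the size advertised in~\eqref{e:EigenExpMain}.

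The main obstacle is bounding the quadratic remainder $\langle\bar\phi_L^2,W_L^2\rangle$ with the sharp dependence on $|\xi_L(x_0)-a_L|$. The deterministic part $(a_L-\xi_L(x_0))^2(1-v_L)^2$, combined with~\eqref{e:DecayPhiL} and~\eqref{e:AlmostDecay}, yields an error of order $d_L/a_L^2$ after multiplication by the gap factor. For the stochastic part, the pointwise bound in $E_{L,x_0}^3$ together with~\eqref{e:VarZeta} gives
\begin{equ}
\bar\phi_L(x)^2\,\zeta_{L,x_0}(x_0+x)^2\lesssim\Big(\tfrac{d_L}{a_L}\Big)^{2(1-\kappa)|x|}\tfrac{e^{c''|x|}}{d_L}\big(1\vee(|\xi_L(x_0)-a_L|a_L)\big)\;.
\end{equ}
Since $\kappa<1/3$, summing over $|x|\ge 1$ is dominated by the nearest-neighbour sites $|x|=1$; after multiplying by $d_L/a_L$ this recovers precisely $(d_L/a_L)^{2-2\kappa}|\xi_L(x_0)-a_L|+d_L/a_L^2$, matching~\eqref{e:EigenExpMain}. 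Finally, the eigenfunction bound~\eqref{Eq:Bdpsi} follows from the standard perturbation inequality $\|\phi_{R_L}-\bar\phi_L(\cdot-x_0)\|_{\ell^2}^2\lesssim(\text{eigenvalue error})/\text{gap}$ for a simple eigenvalue, which delivers the square-root expression in the statement.
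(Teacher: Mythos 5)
Your proposal is correct and essentially mirrors the paper's own argument: both hinge on the convex-analysis Lemma~\ref{l:Convex} applied to the Dirichlet quadratic form with $\bar\phi_L$ as base point, on the a priori exponential decay of Lemma~\ref{l:DecayPsi}, and on the pointwise control of $\zeta_{L,x_0}$ afforded by the event $E^3_{L,x_0}$, and you arrive at the same size of linear and quadratic remainder terms. The only organizational difference is that you propose a two-step localisation --- first Combes--Thomas from $Q_{R_L}$ to $Q_{r_L,x_0}$, then Lemma~\ref{l:Convex} for $\bar\cH_L+W_L$ on $Q_{r_L}$ --- whereas the paper applies Lemma~\ref{l:Convex} directly on $Q_{R_L}$, comparing $\cD_{R_L}(\phi_{R_L})$ with $\cD_{R_L}(\bar\phi_L(\cdot-x_0))$ (with $\bar\phi_L$ extended by zero and the deterministic shift isolated as a separate exactly computable term $\two$); the localisation cost then reappears only as a boundary contribution of order $r_L^{d-1}(d_L/a_L)^{2r_L}\lesssim 1/a_L$ in the gradient estimate of Lemma~\ref{l:IL}, which is exactly the ``super-polynomially small error'' you anticipate.
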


In order to prove the above statement, we need to show that (1) 
we can  localise the eigenproblem to a ball of size $r_L$ centred around the maximum of $\xi$ on $Q_{R_L}$, 
which, since we will be working on $E_{L,x_0}$, 
is at $x_0\in Q_{R_L-r_L}$, 
and (2) the local eigenproblem on such ball is close to the deterministic 
one associated to the operator $\bar\cH_{L}$. 
For these, two main ingredients are required, namely,  
suitable a-priori estimates on the decay of the main eigenfunctions, 
and a basic (but very useful) technical lemma on convex functionals, 
which in particular applies to the Dirichlet forms associated to  $\cH_{Q_{R_L}, \xi_L}$ and $\bar\cH_{L}$. 
For the former, we will use~\cite[Lemma 4.2]{BiKo16} whose statement is recalled below 
in a slightly different formulation which better suits our purposes. 


\begin{lemma}{\cite[Lemma 4.2]{BiKo16}}\label{lemma:BKL2Bound}
Let $V\colon\Z^d \rightarrow \R$ and $D\subset \Z^d$. 
Let $\lambda,\phi$ be an eigenvalue and eigenfunction (normalised in $\ell^2(D)$) of $\cH_{D,V}$ with Dirichlet boundary conditions. 
Assume $D'\subset D$, $A'\ge A>0$ and $R\ge 1$ is an integer, such that
\begin{enumerate}[label=\normalfont{(\arabic*)}]
\item for all $x\in D'$, $V(x)\leq \lambda-A'$, 
\item for all $x\in D$ such that $\min_{y\in D'} |x-y|_1 <R$, $V(x)< \lambda -A$,
\end{enumerate}
where $|x|_1 \eqdef \sum_{i=1}^d |x_i|$ denotes the $\ell^1$-norm. Then,
\begin{equ}[e:BKL2Bound]
\sum_{x\in D'} |\phi(x)|^2 \le \Big(1 + \frac{A}{2d}\Big)^{2-2R} \Big(1 + \frac{A'}{2d}\Big)^{-2}\;.
\end{equ}
\end{lemma}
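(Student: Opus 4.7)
The statement is a classical Combes--Thomas / Agmon-type decay estimate for eigenfunctions of discrete Schr\"odinger operators: an eigenfunction of $\cH_{D,V}$ must be exponentially small in any region where the potential lies strictly below the eigenvalue, with the rate of decay dictated by the gap. My plan is to prove it via a weighted energy estimate.

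First, I would introduce an exponentially growing weight $w\colon D\to[1,\infty)$ with the following structure: set $\rho \eqdef 1+A/(2d)$ and $\rho'\eqdef 1+A'/(2d)$, put $w(x)\eqdef 1$ whenever $\min_{y\in D'}|x-y|_1 \geq R$, let $w$ grow by a factor at most $\rho$ per unit $\ell^1$-step as one approaches $D'$ (so that $w\geq \rho^{R-1}$ on the first layer adjacent to $D'$), and finally set $w\geq \rho^{R-1}\rho'$ on $D'$ itself (the extra factor $\rho'$ reflecting the stronger gap $A'$ there). A concrete choice is $w(x)\eqdef \rho^{\max\{0,\, R-1-\min_{y\in D'}|x-y|_1\}}$ for $x\notin D'$ and $w(x) \eqdef \rho^{R-1}\rho'$ for $x\in D'$. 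By construction, $w(x)/w(y)\leq \rho$ at adjacent sites in the buffer region, and $w(x)/w(y)\leq \rho'$ across $\partial D'$.

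Next, I would test the eigenvalue identity $\Delta\phi + V\phi = \lambda\phi$ against $w^2\phi$ and sum over $D$ (with $\phi$ extended by $0$ outside $D$ to respect the Dirichlet boundary condition). After summation by parts on the Laplacian term --- equivalently, by working with the conjugated function $\psi\eqdef w\phi$ and the quadratic form of $w\Delta w^{-1}$ --- one arrives at an identity of the schematic form
\begin{equation*}
\sum_x (\lambda-V(x))\,\psi(x)^2 \;+\; \sum_{\{x,y\}}\tfrac12\bigl(\tfrac{w(x)}{w(y)}+\tfrac{w(y)}{w(x)}\bigr)(\psi(x)-\psi(y))^2 \;=\; \sum_x \psi(x)^2 \sum_{y\sim x}\Bigl(\tfrac12\bigl(\tfrac{w(x)}{w(y)}+\tfrac{w(y)}{w(x)}\bigr)-1\Bigr).
\end{equation*}
The scale $\rho$ has been tuned so that the weighted Laplacian cost on the right is bounded by $A \sum \psi^2$ on the buffer region and by $A'\sum\psi^2$ on $D'$, which are exactly matched by the lower bounds $A$ and $A'$ on $\lambda - V$ in the respective regions. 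The right-hand side is therefore absorbed by the first term on the left, yielding $\sum_x w(x)^2\phi(x)^2 = \|\psi\|_{\ell^2(D)}^2 \leq \|\phi\|_{\ell^2(D)}^2 = 1$. Restricting the sum to $D'$, where $w(x)^2\geq \rho^{2(R-1)}(\rho')^2$, gives the claimed bound.

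The main technical obstacle is the algebraic balance in the last display: the factor-per-step $\rho$ must be large enough for $w$ to concentrate the $\ell^2$-mass on $D'$ but small enough that the pointwise Laplacian cost $\sum_{y\sim x}\bigl(\tfrac12(w(x)/w(y)+w(y)/w(x))-1\bigr)$ stays below the potential gap $A$. The choice $\rho=1+A/(2d)$ saturates this balance to leading order, and careful bookkeeping --- particularly across the interface $\partial D'$, where the jump from $\rho$ to $\rho'$ must be absorbed and produces the explicit prefactor $(\rho')^{-2}$ in the statement --- is where essentially all the work lies.
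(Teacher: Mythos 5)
Your weighted-energy identity (testing the eigenvalue equation against $w^2\phi$ and symmetrising) is correct as an algebraic manipulation, but the final step does not follow and is in fact impossible as stated. Since $w\ge 1$ everywhere, one has $\|\psi\|_{\ell^2(D)}^2=\sum_x w(x)^2\phi(x)^2\ge\sum_x\phi(x)^2=\|\phi\|_{\ell^2(D)}^2=1$, so the claimed conclusion $\|\psi\|^2\le\|\phi\|^2$ can never hold (unless $\phi$ vanishes on $\{w>1\}$, in which case the statement is vacuous). Concretely, the identity rearranges to
\begin{equation*}
\sum_x\Big[(\lambda-V(x))-c(x)\Big]\,\psi(x)^2 = -\sum_{\{x,y\}}\alpha_{xy}\,(\psi(x)-\psi(y))^2\le 0,
\qquad c(x)\eqdef\sum_{y\sim x}\Big(\tfrac12\bigl(\tfrac{w(x)}{w(y)}+\tfrac{w(y)}{w(x)}\bigr)-1\Big),
\end{equation*}
and arranging $c\le A$ on the buffer and $c\le A'$ on $D'$ only makes the coefficient $(\lambda-V)-c$ nonnegative there; on the region $D''$ at $\ell^1$-distance $\ge R$ from $D'$ the potential is not controlled by the hypotheses, so $(\lambda-V)-c$ can be very negative and the inequality says nothing. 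There is also a mismatch at the interface: with your explicit $w$, the jump from distance $1$ (weight $\rho^{R-2}$) to $D'$ (weight $\rho^{R-1}\rho'$) is a factor $\rho\rho'$, not $\rho'$, so even the absorption on $D'$ would not be ``exactly matched'' as described.

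The lemma is in fact proved in \cite{BiKo16} by a short iterative argument that avoids weights entirely, and you may find it instructive to compare. Writing the eigenvalue equation pointwise as $(2d+\lambda-V(x))\phi(x)=\sum_{y\sim x}\phi(y)$, one gets, for any $x$ with $V(x)<\lambda-A$, the bound $(2d+A)|\phi(x)|\le\sum_{y\sim x}|\phi(y)|$, hence by Cauchy--Schwarz $|\phi(x)|^2\le \frac{2d}{(2d+A)^2}\sum_{y\sim x}|\phi(y)|^2$. Summing this over a shell $D_k\eqdef\{x\in D:\min_{y\in D'}|x-y|_1\le k\}$ and noting that each site is hit by at most $2d$ neighbours yields $\|\phi\|^2_{\ell^2(D_k)}\le\bigl(1+\tfrac{A}{2d}\bigr)^{-2}\|\phi\|^2_{\ell^2(D_{k+1})}$ for $1\le k\le R-1$, and the same one-step bound with $A'$ in place of $A$ for $k=0$. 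Iterating $R$ times and using $\|\phi\|_{\ell^2(D_R)}\le\|\phi\|_{\ell^2(D)}=1$ gives exactly \eqref{e:BKL2Bound}. The key structural difference from your attempt is that the iteration bounds the mass on $D'$ by the mass on progressively larger shells; it never asserts a global inequality $\|w\phi\|\le\|\phi\|$, which, as noted above, cannot be true.
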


Let us see what type of information the previous lemma provides in our context.

\begin{lemma}\label{l:DecayPsi}
	In the setting of Theorem~\ref{thm:MainEstimate}, there exists a constant $c_d>0$ such that 
	for $L$ large enough, on the event 
	$E^1_{L,x_0}\cap E^2_{L,x_0}$ (see \eqref{e:E1} and \eqref{e:E2}), 
	we have
	\begin{equ}[e:Decay]
		\phi_{R_L}(x)^2\leq  \Big(1+ c_d\frac{a_L}{d_L}\Big)^{-2|x-x_0|}\qquad \forall\, x\in Q_{R_L}\,,
	\end{equ}
	and
	\begin{equ}[e:x0]
		\phi_{R_L}(x_0)^2\geq 1 -\Big(1+ c_d\frac{a_L}{d_L}\Big)^{-2}\,.
	\end{equ}
	Furthermore, both~\eqref{e:Decay} and~\eqref{e:x0} hold with $\phi_{R_L}$ and $x_0$ replaced by 
	$\bar{\phi}_L$ and $0$ (the restriction to the event $E^1_{L,x_0}\cap E^2_{L,x_0}$ clearly being unnecessary  in this case).
\end{lemma}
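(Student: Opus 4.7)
The plan is to apply the Biskup--König estimate, Lemma~\ref{lemma:BKL2Bound}, to each of the two operators in turn, with the input being a lower bound of order $a_L/d_L$ on the spectral gap between the principal eigenvalue and the potential at all other sites. For $\cH_{Q_{R_L},\xi_L}$, I would first obtain a lower bound on $\lambda_1(Q_{R_L},\xi_L)$ by testing the Dirichlet form against the normalised indicator $\delta_{x_0}$: since $\langle\delta_{x_0},\Delta\delta_{x_0}\rangle=-2d$, the variational principle yields
\begin{equ}[e:LowerLambda]
\lambda_1(Q_{R_L},\xi_L)\ge \xi_L(x_0)-2d\,.
\end{equ}
On $E^1_{L,x_0}\cap E^2_{L,x_0}$, the proof of~\eqref{e:LowerShape} (which only uses these two events) gives $\xi_L(y)\le \xi_L(x_0)-\tfrac{\fc}{2}\tfrac{a_L}{d_L}$ for every $y\in Q_{2R_L,x_0}^{\ne x_0}$; since $x_0\in Q_{R_L-r_L}$ ensures $Q_{R_L}\subset Q_{2R_L,x_0}$, combining this with~\eqref{e:LowerLambda} and Assumption~\ref{ass:dLaL} (so that the constant $2d$ is absorbed for $L$ large) produces
\begin{equ}
\lambda_1(Q_{R_L},\xi_L)-\xi_L(y)\ge \tfrac{\fc}{4}\tfrac{a_L}{d_L}\eqdef A\,,\qquad \forall\,y\in Q_{R_L}^{\ne x_0}\,.
\end{equ}

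To derive~\eqref{e:Decay}, for each $x\in Q_{R_L}\setminus\{x_0\}$ I apply Lemma~\ref{lemma:BKL2Bound} with $D=Q_{R_L}$, $V=\xi_L$, $\lambda=\lambda_1(Q_{R_L},\xi_L)$, $\phi=\phi_{R_L}$, and the singleton $D'=\{x\}$, $A'=A$, and integer $R=|x-x_0|_1$. Both hypotheses are met: $V(x)\le\lambda-A'$ holds by the bound above, and the $\ell^1$-ball $\{y\in Q_{R_L}:|y-x|_1<|x-x_0|_1\}$ excludes $x_0$, so $V(y)<\lambda-A$ there as well. The conclusion~\eqref{e:BKL2Bound} yields $\phi_{R_L}(x)^2\le(1+A/(2d))^{-2|x-x_0|_1}$, and since $|x-x_0|_1\ge|x-x_0|$ and $(1+A/(2d))^{-1}<1$, this gives~\eqref{e:Decay} with $c_d\eqdef\fc/(8d)$. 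For~\eqref{e:x0} I instead apply the same lemma with $D'=Q_{R_L}\setminus\{x_0\}$ and $R=1$: the hypothesis on the set $\{y:\min_{z\in D'}|y-z|_1<1\}=D'$ reduces to the bound already established, and the conclusion is $\sum_{x\ne x_0}\phi_{R_L}(x)^2\le(1+A/(2d))^{-2}$, from which~\eqref{e:x0} follows using $\|\phi_{R_L}\|_{\ell^2(Q_{R_L})}=1$.

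The deterministic statement is then proved by the very same scheme, with $\cH_{Q_{R_L},\xi_L}$ replaced by $\bar\cH_L=\cH_{Q_{r_L},-\cS_L}$ and $x_0$ replaced by $0$: the test-function argument gives $\bar\lambda_L\ge -\cS_L(0)-2d=-2d$, while the lower bound in~\eqref{e:AlmostDecay} gives $-\cS_L(x)\le-\fc a_L/d_L$ for every $x\in Q_{r_L}^{\ne 0}$, so that $\bar\lambda_L-(-\cS_L(x))\ge \tfrac{\fc}{2}\tfrac{a_L}{d_L}$ for $L$ large. The same two applications of Lemma~\ref{lemma:BKL2Bound} then yield the decay and concentration bounds for $\bar\phi_L$ at the origin.

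I do not expect any real obstacle here: the deterministic step is essentially textbook and the random step piggy-backs on~\eqref{e:LowerShape}, which was already established under exactly the two events $E^1_{L,x_0},E^2_{L,x_0}$ appearing in the hypothesis. The only subtle point is the bookkeeping in the application of Lemma~\ref{lemma:BKL2Bound}: one must pick $R=|x-x_0|_1$ (rather than $|x-x_0|$) so that the strict inequality in the distance condition excludes $x_0$ from the ball, and then convert back to the $\ell^2$-norm for the final statement.
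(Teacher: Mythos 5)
Your proof is correct and follows essentially the same route as the paper: both establish a uniform spectral gap of order $a_L/d_L$ between the principal eigenvalue and the potential at every site $\neq x_0$ (the paper does this by passing to $V_L = \xi_L - \xi_L(x_0)$ so that $V_L(x_0)=0$ forces $\lambda_1\ge -2d$, while you obtain the equivalent bound $\lambda_1\ge\xi_L(x_0)-2d$ directly by testing against $\delta_{x_0}$), and both then feed this gap into Lemma~\ref{lemma:BKL2Bound}. The only difference is bookkeeping: the paper applies the lemma once with the cumulative tail set $D'=\{y\in Q_{R_L}:|y|_1\ge|x|_1\}$ and $R=|x|_1$, from which the pointwise bound~\eqref{e:Decay} (each singleton being a subset of the appropriate tail set) and the mass bound~\eqref{e:x0} (take $|x|_1=1$) drop out simultaneously; you apply it twice, once with the singleton $D'=\{x\}$ and once with $D'=Q_{R_L}\setminus\{x_0\}$. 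Both yield the same constant $c_d=\fc/(8d)$ and the same exponent after converting $|\cdot|_1\ge|\cdot|$, so the two arguments are materially equivalent.
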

\begin{proof}
	Without loss of generality, we take $x_0=0$ throughout this proof, and 
	omit the corresponding subscript from the notation (so that e.g. $V_L=V_{L,0}$ and so on). 
	Moreover, note that $\phi_{R_L}$ is also the main eigenfunction of $\cH_{Q_{R_L}, V_L}$, 
	for $V_L$ as in~\eqref{e:VLDecomp}, associated to the eigenvalue 
	$\lambda_1(Q_{R_L},V_L)=\lambda_1(Q_{R_L},\xi_L)-\xi_L(0)$.
	
	If we establish the bound
	\begin{equ}[e:Decaybis]
	\sum_{\substack{y\in Q_{R_L}:\\|y|_1 \ge |x|_1}} \phi_{R_L}(y)^2 \le \Big(1+ c_d\frac{a_L}{d_L}\Big)^{-2|x|}\qquad \forall\, x\in Q_{R_L}\,,
	\end{equ}
	then both~\eqref{e:Decay} and~\eqref{e:x0} follow (for the latter recall that $\phi_{R_L}$ and $\bar\phi_{L}$ are normalised in $\ell^2(Q_{R_L})$). We thus prove \eqref{e:Decaybis}.
	On the event $E^1_{L}\cap E_L^2$, by Proposition \ref{l:Event} provided $L$ is large enough
	\begin{equ}[e:PotBound]
		V_L(y)\leq -\frac{\fc}{2}\frac{a_L}{d_L}\,,\qquad \text{for all }y\in Q_{R_L}^{\neq0}\,.
	\end{equ}
	By~\eqref{e:AlmostDecay}, the same holds for $-\cS_L$ with $\fc/2$ replaced by $\fc$. 
	Since $V_L(0) =0=\cS_L(0)$, we deduce from the min-max formula that $\lambda_1(Q_{R_L},V_L)\wedge\bar\lambda_L\geq -2d$. 
	Hence, upon setting $R= |x|_{1}$, $D=Q_{R_L}$, $D'= \{y\in Q_{R_L}: |y|_1 \ge |x|_1\}$ and 
	$A=A'= \fc d_L/(4a_L)$ (or $\fc d_L/(2a_L)$ if we deal with $\bar{\phi}_L$), both hypothesis (1) and (2) 
	in Lemma~\ref{lemma:BKL2Bound} hold provided $L$ is large enough. 
	Thus,~\eqref{e:BKL2Bound} yields the bound in~\eqref{e:Decaybis} but with the exponent at the r.h.s. given by  
	$-2|x|_1$. However since $|y|_1 \ge |y|$ for any $y\in \Z^d$, the desired bound immediately follows 
	and the proof is complete. 
\end{proof}

Before stating the next lemma detailing the second tool we need, let us briefly motivate it. 
Let $\cH$ be either of the operators $\cH_{Q_{R_L},\xi_L}=-\Delta + \xi_L$ on $Q_{R_L}$ or $\bar\cH_L=-\Delta -\cS_L$ on $Q_{r_L}$, 
$\lambda$ and $\phi$ be its respective principal eigenvalue and eigenfunction (which we 
take normalised and non-negative to ensure uniqueness), $r$ be either $R_L$ or $r_L$. 
By the min-max theorem, we know that $\lambda= \max \cD(\psi) = \cD(\phi)$ where $\cD$ is either $\cD_{R_L}$ or $\bar{\cD}_L$ 
and the latter are given by 
\begin{equ}[e:I]
	\cD_{R_L}(\psi)=\langle \psi, \cH_{Q_{R_L},\xi_L} \psi\rangle_{\ell^2(Q_{R_L})}\;,\quad \bar{\cD}_L(\psi)=\langle \psi, \bar{\cH}_L \psi\rangle_{\ell^2(Q_{r_L})}\;,
\end{equ}
the maximum carrying over all functions $\psi\colon Q_r \to \R$, normalised in $\ell^2(Q_r)$. Actually, we do not need to consider all such functions $\psi$, but only those that share the decay properties of $\phi$ as detailed in Lemma~\ref{l:DecayPsi}. Thus, we will view $\cD$ as a functional of $(r+1)^d-1$ variables 
(as the value at $0$ of the normalised, non-negative functions can be recovered from those elsewhere) 
defined on $\cZ_{r}\subset \ell^2(Q_r^{\neq 0})$ whose elements $\psi$ satisfy 
\begin{equs}[e:Z]
|\psi(x)|^2\leq  \Big(c_d\frac{a_L}{d_L}\Big)^{-2|x|}\,,\qquad \forall x\in Q^{\neq 0}_r\,.
\end{equs}
$\cZ_{r}$ is closed and convex. Note that, compared to \eqref{e:Decay}, we imposed a slightly larger upper bound: 
this is to ensure that $\phi_{R_L}$ and $\bar\phi_L$ lie in the interiors of $\cZ_{R_L}$ and $\cZ_{r_L}$ respectively.

The next lemma provides a general statement that suitably exploits 
convexity to derive estimates on the increments of functionals as above near their maximisers.

\begin{lemma}\label{l:Convex}
Let $S\subset\Z^d$ be finite and $\cC\subset\ell^2(S)$ be closed and convex. 
Assume that $G\colon\CC\to\R$ is a strictly concave, 
twice continuously differentiable (on $\mathring{\cC}$) functional  for which 
there exists a constant $H>0$, such that 
for all $z\in\mathring{\cC}$, its Hessian $\Hess G$ at $z$ satisfies
\begin{equ}[e:Convex]
\langle y, \Hess G(z) y\rangle_{\ell^2(S)}\leq -H \|y\|^2_{\ell^2(S)}\,,\qquad \forall y\in\ell^2(S).
\end{equ}
Let $x$ be the maximiser of $G$ in $\cC$ (which exists and is unique by~\eqref{e:Convex}), and assume it lies in $\mathring{\cC}$.
Then, for any $\bar x\in\mathring{\cC}$, we have
\begin{equs}
|G(x)-G(\bar x)|&\leq \frac{1}{H}\|\nabla G(\bar x)\|^2_{\ell^2(S)}\,,\label{e:DiffGN}\\
\|x-\bar x\|_{\ell^2(S)}&\leq \frac{1}{H}\|\nabla G(\bar x)\|_{\ell^2(S)}\,.\label{e:Diffx}
\end{equs}
\end{lemma}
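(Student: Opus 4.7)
The plan is to exploit the fact that $x$ lies in $\mathring{\cC}$, so $x$ is an unconstrained critical point of $G$ and in particular $\nabla G(x)=0$. Existence and uniqueness of the maximiser are guaranteed by strict concavity (via~\eqref{e:Convex}) and the closedness and convexity of $\cC$, so no further work is needed on this front.

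To obtain~\eqref{e:Diffx}, I would invoke the fundamental theorem of calculus applied to $\nabla G$ along the segment $z_t \eqdef \bar x + t(x-\bar x)$ (which lies in $\mathring{\cC}$ by convexity for $t\in[0,1)$, and by a continuity argument one can include $t=1$ since $x\in\mathring{\cC}$). This gives
\begin{equ}
-\nabla G(\bar x) = \nabla G(x)-\nabla G(\bar x) = \int_0^1 \Hess G(z_t)(x-\bar x)\,\dd t\,.
\end{equ}
Taking the inner product with $x-\bar x$ and using~\eqref{e:Convex} yields
\begin{equ}
-\langle \nabla G(\bar x), x-\bar x\rangle_{\ell^2(S)} \le -H\,\|x-\bar x\|_{\ell^2(S)}^2\,,
\end{equ}
and then Cauchy--Schwarz on the left-hand side gives $H\|x-\bar x\|^2 \le \|\nabla G(\bar x)\|\,\|x-\bar x\|$, which is exactly~\eqref{e:Diffx}.

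For~\eqref{e:DiffGN}, I would use plain concavity. On the one hand, since $x$ is the maximiser, $G(x)-G(\bar x)\ge 0$, and since $\nabla G(x)=0$ concavity gives $G(\bar x)\le G(x)+\langle \nabla G(x),\bar x-x\rangle = G(x)$, confirming this inequality. On the other hand, concavity at $\bar x$ yields
\begin{equ}
G(x) \le G(\bar x) + \langle \nabla G(\bar x), x-\bar x\rangle_{\ell^2(S)}\,,
\end{equ}
so that by Cauchy--Schwarz and the bound already obtained,
\begin{equ}
0 \le G(x)-G(\bar x) \le \|\nabla G(\bar x)\|_{\ell^2(S)}\,\|x-\bar x\|_{\ell^2(S)} \le \frac{1}{H}\|\nabla G(\bar x)\|_{\ell^2(S)}^2\,,
\end{equ}
which proves~\eqref{e:DiffGN}.

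There is no real obstacle here: the statement is a textbook-flavoured consequence of strong concavity. The only minor subtlety is to ensure that the segment from $\bar x$ to $x$ stays inside $\mathring{\cC}$ so that $\Hess G$ is defined along it, which follows from convexity of $\cC$ and the assumption $x,\bar x\in\mathring{\cC}$; otherwise one could simply work on a slightly smaller open convex set containing both points on which $G$ remains $\cC^2$.
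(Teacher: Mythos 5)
Your proof is correct and follows essentially the same route as the paper: both establish~\eqref{e:Diffx} via the fundamental theorem of calculus for $\nabla G$ along the segment from $\bar x$ to $x$ combined with~\eqref{e:Convex} and Cauchy--Schwarz, and both then deduce~\eqref{e:DiffGN} from the first-order concavity inequality $G(x)\le G(\bar x)+\langle\nabla G(\bar x),x-\bar x\rangle$ (the paper phrases this via the one-variable function $f(t)=G(\bar x+t(x-\bar x))$ and the monotonicity of $f'$, but it is the same estimate). The only cosmetic difference is that you invoke the concavity inequality directly rather than introducing $f$.
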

\begin{proof}
Throughout the proof, the scalar product and the norm used are those on $\ell^2(S)$ 
thus, to lighten the notation, we omit the corresponding subscript, i.e. 
we write $\langle\cdot,\cdot\rangle$ and $\|\cdot\|$ 
in place of $\langle\cdot,\cdot\rangle_{\ell^2(S)}$ and $\|\cdot\|_{\ell^2(S)}$. 

We first establish~\eqref{e:Diffx} and then use it to show~\eqref{e:DiffGN}. 
Let $x$ be the maximiser of $G$ in $\cC$ and $\bar x\neq x$ be another element of $\CC$. 
Since $x$ is a maximiser, $\nabla G(x)\equiv 0$ and therefore
\begin{equs}[e:NablavsHess]
\langle x-\bar x, \nabla G(\bar x)\rangle&=-\langle x-\bar x, \nabla G(x)-\nabla G(\bar x)\rangle\\
&=-\int_0^1\langle x-\bar x, \Hess G(\bar x+t(x-\bar x))(x-\bar x)\rangle\dd t\,.
\end{equs}
By assumption $\cC$ is convex, so that $\bar x+t(x-\bar x)\in\mathring{\CC}$ for any $t\in[0,1]$, 
and we can use~\eqref{e:Convex} to bound the r.h.s. of~\eqref{e:NablavsHess} from below by 
$H\|x-\bar x\|^2$. As a consequence, we deduce 
\begin{equ}
\|x-\bar x\|^2\leq \frac{1}{H}\langle x-\bar x, \nabla G(\bar x)\rangle\leq \frac{1}{H}\|x-\bar x\|\|\nabla G(\bar x)\|
\end{equ} 
from which~\eqref{e:Diffx} follows at once. 

For~\eqref{e:DiffGN}, consider the map $f:[0,1]\ni t \mapsto G(\bar x+t(x-\bar x))$ which is concave and achieves its maximum at $t=1$. Necessarily the maximum of its derivative is attained at $t=0$ and therefore
\begin{equ}
	|G(x)-G(\bar x)| = |f(1)-f(0))| \le f'(0) = \langle x-\bar x, \nabla G(\bar x)\rangle \le \|x-\bar x\|\|\nabla G(\bar x)\|
\end{equ}
and thus~\eqref{e:DiffGN} follows by plugging~\eqref{e:Diffx} at the r.h.s.
%
%
\end{proof}

With Lemmas~\ref{l:DecayPsi} and~\ref{l:Convex} at our disposal, and anticipating some properties of $\cD_{R_L}$ and $\bar \cD_L$ stated and shown in Appendix~\ref{a:IL}, we are ready to prove Theorem~\ref{thm:MainEstimate}.  

\begin{proof}[Proof of Theorem~\ref{thm:MainEstimate}]
As soon as $x_0\in Q_{R_L-r_L}$, the inclusion $Q_{r_L,x_0} \subset Q_{R_L}$ holds. As the arguments presented in this proof only rely on such inclusion, w.l.o.g., we can take $x_0 = 0$ and omit the corresponding index from the notation. 

Let us first consider the l.h.s. of~\eqref{e:EigenExpMain}. 
Let $\cH_{Q_{R_L},V_{L}}$ be the operator on $Q_{R_L}$ given by $\Delta+V_L$ for $V_{L}=\xi_L-\xi_L(0)$. 
By~\eqref{e:lambdaV} and~\eqref{e:FL}, we have
\begin{equs}
\lambda_1(Q_{R_L},\xi_L) - \bar\lambda_L -\Xi_L(0)&=\lambda_1(Q_{R_L},\xi_L) - \xi_L(0) - \bar\lambda_L-\Phi_L(0)\\
&= \lambda_1(Q_{R_L},V_{L}) - \bar\lambda_L-\langle\bar\phi_{L}, \zeta_{L}\bar\phi_{L}\rangle_{\ell^2(Q_{r_L})}\\
&= \cD_{R_L}(\phi_{R_L})-\bar \cD_{L}(\bar\phi_{L})-\langle\bar\phi_{L}, \zeta_{L}\bar\phi_{L}\rangle_{\ell^2(Q_{r_L})}\,,
\end{equs}
where $\cD_{R_L}$ and $\bar \cD_{L}$ are defined according to~\eqref{e:I}. Then, 
the r.h.s.~coincides with the sum of two terms
\begin{equs}
\one&\eqdef \cD_{R_L}(\phi_{R_L})-\cD_{R_L}(\bar\phi_{L})\,,\label{e:A}\\
\two&\eqdef \cD_{R_L}(\bar\phi_{L})-\bar \cD_{L}(\bar\phi_{L})-\langle\bar\phi_{L}, \zeta_{L}\bar\phi_{L}\rangle_{\ell^2(Q_{r_L})}\,,\label{e:B}
\end{equs}
which we will separately control. 

Let us begin with $\two$. Since $\supp(\bar\phi_{L})\subset Q_{r_L}$, in the first summand 
 the scalar product in the definition of $\cD_{R_L}$ in~\eqref{e:I} 
can be taken in $\ell^2(Q_{r_L})$ instead of $\ell^2(Q_{R_L})$. Since all the scalar products appearing 
in this term are in $\ell^2(Q_{r_L})$, we lighten the presentation by omitting the corresponding subscript from the notation. 
Then,~\eqref{e:VLDecomp} and the definition of $\cS_L$ in~\eqref{e:Shape} give 
\begin{equs}
\two&=\langle \bar\phi_{L},\cH_{Q_{R_L},V_{L}}\bar\phi_{L}\rangle-\langle \bar\phi_{L},\bar\cH_{L}\bar\phi_{L}\rangle-\langle\bar\phi_{L}, \zeta_{L}\bar\phi_{L}\rangle\\
&=(\xi_L(0)-a_L)\langle \bar\phi_{L}, [v_L(\cdot)-1]\bar\phi_{L}\rangle\,.
\end{equs}
On $E_{L,x_0}^1$, $|\xi_L(0)-a_L|\leq \theta$ which, together with~\eqref{e:AlmostDecay}, implies 
\begin{equs}
\vert\two\vert&=\vert\xi_L(0)-a_L\vert\Big\vert \sum_{x\in Q_{r_L}^{\neq 0}}[v_L(x)-1]\bar\phi_{L}(x)^2\Big\vert \leq\frac{\theta}{d_L}\sum_{x\in Q_{r_L}^{\neq 0}} e^{\fc'|x|} \bar\phi_{L}(x)^2\;,
\end{equs}
the exclusion of $0$ in the first sum is a consequence of $v_L(0)=1$. Using the bound~\eqref{e:Decay} on the decay 
of $\bar\phi_{L}$, we easily deduce that
\begin{equ}[e:II]
	\vert\two\vert \leq  \frac{\theta}{d_L}\sum_{x\in Q_{r_L}^{\neq 0}} e^{\fc'|x|}\Big(1+ c_d\frac{a_L}{d_L}\Big)^{-2|x|} \le \frac{C}{a_L}\Big(\frac{d_L}{a_L}\Big)
\end{equ}
for some constant $C>0$ independent of $L$. 

We now turn to $\one$, for which we apply Lemma~\ref{l:Convex} with $S=Q_{R_L}^{\neq 0}$. More specifically, 
by Lemma~\ref{l:IL}, the Hessian of $\cD_{R_L}$ satisfies~\eqref{e:Convex} 
with $H=c_0\,a_L/d_L$ and the $\ell^2(Q_{R_L}^{\neq0})$-norm of its gradient can be bounded by~\eqref{e:GradientL2}. 
As a consequence,~\eqref{e:DiffGN} gives
\begin{equ}
|\one|\lesssim \frac{d_L}{a_L}\Big(\frac{1}{a_L}+\|\bar\phi_{L}\zeta_L\|_{\ell^2(Q_{r_L})}^2\Big) 
\end{equ}  
which, together with~\eqref{e:II}, implies for some constant $C>0$ and for all $L$ large enough
\begin{equ}
\Big\vert \lambda_1(Q_{R_L},\xi_L) - \bar\lambda_L - \Xi_L(0) 
\Big\vert
\le C\frac{d_L}{a_L}\Big(\frac{1}{a_L}+\|\bar\phi_{L}\zeta_{L}\|_{\ell^2(Q_{r_L})}^2\Big).\label{e:EigenExpMain2}
\end{equ}

Before completing the proof, let us consider the l.h.s. of~\eqref{Eq:Bdpsi}, 
for which we argue as for $\one$ above invoking~\eqref{e:Diffx} 
instead of~\eqref{e:DiffGN}. Thus, we deduce
\begin{equ}
\| \phi_{R_L} - \bar\phi_{L} \|_{\ell^2(Q^{\neq0}_{R_L})} \le  C\frac{d_L}{a_L}\sqrt{\frac{1}{a_L}+\|\bar\phi_{L}\zeta_L\|_{\ell^2(Q_{r_L})}^2}\,.
\end{equ}
To control the difference of $\phi_{R_L}$ and $\bar\phi_{L}$ at $0$, 
we use the fact that $x\mapsto \sqrt{1-x^2}$ is Lipschitz on $(-1/2,1/2)$ to get
\begin{equs}[e:0]
|\phi_{R_L}(0)-\bar\phi_{L}(0)|&=\Big\vert\sqrt{1-\|\phi_{R_L}\|_{\ell^2(Q_{R_L}^{\neq 0})}^2}-\sqrt{1-\|\bar\phi_{L}\|_{\ell^2(Q_{R_L}^{\neq 0})}^2}\Big\vert\\
&\lesssim \Big|\|\phi_{R_L}\|_{\ell^2(Q_{R_L}^{\neq 0})}^2-\|\bar\phi_{L}\|_{\ell^2(Q_{R_L}^{\neq 0})}^2\Big|\lesssim \|\phi_{R_L}-\bar\phi_{L}\|_{\ell^2(Q_{R_L}^{\neq 0})}\,,
\end{equs}
which ultimately gives (possibly for a different constant $C>0$)
\begin{equ}[Eq:Bdpsi2]
\| \phi_{R_L} - \bar\phi_{L} \|_{\ell^2(Q_{R_L})} \le  C\frac{d_L}{a_L}\sqrt{\frac{1}{a_L}+\|\bar\phi_{L}\zeta_{L}\|_{\ell^2(Q_{r_L})}^2}\;. 
\end{equ}
\medskip

Thanks to~\eqref{e:EigenExpMain2} and~\eqref{Eq:Bdpsi2},~\eqref{e:EigenExpMain} and~\eqref{Eq:Bdpsi} 
follow provided we suitably estimate the $\ell^2(Q_{r_L})$-norm of 
$\bar\phi_{L}\zeta_{L}$. Notice that so far, we never used the bound provided by 
$E_{L}^3$ and this is the point at which it becomes essential. 
Indeed, on $E^3_L$ for $x\in Q_{r_L}$ the fluctuation field $\zeta_{L}$ 
is bounded above by 
\begin{equs}
|\zeta_{L}(x)|&\leq \sqrt{{\rm Var}[\zeta_{L}(x)]}\Big(\frac{a_L}{d_L}\Big)^{\kappa|x|}\sqrt{1\vee(|\xi(0)-a_L|a_L)}\\
&\lesssim \frac{e^{\frac{c'' |x|}{2}}}{\sqrt{d_L}}\Big(\frac{a_L}{d_L}\Big)^{\kappa|x|}\sqrt{1+|\xi(0)-a_L|a_L} 
\end{equs}
where in the last step we also used~\eqref{e:VarZeta}. Note that $\zeta_{L}(0)=0$. Using the exponential decay of $\bar{\phi}_L$ stated in~\eqref{e:Decay}, we thus deduce
\begin{equs}
\|\bar\phi_{L}\zeta_{L}\|_{\ell^2(Q_{r_L})}^2&\lesssim \sum_{x\in Q_{r_L}^{\neq0}}\bar\phi_{L}(x)^2\frac{e^{c'' |x|}}{d_L} \Big(\frac{a_L}{d_L}\Big)^{2\kappa|x|}(1+|\xi(0)-a_L|a_L)\\
&\lesssim\frac{1+|\xi(0)-a_L|a_L}{d_L}\sum_{x\in Q_{r_L}^{\neq0}} e^{c'' |x|} \Big(c_d\frac{d_L}{a_L}\Big)^{(2-2\kappa)|x|}\\
&\lesssim \frac{1+|\xi(0)-a_L|a_L}{d_L}\Big(\frac{d_{L}}{a_L}\Big)^{2-2\kappa}=\Big(\frac{d_{L}}{a_L}\Big)^{1-2\kappa}\Big[\frac{1}{a_L}+|\xi_L(0)-a_L|\Big]\;.
\end{equs}
Plugging this estimate into~\eqref{e:EigenExpMain2} and~\eqref{Eq:Bdpsi2}, the 
statement follows at once. 
\end{proof}

Before concluding this section we state and prove the next proposition 
which, together with Theorem~\ref{thm:MainEstimate}, will be shown to imply 
a (diverging) spectral gap for the operator $\cH_{Q_{R_L}, \xi_L}$ on  
the event $E_{L,x_0}$ in Definition~\ref{d:Event}. 

\begin{proposition}\label{p:SpectralGap}
There exist a constant $C_\gap>0$ and an integer $L_0>1$ such that for all $L\ge L_0$ and all $x_0\in Q_{R_L-r_L}$,
on the event ${E}^1_{L,x_0}\cap E^2_{L,x_0}$ as in Definition~\ref{d:Event}, we have
\begin{equ}[e:gap]
\lambda_2(Q_{R_L},\xi_L) \leq \xi_L(x_0) - C_\gap \frac{a_L}{d_L}\,.
\end{equ}
\end{proposition}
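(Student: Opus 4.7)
The plan is to combine three ingredients already available to us: the min-max characterisation of $\lambda_2$, the uniform gap of $\xi_L$ away from its peak given by \eqref{e:LowerShape}, and the concentration of the principal eigenfunction $\phi_{R_L}$ at $x_0$ given by \eqref{e:x0}. A close inspection of the proof of Proposition~\ref{l:Event} shows that \eqref{e:LowerShape} is actually established on $E^1_{L,x_0} \cap E^2_{L,x_0}$ (the event $E^3_{L,x_0}$ is never invoked), and likewise Lemma~\ref{l:DecayPsi} holds on this event. Throughout, we take $x_0 = 0$ for simplicity.

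The strategy is the following. By the min-max characterisation,
\begin{equation*}
\lambda_2(Q_{R_L},\xi_L) = \max\Big\{ \langle \phi,\cH_{Q_{R_L},\xi_L}\phi\rangle : \|\phi\|_{\ell^2(Q_{R_L})}=1,\ \phi \perp \phi_{R_L}\Big\}\;.
\end{equation*}
For any such competitor $\phi$, since the Dirichlet Laplacian is non-positive, $\langle\phi,\Delta\phi\rangle \le 0$, so
\begin{equation*}
\langle \phi,\cH_{Q_{R_L},\xi_L}\phi\rangle \le \sum_{x\in Q_{R_L}} \xi_L(x)\phi(x)^2 = \xi_L(x_0) - \sum_{x\neq x_0}\big(\xi_L(x_0)-\xi_L(x)\big)\phi(x)^2\;.
\end{equation*}
By \eqref{e:LowerShape}, the bracketed factor is at least $\tfrac{\fc}{2}\tfrac{a_L}{d_L}$ for every $x\neq x_0$ in $Q_{R_L} \subset Q_{2R_L,x_0}$, hence
\begin{equation*}
\langle \phi,\cH_{Q_{R_L},\xi_L}\phi\rangle \le \xi_L(x_0) - \tfrac{\fc}{2}\tfrac{a_L}{d_L}\big(1-\phi(x_0)^2\big)\;.
\end{equation*}

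The main (and only) remaining step is to show that $1-\phi(x_0)^2$ is bounded away from $0$. This is where orthogonality to $\phi_{R_L}$ enters. Writing $\langle\phi,\phi_{R_L}\rangle = 0$ and isolating the contribution at $x_0$, Cauchy--Schwarz on $Q_{R_L}^{\neq x_0}$ yields
\begin{equation*}
|\phi(x_0)|\,\phi_{R_L}(x_0) \le \sqrt{1-\phi(x_0)^2}\,\sqrt{1-\phi_{R_L}(x_0)^2}\;,
\end{equation*}
and squaring this gives the elementary inequality $\phi(x_0)^2 \le 1 - \phi_{R_L}(x_0)^2$. Invoking \eqref{e:x0}, we get $\phi(x_0)^2 \le (1+c_d a_L/d_L)^{-2}$. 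Since $a_L/d_L \to \infty$ by Assumption~\ref{ass:dLaL}, there exists $L_0$ such that for $L \ge L_0$, this upper bound is at most $1/2$, and consequently $1-\phi(x_0)^2 \ge 1/2$ uniformly in the competitor $\phi$.

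Combining the two estimates, we conclude that on $E^1_{L,x_0} \cap E^2_{L,x_0}$ and for $L \ge L_0$,
\begin{equation*}
\lambda_2(Q_{R_L},\xi_L) \le \xi_L(x_0) - \tfrac{\fc}{4}\tfrac{a_L}{d_L}\;,
\end{equation*}
so that \eqref{e:gap} holds with $C_{\gap} = \fc/4$. I do not anticipate any technical obstacle here: the argument is entirely deterministic once one is on $E^1_{L,x_0}\cap E^2_{L,x_0}$, and reduces to a one-line orthogonality/Cauchy--Schwarz computation together with the already established quantitative sharp-peak bounds \eqref{e:LowerShape} and \eqref{e:x0}.
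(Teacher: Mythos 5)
Your proof is correct and takes essentially the same route as the paper: min-max characterisation of $\lambda_2$, dropping the non-positive Dirichlet Laplacian, bounding the potential part via \eqref{e:LowerShape}, and using orthogonality to $\phi_{R_L}$ together with \eqref{e:x0} (both valid already on $E^1_{L,x_0}\cap E^2_{L,x_0}$) to force $\phi(x_0)^2$ to be small. The only cosmetic difference is your cleaner algebraic identity $\phi(x_0)^2 \le 1-\phi_{R_L}(x_0)^2$ versus the paper's $|\phi(x_0)|\le 2\sqrt{1-\phi_{R_L}(x_0)^2}$; both yield the same conclusion.
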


%

\begin{proof}
By the min-max formula, the second eigenvalue of $\cH_{Q_{R_L}, \xi_L}$ satisfies
\begin{equ}[e:minmax2]
\lambda_2(Q_{R_L},\xi_L)=\sup\{\cD_{R_L}(\psi)\colon \|\psi\|_{\ell^2(Q_{R_L})}=1\,,\, \langle \psi,\phi_{R_L}\rangle=0\}\,.
\end{equ}
Notice first that, for any $\psi\in \ell^2(Q_{R_L})$ normalised to $1$ such that 
$0=\langle \psi,\phi_{R_L}\rangle=\sum_{x\in Q_{R_L}} \psi(x)\phi_{R_L}(x)$, 
we have 
\begin{equ}
\psi(x_0)=-\frac{1}{\phi_{R_L}(x_0)}\sum_{x\in Q_{R_L}^{\ne x_0}}\psi(x)\phi_{R_L}(x)
\end{equ}
the expression above being meaningful as~\eqref{e:x0} implies that, for $L$ 
large enough on the event $E_{L,x_0}$, $\phi_{R_L}(x_0)^2\geq 1 - C (d_L/a_L)^2 \ge 1/4$. 
As a consequence, Cauchy-Schwarz gives
\begin{equ}[e:Boundphi0]
|\psi(x_0)|\leq 2 \|\phi_{R_L}\|_{\ell^2(Q_{R_L}^{\ne x_0})}\|\psi\|_{\ell^2(Q_{R_L}^{\ne x_0})}\leq 2\sqrt{1-\phi_{R_L}(x_0)^2}\leq 2 \sqrt{C} \frac{d_L}{a_L} \le \frac12\,.
\end{equ}
Now, let $\psi$ be as above and consider the quadratic form at $\psi$, which is 
\begin{equs}
\cD_{R_L}(\psi)&\leq \langle \psi, \xi_L\psi\rangle\leq \xi_L(x_0)\psi(x_0)^2 +\Big(\max_{x\in Q_{R_L}^{\ne x_0}} \xi_L(x)\Big)\|\psi\|_{\ell^2(Q_{R_L}^{\ne x_0})}^2\\
&=\xi_L(x_0)\psi(x_0)^2+\Big(\max_{x\in Q_{R_L}^{\ne x_0}} \xi_L(x)\Big)(1-\psi(x_0)^2)\,.\label{e:Step1SG}
\end{equs}
By Proposition \ref{l:Event}, on the event $E_{L,x_0}$ we have
$$\max_{x\in Q_{R_L}^{\ne x_0}} \xi_L(x) \le \xi_L(x_0) - \frac{\fc}{2} \frac{a_L}{d_L}\;.$$
Using \eqref{e:Boundphi0} we find
\begin{equs}
	\cD_{R_L}(\psi) &\leq \xi_L(x_0) - \frac{\fc}{2} \frac{a_L}{d_L}(1-\psi(x_0)^2)\leq \xi_L(x_0)- \frac{3\fc}{8} \frac{a_L}{d_L}\;,
\end{equs}
thus concluding the proof.
\end{proof}

\subsection{Proof of Theorem \ref{Th:Tail}}
%
%
%

For Theorem~\ref{thm:MainEstimate} to provide a useful description of the principal eigenvalue of $\cH_{Q_{R_L},\xi_L}$, 
we need to ensure that the random variable at the r.h.s. of~\eqref{e:EigenExpMain}, i.e. 
\begin{equ}[e:error]
	\frac{d_L}{a_L}\Big(\Big(\frac{d_L}{a_L}\Big)^{1-2\kappa}|\xi_L(x_0)-a_L| +\frac{1}{a_L}\Big)\;, 
\end{equ}
is negligible compared to the putative fluctuation scale of $\lambda_1(Q_{R_L},\xi_L)$ itself, that is, $a_L^{-1}$. 
For this, in the next proposition, we show that we can restrict ourselves to the event $\{|\xi_L(x_0)-a_L|<\theta_L\}$ 
for some sequence $(\theta_L)_{L\ge 1}$ satisfying
\begin{equ}\label{e:etaL}
	\Big(\frac{d_L}{a_L}\Big)^{2-2\kappa} \theta_L\ll \frac1{a_L}\,.
\end{equ}
As we will apply Lemma~\ref{l:GaussianNew}, in particular~\eqref{e:GausSum2}, we are not allowed 
to take $\theta_L$ arbitrarily small, but we need it to satisfy $\theta_L\gg \max\{a_L^{-1},a_L\tau_L^2\}$. 
For concreteness, let us make a specific choice and from here on set  
\begin{equ}\label{e:ThetaL}
	\theta_L\eqdef \Big(\frac{a_L}{d_L}\Big)^\kappa\max\{a_L^{-1},a_L\tau_L^2\}\,,
\end{equ}
where $\kappa$ is the small parameter appearing in the definition of the event $E_{L,x_0}$. 
As $\kappa<1/3$ and recalling Assumption \ref{a:TechnicalAssum}, ~\eqref{e:etaL} can be immediately checked to be satisfied. 

Let us now state the above-mentioned proposition, whose proof is postponed to the end of the section. 


\begin{proposition}\label{p:Shrink}
	Let $\theta_L$ be defined according to~\eqref{e:ThetaL}. For any event $G$ 
	\begin{equs}\label{e:Shrink}
		\P&\Big(G\cap \Lambda_L(s)\Big)\\
		&=\sum_{x_0\in Q_{R_L-r_L}} \P\Big(G\cap \Lambda_L(s) \cap E_{L,x_0}\cap \{|\xi_L(x_0)-a_L|<\theta_L\}\Big)+o\Big(\frac{R_L^d}{L^d}\Big)\,.
	\end{equs}
\end{proposition}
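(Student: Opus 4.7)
My plan is to prove the identity in three steps that progressively refine $\Lambda_L(s)$ into the target form. The first step is to localise $\Lambda_L(s)$ onto the union of the events $E_{L,x_0}$ with $x_0\in Q_{R_L-r_L}$. Since $\lambda_1(Q_{R_L},\xi_L)\le \max_{Q_{R_L}}\xi_L$ by the Rayleigh characterisation, and since $\bar\lambda_L\ge -2d$, $a_L^\Xi\ge a_L$, and $\theta=2d+1$, the event $\Lambda_L(s)$ forces $\max_{Q_{R_L}}\xi_L\ge a_L-\theta$ for $L$ large. Combining this with~\eqref{e:MaxNegl} and a boundary-layer estimate on $Q_{R_L}\setminus Q_{R_L-r_L}$ (of cardinality $\cO(r_L R_L^{d-1})$, which by~\eqref{e:preBound} contributes at most $\cO(r_L R_L^{d-1} e^{\theta a_L}/L^d) = o(R_L^d/L^d)$ thanks to~\eqref{e:RL}) allows me to replace $\P(G\cap\Lambda_L(s))$ by $\P(G\cap\Lambda_L(s)\cap\bigcup_{x_0\in Q_{R_L-r_L}}E_{L,x_0})$ with an additive error $o(R_L^d/L^d)$.

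The second step uses the sharp localisation~\eqref{e:LowerShape} to show that the events $E_{L,x_0}$ with $x_0 \in Q_{R_L - r_L}$ are pairwise disjoint: any two distinct $x_0,y_0$ in this box satisfy $y_0\in Q_{2R_L,x_0}^{\neq x_0}$, hence~\eqref{e:LowerShape} combined with $|\xi_L(x_0)-a_L| < \theta$ forces $\xi_L(y_0) < a_L -\theta$ on $E_{L,x_0}$, contradicting $E^1_{L,y_0}$. The union then becomes a disjoint sum, and since $G\cap A\subset A$ it suffices to show that, uniformly in $x_0\in Q_{R_L-r_L}$, $\P(\Lambda_L(s)\cap E_{L,x_0}\cap\{|\xi_L(x_0)-a_L|\ge\theta_L\})=o(L^{-d})$. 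I would split this event according to $\xi_L(x_0)\in[a_L+\theta_L,a_L+\theta)$ (case (a)) versus $\xi_L(x_0)\in(a_L-\theta,a_L-\theta_L]$ (case (b)), the second being the main obstacle. Case (a) follows immediately from the Gaussian tail~\eqref{e:preBound}: its probability is $\lesssim L^{-d}e^{-a_L\theta_L}=o(L^{-d})$ since $a_L\theta_L\ge (a_L/d_L)^\kappa\to\infty$ by Assumption~\ref{ass:dLaL}.

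For case (b), write $\xi_L(x_0)=a_L-t$ with $t\in[\theta_L,\theta]$. On $E_{L,x_0}\cap\Lambda_L(s)$, Theorem~\ref{thm:MainEstimate} together with $a_L^\Xi-a_L=\cO(a_L\tau_L^2)$ forces
\[
\Phi_L(x_0)\ge t\big(1-C(d_L/a_L)^{2-2\kappa}\big) + \cO\big(\max\{a_L^{-1},a_L\tau_L^2\}\big)\ge\tfrac{t}{4}
\]
for $L$ large (the last inequality uses $t\ge\theta_L\gg \max\{a_L^{-1},a_L\tau_L^2\}$ together with $(d_L/a_L)^{2-2\kappa}\to 0$). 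Since $\xi_L(x_0)$ and $\Phi_L(x_0)$ are independent with $\Phi_L(x_0)\sim\cN(0,\tau_L^2)$ by Lemma~\ref{l:FluctField}, integrating the density of $\xi_L(x_0)$ (controlled via~\eqref{e:Asympa_L}) against the Gaussian tail bound $\P(\Phi_L(x_0)\ge t/4)\le e^{-t^2/(32\tau_L^2)}$ yields
\[
\frac{a_L}{L^d}\int_{\theta_L}^{\theta}\exp\Big(a_Lt-\frac{t^2}{2}-\frac{t^2}{32\tau_L^2}\Big)\,\dd t\,.
\]
The concave exponent is maximised at a point $t^*\ll\theta_L$ (by the very definition of $\theta_L$), so on $[\theta_L,\theta]$ the integrand is maximised at $t=\theta_L$; a direct calculation using $\theta_L^2/\tau_L^2\gtrsim(a_L/d_L)^{2\kappa}$ in both regimes of $\tau_L$---invoking Assumption~\ref{a:TechnicalAssum} when $\tau_L>a_L^{-1}$---shows the exponent at $t=\theta_L$ is at most $-c(a_L/d_L)^{2\kappa}$, yielding the desired $o(L^{-d})$ bound. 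Summing the contributions of cases (a) and (b) over the $\cO(R_L^d)$ points $x_0\in Q_{R_L-r_L}$ produces the advertised $o(R_L^d/L^d)$ error.
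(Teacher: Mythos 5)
Your proposal is correct and follows essentially the same route as the paper, which breaks the argument into Lemma~\ref{l:RedE} (localising $\Lambda_L(s)$ to the disjoint union of the $E_{L,x_0}$ with $x_0\in Q_{R_L-r_L}$ via~\eqref{e:MaxNegl} plus the boundary-layer count on $Q_{R_L}\setminus Q_{R_L-r_L}$) and Lemma~\ref{l:Shrink} (the per-site reduction to $\{|\xi_L(x_0)-a_L|<\theta_L\}$, splitting into upper and lower tails and, for the lower tail, turning~\eqref{e:EigenExpMain} into a lower bound on $\Phi_L(x_0)$ of order $a_L-\xi_L(x_0)$ and integrating). The cosmetic differences — your constant $t/4$ versus the paper's $t/2$, restricting the integral to $[\theta_L,\theta]$ rather than $[\theta_L,\infty)$, and bounding the integrand at its supremum at $t=\theta_L$ versus the paper's direct Gaussian tail integration — do not change the substance.
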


Before turning to the proof of Theorem~\ref{Th:Tail}, let us appreciate the advantage 
of the previous statement. What it guarantees is that, when studying the asymptotic 
behaviour of the probability of $\Lambda_L(s)\cap G$, for $s\in\R$ and some event $G$, 
it suffices to analyse that of $\Lambda_L(s)\cap G\cap E_{L,x_0}\cap \{|\xi_L(x_0)-a_L|<\theta_L\}$ 
for $x_0\in Q_{R_L-r_L}$. On $E_{L,x_0}\cap \{|\xi_L(x_0)-a_L|<\theta_L\}$, 
Theorem~\ref{thm:MainEstimate} says that $\lambda_1(Q_{R_L},\xi_L)$ satisfies 
\begin{equ}
	\lambda_1(Q_{R_L},\xi_L)\approx \bar\lambda_L+\Xi_L(x_0)=\bar\lambda_L+\xi_L(x_0)+\Phi_L(x_0)
\end{equ}
up to an error {\it strictly smaller} than the size of the fluctuations $a_L^{-1}$ (see~\eqref{e:error} and~\eqref{e:etaL}). 
Among the terms at the r.h.s., $\bar\lambda_L$ is deterministic while, 
for fixed $x_0$, $\xi_L(x_0)$ and $\Phi_L(x_0)$ are {\it independent Gaussian 
	random variables} of variances $1$ and $\tau_L^2$ respectively (see Lemma~\ref{l:FluctField} and \eqref{e:FL}).

In other words, we managed to reduce the analysis of the fluctuations of 
the complicated object $\lambda_1(Q_{R_L},\xi_L)$ to that  
of the sum of two independent Gaussian random variables which in turn 
was studied in detail in Lemma~\ref{l:GaussianNew}. 

\begin{proof}[Proof of Theorem~\ref{Th:Tail}]
	We take $\theta_L$ as in~\eqref{e:ThetaL}. Let us begin by identifying a suitable sequence $(\eta_L)_{L\ge 1}$. 
	For any $x_0\in Q_{R_L-r_L}$, on the event $E_{L,x_0}$, Theorem~\ref{thm:MainEstimate} gives 
	\begin{equs}\label{e:GreatApprox}
		\Big\vert \lambda_1(Q_{R_L},\xi_L)  - \bar\lambda_L-\Xi_L(x_0)\Big\vert\leq \frac{C}{a_L}\Big(\Big(\frac{d_L}{a_L}\Big)^{2-2\kappa}\theta_L a_L+\frac{d_L}{a_L}\Big)&=:\frac{\tilde\eta_L}{a_L}\,,\\
		\|\phi_{R_L}-\bar\phi_{L}(\cdot-x_0)\|_{\ell^2(Q_{R_L})} 
		\leq C\frac{d_L}{a_L}\sqrt{\Big(\frac{d_L}{a_L}\Big)^{1-2\kappa}\theta_L+\frac{1}{a_L}}&=:\frac{d_L}{a_L}\tilde\eta'_L\,,\label{e:GreatApprox2}
	\end{equs}
	and set $\eta_L\eqdef \tilde\eta_L\vee \tilde\eta_L'$. 
	By \eqref{e:etaL}, we see that $\eta_L$ goes to $0$ as $L\to\infty$. 
	We now turn to the proof of each of the five points in which the statement is divided. 
	\medskip
	
	\noindent\textbf{Point~\ref{i:1}.} Proposition~\ref{p:Shrink} with 
	$G$ being the whole probability space, implies that the statement follows 
	if we show that uniformly over all $x_0\in Q_{R_L-r_L}$, we have 
	\begin{equ}[e:1Final]
		P_{L,x_0}\eqdef\P\Big(\Lambda_L(s) \cap E_{L,x_0}\cap\{|\xi_L(x_0)-a_L|<\theta_L\}\Big)\sim\frac{e^{-s}}{L^d}\,.
	\end{equ}
	Thanks to~\eqref{e:GreatApprox}, we immediately get
	\begin{equs}
		P_{L,x_0}&\leq \P\Big(\Xi_L(x_0) \ge a^\Xi_L + \frac{s-\eta_L}{a_L}\Big)\,,\label{e:UB}\\
		P_{L,x_0}&\geq \P\Big(\Big\{\Xi_L(x_0) \ge a^\Xi_L + \frac{s+\eta_L}{a_L} \Big\}
		\cap E_{L,x_0} \cap\{ |\xi_L(x_0) - a_L| \le \theta_L\}\Big)\,,
	\end{equs}
	and recall that $\Xi_L(x_0)=\xi_L(x_0)+\Phi_L(x_0)$, that is $\Xi_L(x_0)$ is the sum of independent mean-zero Gaussian 
	random variables of variance $1$ and $\tau_L^2$, respectively. 
	Now, for the upper bound, we apply Lemma~\ref{l:GaussianNew} and in particular~\eqref{e:GausSum0}. 
	For the lower bound, we first remove the event $E_{L,x_0}$ at a price negligible with respect to $L^{-d}$, which is 
	allowed since $\theta_L\ll \theta$ and 
	\begin{equ}
		\P(|\xi_L(x_0) - a_L| \leq \theta_L ; E_{L,x_0}^\complement) \le \P(\xi_L(x_0) \ge a_L - \theta ; E_{L,x_0}^\complement) = o\Big(\frac1{L^d}\Big)\;,
	\end{equ}
	as implied by~\eqref{e:nonE}. Thus, additionally using~\eqref{e:GausSum2} we deduce
	\begin{equs}
		P_{L,x_0}\geq& \P\Big(\Xi_L(x_0)  \ge a^\Xi_L + \frac{s+\eta_L}{a_L} ; |\xi_L(x_0) - a_L| \le \theta_L\Big) + o\Big(\frac1{L^d}\Big)\\
		=&\P\Big(\Xi_L(x_0) \ge a^\Xi_L + \frac{s+\eta_L}{a_L}\Big)\\
		&-\P\Big(\Xi_L(x_0) \ge a^\Xi_L + \frac{s+\eta_L}{a_L} ; |\xi_L(x_0) - a_L| > \theta_L\Big) + o\Big(\frac1{L^d}\Big)\\
		=&\P\Big(\Xi_L(x_0) \ge a^\Xi_L + \frac{s+\eta_L}{a_L}\Big) + o\Big(\frac1{L^d}\Big)
	\end{equs}
	and, to the latter, we apply once again~\eqref{e:GausSum0}. 
	Putting upper and lower bounds together,~\eqref{e:1Final} follows. 
	\medskip
	
	\noindent\textbf{Point~\ref{i:2}.} Take $G$ in Proposition~\ref{p:Shrink} to be the event 
	$\{\vert \lambda_1(Q_{R_L},\xi_L) - \bar\lambda_L-\Xi_L(w_L) \vert>\eta_L/a_L\}$. 
	Note that, for every $x_0\in Q_{R_L-r_L}$, on the event $E_{L,x_0}$ the maximum of $\xi_L$ 
	is achieved at $x_0$ so that $w_L=x_0$ by Proposition~\ref{l:Event}. Thus,~\eqref{e:GreatApprox} implies that  
	each of the summands at the r.h.s. of~\eqref{e:Shrink} is $0$ 
	and~\eqref{e:Approx} follows at once. Regarding~\eqref{e:Approxbis}, since
	$$ \Big\{\Xi_L(w_L)  > a_L^\Xi + \frac{s}{a_L}\Big\} = \{\Theta^\Xi_L \ge s\}\;,$$
	the same argument as in~\eqref{e:QuasiQuasi} ensures that for any event $H$
	\begin{equs}
		\ &\P\Big(\Xi_L(w_L)  > a_L^\Xi + \frac{s}{a_L} ; H\Big)=\sum_{x_0\in Q_{R_L}} \P\Big(\Xi_L(x_0) > a_L^\Xi + \frac{s}{a_L} ; H ; E_{L,x_0}\Big) + o\Big(\frac{R_L^d}{L^d}\Big)\;.
	\end{equs}
	Now taking $H \eqdef \{| \lambda_1(Q_{R_L},\xi_L) - ( \Xi_L(w_L) + \bar{\lambda}_L) | > \eta_L/a_L\}$, 
	and recalling that on $E_{L,x_0}$ we have $w_L = x_0$, Theorem \ref{thm:MainEstimate} ensures that each term 
	in the sum over $x_0$ vanishes, thus completing the proof of \eqref{e:Approxbis}.

	\medskip
	
	\noindent\textbf{Point~\ref{i:3}.} We choose the event $G$ in~\eqref{e:Shrink} to be $G=\{\xi_L(w_L)\notin I_L(C)\}$. 
	As before, it suffices to control the probability of the event 
	$G\cap \Lambda_L(s) \cap E_{L,x_0}\cap \{|\xi_L(x_0)-a_L|<\theta_L\}$ uniformly in $x_0$. 
	Arguing as in~\eqref{e:UB}, we get 
	\begin{equs}
		\P\Big(&G\cap \Lambda_L(s) \cap E_{L,x_0}\cap \{|\xi_L(x_0)-a_L|<\theta_L\}\Big)\\
		&\leq \P\Big(\Xi_L(x_0)  \ge a^\Xi_L + \frac{s-\eta_L}{a_L} ; \xi_L(x_0) \notin I_L(C)\Big)
	\end{equs}
	and the quantity at the r.h.s. is independent of $x_0$. 
	The limit in~\eqref{e:GausSum} implies that for any given $\eps > 0$, 
	provided $C$ is large enough, its $\limsup_{L\to\infty}$ passes below $\epsilon / L^d$ and one can conclude.
	\medskip
	
	\noindent\textbf{Point~\ref{i:4}.} We choose $G$ in~\eqref{e:Shrink} to be 
	\begin{equ}
		G\eqdef \Big\{\lambda_2(Q_{R_L},\xi_L) > a_L^\Xi + \bar\lambda_L -C'\frac{a_L}{d_L}\Big\}\,,
	\end{equ}
	and, as argued before we only need to show that uniformly over all $x_0\in Q_{R_L-r_L}$
	\begin{equ}[e:Goal2]
		\P\Big(G\cap \Lambda_L(s)\cap E_{L,x_0}\Big)=o\Big(\frac{1}{L^d}\Big)\,.
	\end{equ}
	Actually, an even stronger statement is true, namely, there exists a constant $C'>0$ such that 
	for $L$ sufficiently large, the probability at the l.h.s. of~\eqref{e:Goal2} is simply equal to $0$. 
	Indeed, using Proposition~\ref{p:SpectralGap}, the definition of $E_{L,x_0}^1$, 
	the fact that $\bar \lambda_L\ge -2d$ (by the minmax formula) and that $\theta= 2d+1$ 
	as in Definition~\ref{d:Event}, we know that on 
	$E_{L,x_0}\cap \Lambda_L(s)$, 
	\begin{equs}
		\lambda_2(Q_{R_L},\xi_L)&\leq \xi_L(x_0)-C_\gap \frac{a_L}{d_L}\leq a_L + \theta -C_\gap \frac{a_L}{d_L}\\
		&\leq a_L \sqrt{1+\tau_L^2} + \bar{\lambda}_L + 2d +\theta-C_\gap \frac{a_L}{d_L}\leq a_L^\Xi + \bar\lambda_L+2\theta-C_\gap \frac{a_L}{d_L}\\
		&< a_L^\Xi + \bar\lambda_L-C'\frac{a_L}{d_L}\;,
	\end{equs}
	provided $C'<C_\gap$ and $L$ is large enough. Hence $G$ cannot hold on $E_{L,x_0}\cap \Lambda_L(s)$, and thus the l.h.s. of~\eqref{e:Goal2} is $0$. 
	\medskip
	
	\noindent\textbf{Point~\ref{i:5}.} Let $G$ be the event 
	$\{\|\phi_{R_L}-\bar\phi_{L}(\cdot-x_0)\|_{\ell^2(Q_{R_L})}  > \eta_Ld_L/a_L\}$, where $\eta_L$ was defined 
	right below~\eqref{e:GreatApprox2}. But then, by~\eqref{e:GreatApprox2}, for any $x_0$, 
	$\Lambda_L(s)\cap G\cap E_{L,x_0}=\emptyset$, so that each summand in the sum at the r.h.s. of~\eqref{e:Shrink}  
	is $0$ and~\eqref{e:Eigen} follows at once. Therefore, the proof of point~\ref{i:5} and of the statement are complete. 
\end{proof}

We now turn to the proof of Proposition \ref{p:Shrink}. It is performed in two steps, summarised by the following two lemmas.

\begin{lemma}\label{l:RedE}
	For any event $G$, as $L\to\infty$, we have 
	\begin{equ}[e:RedE]
		\P\Big(G\cap \Lambda_L(s)\Big)=\sum_{x_0\in Q_{R_L-r_L}} \P\Big(G\cap \Lambda_L(s) \cap E_{L,x_0}\Big)+o\Big(\frac{R_L^d}{L^d}\Big) \,.
	\end{equ}
\end{lemma}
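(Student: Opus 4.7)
The plan is to express $G \cap \Lambda_L(s)$ as an exact disjoint sum over $x_0 \in Q_{R_L-r_L}$ of the events $G \cap \Lambda_L(s) \cap E_{L,x_0}$, up to an additive error $o(R_L^d/L^d)$, and this naturally splits into three steps.

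First, I would establish the inclusion $\Lambda_L(s) \subset \{\max_{Q_{R_L}}\xi_L \ge a_L - \theta\}$ for $L$ large enough. Since the discrete Laplacian with Dirichlet boundary conditions is negative semi-definite, the min-max formula yields $\lambda_1(Q_{R_L},\xi_L) \le \max_{Q_{R_L}}\xi_L$. Testing the quadratic form of $\bar{\cH}_L$ against $\delta_0$ gives $\bar\lambda_L \ge -2d$, while $a_L^\Xi = a_L\sqrt{1+\tau_L^2} \ge a_L$ trivially, so with $\theta = 2d+1$ one has $a_L^\Xi + \bar\lambda_L + s/a_L \ge a_L - 2d + s/a_L > a_L - \theta$ for $L$ large enough. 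Combined with~\eqref{e:MaxNegl} and the trivial inclusion $\{\max_{Q_{R_L}}\xi_L \ge a_L - \theta\} \subset \{\max_{Q_{R_L+r_L}}\xi_L \ge a_L - \theta\}$, this yields
\begin{equ}
\P(G \cap \Lambda_L(s)) = \P\Big(G \cap \Lambda_L(s) \cap \bigcup_{x_0 \in Q_{R_L}} E_{L,x_0}\Big) + o\Big(\frac{R_L^d}{L^d}\Big)\,.
\end{equ}

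Second, I would turn this union into a disjoint sum using property~\ref{i:event1} of Proposition~\ref{l:Event}: on $E_{L,x_0}$, $\xi_L$ has a unique maximum at $x_0$ over $Q_{2R_L,x_0}$. For any distinct $x_0, x_0' \in Q_{R_L}$, each of the two points lies in the $Q_{2R_L}$-neighbourhood of the other, so the intersection $E_{L,x_0} \cap E_{L,x_0'}$ would simultaneously force $\xi_L(x_0) > \xi_L(x_0')$ and $\xi_L(x_0') > \xi_L(x_0)$, a contradiction. Hence $\{E_{L,x_0}\}_{x_0 \in Q_{R_L}}$ is a pairwise disjoint family, and the probability on the right in the display above becomes $\sum_{x_0 \in Q_{R_L}} \P(G \cap \Lambda_L(s) \cap E_{L,x_0})$.

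Finally, I would discard the boundary layer $Q_{R_L}\setminus Q_{R_L-r_L}$ by a first-moment bound. Since $E_{L,x_0} \subset \{\xi_L(x_0) > a_L - \theta\}$,~\eqref{e:preBound} gives $\P(E_{L,x_0}) \lesssim e^{\theta a_L}/L^d$ uniformly in $x_0$, so the sum of these probabilities over the boundary layer is of order $r_L R_L^{d-1} e^{\theta a_L}/L^d$. This is $o(R_L^d/L^d)$ provided $r_L e^{\theta a_L} = o(R_L)$, which follows from~\eqref{e:RL} and~\eqref{e:rL}: indeed $\theta a_L \ll \ln R_L$ and $\ln r_L \ll \sqrt{a_L} \ll \ln R_L$, so $\ln(r_L e^{\theta a_L}) \ll \ln R_L$. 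The \emph{main obstacle} of the proof lies in this last scale check, which ties together the various constraints built into the definitions of $r_L$ and $R_L$; the other two steps are essentially immediate consequences of Proposition~\ref{l:Event} and~\eqref{e:MaxNegl}.
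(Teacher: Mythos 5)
Your proposal is correct and follows essentially the same route as the paper: reduce $\Lambda_L(s)$ to the event that the maximum of $\xi_L$ is at least $a_L-\theta$, invoke~\eqref{e:MaxNegl} to cover the union of the $E_{L,x_0}$, use the pairwise disjointness of $\{E_{L,x_0}\}_{x_0\in Q_{R_L}}$ (a consequence of item~\ref{i:event1} of Proposition~\ref{l:Event}, which you correctly derive rather than relying on the narrower item~\ref{i:event2}), and finally discard the $Q_{R_L}\setminus Q_{R_L-r_L}$ shell via~\eqref{e:preBound} and the scale hierarchy $\ln r_L \ll a_L \ll \ln R_L$. The only cosmetic difference is that you spell out the disjointness argument that the paper leaves implicit.
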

\begin{proof}
	As an initial step, we want to localise $\Lambda_L(s)$ to an event in which 
	the maximum of $\xi_L$ over $Q_{R_L}$ is of order $a_L$. 
	To do so, we begin with two remarks. First, by the minmax formula, we have that 
	\begin{equ}
		\lambda_1(Q_{R_L},\xi_L) \le \max_{x\in Q_{R_L}} \xi_L(x)\;.
	\end{equ}
	Second, since $\bar{\lambda}_L\geq -2d$, we immediately deduce that for $L$ large enough
	\begin{equ}
		a_L-\theta=a_L-2d-1<a_L\sqrt{1+\tau_L^2}+\bar{\lambda}_L - 1\leq a_L^\Xi+ \frac{s}{a_L}+\bar\lambda_L\;.
	\end{equ}
	with $\theta=2d+1$ as in Definition~\ref{d:Event}. 
	Consequently, if $\max \xi_L < a_L - \theta$ 
	then $\lambda_1(Q_{R_L},\xi_L) < a_L - \theta\le a_L^\Xi + \frac{s}{a_L}+\bar\lambda_L$, 
	which means that $\Lambda_L(s)\subset \{\max \xi_L \geq a_L - \theta\}$. 
	Hence, by~\eqref{e:MaxNegl}, we deduce that
	\begin{equs}
	\P\Big(G\cap \Lambda_L(s)\cap \Big(\bigcup_{x_0\in Q_{R_L}} E_{L,x_0}\Big)^{\complement}\Big)\leq \P\Big(\{\max_{Q_{R_L}} \xi_L \geq a_L - \theta\}\cap \Big(\bigcup_{x_0\in Q_{R_L}} E_{L,x_0}\Big)^{\complement}\Big)
	\end{equs}
	is negligible compared to $R_L^d/L^d$, which 
	implies 
	\begin{equ}
	\P(G\cap \Lambda_L(s))=\sum_{x_0\in Q_{R_L}}\P(G\cap \Lambda_L(s)\cap E_{L,x_0}) +o\Big(\frac{R_L^d}{L^d}\Big)\,.
	\end{equ}
	As a consequence, we are left to neglect the sum over $Q_{R_L}\setminus Q_{R_L-r_L}$, which in turn can 
	be controlled by~\eqref{e:preBound} as 
	\begin{equs}
	\sum_{x_0 \in Q_{R_L}\backslash Q_{R_L-r_L}}\P(G\cap \Lambda_L(s)\cap E_{L,x_0})&\leq \sum_{x_0 \in Q_{R_L}\backslash Q_{R_L-r_L}} \P(\xi_L(x_0)\geq a_L-\theta)\\
	&\lesssim r_L R_L^{d-1}   \frac1{L^d} e^{\theta a_L}
	\end{equs}
	which is also negligible compared to $R_L^d/L^d$ since $\ln r_L \ll a_L \ll \ln R_L$ by~\eqref{e:RL} and~\eqref{e:rL}. 
\end{proof}

\begin{lemma}\label{l:Shrink}
	Let $\theta_L$ be defined according to~\eqref{e:ThetaL}. Then, for any event $G$ and any $x_0\in Q_{R_L-r_L}$, 
	as $L\to\infty$, we have 
	\begin{equ}
		\P\Big(G\cap \Lambda_L(s) \cap E_{L,x_0}\Big)=\P\Big(G\cap \Lambda_L(s) \cap E_{L,x_0}\cap \{|\xi_L(x_0)-a_L|<\theta_L\}\Big)+o\Big(\frac{1}{L^d}\Big)
	\end{equ}
	which, together with Lemma~\ref{l:RedE}, ultimately gives \eqref{e:Shrink}.
\end{lemma}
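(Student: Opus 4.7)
The plan is to show, after dropping the event $G$ (which can only decrease the probability), that uniformly in $x_0\in Q_{R_L-r_L}$,
\begin{equation*}
\P\bigl(\Lambda_L(s) \cap E_{L,x_0} \cap \{|\xi_L(x_0)-a_L|\ge \theta_L\}\bigr) = o(1/L^d).
\end{equation*}
Setting $\delta_L := (d_L/a_L)^{2-2\kappa}$, Theorem~\ref{thm:MainEstimate} yields on $E_{L,x_0}\cap\Lambda_L(s)$ the lower bound
\begin{equation*}
\Phi_L(x_0) = \Xi_L(x_0)-\xi_L(x_0) \ge (a_L^\Xi-a_L) - (\xi_L(x_0)-a_L) + \tfrac{s}{a_L} - C\delta_L|\xi_L(x_0)-a_L| - C\tfrac{d_L}{a_L^2},
\end{equation*}
which I will combine with the tail restriction on $\xi_L(x_0)$.

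I would then split $\{|\xi_L(x_0)-a_L|\ge\theta_L\}$ into its upper and lower halves. The upper tail $\{\xi_L(x_0)\ge a_L+\theta_L\}$ is controlled directly by~\eqref{e:preBound}: since the definition of $\theta_L$ gives $\theta_L\ge (a_L/d_L)^\kappa/a_L\gg 1/a_L$, we have $a_L\theta_L\to\infty$ and hence
\begin{equation*}
\P(\xi_L(x_0)\ge a_L+\theta_L) \lesssim L^{-d}\,e^{-a_L\theta_L} = o(L^{-d}).
\end{equation*}

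For the lower tail, write $y := a_L - \xi_L(x_0) \ge \theta_L$. Since $\delta_L\to 0$, $a_L^\Xi-a_L=O(a_L\tau_L^2)=o(\theta_L)$, and $|s|/a_L+Cd_L/a_L^2 = o(\theta_L)$, the displayed bound simplifies for large $L$ to $\Phi_L(x_0)\ge y/4$. Invoking the independence of $\xi_L(x_0)$ and $\Phi_L(x_0)$ from Lemma~\ref{l:FluctField} together with the Gaussian tail of $\Phi_L(x_0)\sim \cN(0,\tau_L^2)$,
\begin{equation*}
P_{\mathrm{lower}} \le \int_{\theta_L}^\infty \frac{e^{-(a_L-y)^2/2}}{\sqrt{2\pi}}\,e^{-y^2/(32\tau_L^2)}\,\dd y \lesssim \frac{\tau_L^2}{a_L\theta_L L^d}\,e^{a_L\theta_L - \theta_L^2/(32\tau_L^2)},
\end{equation*}
where~\eqref{e:Asympa_L} gives $e^{-a_L^2/2}\sim 1/(a_L L^d)$ and the integral is bounded using concavity of the exponent in $y$. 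The crucial inequality $\theta_L/\tau_L^2 \gg a_L$ is verified by case analysis: if $\tau_L\le 1/a_L$ then $\max\{1/a_L,a_L\tau_L^2\}=1/a_L$ and $\theta_L/\tau_L^2 = (a_L/d_L)^\kappa/(a_L\tau_L^2) \ge (a_L/d_L)^\kappa a_L$; if $\tau_L>1/a_L$ then $\max\{1/a_L,a_L\tau_L^2\}=a_L\tau_L^2$ and $\theta_L/\tau_L^2 = (a_L/d_L)^\kappa a_L$; in both cases this diverges. Hence $a_L\theta_L - \theta_L^2/(32\tau_L^2) \to -\infty$ and $P_{\mathrm{lower}} = o(L^{-d})$.

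The main obstacle is that the error term in Theorem~\ref{thm:MainEstimate} grows linearly with $|\xi_L(x_0)-a_L|$ and can exceed $1/a_L$ on the event of interest; this rules out a direct black-box application of Lemma~\ref{l:GaussianNew}\eqref{e:GausSum2}, which would give an effective threshold on $\Xi_L(x_0)$ drifting by a diverging amount. The key observation unlocking the argument is that this error enters the constraint on $\Phi_L(x_0)$ only through a multiplicative factor $(1-C\delta_L)\to 1$, so $\Phi_L(x_0)$ must still be of order $y$; the Gaussian cost $e^{-y^2/(32\tau_L^2)}$ then beats the density $e^{-(a_L-y)^2/2}$ of $\xi_L$ precisely because the factor $(a_L/d_L)^\kappa$ built into $\theta_L$ amplifies the relevant fluctuation scale by a diverging quantity.
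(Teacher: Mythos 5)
Your argument is correct and follows essentially the same route as the paper's: split $\{|\xi_L(x_0)-a_L|\ge\theta_L\}$ into its two tails, dispose of the upper tail via \eqref{e:preBound} using $a_L\theta_L\to\infty$, and on the lower tail use Theorem~\ref{thm:MainEstimate} plus the independence of $\xi_L(x_0)$ and $\Phi_L(x_0)$ to turn $\Lambda_L(s)$ into the constraint $\Phi_L(x_0)\gtrsim y$, whose Gaussian cost $e^{-cy^2/\tau_L^2}$ overwhelms the density $e^{-(a_L-y)^2/2}$ precisely because $\theta_L/\tau_L^2\gg a_L$. The only discrepancies with the paper are cosmetic: you bound the final integral by a concavity/tangent-line argument rather than the paper's completion-of-the-square and change of variable, and your prefactor $\tau_L^2/(a_L\theta_L L^d)$ should read $a_L\tau_L^2/(\theta_L L^d)$, a typo that does not affect the conclusion since the exponential factor dominates.
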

\begin{proof}
	Our goal is to show that uniformly over all $x_0\in Q_{R_L-r_L}$
	\begin{equ}[e:Actual]
		\P\Big(G\cap \Lambda_L(s) \cap E_{L,x_0}\cap \{|\xi_L(x_0)-a_L|\geq\theta_L\}\Big)=o\Big(\frac{1}{L^d}\Big)\,.
	\end{equ}
	By a union bound, we can separately estimate the probability of the events 
	$G\cap \Lambda_L(s) \cap E_{L,x_0}\cap\{\xi_L(x_0)\geq a_L+\theta_L\}$ and 
	$G\cap \Lambda_L(s) \cap E_{L,x_0}\cap\{\xi_L(x_0)\leq a_L-\theta_L\}$. 
	For the former,~\eqref{e:Actual} holds as can be seen by applying~\eqref{e:preBound} 
	to $\P(\xi_L(x_0)\geq a_L+\theta_L)$ 
	and using that, by definition of $\theta_L$ in~\eqref{e:ThetaL}, we have $a_L\theta_L\gg 1$. 
	
	For the latter, notice that by~\eqref{e:EigenExpMain}, on $E_{L,x_0}\cap\{\xi_L(x_0)\leq a_L-\theta_L\}$ we have 
	\begin{equ}
		\lambda_1(Q_{R_L},\xi_L) - \bar\lambda_L-\Xi_L(x_0)
		\le C\Big(\Big(\frac{d_L}{a_L}\Big)^{2-2\kappa}(a_L-\xi_L(x_0)) +\frac{d_L}{a_L^2}\Big)\,,
	\end{equ}
	so that on $\Lambda_L(s) \cap E_{L,x_0}\cap\{\xi_L(x_0)\leq a_L-\theta_L\}$ we have
	\begin{equ}
		\xi_L(x_0)+\Phi_L(x_0)\geq a_L\sqrt{1+\tau_L^2} -\frac{s+C d_L/a_L}{a_L}-C\Big(\frac{d_L}{a_L}\Big)^{2-2\kappa}(a_L-\xi_L(x_0))\;,
	\end{equ}
	which implies
	\begin{equs}
		\Phi_L(x_0)&\geq a_L(\sqrt{1+\tau_L^2}-1) -\frac{s+C d_L/a_L}{a_L}+(a_L-\xi_L(x_0))(1-C(d_L/a_L)^{2-2\kappa})\\
		&\geq \frac{1}{2}(a_L-\xi_L(x_0))\,,
	\end{equs}
	where we used that, by definition of $\theta_L$, on $\{\xi_L(x_0)\leq a_L-\theta_L\}$, 
	$|s+C d_L/a_L|/a_L\ll \theta_L\leq a_L-\xi_L(x_0)$. 
	We can now exploit the independence of $\xi_L(x_0)$ and $\Phi_L(x_0)$ 
	(and the fact that the variance of the latter is $\tau_L^2$) to deduce 
	\begin{equs}
		\P\Big(G&\cap \Lambda_L(s) \cap E_{L,x_0}\cap \{\xi_L(x_0)\leq a_L-\theta_L\}\Big)\\
		&\leq \P\Big(\Phi_L(x_0)\geq \frac{1}{2}(a_L-\xi_L(x_0))\,;\,\xi_L(x_0)\leq a_L-\theta_L\Big)\\
		&=\int_{\theta_L}^\infty\frac{e^{-\frac{(y-a_L)^2}{2}}}{\sqrt{2\pi}}\P\Big(\Phi_L(x_0)\geq \frac{y}{2}\Big)\dd y\lesssim \frac{a_L}{L^d}\int_{\theta_L}^\infty e^{y a_L} e^{-\frac{y^2}{8\tau_L^2}}\dd y\\
		&\leq \frac{a_L}{L^d}\int_{\theta_L}^\infty e^{-\frac{y^2}{16\tau_L^2}}\dd y = 4\sqrt{\pi} \frac{a_L \tau_L }{L^d}\P\Big(\cN(0,1) \ge \frac{\theta_L}{\sqrt{8} \tau_L}\Big) \lesssim \frac{1}{L^d}\frac{a_L\tau_L^2}{\theta_L}\;.
	\end{equs}
	In the third step, we neglected the term $e^{-y^2/2}$, and we used~\eqref{e:Asympa_L} and \eqref{e:TailGauss}, since $\theta_L \gg \tau_L$. The fourth step relies on~\eqref{e:ThetaL}, as $y>\theta_L\gg a_L\tau_L^2$ implies $a_L y\leq y^2/(16 \tau_L^2)$ for 
	$L$ large enough. The last step uses~\eqref{e:TailGaussAsymp} (as $\theta_L \gg \tau_L$) and a basic exponential bound. By \eqref{e:ThetaL}, the last quantity is negligible compared to $L^{-d}$, and thus the proof of the statement is complete.
\end{proof}

\section{The macroscopic eigenproblem}\label{sec:LocToGlob}

\noindent The goal of this section is to prove Theorems~\ref{Th:MainEigenvalue}, 
\ref{Th:MainLocalisation} and \ref{Th:MainRelationship}. Recall the splitting scheme introduced in~\eqref{e:UL} and the definition of 
$U_L$ in~\eqref{e:UL}. Similarly to what was done in Section~\ref{sec:LoctoGlobPot} for the potential, 
we will first (Section~\ref{subsec:spectrumUL}) establish the above mentioned theorems for the Hamiltonian restricted to $U_L$, 
and then, in Section~\ref{subsec:ProofMainTh}, show that the difference in behaviour on $U_L$ and $Q_L$ is negligible. To carry out the first task, we will patch together the spectral information on the operator $\Delta + \xi_L$ on each mesoscopic box $Q_{R_L, z_{j,L}}$ in order to deduce the spectral behaviour of the same operator but on $U_L$.

\subsection{Convergence of the top of the spectrum on $U_L$}\label{subsec:spectrumUL}

Consider the operator $\cH_{U_L,\xi_L}$ and let $(\hat{\lambda}_{k,L}, \hat{\varphi}_{k,L})_{k\ge 1}$ 
be the sequence of its eigenvalues and normalised eigenfunctions in the non-increasing order of their first coordinates: this is nothing but the collection of all the eigenvalues and eigenfunctions of $\Delta +\xi_L$ on every mesoscopic box $Q_{R_L,z_{j,L}}$. We will argue below that only the principal eigenvalue / eigenfunction on each mesoscopic box may contribute to the top of the spectrum on $U_L$ with large probability.\\
Let $\hat{y}_{k,L} \in U_L$ be the point where $\xi_L$ reaches its $k$-th largest value on $U_L$. Denote by $\hat{x}_{k,L}$ the point where $|\hat{\varphi}_{k,L}|$ reaches its maximum, 
and assume w.l.o.g.~that $\hat{\varphi}_{k,L}$ is positive at this point. Finally, let $\hat{\ell}_L(k)$ be defined through $\hat{x}_{k,L} = \hat{y}_{\hat{\ell}_L(k),L}$.

The main result of this section is the translation of Theorems \ref{Th:MainEigenvalue}, \ref{Th:MainLocalisation} and \ref{Th:MainRelationship} for the operator $\cH_{U_L,\xi_L}$.

\begin{theorem}\label{Th:MainRevisited}
	The following statements are satisfied. 
	\begin{enumerate}[label=\normalfont{(\arabic*)}]
		\item\label{i:main1} The point process
		$$ \Big(\frac{\hat{x}_{k,L}}{L}, a_L\big(\hat{\lambda}_{k,L}-a_L\sqrt{1+\tau_L^2}-\bar{\lambda}_L\big)\Big)_{1\le k \le \# U_L}\;,$$
		converges in law as $L\to\infty$ towards a Poisson point process on $[-1,1]^d\times\R$ of intensity $\dd x \otimes e^{-u} \dd u$,
		\item\label{i:main2} For any $k\ge 1$, the r.v.
		$$ \frac{a_L}{d_L} \Big\| \hat{\varphi}_{k,L}(\cdot)-\bar{\varphi}_L(\cdot-\hat{x}_{k,L}) \Big\|_{\ell^2(Q_{L})}\;,$$
		converges to $0$ in probability.
		\item\label{i:main3} It holds:
		\begin{enumerate}[label=\normalfont{(\alph*)}]
			\item\label{i:suba} if $\tau_L \ll \frac1{a_L}$, then for any given $k\ge 1$, $\P(\hat\ell_L(k) = k) \to 1$ as $L\to\infty$,
			\item\label{i:subb} if $\tau_L \sim \sqrt{b} \frac1{a_L}$ for some constant $b>0$, then $(\hat\ell_L(k))_{k\ge 1}$ converges in law to $(\ell_{\infty,b}(k))_{k\ge 1}$, the latter being defined according to~\eqref{e:Permutation}, 
			\item\label{i:subc} if $\tau_L \gg \frac1{a_L}$, then for any given $k\ge 1$, $\hat\ell_L(k)\to \plusinfty$ in probability.
		\end{enumerate}
	\end{enumerate}
\end{theorem}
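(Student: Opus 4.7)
My starting point is the direct sum decomposition $\cH_{U_L,\xi_L}=\bigoplus_{j=1}^{n_L}\cH_{Q_{R_L,z_{j,L}},\xi_L}$: the spectrum and eigenfunctions of the left-hand side are the union of those of the $n_L$ mesoscopic problems. I denote by $\lambda_1^{(j)}$ and $\phi^{(j)}$ the principal eigenpair on box $j$ and by $w_{j,L}$ the maximiser of $\xi_L$ on $Q_{R_L,z_{j,L}}$. To prove Part~\ref{i:main1}, I first invoke Theorem~\ref{Th:Tail}\ref{i:4} together with a union bound over $j$ to discard all non-principal eigenvalues: with probability $1-o(1)$, on any box whose principal eigenvalue lies above $a_L^\Xi+\bar\lambda_L-M/a_L$ (for any fixed $M$), the second eigenvalue falls below $a_L^\Xi+\bar\lambda_L-C'a_L/d_L$, so it cannot enter the top of the spectrum on $U_L$. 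Next, I use Theorem~\ref{Th:Tail}\ref{i:2} and another union bound to replace each surviving $\lambda_1^{(j)}$ by $\Xi_L(w_{j,L})+\bar\lambda_L$ up to an error $\eta_L/a_L=o(1/a_L)$. Since $|w_{j,L}-z_{j,L}|\le\sqrt d\,R_L=o(L)$, the spatial marker $\hat x_{k,L}/L$ is interchangeable with $z_{j(k),L}/L$ in the limit, where $j(k)$ denotes the box carrying the $k$-th largest principal eigenvalue. Part~\ref{i:main1} then follows from the Poisson convergence of $\cP_L^{\Xi}$ established in Proposition~\ref{P:PPPxiPrelim}.

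For Part~\ref{i:main2}, Theorem~\ref{Th:Tail}\ref{i:5} applied on $Q_{R_L,z_{j(k),L}}$ yields $\|\phi^{(j(k))}-\bar\varphi_L(\cdot-w_{j(k),L})\|_{\ell^2(Q_{R_L,z_{j(k),L}})}\le\eta_L d_L/a_L$ with high probability. Since $\bar\varphi_L$ concentrates at $0$ by~\eqref{e:BasicphiL}, this identifies $\hat x_{k,L}=w_{j(k),L}$, and since $\hat\varphi_{k,L}$ is supported in the single box $Q_{R_L,z_{j(k),L}}\subset U_L$, the bound extends trivially to the $\ell^2(Q_L)$-norm.

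Turning to Part~\ref{i:main3}, combining the previous two steps, $\hat\ell_L(k)$ equals, up to an event of vanishing probability, the $\xi$-rank within $(\xi_L(w_{j,L}))_j$ of the box $j(k)$ maximising $\Xi_L(w_{j,L})=\xi_L(w_{j,L})+\Phi_L(w_{j,L})$ at position $k$. I plan to read off the three subcases from Proposition~\ref{P:PPPxiPrelim} applied with $\chi=(\xi,\Phi)$ (with the $\Phi_L/\tau_L$ rescaling and $b=1$ in subcase~\ref{i:subc}, as permitted by Remark~\ref{rem:b2}), which provides joint Poisson convergence of the pairs $(a_L(\xi_L(w_{j,L})-a_L),a_L\Phi_L(w_{j,L}))$ to a product Poisson measure with independent Gaussian decoration. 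In subcase~\ref{i:suba}, $a_L\tau_L\to 0$ forces the $\Phi$-contribution to be negligible compared to the $1/a_L$ fluctuation scale of $\xi_L(w_{j,L})$, so the $\Xi$- and $\xi$-rankings coincide and $\hat\ell_L(k)=k$ with probability tending to $1$. In subcase~\ref{i:subb}, both contributions are of the same order and the definition~\eqref{e:Permutation} of $\ell_{\infty,b}$ is literally the composition of the two rankings read off the joint Poisson limit. In subcase~\ref{i:subc}, the $\Xi$-ranking is dictated by $\Phi$, whose top values over $n_L$ boxes are of order $\tau_L\sqrt{\log n_L}\asymp\tau_L a_L$, while the $\Phi$-value at a box of prescribed $\xi$-rank is an independent $\cN(0,\tau_L^2)$ variable of typical size $\tau_L$, far smaller than what is needed to enter the top $k$ of the $\Xi$-ranking; this yields $\hat\ell_L(k)\to\infty$ in probability. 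I expect the main technical point to be this last argument in subcase~\ref{i:subc}: vague convergence only provides compact-window control, so ruling out $\{\hat\ell_L(k)\le M\}$ for any fixed $M$ requires a careful comparison of the tails of the $\xi$- and $\Phi$-distributions at scales growing with $L$, leveraging the independence guaranteed by Lemma~\ref{l:FluctField}.
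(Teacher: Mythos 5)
Your proposal matches the paper's proof in structure and ingredients for Part~\ref{i:main1}, Part~\ref{i:main2}, and subcases~\ref{i:suba},~\ref{i:subb}: the paper packages the union bounds over boxes and the approximations from Theorem~\ref{Th:Tail} into a single ``good event'' $\cV_L(k)$ (Lemma~\ref{Lemma:GoodEvent}), but the mechanism is exactly the one you describe --- spectral gap from~\eqref{e:SpectralGap} eliminates non-principal eigenvalues, approximation~\eqref{e:Approx} replaces $\hat\lambda_{k,L}$ by $\Xi_L(w_{j_k,L})+\bar\lambda_L$, and the Poisson limits of $\cP_L^{\Xi}$ (for~\ref{i:main1}) and $\cP_L^{(\xi,\Phi)}$ (for~\ref{i:suba},~\ref{i:subb}) close the argument. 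Your Part~\ref{i:main2} argument is also the paper's.

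The divergence is in subcase~\ref{i:subc}, and the gap you flag at the end is real: after the $\Phi/\tau_L$ rescaling, $\Xi_L(w_{j,L})$ does not become a function of the rescaled Poisson coordinates (the weight $a_L\tau_L$ on the second coordinate blows up), so you cannot ``read off'' the $\Xi$-ranking from the limiting measure of $\cP_L^{(\xi,\Phi)}$; and your substitute argument (the $\Phi$-value at a fixed-$\xi$-rank box is only $\cO(\tau_L)$) requires a tail comparison at scales growing with $L$, with nontrivial conditioning (the event that a box has $\xi$-rank $m$ involves the whole field) that you leave unresolved. The paper avoids this entirely: the good event $\cV_L(k)$ contains the magnitude bound coming from Theorem~\ref{Th:Tail}\ref{i:3}/$F_{j,L}^2$, which forces $\xi_L(w_{j_k,L})\le\rho_L^+$ with $\rho_L^+=\tfrac{a_L}{\sqrt{1+\tau_L^2}}+C_L\max\{\tfrac{1}{a_L},\tau_L\}$. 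Choosing $C_L=\sqrt{a_L\tau_L}$, a Taylor expansion gives $a_L(\rho_L^+-a_L)\to-\infty$ when $\tau_L\gg a_L^{-1}$ (since $a_L^2\tau_L^2\gg a_L^{3/2}\tau_L^{3/2}$), and then $\hat\ell_L(k)\ge\cP_L^{\xi}\bigl([-1,1]^d\times[a_L(\rho_L^+-a_L),\infty]\bigr)$ diverges in probability by the marginal convergence of $\cP_L^{\xi}$ alone. This is the missing idea: control $\xi_L(w_{j_k,L})$ from above via the magnitude bound, then count in the $\xi$-coordinate, so that the only tail comparison needed is the deterministic fact $a_L(\rho_L^+-a_L)\to-\infty$. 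You should incorporate this step to close subcase~\ref{i:subc}.
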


For every $j\in\{1,\ldots,n_L\}$, let $w_{j,L}$ be the location of the maximum of $\xi_L$ on 
$Q_{R_L, z_{j,L}}$, $\Phi_{L}(\cdot)$ 
be given as in~\eqref{e:FL}, $a_L^\Xi=a_L\sqrt{1+\tau_L^2}$ as in~\eqref{e:aLXiL}, 
${\varphi}_{R_L,j}$ be the eigenfunction of the operator $\cH_{Q_{R_L, z_{j,L}}, \xi_L}$ 
associated to $\lambda_1(Q_{R_L, z_{j,L}},\xi_L)$ and 
$(\eta_L)_{L\ge 1}$ be the vanishing sequence as in Theorem~\ref{Th:Tail}. 

Let $\Xi_{j,L} = \xi_L(w_{j,L}) + \Phi_{L}(w_{j,L})$ and $(j_k)_{1 \le k \le n_L}$ be the permutation of 
$(1,\ldots,n_L)$ corresponding to the order statistics of $(\Xi_{j,L})_{j}$, that is, 
$\Xi_{j_1,L} \ge \Xi_{j_2, L} \ge \ldots$. 
The next lemma shows, among other things, that these order statistics provide the ordering of 
the eigenvalues with large probability.

\begin{lemma}\label{Lemma:GoodEvent}
	Let $(C_L)_{L\ge 1}$ be an arbitrary sequence of non-negative numbers going to $\plusinfty$ as $L\to\infty$. For any integer $k\ge 1$, let $\cV_L(k)$ be the event on which 
	\begin{equ}[e:condD]
		\hat{\lambda}_{k,L} = \lambda_1(Q_{R_L, z_{j_k,L}},\xi_L) \;,\quad \hat\varphi_{k,L} = \varphi_{R_L,j_k} \;,\quad\hat{x}_{k,L} = w_{j_k,L}\;,
	\end{equ}
	and
	\begin{equs}[e:condD2]
				\big| \lambda_1(Q_{R_L, z_{j_k,L}},\xi_L) - (\Xi_{j_k, L} + \bar\lambda_L) \big| \le \frac{\eta_L}{a_L}\;,\\
				\frac{a_L}{d_L} \Big\| \varphi_{R_L,j_k}(\cdot)-\bar{\varphi}_L(\cdot-w_{j_k,L}) \Big\|_{\ell^2(Q_{L})} \le \eta_L\;,\\
				\xi_L(w_{j_k,L})\leq \frac{a_L}{\sqrt{1+\tau_L^2}}+C_L\max\{\tfrac{1}{a_L},\tau_L\}\;. 
	\end{equs}
Then, the probability of $\cV_L(k)$ goes to $1$ as $L\to\infty$. 
\end{lemma}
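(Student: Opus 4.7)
The strategy rests on the orthogonal decomposition
$$\cH_{U_L,\xi_L} = \bigoplus_{j=1}^{n_L} \cH_{Q_{R_L,z_{j,L}},\xi_L}$$
that follows from the Dirichlet boundary conditions on the mutually disjoint mesoscopic boxes: the spectrum of $\cH_{U_L,\xi_L}$ is the multi-set union of the spectra of the $n_L$ mesoscopic operators. The plan is to show that, with probability tending to one, the top $k$ eigenvalues of $\cH_{U_L,\xi_L}$ arise as the principal eigenvalues of $k$ distinct mesoscopic boxes, that these boxes are exactly those indexed by $(j_i)_{i=1,\ldots,k}$, and that on each such box the local estimates of Theorem~\ref{Th:Tail} hold.

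Fix an arbitrary $\epsilon > 0$ and choose $s>0$ with $e^{-s}\le \epsilon/2$. By Theorem~\ref{Th:Tail}(\ref{i:1}) together with decorrelation estimates analogous to those of Proposition~\ref{L:Leadbetter} (applied to the indicators of the events $\Lambda_{L,j}(-s)$, which depend only on $\xi_L$ restricted to a neighbourhood of $Q_{R_L,z_{j,L}}$ of size $R_L+r_L$), the cardinality of the random set $\cG_L(s) \eqdef \{j : \lambda_1(Q_{R_L,z_{j,L}},\xi_L) \ge a_L^\Xi + \bar\lambda_L - s/a_L\}$ converges in law to a Poisson r.v.~of mean $e^s$; in particular, $\P(k \le \#\cG_L(s) \le N)\ge 1-\epsilon$ for $L$ large and some finite $N=N(s,\epsilon)$. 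A union bound over the $n_L\sim (L/R_L)^d$ boxes, using points (\ref{i:2})--(\ref{i:5}) of Theorem~\ref{Th:Tail} (each failure probability being $o((R_L/L)^d)$ per box), then yields that with probability at least $1-O(\epsilon)$, every $j\in\cG_L(s)$ satisfies simultaneously the approximation bounds (a) $|\lambda_1(Q_{R_L,z_{j,L}},\xi_L)-(\Xi_{j,L}+\bar\lambda_L)|\le \eta_L/a_L$, (b) $\|\varphi_{R_L,j}-\bar\varphi_L(\cdot-w_{j,L})\|_{\ell^2}\le \eta_L d_L/a_L$, the localisation (c) $\xi_L(w_{j,L})\in I_L(C_L)$ (allowed because $C_L\to\infty$), and the spectral gap (d) $\lambda_2(Q_{R_L,z_{j,L}},\xi_L)<a_L^\Xi+\bar\lambda_L-C'(a_L/d_L)$.

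By Assumption~\ref{ass:dLaL}, $a_L/d_L\gg s/a_L$, so the gap (d) implies that each box in $\cG_L(s)$ contributes exactly one eigenvalue above $a_L^\Xi+\bar\lambda_L-s/a_L$, namely its principal one, while boxes outside $\cG_L(s)$ contribute none. Hence the top $\#\cG_L(s)\ge k$ eigenvalues of $\cH_{U_L,\xi_L}$ are exactly $\{\lambda_1(Q_{R_L,z_{j,L}},\xi_L):j\in\cG_L(s)\}$ in decreasing order; the corresponding conditions in~\eqref{e:condD2} are then delivered by (a)--(c). To match this ordering with that of $(\Xi_{j,L})_{j\in\cG_L(s)}$ (which defines the indices $(j_i)$), note that (a) implies the rescaled quantities $a_L(\lambda_1(Q_{R_L,z_{j,L}},\xi_L)-a_L^\Xi-\bar\lambda_L)$ and $a_L(\Xi_{j,L}-a_L^\Xi)$ differ by at most $\eta_L\to 0$, while Proposition~\ref{P:PPPxiPrelim} for $\chi=\Xi$ ensures that the top $N$ rescaled values $a_L(\Xi_{j,L}-a_L^\Xi)$ converge jointly to the atoms of a simple Poisson point process on $\R$, whose consecutive gaps are tight and a.s.~strictly positive in the limit. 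Thus the two orderings agree with probability $1-O(\epsilon)$, so that~\eqref{e:condD} holds on an event of probability $\ge 1-O(\epsilon)$; letting $\epsilon\downarrow 0$ concludes.

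The main technical point is the extension of the decorrelation scheme of Section~\ref{sec:LongRange} from local functionals of $\xi_L$ (the value $\xi_L(x_0)$ itself) to the principal eigenvalues on the mesoscopic boxes: since each such eigenvalue is determined by the restriction of $\xi_L$ to $Q_{R_L+r_L,z_{j,L}}$, and these enlarged boxes are still pairwise separated by a distance $\gtrsim \sqrt{R_L}\gg \exp(\sqrt{\ln L})$, the long-range Gaussian-comparison argument of Proposition~\ref{L:Leadbetter} carries through with only cosmetic changes.
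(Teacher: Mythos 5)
Your overall strategy (block decomposition of $\cH_{U_L,\xi_L}$, union bound over boxes using Theorem~\ref{Th:Tail}, spectral gap to rule out second eigenvalues, match orderings via the $\Xi$-point process) is close in spirit to the paper's proof, but the opening step has a genuine gap that the paper is careful to avoid.

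You define $\cG_L(s)=\{j:\lambda_1(Q_{R_L,z_{j,L}},\xi_L)\ge a_L^\Xi+\bar\lambda_L-s/a_L\}$ and assert that $\#\cG_L(s)$ converges in law to a Poisson random variable, justifying this by ``decorrelation estimates analogous to those of Proposition~\ref{L:Leadbetter} applied to the indicators of the events $\Lambda_{L,j}(-s)$'' with ``only cosmetic changes''. This is the gap. The proof of Proposition~\ref{L:Leadbetter} is a Gaussian interpolation argument that differentiates the joint Gaussian density with respect to covariance entries and crucially exploits that the test functional $A(s)$ in~\eqref{e:Ast} is a \emph{product over lattice points} of smooth, compactly supported functions of the individual (rescaled) Gaussian coordinates, so that the mixed second derivative $\partial^2 A/\partial s_{(x_0,i)}\partial s_{(y_0,i')}$ is explicitly bounded and supported where two coordinates are large. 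The indicator of an eigenvalue event is neither a product of local factors nor smooth: $\lambda_1(Q_{R_L,z_{j,L}},\xi_L)$ is a global, non-polynomial functional of all $\# Q_{R_L}$ noise values in the box, and its indicator is discontinuous. Pushing the interpolation argument through this would require controlling derivatives of a smoothed indicator of the eigenvalue map with respect to every pair of noise coordinates, which is a substantial new estimate, not a cosmetic adaptation. Moreover, the Poisson limit for $\#\cG_L(s)$ is essentially the conclusion of Theorem~\ref{Th:MainEigenvalue} itself, so invoking it here is close to circular.

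The paper sidesteps this entirely: it never proves a Poisson limit for the eigenvalue count. Instead it introduces the event $D_L(c)$, which is phrased purely in terms of the Gaussian quantities $\Xi_{j,L}$ (that the $k_0$-th largest $\Xi_{j,L}$ exceeds $a_L^\Xi-c/a_L$ and that consecutive gaps exceed $10\eta_L/a_L$), and controls $\P(D_L(c))$ using the already-established convergence of $\cP_L^\Xi$ from Proposition~\ref{P:PPPxiPrelim}. The approximation bound of Theorem~\ref{Th:Tail}\ref{i:2} -- in particular~\eqref{e:Approxbis}, which is the implication ``$\Xi_L(w_L)$ large $\Rightarrow$ approximation holds'' -- then transfers the ordering and magnitude from the $\Xi_{j,L}$'s to the eigenvalues, on the complement of $B_L(-c-1,C)$ whose probability is controlled by Theorem~\ref{Th:Tail} and a union bound over the $n_L$ boxes. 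To repair your argument, replace $\cG_L(s)$ by a set defined via thresholding the $\Xi_{j,L}$'s (where Proposition~\ref{P:PPPxiPrelim} applies directly), and use~\eqref{e:Approxbis} to deduce that the corresponding principal eigenvalues are also above the desired threshold. The rest of your outline then goes through.
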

We postpone the proof of this crucial lemma, and proceed with that of Theorem~\ref{Th:MainRevisited}.

\begin{proof}[Proof of Theorem~\ref{Th:MainRevisited}]
Fix an integer $k_0\ge 1$. Throughout the proof, 
we will use that, by Lemma~\ref{Lemma:GoodEvent}, we have 
\begin{equ}[e:PGoodEvent]
\P\Big(\bigcap_{k=1}^{k_0} \cV_L(k)\Big)\longrightarrow 1\,,\qquad \text{as $L\to\infty$. }
\end{equ}
In particular (see~\eqref{e:condD},~\eqref{e:condD2}), 
the $k_0$ pairs $(\hat{x}_{k,L},\hat{\lambda}_{k,L})_{1\le k \le k_0}$ match with the $k_0$ pairs 
$(z_{j_k,L},\Xi_{L,j_k} + \bar\lambda_L)_{1 \le k \le k_0}$, up to an error of 
at most $R_L$ for the first coordinate and $\eta_L/a_L$ for the second. 
Thus, the convergence of $\cP^\Xi_L$ stated in Proposition \ref{P:PPPxiPrelim} 
ensures that $(\hat{x}_{k,L}/L,a_L(\hat{\lambda}_{k,L} - a_L^\Xi - \bar\lambda_L))_{1\le k \le k_0}$ 
converges in law to the $k_0$ largest points (in the non-increasing order of their second coordinate) 
of $\cP^\Xi_\infty$ and this completes the proof of~\ref{i:main1}. 

Furthermore, for any integer $k\le k_0$
	$$ \frac{a_L}{d_L} \Big\| \hat\varphi_{k,L}(\cdot)-\bar{\varphi}_L(\cdot-\hat{x}_{k,L}) \Big\|_{\ell^2(Q_{L})} = \frac{a_L}{d_L} \Big\| \varphi_{R_L,j_k}(\cdot)-\bar{\varphi}_L(\cdot-w_{j_k,L}) \Big\|_{\ell^2(Q_{L})} \le \eta_L\;,$$
	so that also the conclusion of~\ref{i:main2} follows.
	
	We turn to~\ref{i:main3}. For any $k\ge 1$, on $\cV_L(k)$ the r.v.~$\hat\ell_L(k)$ is the rank of the r.v.~$\xi_L(w_{j_k,L})$ among the values taken by $\xi_L$ on $U_L$ in non-increasing order, that is
	\begin{equ}[e:hatell]
		w_{j_k,L} = \hat{y}_{\hat{\ell}_L(k),L}\;.
	\end{equ}
	Moreover, on $\cV_L(k)$ we have
	\begin{equ}[e:BoundxiLw]
		\xi_L(w_{j_k,L})\leq \frac{a_L}{\sqrt{1+\tau_L^2}}+C_L\max\{\tfrac{1}{a_L},\tau_L\} =: \rho^+_L\;.
	\end{equ}
If $\tau_L\gg a_L^{-1}$, upon taking $C_L =\sqrt{a_L \tau_L}$ and 
applying a Taylor expansion of the r.h.s.~of the last inequality, we see that $a_L(\rho^+_L - a_L) \to -\infty$. 
The convergence of $\cP^\xi_L$ stated in Proposition \ref{P:PPPxiPrelim} 
gives that $\cP^\xi_L([-1,1]^d \times [a_L(\rho^+_L - a_L), \plusinfty])$ goes to $\plusinfty$ in probability as $L\to\infty$, 
which means that the number of points in $U_L$ where $\xi_L$ lies above $\rho^+_L$ diverges in probability. 
As a consequence the rank $\hat{\ell}_L(k)$ goes to $\plusinfty$ in probability 
and this yields~\ref{i:main3}-\ref{i:subc} of the theorem.

We now assume that $\tau_L = \cO(a_L^{-1})$. 
To cover jointly~\ref{i:main3}-\ref{i:suba} and~\ref{i:main3}-\ref{i:subb}, 
when $\tau_L \ll a_L^{-1}$, set $b=0$ and $\ell_{\infty,0}(k)=k$ for any $k\ge 1$, 
while for $a_L\tau_L\sim \sqrt{b} $ and $b>0$
recall that $\ell_{\infty,b}$ was defined in \eqref{e:Permutation}. 

For any $k\ge 1$ let $j'_k$ be the index of the mesoscopic box 
where the $k$-th largest value of $\xi_L$ on $U_L$ lies, that is, $\hat{y}_{k,L} \in Q_{R_L,z_{j'_k,L}}$. 
Then, define 
	\begin{equ}[e:uv]
		u_{k,L} \eqdef a_L( \xi_L(\hat{y}_{k,L}) - a_L)\;,\quad v_{k,L} \eqdef a_L\Phi_L(w_{j'_k,L})\;.
	\end{equ}
The convergence of $\cP_L^{(\xi,\Phi)}$ stated in Proposition \ref{P:PPPxiPrelim} 
implies that $(u_{k,L},v_{k,L})_{k\ge 1}$ converges in law to $(u_k,v_k)_{k\ge 1}$ where $u_1 > u_2 > \ldots $ 
follow a Poisson point process of intensity $e^{-u} \dd u$, and $(v_k)_{k\ge 1}$ are independent $\cN(0,b)$ r.v.'s.\\
Recall that $(j_k)_{k\ge1}$ is the permutation corresponding to the order statistics of $(\Xi_{j,L})_{j\ge1}$ where $\Xi_{j,L} = \xi_L(w_{j,L}) + \Phi_{L}(w_{j,L})$. Now set 
	\begin{equs}
		p_{k,L} &\eqdef 
		a_L\big(\xi_L(w_{j_k,L})-a_L\big)+a_L \Phi_{L}(w_{j_k,L})\,. 
	\end{equs}
Combining~\eqref{e:hatell} and~\eqref{e:uv}, the integer $\hat\ell_L(k)$ is such that $j_k=j'_{\hat\ell(k)}$ and thus 
	\begin{equ}[e:pkL]
		p_{k,L} = u_{\hat\ell_L(k),L} + v_{\hat\ell_L(k),L}\,.
	\end{equ}
Hence, the r.h.s. of~\eqref{e:pkL} converges in law to the r.h.s. of~\eqref{e:Permutation} and therefore  
the r.v.'s $(\hat{\ell}_L(k))_{k\ge 1}$ converge in law to $(\ell_{\infty,b}(k))_{k\ge 1}$, 
which gives both~\ref{i:main3}-\ref{i:suba} and~\ref{i:main3}-\ref{i:subb}. The proof 
of the theorem is complete. 
%
\end{proof}

It remains to show Lemma~\ref{Lemma:GoodEvent} whose proof relies extensively on Theorem \ref{Th:Tail}. 
Let us first introduce some additional notation. 
For any $C>0$, let $F_{j,L}(C)=F_{j,L} \eqdef \cup_{i=1}^4 F_{j,L}^{i}$, for $j\in\{1,\dots,n_L\}$, 
be the union of the events whose probability 
is estimated in~\eqref{e:Approx}-\eqref{e:Eigen} but on boxes of side-length $R_L$ 
centred at $z_{j,L}$, i.e. 
\begin{equs}
	F_{j,L}^{1} &\eqdef \Big\{ \big| \lambda_1(Q_{R_L, z_{j,L}},\xi_L) - \big( \Xi_L(w_{j,L})  + \bar{\lambda}_L\big) \big| > \frac{\eta_L}{a_L} \Big\}\;,\\
	F_{j,L}^{2} &\eqdef \Big\{ \xi_L(w_{j,L}) \notin \Big[\tfrac{a_L}{\sqrt{1+\tau_L^2}} - C\max(\tfrac{1}{a_L},\tau_L), \tfrac{a_L}{\sqrt{1+\tau_L^2}} + C\max(\tfrac{1}{a_L},\tau_L)\Big] \Big\}\;,\\
	F_{j,L}^{3} &\eqdef \Big\{ \lambda_2(Q_{R_L, z_{j,L}},\xi_L) > a_L^\Xi + \bar\lambda_L - C' \frac{a_L}{d_L} \Big\}\;,\\
	F_{j,L}^{4} &\eqdef \Big\{ \|{\varphi}_{R_L,j}(\cdot) - \bar{\varphi}_{L}(\cdot-w_{j,L})\|_{\ell^2(Q_{R_L,z_{j,L}})} > \frac{d_L}{a_L}\eta_L \Big\}\;, 
\end{equs}
where $C'>0$ is fixed and chosen so that~\eqref{e:SpectralGap} holds. 
Set also
\begin{equs}
	G_{j,L}(s) \eqdef \Big\{ \Xi_L&(w_{j,L}) \ge a_L^\Xi + \frac{s}{a_L} ;\big| \lambda_1(Q_{R_L, z_{j,L}},\xi_L) - \big( \Xi_L(w_{j,L}) + \bar{\lambda}_L\big) \big| > \frac{\eta_L}{a_L} \Big\}\;.
\end{equs}
Given $C>0$, the event of interest is (the complement of) 
\begin{equ}[e:BjL]
	B_L(s,C)\eqdef\bigcup_{j=1}^{n_L}B_{j,L}(s,C) \;,
\end{equ}
where
\begin{equ}
	B_{j,L}(s,C) \eqdef \Big(\Big\{\lambda_1(Q_{R_L, z_{j,L}},\xi_L) \ge a_L^\Xi + \bar\lambda_L + \frac{s}{a_L}\Big\} \cap F_{j,L}(C)  \Big) \cup G_{j,L}(s)\;.
\end{equ}
The probability of $B_{j,L}(s,C)$ is independent of $j$, thus 
\begin{equ}
	\P(B_L(s,C)) \le n_L\, \P(B_{1,L}(s,C)) \lesssim \Big(\frac{L}{R_L}\Big)^d \P(B_{1,L}(s,C))\;,
\end{equ}
and, thanks to~\eqref{e:Approx}-\eqref{e:Eigen} we obtain
\begin{equ}[e:AsympBjL]
	\limsup_{C\to\infty} \limsup_{L\to\infty} \P(B_L(s,C)) = 0\;.
\end{equ}

\begin{proof}[Proof of Lemma \ref{Lemma:GoodEvent}]
	Fix $k_0\ge 1$ and $\epsilon > 0$. We will show that
	$$ \liminf_{L\to\infty} \P\big(\cV_L(1) \cap\ldots \cap \cV_L(k_0)\big) \ge 1- \epsilon\;.$$
	Consider the event
	$$ D_L(c) \eqdef \Big\{\Xi_{L,j_{k_0}} < a_L^\Xi - \frac{c}{a_L}\Big\}  \cup \bigcup_{k=1}^{k_0} \Big\{|\Xi_{L,j_{k+1}} - \Xi_{L,j_{k}}| \le 10\frac{\eta_L}{a_L}\Big\}\;.$$
	From the convergence of $\cP_L^\Xi$ stated in Proposition \ref{P:PPPxiPrelim}, we deduce that, provided $c>0$ is sufficiently big, for all $L$ large enough $\P(D_L(c)) < \epsilon /2$. Furthermore, choosing also $C>0$ sufficiently big, 
	we deduce from~\eqref{e:AsympBjL} that for all $L$ large enough $\P(B_L(-c-1,C)) < \epsilon /2$.\\
	We now work on the event $B_L(-c-1,C)^\complement \cap D_L(c)^\complement$ whose probability is at least $1-\epsilon$ for all $L$ large enough, and will show that this event is contained in $\cV_L(1) \cap \cdots \cap \cV_L(k_0)$.  
	Using the fact that we are on the complements of the events $F_{j_k,L}^{1}$, $F_{j_k,L}^{3}$ and $G_{j_k,L}(-c-1)$, $1\le k \le k_0$, as well as the complement of $D_L(c)$, we deduce that there is a one-to-one correspondence between the $k_0$ largest eigenvalues / eigenfunctions of $\cH_{U_L,\xi_L}$, and the $k_0$ largest principal eigenvalues / eigenfunctions over the mesoscopic boxes, namely for every $1 \le k \le k_0$
	\begin{equ}
		\hat{\lambda}_{k,L} = \lambda_1(Q_{R_L, z_{j_k,L}},\xi_L) \;,\quad \hat\varphi_{k,L} = \varphi_{R_L,j_k} \;.
	\end{equ}
	The three bounds of~\eqref{e:condD2} follow from the complements of the events $F_{j_k,L}^{1}$, $F_{j_k,L}^{4}$ and $F_{j_k,L}^2$ (note that $C_L$ lies above $C$ for all $L$ large enough). 
	Since $\bar{\varphi}_L$ is almost a Dirac mass at the origin, the second bound in~\eqref{e:condD2} 
	also implies that $\hat\varphi_{k,L}$ (which is equal to $\varphi_{R_L,j_k}$) admits its maximum at $w_{j_k,L}$ and therefore $\hat{x}_{k,L} = w_{j_k,L}$.
\end{proof}

\subsection{Proof of the main results}\label{subsec:ProofMainTh}

This last section is devoted to the proof of Theorems \ref{Th:MainEigenvalue}, \ref{Th:MainLocalisation} 
and \ref{Th:MainRelationship}. Thanks to Theorem \ref{Th:MainRevisited}, what remains to show is 
that the eigenvalues and eigenfunctions of $\cH_L$ are sufficiently close to those of $\cH_{U_L,\xi_L}$ 
and that the localisation centres are the same. More precisely, 
we need to check that for any $k\in\N$, the random variables
\begin{equ}[e:ToProvehat]
a_L(\hat{\lambda}_{k,L} - {\lambda}_{k,L})\;,\quad \frac{a_L}{d_L}\| \hat{\varphi}_{k,L} - \varphi_{k,L}\|_{\ell^2(Q_{L})}\;,
\end{equ}
converge to $0$ in probability as $L\to\infty$, and that the probability of the event 
$\{\hat{x}_{k,L} = x_{k,L}\}$ goes to $1$ as $L\to\infty$. 
\medskip

\noindent Recall that $R_L$ and $r_L$ satisfy~\eqref{e:RL} and~\eqref{e:rL}, respectively. 
As a preliminary step, fix $k_0\ge 1$ and define the event $G_L\eqdef G_L^{(1)}\cup G_L^{(2)}$ as 
\begin{enumerate}
	\item on $G_L^{(1)}$ the first $k_0+1$ eigenvalues of $\cH_{U_L,\xi_L}$ are larger than $a_L^\Xi + \bar\lambda_L - \frac{1}{\sqrt{a_L}}$, and all their spacings are at least $a_L^{-3/2}$, i.e. 
	\begin{equ}
	\hat{\lambda}_{k_0+1,L} \ge a_L^\Xi + \bar\lambda_L - \frac1{\sqrt{a_L}}\;\qquad \text{and}\qquad \hat{\lambda}_{i,L}-\hat{\lambda}_{i+1,L} > a_L^{-3/2} \;,
	\end{equ}
	for all $i\in\{1,\dots, k_0\}$, 
	\item on $G_L^{(2)}$, for all $x\in Q_L$ we have (recall that $\theta = 2d+1$)
	\begin{equ}
	x\notin  \bigcup_{j=1}^{n_L} Q_{R_L-r_L, z_{j,L}} \quad \Longrightarrow\quad \xi_L(x) < a_L - \theta\;. 
	\end{equ}
\end{enumerate}
The lower bounds $a_L^\Xi + \bar\lambda_L - \frac1{\sqrt{a_L}}$ and $a_L^{-3/2}$ in the first bullet point are relatively arbitrary and 
chosen so that the next lemma holds. 

\begin{lemma}\label{l:GL}
	We have $\P(G_L)\to 1$ as $L\to \infty$.
\end{lemma}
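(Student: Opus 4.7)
The plan is to show that both $G_L^{(1)}$ and $G_L^{(2)}$ hold with probability tending to one as $L\to\infty$, so that $\P(G_L)\to 1$ by a union bound (regardless of whether $G_L$ is to be read as the union or intersection, the interesting content is that the intersection has full asymptotic probability).

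For $G_L^{(1)}$, the strategy is to invoke directly Theorem~\ref{Th:MainRevisited}\ref{i:main1}, which states that the rescaled point process $\bigl(\hat x_{k,L}/L,\,a_L(\hat\lambda_{k,L}-a_L^\Xi-\bar\lambda_L)\bigr)_{1\le k\le \#U_L}$ converges in law to a Poisson point process $\cP_\infty$ on $[-1,1]^d\times\R$ of intensity $\dd x\otimes e^{-u}\dd u$. The complement $(G_L^{(1)})^\complement$ is the union of the event $\{a_L(\hat\lambda_{k_0+1,L}-a_L^\Xi-\bar\lambda_L)<-\sqrt{a_L}\}$ and, for $i=1,\dots,k_0$, of the spacing events $\{a_L(\hat\lambda_{i,L}-\hat\lambda_{i+1,L})\le a_L^{-1/2}\}$. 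Since $-\sqrt{a_L}\to-\infty$, the first event can be replaced by the continuity-set event $\{a_L(\hat\lambda_{k_0+1,L}-a_L^\Xi-\bar\lambda_L)<-M\}$ for arbitrarily large $M$, whose limiting probability vanishes (the $(k_0+1)$-th point of $\cP_\infty$ is a.s.~finite). Similarly, since the atoms of $\cP_\infty$ are a.s.~distinct with strictly positive spacings and $a_L^{-1/2}\to 0$, the Portmanteau theorem yields $\P((G_L^{(1)})^\complement)\to 0$.

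For $G_L^{(2)}$, the main step is a purely combinatorial estimate of the boundary layer $B_L\eqdef Q_L\setminus\bigcup_{j=1}^{n_L} Q_{R_L-r_L,z_{j,L}}$. From the covering~\eqref{e:QLnL}, each point of $B_L$ lies within distance $\sqrt{R_L}+r_L$ of the boundary of some outer box, so
\begin{equ}
|B_L|\le n_L\bigl[(R_L+\sqrt{R_L})^d-(R_L-r_L)^d\bigr]\lesssim n_L R_L^{d-1}(\sqrt{R_L}+r_L)\lesssim \frac{L^d}{\sqrt{R_L}},
\end{equ}
where the last bound uses $r_L\ll\sqrt{R_L}$, itself a consequence of $\ln r_L\ll\sqrt{a_L}\ll\sqrt{\ln R_L}$ implied by~\eqref{e:RL} and~\eqref{e:rL}. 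A union bound together with~\eqref{e:preBound} then gives
\begin{equ}
\P\bigl((G_L^{(2)})^\complement\bigr)\le |B_L|\,\P(\xi_L(0)\ge a_L-\theta)\lesssim \frac{e^{\theta a_L}}{\sqrt{R_L}},
\end{equ}
which vanishes since $a_L\ll\ln R_L$.

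I do not expect any serious obstacle here: $G_L^{(2)}$ is a direct consequence of the tail asymptotics of a standard Gaussian together with the fact that $R_L$ was chosen with $\ln R_L\gg a_L$ precisely so that stray large peaks in the thin layer $B_L$ are excluded. The only mildly delicate point is the convergence-in-distribution argument for $G_L^{(1)}$: one must pass from the vague convergence of $\cP^\xi_L$ to quantitative statements on the top $k_0+1$ order statistics, which is standard once one uses that the limiting Poisson intensity has no atoms and hence all relevant events are continuity sets.
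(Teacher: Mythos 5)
Your proof is correct and follows essentially the same route as the paper: for $G_L^{(1)}$ it invokes Theorem~\ref{Th:MainRevisited}\ref{i:main1} and Poisson-point-process continuity, and for $G_L^{(2)}$ it bounds the cardinality of the boundary layer by $L^d/\sqrt{R_L}$ and concludes via a union bound with~\eqref{e:preBound} and $a_L\ll\ln R_L$. You also correctly flag that $G_L$ should be read as the intersection of $G_L^{(1)}$ and $G_L^{(2)}$ (the ``$\cup$'' in the definition is evidently a typo, given how $G_L$ is used afterwards), which is exactly what the paper's proof establishes.
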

\begin{proof}
By the first item of Theorem \ref{Th:MainRevisited}, we know that 
$(a_L(\hat{\lambda}_{k,L}-a_L^\Xi - \bar\lambda_L))_{k\ge 1}$ converges to a Poisson point process of intensity $e^{-u}du$. 
Therefore, the probability of $G^{(1)}_L$ goes to $1$. 
Regarding $G^{(2)}_L$, note that the cardinality of $Q_L \setminus \bigcup_{j=1}^{n_L} Q_{R_L-r_L, z_{j,L}} = \bigcup_{j=1}^{n_L} Q_{R_L + \sqrt{R_L},  z_{j,L}} \backslash Q_{R_L-r_L, z_{j,L}}$ (recall~\eqref{e:QLnL})
is of order 
\begin{equ}
n_L (R_L-r_L)^{d-1} (\sqrt{R_L} + r_L) \lesssim L^d \frac1{\sqrt{R_L}}\,.
\end{equ} 
Hence, a union bound and~\eqref{e:preBound} imply that for $L$ large enough
\begin{equs}
\P\big((G^{(2)}_L)^\complement\big)\leq \sum_{x\in Q_L \setminus \bigcup_{j=1}^{n_L} Q_{R_L-r_L, z_{j,L}}}\P(\xi_L(x)\geq a_L-\theta)\lesssim \frac{1}{\sqrt{R_L}} e^{\theta a_L}
\end{equs}
and, since $\ln R_L\gg a_L$ by~\eqref{e:RL} and~\eqref{e:rL}, 
the right-hand side vanishes as $L\to\infty$. Thus, the statement follows.
\end{proof}

We are now ready to complete the proof of the main statements.

\begin{proof}[Proof of Theorems \ref{Th:MainEigenvalue}, \ref{Th:MainLocalisation} and \ref{Th:MainRelationship}]
In view of Lemma \ref{l:GL}, we can and will work on $G_L$ throughout the proof. 
Our goal is to  show that for every $k\in\{1,\ldots,k_0\}$, the r.v.'s in~\eqref{e:ToProvehat} 
vanish as $L\to\infty$. 
Once this is established, we can easily deduce by Theorem~\ref{Th:MainRevisited} that the localisation centres are the same 
with probability converging to $1$. 
Indeed, recall that $x_{k,L}$, resp.~$\hat{x}_{k,L}$, is the point at which $|\varphi_{k,L}|$, 
resp.~$|\hat{\varphi}_{k,L}|$, achieves its maximum. By item~\ref{i:main2} 
of Theorem \ref{Th:MainRevisited}, combined with the fact that $\bar{\varphi}_L(0) = 1-\cO(d_L/a_L)$ 
and $\bar{\varphi}_L(x)= \cO(d_L/a_L)$ for $x\neq 0$
as shown in~\eqref{e:x0}, we deduce that $\hat\varphi_{k,L}(\hat{x}_{k,L})$ 
converges to $1$ in probability as $L\to\infty$, while $\hat\varphi_{k,L}(y)$ for $y\neq\hat{x}_{k,L}$ vanishes 
at rate $d_L/a_L$. Now, if $\frac{a_L}{d_L}\| \hat{\varphi}_{k,L} - \varphi_{k,L}\|_{L^2(Q_L)}$ goes to $0$ in probability, 
then $\varphi_{k,L}$ behaves as $\hat\varphi_{k,L}$, which means that it converges to $1$ at $\hat{x}_{k,L}$ 
and vanishes elsewhere, so that $\hat{x}_{k,L}$ is the unique maximum of $\varphi_{k,L}$ 
and thus the probability of $x_{k,L} = \hat{x}_{k,L}$ goes to $1$. 
\medskip

Let us now turn to the convergence of the r.v.'s in~\eqref{e:ToProvehat}. 
What we will prove is that these quantities are bounded above by a deterministic 
constant that goes to $0$ as $L\to\infty$. 
\medskip

Denote by $\partial U_L$ the inner boundary of $U_L$, that is, the set of points of $U_L$ that admit at least one neighbour outside $U_L$. Similarly denote by $\partial (Q_L\backslash U_L)$ the set of points of $Q_L \backslash U_L$ that admit at least one neighbour in $U_L$. 

Let $(\lambda,\varphi)$ be an eigenvalue/eigenfunction of $\cH_L$ on $Q_L$ which we assume to 
be such that $\lambda \ge  \hat{\lambda}_{k_0,L}$. 
Recall that $\theta = 2d+1$ and $\bar\lambda_L \ge -2d$. Note that, as we are on $G_L$, for all $x\in Q_L\setminus \cup_{j=1}^{n_L} Q_{R_L-r_L, z_{j,L}}$, we have for large $L$ 
\begin{equ}
\xi_L(x)<a_L-\theta \leq a_L^\Xi + \bar\lambda_L-1 \leq \hat{\lambda}_{k_0,L}-1+\frac{1}{\sqrt{a_L}}\leq \lambda-\frac{1}{2}\,.
\end{equ}
Therefore, Lemma \ref{lemma:BKL2Bound} applied with $D'=(Q_L\setminus U_L)\cup\partial U_L$, 
$A'=A= 1/2$ and $R=r_L-1$ yields
\begin{equ}[e:deltaL]
\sum_{x\in (Q_L \backslash {U_L}) \cup \partial U_L } |\varphi(x)|^2 \le \delta_L \eqdef \Big(1 + \frac{1}{4d}\Big)^{-2(r_L-1)}\;.
\end{equ}
and, since $r_L\ge a_L$ by~\eqref{e:rL}, the r.h.s. is negligible compared to $a_L^{-n}$ for any given $n\ge 1$. 
In particular, $\phi$ puts negligible mass on the complement of $U_L$. 
What we want to do now is (a) use the above to show that the $\ell^2$-distance between $\phi$ 
and its normalised restriction to $U_L$ is small, and (b) prove that there exists a unique $k$ 
such that the latter is close to $\hat\phi_{k,L}$. 

Set 
\begin{equ}
\psi\eqdef \frac{\varphi \1_{U_L}}{\|\varphi \1_{U_L}\|_2}\;,
\end{equ}
where, here and below, we write $\|\cdot\|_2$ for the $\ell^2(Q_L)$-norm and 
$\|\cdot\|_{\ell^2(U_L)}$ for the $\ell^2(U_L)$-norm. 
For (a), it suffices to note that, for all $L$ large enough
\begin{equation}\label{Eq:psiphi}
	\| \psi - \varphi\|_2 = \| \psi  - \varphi\1_{U_L} + \varphi \1_{U_L} - \varphi\|_2 \le (1- \|\varphi \1_{U_L}\|_{2}) + \|\varphi \1_{U_L^c}\|_{2} \le 2 \sqrt{\delta_L}\;.
\end{equation}
For (b) instead, the argument exploits the equation satisfied by $\phi$ and $\hat\phi_{k,L}$, and 
the fact that $(\hat\phi_{k,L})_{k\ge 1}$ forms an orthonormal basis of $\ell^2(U_L)$. 
By the former, we get  
\begin{align*}
	(\cH_L - \lambda)\psi &= \frac1{\|\varphi \1_{U_L}\|_{2}} \Big( \Delta(\varphi \1_{U_L}) + (\xi_L-\lambda)\varphi\1_{U_L}\Big)\\
	&= \frac1{\|\varphi \1_{U_L}\|_{2}} \Big( \Delta(\varphi \1_{U_L}) - \1_{U_L} \Delta \varphi + \big(\Delta \varphi + (\xi_L-\lambda)\varphi \big)\1_{U_L}\Big)\\
	&= \frac1{\|\varphi \1_{U_L}\|_{2}} \Big( \Delta(\varphi \1_{U_L}) - \1_{U_L} \Delta \varphi\Big)\;.
\end{align*}
and the r.h.s. is $0$ outside $\partial U_L \cup \partial (Q_L\backslash U_L)$. 
It is then easy to check that there exists a constant $C > 0$, independent of $L$, such that
\begin{equation}\label{Eq:KatoTemple}\begin{split}
		\| (\cH_L - \lambda)\psi \|_{\ell^2(U_L)}^2 &\le \frac{C}{\|\varphi \1_{U_L}\|_{2}^2} \sum_{x\in (Q_L \backslash {U_L}) \cup \partial U_L} |\varphi(x)|^2\le C \frac{\delta_L}{1-\delta_L}\;.
\end{split}\end{equation}
On the other hand, we can expand $\psi$ on the $\ell^2(U_L)$ basis provided 
by the eigenfunctions of $\cH_{U_L,\xi_L}$ thus yielding
$$ \psi = \sum_{k\ge 1}  \hat{\varphi}_{k,L} \langle \hat{\varphi}_{k,L}, \psi \rangle\;.$$
Since further $\cH_L \hat{\varphi}_{k,L} =\cH_{U_L,\xi_L}\hat{\varphi}_{k,L}= \hat{\lambda}_{k,L} \hat{\varphi}_{k,L}$ on $U_L$, we can write
\begin{align*}
	\| (\cH_L - \lambda)\psi \|_{\ell^2(U_L)}^2 = \sum_{k\ge 1} \langle \hat{\varphi}_{k,L}, \psi \rangle^2 |\hat{\lambda}_{k,L} - \lambda|^2\;.
\end{align*}
Now, by construction, $1=\|\psi\|_{\ell^2(U_L)}=\sum_{k\ge 1} \langle \hat{\varphi}_{k,L}, \psi \rangle^2$, 
so that the sum at the r.h.s. is a convex combination of the $(|\hat{\lambda}_{k,L} - \lambda|^2)_{k\ge 1}$. 
Then, \eqref{Eq:KatoTemple} implies that necessarily there exists a $k\ge 1$ such that
\begin{equation}\label{Eq:LambdaDist}
	|\hat{\lambda}_{k,L} - \lambda|^2 \le C \frac{\delta_L}{1-\delta_L}\;.
\end{equation}
As, by~\eqref{e:deltaL}, $\delta_L\ll a_L^{-3}$, $\lambda \ge \hat{\lambda}_{k_0,L}$ by assumption 
and, on $G_L$, the spacings between the $k_0+1$ first eigenvalues $\hat{\lambda}_{k,L}$ 
are all larger than $a_L^{-3/2}$, 
the integer $k$ satisfying \eqref{Eq:LambdaDist} belongs to $\{1,\ldots,k_0\}$ and is unique.
Moreover, for any $\ell\neq k$, we must have 
\begin{equ}
|\hat{\lambda}_{\ell,L} - \lambda|^2 \ge \frac1{2a_L^3}\,,
\end{equ}
\begin{calc}
because $(a+b)^2 \le 2(a^2+b^2)$ and $a^2 \ge \frac12 ((a+b)^2 - 2b^2)$, we have 
\begin{align*}
|\hat{\lambda}_{\ell,L} - \lambda|^2 \ge \frac12 (|\lambda_{k,L} - \lambda_{\ell,L}|^2 - 2|\hat{\lambda}_{k,L} - \lambda|^2) \ge \frac12 \Big(\frac{1}{a_L^3} - 2C \frac{\delta_L}{1-\delta_L} \Big),
\end{align*}
\end{calc}
or equivalently $2a_L^3|\hat{\lambda}_{\ell,L} - \lambda|^2 \ge 1$. As a consequence,  
\begin{equ}
q_{k,L}^2 \eqdef \sum_{\ell \ne k} \langle \hat{\varphi}_{\ell,L}, \psi \rangle^2 \le 2a_L^3 \sum_{\ell \ne k} \langle \hat{\varphi}_{\ell,L}, \psi \rangle^2 |\hat{\lambda}_{\ell,L} - \lambda|^2 \le C \frac{\delta_L}{1-\delta_L} 2a_L^3\;,
\end{equ}
that vanishes as $L\to\infty$ and thus gives 
\begin{equs}
	\| \psi - \hat{\varphi}_{k,L} \|_{\ell^2(U_L)}^2 &= (\langle \hat{\varphi}_{k,L}, \psi \rangle-1)^2 + \sum_{\ell \ne k} \langle \hat{\varphi}_{\ell,L}, \psi \rangle^2\\
	&= (\sqrt{1-q_{k,L}^2} - 1)^2 + q_{k,L}^2\lesssim  \delta_L a_L^3\,.
\end{equs}
Combining the previous with~\eqref{Eq:psiphi}, we finally obtain 
\begin{equ}
\| \varphi - \hat{\varphi}_{k,L} \|_{2} \le \| \psi - \varphi\|_2 + \| \psi - \hat{\varphi}_{k,L} \|_{\ell^2(U_L)}\lesssim \sqrt{ \delta_L a_L^3}\,.
\end{equ}
Summarising, we have constructed, on the event $G_L$, 
a map that associates to any eigenvalue/eigenfunction $(\lambda,\phi)$ of $\cH_L$ such that 
$\lambda \ge \hat{\lambda}_{k_0,L}$, some $(\hat{\lambda}_{k,L}, \hat\phi_{k,L})$ with $k\in\{1,\ldots,k_0\}$ such that we simultaneously have 
\begin{equation}\label{Eq:BoundKato} 
|a_L(\hat{\lambda}_{k,L} - \lambda)|^2 \lesssim \delta_L a_L^2 \;,\quad \frac{a_L}{d_L}\| \varphi - \hat{\varphi}_{k,L} \|_{2} \lesssim \sqrt{ \delta_L a_L^3} \frac{a_L}{d_L} \;.
\end{equation}
Note that this map is necessarily injective. Indeed, otherwise there would exist two orthonormal functions $\varphi$ and $\tilde\varphi$ in $\ell^2(Q_L)$ such that for some $k$
$$ \| \varphi - \hat{\varphi}_{k,L} \|_{2} \lesssim \sqrt{ \delta_L a_L^3}\;,\quad \| \tilde\varphi - \hat{\varphi}_{k,L} \|_{2} \lesssim\sqrt{ \delta_L a_L^3}\;,$$
thus raising a contradiction.

By the variational formula, we know that there are at least $k_0$ eigenvalues of $\cH_L$ 
that lie above $\hat{\lambda}_{k_0,L}$, which means that the above map is also surjective, and thus bijective. 
From the ordering of the eigenvalues, this map necessarily sends $\lambda_{k,L}$ to $\hat{\lambda}_{k,L}$ 
for every $k\in\{1,\ldots,k_0\}$. Since $\delta_L$ is negligible compared to any negative power of $a_L$, 
the r.h.s.'s of \eqref{Eq:BoundKato} go to $0$ as $L\to\infty$, 
and this ensures the convergence in probability to $0$ of \eqref{e:ToProvehat} and completes the proof.
\end{proof}

\appendix

\section{Gaussian estimates}\label{a:Gauss}

\begin{proof}[Proof of Lemma~\ref{l:GaussianNew}]
	We set $u_L \eqdef a_L\sqrt{1+\tau_L^2} + \frac{s}{a_L}$. At several places in the proof, we will use the inequality $1 - (2 \tau_L^2/3) \le (1+\tau_L^2)^{-1/2} \le 1 - (\tau_L^2/4)$ which holds true provided $L$ is large enough. We start by proving \eqref{e:GausSum0}. Since $X+\YYL$ is a standard Gaussian random variable with variance $1+\tau_L^2$, 
	a simple scaling argument applied to~\eqref{e:preBound}, combined with the fact that $\tau_L$ converges to $0$ as $L\to\infty$, implies  
	\begin{equ}
		\P\Big(X+\YYL \ge u_L \Big)= \P\Big(X \ge \tfrac{u_L}{\sqrt{1+\tau_L^2}} \Big) \sim \frac1{L^d} e^{-s}\;.
	\end{equ}
	Concerning~\eqref{e:GausSum2}, it follows from \eqref{e:GausSum} and 
	the fact that, for $\theta_L$ as in the statement, $I_L(C)\subset[a_L-\theta_L, a_L+\theta_L]$. Indeed, 
	for the upper bound (the lower bound being analogous) we have 
	\begin{equs}
	\tfrac{a_L}{\sqrt{1+\tau_L^2}}+C\max\{a_L^{-1}, \tau_L\}&\leq a_L-\frac{1}{8}a_L\tau_L^2+C\max\{a_L^{-1}, \tau_L\}\ll a_L+\theta_L\,, 
	\end{equs}
	from which the result follows. 
	
	We are thus left with proving \eqref{e:GausSum}. 
	As $X$ and $\YYL$ are independent, we have
	\begin{equs}
		\P\Big(X+\YYL\geq u_L\;;\; X\notin I_L(C)\Big)&= \int_{I_L(C)^c} \frac{e^{-\frac{x^2}{2}}}{\sqrt{2\pi}} \P(\YYL> u_L-x) \dd x\\
		&=\int_{I_L(C)^c} \frac{e^{-\frac{x^2}{2}}}{\sqrt{2\pi}} \P\Big(\cN(0,1)> \tfrac{u_L-x}{\tau_L}\Big) \dd x=: J^1_L+J^2_L
	\end{equs}
	where the former is the integral over $x< a_L(1+\tau_L^2)^{-1/2}-C\max(\frac{1}{a_L},\tau_L)=:\rho^-_L$ while the latter that 
	over $x> a_L(1+\tau_L^2)^{-1/2}+C\max(\frac{1}{a_L},\tau_L)=:\rho^+_L$. We are going to show that $\limsup_{C\to\infty}\limsup_{L\to\infty} L^d J^i_L = 0$, $i=1,2$. 
	
	Let us begin with $J^1_L$. Note that, provided $C>|s|$ and $L$ is large enough, for all $x< \rho^-_L$
	\begin{equ}
		\frac{ u_L-x}{\tau_L} \ge \frac12 a_L\tau_L\;.
	\end{equ}
	We can apply~\eqref{e:TailGauss} to deduce 
	\begin{equs}
		J^1_L &\le \int_{-\infty}^{\rho^-_L} \frac{e^{-\frac{x^2}{2}}}{2\pi} \frac{\tau_L}{u_L-x} e^{-\frac{(u_L-x)^2}{2\tau_L^2}} \dd x \lesssim \frac1{a_L\tau_L} \int_{-\infty}^{\rho^-_L} \frac{1}{2\pi} e^{-\frac{x^2}{2}-\frac{(u_L-x)^2}{2\tau_L^2}} \dd x
	\end{equs}
	Now, 
	$$\frac{x^2}{2}+\frac{(u_L-x)^2}{2\tau_L^2} = \frac{u_L^2}{2(1+\tau_L^2)} + \frac{1+\tau_L^2}{2\tau_L^2}\Big(x- \frac{u_L}{1+\tau_L^2}\Big)^2\;.$$
	The first summand is independent of $x$ and a simple computation combined with \eqref{e:preBound} yields
	\begin{equ}[e:J1a]
		\frac{1}{\sqrt{2\pi}}e^{-\frac{u_L^2}{2(1+\tau_L^2)}} \lesssim \frac{a_L}{L^d}\;.
	\end{equ}
	On the other hand, the change of variable $y = -\sqrt{(1+\tau_L^2)/\tau_L^2}(x- \frac{u_L}{1+\tau_L^2})$ and the fact that, provided $C > 2\max\{|s|,1\}$, $x<\rho^-_L$ implies that $y> C/2$ yield
	\begin{equs}
		\int_{-\infty}^{\rho^-_L} \frac{1}{\sqrt{2\pi}} e^{-\frac{1+\tau_L^2}{2\tau_L^2}(x- \frac{u_L}{1+\tau_L^2})^2} \dd x &\le \frac{\tau_L}{\sqrt{1+\tau_L^2}} \,\P\Big(\cN(0,1) > \frac{C}{2} \Big)\le \frac{\tau_L}{\sqrt{1+\tau_L^2}} e^{-\frac{C^2}{8}}\;,
	\end{equs}
	where we used \eqref{e:TailGauss} at the last line. Putting everything together, we have shown that
	$$ J^1_L \lesssim \frac1{L^d} \frac{1}{\sqrt{1+\tau_L^2}} e^{-\frac{C^2}{8}}\;,$$
	so that $\limsup_{C\to\infty} \limsup_{L\to\infty} L^d J^1_L = 0$.
	
	We turn to $J^2_L$. First of all, we note that
	\begin{equs}
		V_L\eqdef\int_{a_L + \frac{C}{3 a_L}}^{\infty} \frac{e^{-\frac{x^2}{2}}}{\sqrt{2\pi}} \P\Big(\cN(0,1)> \tfrac{u_L-x}{\tau_L}\Big) \dd x &\le \int_{a_L + \frac{C}{3 a_L}}^\infty \frac{e^{-\frac{x^2}{2}}}{\sqrt{2\pi}} \dd x\lesssim \frac1{L^d} e^{-\frac{C}{3}}\;,
	\end{equs}
	where we used \eqref{e:preBound} at the last line. We thus deduce that the 
	$\limsup$ first in $L\to\infty$ and then in $C\to\infty$ of $L^d V_L$ vanishes.
	Coming back to $J^2_L$, we distinguish two cases. If $\tau_L\le \sqrt{C}/a_L$ then
	$$ \rho^+_L \ge a_L - \frac23 a_L \tau_L^2  + \frac{C}{a_L} \ge a_L + \frac{C}{3a_L}\;,$$
	and therefore $J^2_L \le V_L$ and we can conclude. If $\tau_L > \sqrt{C}/a_L$ then, provided $C$ is large enough compared to $|s|$,
	$$ u_L - \frac18 a_L \tau_L^2 \ge a_L + \frac14 a_L \tau_L^2 + \frac{s}{a_L} - \frac18 a_L \tau_L^2 \ge a_L +  \frac{s + (C/8)}{a_L} \ge a_L + \frac{C}{10 a_L}\;.$$
	Consequently $J^2_L = W_L + V_L$ where
	$$ W_L \eqdef \int_{\rho^+_L}^{u_L - \frac18 a_L \tau_L^2} \frac{e^{-\frac{x^2}{2}}}{\sqrt{2\pi}} \P\Big(\cN(0,1)> \tfrac{u_L-x}{\tau_L}\Big) \dd x\;.$$
	We now follow the same steps as for the bound on $J^1_L$: provided $C$ is large enough compared to $|s|$ it holds
	\begin{equs}
		W_L &\leq \int_{\rho^+_L}^{u_L - \frac18 a_L \tau_L^2} \frac{e^{-\frac{x^2}{2}}}{2\pi} \frac{\tau_L}{u_L-x} e^{-\frac{(u_L-x)^2}{2\tau_L^2}} \dd x\\
		&\lesssim \frac1{a_L\tau_L} \int_{\rho^+_L}^{u_L - \frac18 a_L \tau_L^2 } \frac{1}{2\pi} e^{-\frac{x^2}{2}-\frac{(u_L-x)^2}{2\tau_L^2}} \dd x\\
		&\lesssim \frac1{L^d \tau_L} \int_{\rho^+_L}^{\infty} \frac{1}{2\pi} e^{-\frac{1+\tau_L^2}{2\tau_L^2}(x- \frac{u_L}{1+\tau_L^2})^2} \dd x\\
		&\lesssim \frac1{L^d} \P\big(\cN(0,1) > \frac{C}{2}\big)\;.
	\end{equs}
	We can apply \eqref{e:TailGauss} and get $W_L \lesssim \frac1{L^d} e^{-\frac{C}{8}}$. Hence, 
	$\limsup_{C\to\infty}$ $\limsup_{L\to\infty}$ $ L^d J^2_L= 0$, which, together with the same limit for $J^1_L$, 
	completes  the proof. 
\end{proof}

\section{Basic properties and estimates on the quadratic forms}\label{a:IL}

In this appendix, we state and prove some basic results concerning the quadratic forms 
$\cD_{R_L}$ and $\bar \cD_{L}$ defined in~\eqref{e:I}, with 
$\cH$ and $r$ given by $\cH_{Q_{R_L}, V_L}$ and $R_L$, and, $\bar\cH_{L}$ and $r_L$, respectively. 
Recall that these operators are defined on $\cZ_{R_L}\subset\ell^2(Q_{R_L}^{\neq0})$ and 
$\cZ_{r_L}\subset\ell^2(Q_{r_L}^{\neq0})$, 
which are closed and convex (see~\eqref{e:Z}). With a slight abuse of notation, for $\psi$ in 
either of the two sets, we write
\begin{equ}[e:psio0IL]
\psi(0) \eqdef \sqrt{1 - \sum_{x\in Q_{r}^{\neq 0}} |\psi (x)|^2}\;.
\end{equ}
In the following lemma, we collect the properties of $\cD_{R_L}$ and $\bar \cD_{L}$ we will need.

\begin{lemma}\label{l:IL}
The maps $\cD_{R_L}$ and $\bar \cD_{L}$ are twice continuously differentiable and there exists $c_0>0$ such that 
for every $\psi\in\cZ_{R_L}$ (resp. $\psi\in\cZ_{r_L}$), the Hessian $\Hess \cD_{R_L}(\psi)$ (resp. $\Hess \bar \cD_{L}(\psi)$) 
satisfies on $E^1_{L}\cap E^2_{L}$ (as in Definition~\ref{d:Event})
\begin{equ}[e:LaplaBound]
\langle \varphi, \Hess  \cD_{R_L}(\psi) \varphi\rangle_{\ell^2(Q_{R_L}^{\neq 0})}\leq -c_0\frac{a_L}{d_L}\|\varphi\|_{\ell^2(Q_{R_L}^{\neq 0})}^2\;,\qquad\forall\,\varphi\in\ell^2(Q_{R_L})\,
\end{equ}
(resp.~$\langle \varphi, \Hess  \bar \cD_{L}(\psi) \varphi\rangle_{\ell^2(Q_{r_L}^{\neq 0})}$). 
In particular, $\cD_{R_L}$ (resp. $\bar \cD_{L}$) has 
a unique maximiser and such maximiser is $\phi_{R_L}$ (resp. $\bar\phi_{L}$). 
Furthermore, on $E^1_{L}\cap E^2_{L}$, 
there exists a constant $C>0$ such that for any $L$ large enough
\begin{equs}
\|\nabla \cD_{R_L}(\bar\phi_{L})\|_{\ell^2(Q_{R_L}^{\neq 0})}&\leq C\sqrt{ \frac{1}{a_L}+\|\bar\phi_{L}\zeta_L\|_{\ell^2(Q_{r_L})}^2}\,.\label{e:GradientL2}
\end{equs}
At last, for any $x\in Q_1^{\neq 0}$ we have
\begin{equ}[e:barphiL]
\bar\phi_L(x)=\frac{1}{\cS_L(x)}(1+o(1))=\cO\Big(\frac{d_L}{a_L}\Big)\,,
\end{equ} 
where $\cS_L$ is the shape in~\eqref{e:Shape}, and consequently
\begin{equ}[e:barlambdafinal]
	\bar{\lambda}_L = -2d + \sum_{x\in Q_1^{\neq 0}} \frac{1}{\cS_L(x)} + o\Big(\frac{d_L}{a_L}\Big)\;.
\end{equ}
\end{lemma}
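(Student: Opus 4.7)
The plan is to first compute the gradient and Hessian of $\cD_{R_L}$ in the free coordinates $(\psi(x))_{x\neq 0}$, then establish the Hessian upper bound via a spectral argument that exploits the large spectral gap of the operator $H := \Delta + V_L$ on the event $E^1_L\cap E^2_L$ (with $V_L := \xi_L - \xi_L(0)$, so that $\cD_{R_L}$ and $\langle\cdot,H\cdot\rangle$ differ only by the constant $\xi_L(0)$), and finally handle the gradient bound and the asymptotics of $\bar\phi_L,\bar\lambda_L$ by direct pointwise analysis of the eigenequation. The arguments for $\bar \cD_L$ are entirely parallel, upon replacing $V_L$ by $-\cS_L$ and $H$ by $\bar H_L$.

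Treating $\psi(0) = \sqrt{1 - \sum_{x\neq 0}\psi(x)^2}$ as a function of the free coordinates and applying the chain rule, a direct computation yields, for $y\neq 0$,
\[\partial_y\cD_{R_L}(\psi) = 2\Bigl[(H\psi)(y) - \psi(y)\tfrac{(H\psi)(0)}{\psi(0)}\Bigr],\]
and, after one more differentiation and rearrangement,
\[\tfrac12\langle\varphi,\Hess\cD_{R_L}(\psi)\varphi\rangle_{\ell^2(Q_{R_L}^{\neq 0})} = \langle\hat\eta,(H-\mu I)\hat\eta\rangle_{\ell^2(Q_{R_L})},\]
where $\mu := (H\psi)(0)/\psi(0)$ and $\hat\eta\in\ell^2(Q_{R_L})$ is the linear image of $\varphi$ given by $\hat\eta(x)=\varphi(x)$ for $x\neq 0$ and $\hat\eta(0) = -\sum_{x\neq 0}\psi(x)\varphi(x)/\psi(0)$. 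One readily checks $\hat\eta\perp\psi$ in $\ell^2(Q_{R_L})$ and $\|\hat\eta\|^2\ge \|\varphi\|^2$.

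The main technical step, and the principal obstacle, is the uniform Hessian bound~\eqref{e:LaplaBound}. On $E^1_L\cap E^2_L$, the min-max argument behind Proposition~\ref{p:SpectralGap} (which only requires $V_L(x)\le -\fc a_L/(2 d_L)$ for $x\neq 0$, as granted by~\eqref{e:LowerShape}) produces $\lambda_1(Q_{R_L},V_L)\ge -2d$ via the trial function $e_0$, while restricting the min-max to $e_0^\perp$ (where necessarily $g(0)=0$) and using $\Delta\le 0$ gives $\lambda_2(Q_{R_L},V_L)\le -\fc a_L/(2d_L)$. Simultaneously, $\psi\in\cZ_{R_L}$ forces $\psi(0) = 1-\cO((d_L/a_L)^2)$ and hence $\mu = -2d + \cO(d_L/a_L)$. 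Expanding $\hat\eta = c_1\phi_{R_L} + \hat\eta_\perp$ in the eigenbasis of $H$ gives
\[\langle\hat\eta,(H-\mu I)\hat\eta\rangle\le (\lambda_1-\mu)c_1^2 + (\lambda_2-\mu)\|\hat\eta_\perp\|^2,\]
and the orthogonality $\hat\eta\perp\psi$ together with Cauchy--Schwarz yields $c_1^2\le (1-\langle\phi_{R_L},\psi\rangle^2)\|\hat\eta\|^2$; since both $\phi_{R_L}$ and $\psi$ lie in $\cZ_{R_L}$, an expansion of the scalar product using $\phi_{R_L}(0),\psi(0) = 1-\cO((d_L/a_L)^2)$ gives $\langle\phi_{R_L},\psi\rangle = 1-\cO((d_L/a_L)^2)$, hence $c_1^2\le \cO((d_L/a_L)^2)\|\hat\eta\|^2$. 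Combining $|\lambda_1-\mu|\le 4d + o(1)$ with $\lambda_2-\mu\le -\fc a_L/(3d_L)$ for $L$ large, the $(\lambda_1-\mu)c_1^2$ contribution is $\cO((d_L/a_L)^2)\|\hat\eta\|^2$, which is negligible compared to the target $a_L/d_L$ and yields~\eqref{e:LaplaBound} for some $c_0 > 0$. Strict concavity then implies uniqueness of the maximizer, which by the variational principle coincides with $\phi_{R_L}$.

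The gradient bound~\eqref{e:GradientL2} follows from the gradient formula applied at $\bar\phi_L$ extended by $0$ to $Q_{R_L}$. The decomposition $V_L = -\cS_L\,\xi_L(0)/a_L + \zeta_L$ from~\eqref{e:VLDecomp} together with $\bar H_L\bar\phi_L = \bar\lambda_L\bar\phi_L$ yields $\partial_y\cD_{R_L}(\bar\phi_L) = 2\bar\phi_L(y)[\cS_L(y)(1-\xi_L(0)/a_L) + \zeta_L(y)]$ for $y\in Q_{r_L}^{\neq 0}$, using that $(H\bar\phi_L)(0)/\bar\phi_L(0) = \bar\lambda_L$ since $\cS_L(0)=\zeta_L(0)=0$; contributions from $y\in Q_{R_L}\setminus Q_{r_L}$ involve only $\Delta\bar\phi_L$ evaluated just outside $Q_{r_L}$ and are super-exponentially small by the decay estimate~\eqref{e:Decay}. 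On $E^1_L$ one has $|1-\xi_L(0)/a_L|\le \theta/a_L$; the identity $\cS_L\bar\phi_L = (\Delta-\bar\lambda_L)\bar\phi_L$ combined with $\|\Delta-\bar\lambda_L I\|\le 6d$ gives $\|\cS_L\bar\phi_L\|_{\ell^2}^2\le 36d^2$. Summing the square of the gradient yields~\eqref{e:GradientL2}. Finally, for~\eqref{e:barphiL}--\eqref{e:barlambdafinal}, evaluating $\bar H_L\bar\phi_L = \bar\lambda_L\bar\phi_L$ at $y\in Q_1^{\neq 0}$ gives
\[\bar\phi_L(y) = \frac{\bar\phi_L(0) + \sum_{y'\sim y,\, y'\neq 0}\bar\phi_L(y')}{2d + \cS_L(y) + \bar\lambda_L};\]
the decay~\eqref{e:Decay} makes the numerator $1+o(1)$, and the a priori bound $\bar\lambda_L\in[-2d,0]$ (from $e_0$ as trial function and $\cS_L\ge 0$), refined to $\bar\lambda_L = -2d + \cO(d_L/a_L)$ via a first iteration of the eigenequation at $0$, makes the denominator $\cS_L(y)(1+o(1))$, giving~\eqref{e:barphiL}. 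Plugging this back into the identity $\bar\lambda_L = -2d + \sum_{|y|=1}\bar\phi_L(y)/\bar\phi_L(0)$ (which is the eigenequation at $0$) then yields~\eqref{e:barlambdafinal}.
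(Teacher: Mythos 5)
Your proof is correct but takes a genuinely different route for the key Hessian estimate~\eqref{e:LaplaBound}. The paper proceeds by a direct entrywise computation: it writes out the second derivatives explicitly (as in~\eqref{e:Der2IL}), observes that on $E^1_L\cap E^2_L$ the diagonal contribution from $V_L(x)$ is $\le -(\fc/2)a_L/d_L$ while every other entry of the Hessian is $\cO(1)$, and then absorbs the bounded part into the dominant negative diagonal. You instead recognise the identity
\[
\tfrac12\langle\varphi,\Hess\cD_{R_L}(\psi)\varphi\rangle_{\ell^2(Q_{R_L}^{\neq 0})} = \big\langle\hat\eta,(H-\mu I)\hat\eta\big\rangle_{\ell^2(Q_{R_L})}\,,\quad H=\cH_{Q_{R_L},V_L}\,,\ \mu=\tfrac{(H\psi)(0)}{\psi(0)}\,,
\]
with $\hat\eta$ the tangent lift of $\varphi$ (so $\hat\eta\perp\psi$ and $\|\hat\eta\|\ge\|\varphi\|$), and then close the argument via the spectral gap $\lambda_1(H)\ge -2d$, $\lambda_2(H)\le -\fc a_L/(2d_L)$, using $\langle\phi_{R_L},\psi\rangle=1-\cO((d_L/a_L)^2)$ to control the component of $\hat\eta$ along $\phi_{R_L}$. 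Both routes are valid; yours is conceptually the second variation of the Rayleigh quotient constrained to the sphere, which makes the role of the spectral gap transparent, while the paper's is more elementary and avoids any reference to the spectrum of $H$ at an arbitrary $\psi$. For the gradient bound~\eqref{e:GradientL2} you again use a small operator-theoretic shortcut, $\cS_L\bar\phi_L=(\Delta-\bar\lambda_L)\bar\phi_L$ with $\|\Delta-\bar\lambda_L I\|\le 6d$, where the paper instead sums the exponential decay of $\bar\phi_L$ against the bound on $1-v_L$ from~\eqref{e:AlmostDecay}; both give the same $\cO(a_L^{-2})$ contribution. The asymptotics~\eqref{e:barphiL}--\eqref{e:barlambdafinal} are obtained essentially as in the paper (the eigenequation at $y$ and at $0$ are exactly the conditions $\partial_y\bar\cD_L(\bar\phi_L)=0$ that the paper uses), so that part is the same argument in a different notation.
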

\begin{proof}
We start with the differentiability and convexity of $\cD_{R_L}$. Observe that for $\psi \in \cZ_{R_L}$, 
since $V_{L}(0)=0$, the map $\cD_{R_L}$ is given by 
\begin{equ}
\cD_{R_L}(\psi) = -2d +2\psi(0) \sum_{\substack{x\in Q_{R_L}^{\neq 0}\\ x \sim 0}} \psi(x)
+ \sum_{\substack{x,y \in Q_{R_L}^{\neq 0}\\ x\sim y }} \psi(x) \psi(y) + \sum_{x\in Q_{R_L}^{\neq 0}} \psi(x)^2 V_{L}(x)\,.
\end{equ}
A direct computation shows that the first derivative of $\cD_{R_L}$ in the direction $\psi(x)$ for $x\in Q_{R_L}^{\neq 0}$ 
is given by 
\begin{equ}[e:DerIL]
\frac{\partial \cD_{R_L}}{\partial \psi(x)}(\psi)
= 2\psi(0) \1_{\{x\sim 0\}} +  2\frac{\partial \psi(0)}{\partial \psi(x)}  \sum_{\substack{y\in Q_{R_L}^{\neq 0}\\y\sim 0}} \psi(y) 
+ 2\sum_{\substack{y\in Q_{R_L}^{\neq 0}\\y\sim x}} \psi(y) + 2 \psi(x) V_{L}(x)\;,
\end{equ}
and the second derivative reads
\begin{equs}[e:Der2IL]
\frac{\partial^2 \cD_{R_L}}{\partial \psi(x)^2}(\psi)&=4 \frac{\partial \psi(0)}{\partial \psi(x)} \1_{\{x\sim 0\}} +2\frac{\partial^2 \psi(0)}{\partial \psi(x)^2}  \sum_{\substack{y\in Q_{R_L}^{\neq 0}\\y\sim 0}} \psi(y)+2 V_{L}(x)\,,\\
\frac{\partial^2 \cD_{R_L}}{\partial \psi(x')\partial \psi(x)}(\psi)&=2 \Big(\frac{\partial \psi(0)}{\partial \psi(x')} \1_{\{x\sim 0\}}+\frac{\partial \psi(0)}{\partial \psi(x)} \1_{\{x'\sim 0\}}\\
&\qquad\qquad\qquad\qquad+\frac{\partial^2 \psi(0)}{\partial \psi(x)\partial \psi(x')}  \sum_{\substack{y\in Q_{R_L}^{\neq 0}\\y\sim 0}} \psi(y)+\1_{\{x\sim x'\}}\Big)\,,
\end{equs}
where the latter holds for $x\neq x'$. In the above expressions, the derivatives involving $\psi(0)$ equal
\begin{equ}[e:Derpsi0]
\frac{\partial\psi(0)}{\partial\psi(x)}=-\frac{\psi(x)}{\psi(0)}\;,\qquad\frac{\partial^2\psi(0)}{\partial\psi(x)\partial\psi(x')}=-\frac{\psi(x)\psi(x')}{\psi(0)^3}-\frac{\1_{\{x= x'\}}}{\psi(0)}\;.
\end{equ}
Since any $\psi\in\cZ_{R_L}$ satisfies the upper bound in~\eqref{e:Decay} 
and $\psi(0)$ is given according to~\eqref{e:psio0IL}, it follows that $\psi(0)>0$ so that $1/\psi(0)$ is well-defined. 
Hence, $\cD_{R_L}$ is twice continuously differentiable. It can now be checked that all terms appearing in the second order derivatives of $\cD_{R_L}$ are of order at most $1$ except $V_L(x)$ which, on the event $E^1_{L}\cap E^2_{L}$, is smaller than $-(\fc/2) a_L/d_L$ thanks to \eqref{e:LowerShape}. Consequently, it is straightforward to check the existence of $c_0>0$ such that, 
uniformly in $\psi\in\cZ_{R_L}$, 
$\Hess \cD_{R_L}(\psi)$ satisfies~\eqref{e:LaplaBound} for any $\varphi\in\ell^2(Q_{R_L})$. 
This in particular means that $\cD_{R_L}$ is strictly concave on $\cZ_{R_L}$ and its unique maximiser coincides with $\phi_{R_L}$.

The arguments apply almost verbatim to $\bar \cD_{L}$, the only specific input comes from the bound $-\cS_L(x) \le -\fc a_L/d_L$ which is a consequence of \eqref{e:AlmostDecay}.
\medskip

For~\eqref{e:GradientL2}, note that, since $\bar\phi_{L}$ is a maximiser for $\bar \cD_{L}$, 
$\nabla \bar \cD_{L}(\bar\phi_{L})\equiv0$. Viewing $\bar\cD_L$ as a function from $Q_{R_L}^{\neq 0}$ into $\R$ (which does not depend on $\psi(x)$ whenever $x\notin Q_{r_L}$), we further have $\frac{\partial \bar \cD_{L}}{\partial \psi(x)} (\bar\phi_{L}) = 0$ for all $x$. We now write
\begin{equs}
\frac{\partial \cD_{R_L}}{\partial \psi(x)} (\bar\phi_{L})=&\frac{\partial \bar \cD_{L}}{\partial \psi(x)} (\bar\phi_{L})+2\1_{ \tilde Q_{r_L}}(x)\sum_{\substack{y\in Q_{r_L}\\ y\sim x}} \bar\phi_{L}(y)+2 \bar\phi_{L}(x)(V_L(x)+\cS_L(x))\\
=&2\1_{\tilde Q_{r_L}}(x)\sum_{\substack{y\in Q_{r_L}\\ y\sim x}} \bar\phi_{L}(y) \\
&+ 2\bar\phi_{L}(x)(\xi_L(0)-a_L) [v_L(x)-1]+2\bar\phi_{L}(x)\zeta_L(x)\,,
\end{equs}
where $\tilde Q_{r_L}\eqdef\{x\notin Q_{r_L}\colon \exists y\in Q_{r_L}\text{ s.t. }|x-y|= 1\}$. Thus, 
\begin{equs}
\|\nabla \cD_{R_L}&(\bar\phi_{L})\|_{\ell^2(Q_{R_L}^{\neq 0})}^2\lesssim 
\sum_{x\in \tilde Q_{r_L}}\Big|\sum_{\substack{y\in  Q_{r_L}\\ y\sim x}} \bar\phi_{L}(y)\Big|^2 \\
&\quad+|\xi_L(0)-a_L|^2\sum_{x\in Q_{r_L}^{\neq 0}}\bar\phi_{L}(x)^2|v_L(x)-1|^2+\sum_{x\in Q_{r_L}^{\neq0}}\bar\phi_{L}(x)^2|\zeta_L(x)|^2\;.
\end{equs}
Let us bound the first two terms on the r.h.s. From the exponential decay~\eqref{e:Decay} of $\bar \phi_L$ we get
\begin{equ}
\sum_{x\in \tilde Q_{r_L}}\Big|\sum_{\substack{y\in  Q_{r_L}\\ y\sim x}} \bar\phi_{L}(y)\Big|^2 \lesssim r_L^{d-1}  \Big(c_d\frac{d_L}{a_L}\Big)^{2r_L}\leq r_L^{d-1}\Big(\frac12\Big)^{2r_L}\lesssim\frac1{r_L}\leq \frac1{a_L}
\end{equ}
for $L$ large enough, where in the last step we used that, by~\eqref{e:rL}, $r_L\geq a_L$. 

We turn to the second term. Using the exponential decay \eqref{e:Decay} of $\bar\phi_L$, \eqref{e:AlmostDecay}, and the content of event $E_{L}^1$, we deduce that
\begin{equs}
	|\xi(0)-a_L|^2\sum_{x\in Q_{r_L}^{\neq 0}}\bar\phi_{L}(x)^2|v_L(x)-1|^2 &\lesssim \sum_{x\in Q_{r_L}^{\neq 0}} \frac{e^{2c' |x|}}{d_L^2} \Big(\frac{d_L}{a_L}\Big)^{2|x|}\lesssim \frac1{d_L^2}\Big(\frac{d_L}{a_L}\Big)^2 
\end{equs}
which is negligible compared to $a_L^{-1}$. Putting everything together~\eqref{e:GradientL2} follows at once.  
\medskip

In order to prove~\eqref{e:barphiL}, note that $\nabla\bar\cD_L(\bar\phi_L)=0$. Hence, similar to~\eqref{e:DerIL}, for any $x\in Q_1^{\neq 0}$ 
we have 
\begin{equs}
0&=\frac{\partial\bar\cD_L}{\partial \psi(x)}(\bar\phi_L)= 2\bar\phi_L(0) -  2\frac{ \bar\phi_L(x)}{\bar\phi_L(0)}  \sum_{\substack{y\in Q_{R_L}^{\neq 0}\\y\sim 0}} \bar\phi_L(y) 
+ 2\sum_{\substack{y\in Q_{R_L}^{\neq 0}\\y\sim x}} \bar\phi_L(y) - 2 \bar\phi_L(x) \cS_{L}(x),
\end{equs}
which implies
\begin{equ}
\bar\phi_L(x)=\frac{1}{\cS_L(x)}\Big[1+(\bar\phi_L(0)-1) - \frac{ \bar\phi_L(x)}{\bar\phi_L(0)}  \sum_{\substack{y\in Q_{R_L}^{\neq 0}\\y\sim 0}} \bar\phi_L(y)+\sum_{\substack{y\in Q_{R_L}^{\neq 0}\\y\sim x}} \bar\phi_L(y) \Big]\,.
\end{equ}
By~\eqref{e:x0} and~\eqref{e:Decay}, the last three summands in the parenthesis are $\cO(d_L/a_L)$. In addition,~\eqref{e:AlmostDecay} and~\eqref{e:defdL} imply that
$1-v_L(x) \asymp 1/d_L$ for all $x\in Q_1^{\neq 0}$. Using~\eqref{e:Shape}, we thus deduce that $\cS_L(x) \asymp a_L/d_L$ for any $x\in Q_1^{\neq 0}$, 
from which~\eqref{e:barphiL} follows. To prove~\eqref{e:barlambdafinal}, it suffices to compute $\bar\cD_L(\bar\phi_L)$ using the estimates that we collected for $\bar\phi_L$.
\end{proof}

\bibliographystyle{alpha}
\bibliography{references}

\end{document}